\renewcommand\subsubsection{\@startsection{subsubsection}{3}{\z@}%
                                     {-3.25ex\@plus -1ex \@minus -.2ex}%
                                     {-1em}% <---changed
                                     {\normalfont\normalsize\bfseries}}
\newtheorem{theorem}[subsubsection]{Theorem}
\newtheorem{lemm}[subsubsection]{Lemma}
\newtheorem{definition}[subsubsection]{Definition}
\newtheorem{remark}[subsubsection]{Remark}
\newtheorem*{remark*}{Remark}
\newtheorem{coro}[subsubsection]{Corollary}
\newtheorem{prop}[subsubsection]{Proposition}
\newtheorem{prop-def}[subsubsection]{Proposition-Definition}
\numberwithin{equation}{section}
\newtheorem{atheorem}{Theorem}[section]
\newtheorem{aremark}[atheorem]{Remark}
\newtheorem{acoro}[atheorem]{Corollary}
\newtheorem{aprop}[atheorem]{Proposition}
\newcommand{\overbar}{\bar}
\newcommand{\Hom}{\mathrm{Hom}}
\newcommand{\End}{\mathrm{End}}
\renewcommand{\d}{\mathrm{d}}
\renewcommand{\hat}{\widehat}
\newcommand{\vol}{\mathrm{vol}}
\newcommand{\Gal}{\mathrm{Gal}}
\newcommand{\Ind}{\mathrm{Ind}}
\newcommand{\Tr}{\mathrm{Tr}}
\newcommand{\ad}{\mathrm{Ad}}
\newcommand{\Ad}{\mathrm{Ad}}
\newcommand{\cInd}{\mathrm{c\text{-}Ind}}
\newcommand{\rk}{\mathrm{rk}}
\newcommand{\res}{\mathrm{res}}
\newcommand{\AAA}{\mathbb{A}}
\newcommand{\bbb}{\mathfrak{b}}
\renewcommand{\ggg}{\mathfrak{g}}
\newcommand{\hhh}{\mathfrak{h}}
\renewcommand{\lll}{\mathfrak{l}}
\newcommand{\nnn}{\mathfrak{n}}
\newcommand{\mmm}{\mathfrak{m}}
\newcommand{\ppp}{\mathfrak{p}}
\newcommand{\zzz}{\mathfrak{z}}
\newcommand{\ttt}{\mathfrak{t}}
\newcommand{\ooo}{\mathcal{O}}
\newcommand{\car}{\mathfrak{c}}
\newcommand{\rs}{\mathrm{reg}}
\newcommand{\ani}{\mathrm{ani}}
\newcommand{\ev}{{\mathrm{ev}}}
\newcommand{\ago}{\mathfrak{a}}
\DeclareMathOperator{\reg}{reg}
\DeclareMathOperator{\Spec}{Spec}
\renewcommand{\leq}{\leqslant}
\renewcommand{\geq}{\geqslant}
\newcommand{\htau}{\hat \tau}
\newcommand*{\rom}[1]{\expandafter\@slowromancap\romannumeral #1@}
\begin{document}

\setcounter{secnumdepth}{3}
\setcounter{tocdepth}{2}

\title[Automorphic representations and Hitchin's moduli spaces]{Number of cuspidal automorphic representations and Hitchin's moduli spaces}

\author{Hongjie Yu}
\address{IST Austria\\ Am Campus 1\\ Klosterneuburg 3400\\ Austria}
\curraddr{Weizmann Institute of Science\\ Herzl St 234\\ Rehovot\\ Israel}
    \email{hongjie.yu@weizmann.ac.il}

%\classification{11F70, 11F72, 14H60}
\keywords{Arthur-Selberg trace formula, cuspidal automorphic representations, Higgs bundles, Hitchin moduli spaces}

\begin{abstract}

Let $F$ be the function field of a projective smooth geometrically connected curve $X$ defined over a finite field $\mathbb{F}_q$. Let $G$ be a split semisimple algebraic group over $\mathbb{F}_q$. Let $S$ be a non-empty finite set of points of $X$. 
We are interested in the number of $G$ cuspidal automorphic representations whose local behaviors in $S$ are prescribed. In this article, we consider those cuspidal automorphic representations whose local component at each $v\in S$ contains a fixed irreducible Deligne-Lusztig induced representation of a hyperspecial group. 
We express that the count in terms of groupoid cardinality of $\mathbb{F}_q$-points of Hitchin moduli stacks of groups associated with $G$. In the course of the proof,  we study the geometry of Hitchin moduli stacks and prove some vanishing results on the geometric side of a variant of the Arthur-Selberg trace formula for test functions with small support. 
\end{abstract}

\maketitle

\tableofcontents

\section{Introduction}Let $F$ be a function field of a smooth projective and geometrically connected curve $X$ defined over $\mathbb{F}_q$. 
In 1981,  Drinfeld \cite{Drinfeld} computes the number of everywhere unramified cuspidal automorphic representations over $F$ for $GL_2$.  Using global Langlands correspondence, he observes that the number of those rank $2$ $\bar{\mathbb{Q}}_\ell$-irreducible continuous representations of the geometric fundamental group of  $X$ that can be extended to representations of the arithmetic fundamental group of $X$
is similar to the number of $\mathbb{F}_q$-points of a variety over $\mathbb{F}_q$ of dimension $4g-3$, where $g$ is the genus of the curve $X$.  
Deligne proposes in \cite{Deligne} to extend Drinfeld's result to higher ranks with prescribed ramifications. If the ramifications are tame, Deligne conjectures that this phenomenon has a relation with the coarse moduli space of vector bundles with connections in characteristic $0$ (the coincidence of Euler characteristic, etc.). So conjecturally, it should also have a relation with the moduli space of Higgs bundles due to Simpson's non-abelian Hodge theory. For the general linear groups $GL_n$, there has been some progress in Deligne's conjectures. Based on both Simpson's theory and the Langlands programs, it is natural to extend Deligne's conjectures to representations of the fundamental group of $X$ in reductive groups other than the general linear groups. We hope to gain more insights from this direction, yet very little is known at this time.

In fact, unlike the case of $GL_n$, where we have a relatively comprehensive understanding of the Langlands correspondence, many statements regarding the Langlands correspondence for reductive groups remain conjectural and are not expected to be as straightforward as those for $GL_n$.
Ignoring these issues, it is already very interesting to consider this question on the automorphic side. In fact, calculating the dimension of the space of modular forms is a classical problem in arithmetic. 

Let us introduce Deligne's questions on the automorphic side. Let $G$ be a split semisimple algebraic group defined over $\mathbb{F}_q$ (we can do a similar thing for a reductive group, but let us restrict to semisimple algebraic groups for simplicity).  
We fix a finite set of places $S$ of $F$. For each place $v\in S$, we wish to prescribe local behaviors of cuspidal automorphic representations in $v$ that can reflect the restriction to the inertial Galois subgroup of the corresponding local Langlands parameter. Then we are interested in the number of $G$-cuspidal automorphic representations $\pi$ (with multiplicity) that exhibit the prescribed local behaviors in $S$ and are unramified outside $S$. 
If we only consider tame ramifications, then it is expected that $\pi_v$ ($v\in S$) is of depth zero and the restriction of $\pi_v$ to a parahoric subgroup is inflated from specified representations of the finite reductive quotient of the parahoric subgroup.  

We expect the count to be connected to geometric objects. Suggested by known cases in $GL_n$, we hope to see the appearance of Hitchin moduli spaces. Surprisingly, we will discover that not only the Hitchin moduli space of $G$ contributes to the count but also the Hitchin moduli space of certain relevant groups. This looks strange compared with the Simpson correspondence over $\mathbb{C}$, which establishes a bijection between some $G$-local systems and $G$-Higgs bundles. However, under the Langlands conjectures, we potentially establish a connection between $\hat{G}(\bar{\mathbb{Q}}_\ell)$-local systems and $G$-Higgs bundles. Here $\hat{G}$ denotes the Langlands dual group of $G$ defined over $\bar{\mathbb{Q}}_\ell$.  

In this article, we consider the case that the parahoric subgroup is contained in a hyperspecial subgroup, and the representation of the finite reductive group is a Deligne-Lusztig induced representation (therefore, is induced from a character in general position of a torus). 
We generalize a criterion of Deligne implying that if these representations are in general position, then any automorphic representation whose local behaviors are thus prescribed lies automatically in the cuspidal spectrum.

We use the Arthur-Selberg trace formula for this question.
Ngô B.C. has observed in \cite{NgoH} that the groupoid cardinality of the category of the $\mathbb{F}_q$-points of the Hitchin moduli stack is related to an analog of the automorphic trace formula for the Lie algebra $\ggg$ of $G$. Although neither the cardinality nor the trace is finite, their so-called elliptic (or anisotropic) parts are well-defined and still match for a well-chosen test function. 
The convergence problem for the trace formula over a number field is solved by Arthur by introducing a truncation process.
Chaudouard and Laumon (\cite{C-L1}\cite{Chau}) find that Arthur's truncation process for trace formulas of a Lie algebra is closely related to the semistability of Hitchin bundles for the same test function.  
However, all these works do not link directly to Arthur's trace formula. 
Hence has no direct link with automorphic forms. Arguably, this article's most interesting technical point is that we pass from a group to its Lie algebra for a trace formula with any test function of small support.

\subsection{Main theorems} 
We introduce the geometric objects that will be related to the number of cuspidal automorphic representations. We suppose that the characteristic of the field $\mathbb{F}_q$ is very good for $G$ (see the beginning of \ref{chara}).

\subsubsection{Hitchin moduli stacks}
 We fix a split maximal torus $T$ of $G$ defined over $\mathbb{F}_q$. Let $W$ be the Weyl group of $G$ with respect to $T$ and $\ttt$ be the Lie algebra of $T$. 

Fix a point $\infty\in  X(\mathbb{F}_q)$ identified with a closed point of $X$. Let $S$ be a finite set of closed points of $X$ containing $\infty$. Let $D=K_X+\sum_{v\in S}v$, where $K_X$ is a canonical divisor of $X$, i.e., the line bundle associated to $K_X$ is the canonical line bundle of $X$.  Let $\mathcal{M}_G$ be the Hitchin moduli stack that classifies $G$-torsors $\mathcal{E}$ over $X$ together with a global section of the associated $D$-twisted adjoint vector bundle $\Ad(\mathcal{E})\otimes\mathcal{O}_X(D)$. Let $\mathcal{A}_G$ be the associated Hitchin base which is the space of global sections of the affine bundle $\car_{G, D}=(\ttt \sslash W)\times^{\mathbb{G}_m}\mathcal{O}_X(D)$ over $X$. We denote the Hitchin fibration by $f$:
\[f:\mathcal{M}_G\longrightarrow \mathcal{A}_G. \] 

Let $\mathcal{R}_G$ be the $\mathbb{F}_q$-scheme 
\[\prod_{v\in {S}}\mathrm{R}_{\kappa_v|\mathbb{F}_q}\car_{G, D,v}, \] 
where $\car_{G,D,v}$ is the $\kappa_v$-scheme defined to be the fiber of the bundle $\car_{G,D}$ in $v$ and $\mathrm{R}_{\kappa_v|\mathbb{F}_q}$ is the functor of restriction of scalars. 
We have an evaluation morphism: 
\(\mathrm{ev}_G: \mathcal{A}_G\longrightarrow \mathcal{R}_G \) that evaluates a global section in $S$. We call its composition with the Hitchin fibration the residue morphism: 
\[\mathrm{res}_G: \mathcal{M}_G\longrightarrow \mathcal{R}_G.   \]
Let $\widetilde{\mathcal{R}}_G:=  \ttt_{D,\infty}^{\mathrm{reg}}\prod_{v\in S-\{\infty\}}\mathrm{R}_{\kappa_v|\mathbb{F}_q}\car_{G,D,v}$, where $\ttt^{\mathrm{reg}}\subseteq \ttt$ is the open subset of regular elements. Then we have an \'etale morphism
 \( \widetilde{\mathcal{R}}_G  \longrightarrow \mathcal{R}_G ,\). It induces a Cartesian diagram
\[\begin{CD}
\widetilde{\mathcal{M}}_G @>>> \widetilde{\mathcal{A}}_G @>\widetilde{\ev}_G >> \widetilde{\mathcal{R}}_G  \\
@VVV   @VVV@VVV  \\
\mathcal{M}_G@>>>\mathcal{A}_G@>\ev_G >>\mathcal{R}_G
\end{CD}.  \] 
The stack $\widetilde{\mathcal{M}}_G$ plays an important role in Ngô's proof of the fundamental lemma. % to eliminate the difference between conjugacy classes ``defined over" $\mathbb{F}_q$ and those ``defined over" $\overline{\mathbb{F}}_q$. 
We use the notation $\widetilde{\res}_G$ for the morphism 
\begin{equation}\widetilde{\res}_G: \widetilde{\mathcal{M}}_G^{\xi}\longrightarrow \widetilde{\mathcal{R}}_G\end{equation} defined to be the composition of the upper horizontal arrows. 

Let $\xi\in X_{\ast}(T)\otimes_{\mathbb{Z}} \mathbb{Q}$. Chaudouard and Laumon (\cite{C-L1}) have defined a $\xi$-stability on $\widetilde{\mathcal{M}}_G$, so that its $\xi$-stable part is a separated Deligne-Mumford stack, denoted by $\widetilde{\mathcal{M}}_G^{\xi}.$ 
 For a point $o\in \widetilde{\mathcal{R}}_G(\mathbb{F}_q)$, we denote the fiber of $\widetilde{\res}_G$ in $o$ by $\widetilde{\mathcal{M}}^{\xi}_G(o)$. 
 
 In this article, we study the dimension of automorphic forms on $G$ with prescribed local behaviors. It will be related to fibers of $\widetilde{\res}_H$ for some  semisimple algebraic groups $H$ associated with $G$ defined below. 

%\marginpar{add tilde to res}
\begin{definition}\label{coendo}
A split elliptic coendoscopic group, $H$ of $G$, is a subgroup of $G$ defined over $\mathbb{F}_q$ so that there is an elliptic element $s\in T(\mathbb{F}_q)$ such that $H=G_s^0$. Two split elliptic coendoscopic groups are called conjugate if conjugate by an element in $G(\mathbb{F}_q)$.  
\end{definition}

Given a conjugacy class of split elliptic coendoscopic group $H$ of $G$, 
we have an obviously defined finite morphism \begin{equation}\widetilde{\pi}_{H, G}: \widetilde{\mathcal{R}}^G_H\longrightarrow  \widetilde{\mathcal{R}}_G.  \end{equation}
It is independent of the group $H$ in its $G(\mathbb{F}_q)$-conjugacy class, where $\widetilde{\mathcal{R}}^G_H$ is the open subset of $\widetilde{\mathcal{R}}_H$ whose components at $\infty$ consists of $G$-regular elements.

\subsubsection{Sum of multiplicities of cuspidal automorphic representations}
Let $\mathbb{A}$ be the ring of ad\`eles of the function field $F$ and $\mathcal{O}$ its subring of integral ad\`eles. The space of cuspidal automorphic forms $C_{cusp}(G(F)\backslash G(\AAA))$ is a semisimple $G(\AAA)$-representation: 
\[C_{cusp}(G(F)\backslash G(\AAA))=\bigoplus {m_\pi} \pi.  \]
Given an irreducible smooth representation $\rho$ of $G(\ooo)$, we are mainly interested in the sum of multiplicities of cuspidal automorphic representations $\pi$ of $G(\AAA)$ containing $\rho$: 
$$\sum_{\pi: \pi_{\rho}\neq 0}m_{\pi}.   $$
Here $\pi_\rho$ denotes the $(G(\mathcal{O}), \rho)$-isotypic subspace of $\pi$. 

In this article, we only consider some special depth zero type of $\rho$ that ensure an automorphic representation $\pi$ containing $\rho$ is automatically cuspidal and has each local component of depth zero. 
For each $v\in S$, we fix a maximal torus $T_v$ of $G$ defined over $\kappa_v$. Let $\theta_v$ be a complex-valued character of $T_v(\kappa_v)$ that is absolutely regular (i.e., is in general position for every field extension, \ref{abreg}).
We fix an isomorphism between $\mathbb{C}$ and $\overbar{\mathbb{Q}}_{\ell}$ ($\ell\nmid q$). 
Let $$\rho_v=\epsilon_{\kappa_v}(T_v)\epsilon_{\kappa_v}(G) R_{T_v}^{G}\theta_v$$ be the Deligne-Lusztig induced (complex) representation of $G(\kappa_v)$. It is an irreducible representation (\cite[Proposition 7.4]{DL}), where $\epsilon_{\kappa_v}(T_v)=(-1)^{\rk_{\kappa_v}T_v}$. 
We view $\rho_v$ as a representation of $G(\ooo_v)$ by inflation and consider the representation $\rho=\otimes_{v}\rho_v$ of $G(\ooo)$ (outside $S$, $\rho_v$ is the trivial representation). %We call such a $\rho$ of Deligne-Lusztig type. 

We say that $\rho$ is a cuspidal filter, if for any $L^2$-automorphic representation, we have
$$(\ast)\quad \pi_\rho\neq 0 \implies \pi \text{ is cuspidal, i.e., contained in the cuspidal spectrum. }$$ 
Inspired by a criterion given by Deligne on the Galois side, we will provide a criterion for $\rho$ as a cuspidal filter in Proposition \ref{generic}. It shows that when $q$ tends to $+\infty$, most of $\rho$ constructed by the Deligne-Lusztig theory are cuspidal filters.  

In a manner similar to geometric settings, where the étale open $\widetilde{R}_G$ is taken into consideration, we assume that $T_\infty = T$ is a split. Let $\ttt_v$ be the Lie algebra of $T_v$. It is a vector scheme defined over $\kappa_v$. We denote the open subscheme of $G$-regular elements by $\ttt_v^{\reg}$. 

\begin{atheorem}(Proposition \ref{generic}, Theorem \ref{main} and Proposition \ref{nho})\label{best}
If the character \[\prod_{v\in S}\theta_v|_{Z_G(\mathbb{F}_q)}\] is non-trivial, then \[\sum_{\pi: \pi_{\rho}\neq 0}m_\pi=0 .  \]

Suppose that $\prod_{v\in S}\theta_v|_{Z_G(\mathbb{F}_q)}$ is trivial. Suppose, moreover, $T_\infty$ is split and that $\rho$ constructed above is a cuspidal filter.
Let $o_G$ be any point in the image of the natural map \(\prod_{v\in S}\ttt_v^{\reg}(\kappa_v) \rightarrow  \widetilde{\mathcal{R}}_G(\mathbb{F}_q), \)
 then for each conjugacy class of split elliptic coendoscopic group $H$ of $G$, each $o\in \widetilde{\pi}_{H,G}^{-1}(o_G)(\mathbb{F}_q)$, there is an explicit integer $n_{H,o}\in \mathbb{Z}$ such that 
\begin{equation}\label{Eq0}\sum_{\pi: \pi_{\rho}\neq 0}m_\pi = \sum_{H}q^{-\frac{1}{2}(\dim \mathcal{M}_H - \dim \mathcal{R}_H) }\sum_{o\in \widetilde{\pi}_{H,G}^{-1}(o_G)(\mathbb{F}_q)}n_{H,o}  |\widetilde{\mathcal{M}}_{H}^{\xi}(o)(\mathbb{F}_q)|.  \end{equation}
The first sum is taken over conjugacy classes of split elliptic coendoscopic group $H$ of $G$.  
The integers $n_{H,o}$ are given by the explicit expression (\ref{537}). Moreover, we know that 
\begin{enumerate}
\item $n_{G, o_G}=|Z_G(\mathbb{F}_q)|$. 
\item There is a constant $C$ depending only on $\deg S$ and on the root system $\Phi(G,T)$ so that 
\[\sum_{H,o}| n_{H,o}|\leq C . \]
\item Suppose that $S\subseteq X(\mathbb{F}_q)$, every $T_v$ is split and $q-1$ is divisible enough (an explicit hypothesis is given in \ref{car2}).  Then every split elliptic coendoscopic group $H$ defined over $\overbar{\mathbb{F}}_q$ is defined over $\mathbb{F}_q$ and the fiber $\widetilde{\pi}_{H,G}^{-1}(o_G)$ is a constant scheme over $\mathbb{F}_q$.
Assume that $\theta_v$ is obtained as the composition of an algebraic character in $X^{*}(T_v)$ with a character $\psi_v: \overbar{\mathbb{F}}_q^{\times}\rightarrow \mathbb{C}^{\times}$.  Then
 $n_{H,o}$ is unchanged if the field $\mathbb{F}_q$ is replaced by $\mathbb{F}_{q^n}$. 
 \end{enumerate}
\end{atheorem}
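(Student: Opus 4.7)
The plan is to apply the Arthur-Selberg trace formula to $f=\otimes_v f_v$, where at $v\in S$, $f_v$ is built (by inflation) from the character of $\rho_v$ on $G(\kappa_v)$ extended by zero outside $G(\mathcal{O}_v)$, and $f_v=\mathbf{1}_{G(\mathcal{O}_v)}$ elsewhere. The cuspidal filter hypothesis turns the spectral side into $\sum_{\pi:\pi_\rho\neq 0}m_\pi$ (up to explicit normalizations), and the first vanishing statement follows because the central character of $\rho$ is $\prod_v\theta_v|_{Z_G(\mathbb{F}_q)}$, while any automorphic contribution must have central character trivial on the diagonal $Z_G(F)=Z_G(\mathbb{F}_q)$.

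The substantive content lies on the geometric side. The first and most delicate step is to reduce from the group-level trace formula to a Lie-algebra version in the spirit of Ng\^o and Chaudouard--Laumon. Since each $f_v$ is depth zero with small support in a hyperspecial compact, a Cayley-type quasi-logarithm transfers its behaviour near the identity to a test function on $\ggg(\mathbb{A})$; away from the identity, the Deligne--Lusztig character formula expresses $\mathrm{tr}\,\rho_v$ at $\gamma=su$ (topological Jordan decomposition) as a weighted sum of Green functions on the connected centralizer $G_s^0$. The split elliptic coendoscopic groups $H=G_s^0$ enter exactly here.

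After this expansion and globalization, the geometric side reorganizes as a sum over $G(\mathbb{F}_q)$-conjugacy classes of elliptic $s\in T(\mathbb{F}_q)$ of Lie-algebra elliptic trace formulas for $H=G_s^0$, each multiplied by an explicit scalar built from $\prod_v\theta_v(s)$, Green-function values and Deligne--Lusztig signs. Invoking the Ng\^o--Chaudouard--Laumon identification of the elliptic part with $q^{-(\dim\mathcal{M}_H-\dim\mathcal{R}_H)/2}|\widetilde{\mathcal{M}}_H^{\xi}(o)(\mathbb{F}_q)|$, summed over $o\in\widetilde{\pi}_{H,G}^{-1}(o_G)(\mathbb{F}_q)$, yields (\ref{Eq0}); the scalar repackages into $n_{H,o}$ as recorded in (\ref{537}).

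The three refinements are then direct once the decomposition is in place. For (1), $H=G$ forces $s\in Z_G$, so the relevant Green function collapses to $\dim\rho_v$ and one reads off $n_{G,o_G}=|Z_G(\mathbb{F}_q)|$. For (2), only finitely many $G(\mathbb{F}_q)$-conjugacy classes of split elliptic coendoscopic $H$ occur, each fiber $\widetilde{\pi}_{H,G}^{-1}(o_G)$ is finite with cardinality bounded by $|W|$ and $\deg S$, and Green-function values are uniformly bounded in terms of the root system. For (3), when $q-1$ is divisible enough all elliptic elements of $T(\overline{\mathbb{F}}_q)$ lie in $T(\mathbb{F}_q)$, so the index set of coendoscopic groups is insensitive to the base field; the algebraicity assumption on $\theta_v$ then makes every ingredient of $n_{H,o}$ invariant when passing from $\mathbb{F}_q$ to $\mathbb{F}_{q^n}$. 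The hardest step remains the group-to-Lie-algebra reduction itself: beyond the elliptic part one must control the non-elliptic and truncation terms on both sides, simultaneously on $G$ and on every coendoscopic $H$, and show the cancellations that turn (\ref{Eq0}) into a genuine equality rather than an asymptotic identity.
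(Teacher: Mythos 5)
Your overall architecture does match the paper's (trace formula applied to the Deligne--Lusztig trace function, vanishing via central characters, the Deligne--Lusztig character formula producing the groups $H=G_s^0$, a Lie-algebra reduction, and point counts of Hitchin fibers), but two genuine gaps remain. First, on the spectral side, the cuspidal filter hypothesis alone does not give $\sum_{\pi}m_\pi$: the trace of the test function on $\pi$ is $m_\pi\prod_{v\in S}\dim\Hom_{G(\ooo_v)}(\rho_v,\pi_v)$, and these local multiplicities could a priori exceed $1$. The paper needs the commutativity of the depth-zero Hecke algebra $\mathcal{H}(G(F_v),\rho_v)$ (Theorem \ref{commutative}, via Morris and Bushnell--Kutzko, using the absolute regularity of $\theta_v$) to conclude $\Tr(e_\rho|\pi)\le 1$ (Corollary \ref{mul1}); without this input your ``up to explicit normalizations'' is unjustified.

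Second --- and you flag it yourself as left open --- the group-to-Lie-algebra reduction with control of the non-elliptic and truncation terms is precisely the hard content, and ``invoking the Ng\^o--Chaudouard--Laumon identification of the elliptic part'' is not an off-the-shelf statement for fibers of the residue morphism over a prescribed $o$: that identification is itself proved in the paper (Corollary \ref{corollary}), and the mechanism is specific. The paper (i) replaces the test function at $\infty$ by $\widetilde{e}_\rho$ supported in the Iwahori, which is possible because $T_\infty$ is split and leaves $J^{G,\xi}$ unchanged up to $\vol(\mathcal{I}_\infty)$ by Frobenius reciprocity (Proposition \ref{Frobenius}); (ii) proves, for $\xi$ in general position and Iwahori-supported functions, that all classes which are non-elliptic or not represented in $T(\mathbb{F}_q)$ contribute zero and that elliptic ones descend to $J^{G_s^0,w\xi}$ (Theorem \ref{expansion}), with the Lie-algebra analogue killing everything but the nilpotent class (Theorem \ref{expansion'}); (iii) passes from unipotent to nilpotent via a Springer isomorphism and then uses Springer's hypothesis --- the Green function on nilpotents is the Fourier transform of a regular semisimple orbit --- so that the Lie-algebra trace formula, after Fourier transform, lands on weighted orbital integrals $J_M^{\ggg}(X,\mathbbm{1}_D)$ with residue conditions at $S$, to which Chaudouard--Laumon's Theorem \ref{C-L1} applies; an independence-of-$\xi$ statement (Corollary \ref{indepxi}) is also needed to replace $w\xi$ by $\xi$. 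Your Cayley-type quasi-logarithm near the identity and the topological Jordan decomposition do not by themselves supply this chain, and without steps (i)--(ii) the Green-function expansion does not interface with the truncation. The refinements (1)--(3) are fine in spirit, though $n_{G,o_G}=|Z_G(\mathbb{F}_q)|$ comes from summing the (trivial) product character over $Z_G(\mathbb{F}_q)$ in \eqref{537}, not from Green functions collapsing to $\dim\rho_v$.
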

\begin{aremark}\label{Rmk1}
The equality \eqref{Eq0} is understood in a way that if all the $\widetilde{\mathcal{M}}_{H}^{\xi}(o)$ above have no $\mathbb{F}_q$-points (for example, if they are empty) then the left-hand side equals $0$. This may happen in some extreme cases. If the characteristic does not divide the order of the Weyl group of $G$, we can use the number of $\mathbb{F}_q$-points of corresponding coarse moduli spaces. 
\end{aremark}

We have a result of Theorem \ref{best} and Theorem \ref{connectedc} (see also Corollary \ref{estimate}).
\begin{acoro}\label{esti}
Suppose $\prod_{v\in S}\theta_v|_{Z_G(\mathbb{F}_q)}$ is trivial. 
Assume also that $T_\infty$ is split, $\rho$ is a cuspidal filter and $\deg S>\max\{2-g, 0\}$. Here $g$ is the genus of the curve $X$. 
There is a constant $C$ depending only on the root system of $G$, on $S\otimes \overbar{\mathbb{F}}_q$ and the curve $\overbar{X}$, such that for any $\rho$ that is of Deligne-Lusztig type and is a cuspidal filter, we have 
 \[| \sum_{\pi: \pi_{\rho}\neq 0}m_\pi -   |Z_G(\mathbb{F}_q)||\pi_1(G)|q^{N}| \leq C q^{N-\frac{1}{2}}, \] 
 where $N= \frac{1}{2}((2g-2+\deg S)dim \ggg - \deg S\dim \ttt ).$ In particular, if $q$ is large enough depending only on $S\otimes \overbar{\mathbb{F}}_q$ and on the curve $\overbar{X}$ (but not on $(\theta_v)_{v\in S}$), we have
  \[\sum_{\pi: \pi_{\rho}\neq 0}m_\pi\neq 0. \]
\end{acoro}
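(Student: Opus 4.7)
The plan is to treat the $H=G$ summand in the formula of Theorem \ref{best} as the main term and to bound every other summand by a dimension argument combined with part (2) of the same theorem.

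I would start from the identity
\[\sum_{\pi:\pi_{\rho}\neq 0}m_\pi=\sum_H q^{-\frac{1}{2}(\dim\mathcal{M}_H-\dim\mathcal{R}_H)}\sum_{o\in\widetilde{\pi}_{H,G}^{-1}(o_G)(\mathbb{F}_q)}n_{H,o}\,|\widetilde{\mathcal{M}}_H^\xi(o)(\mathbb{F}_q)|\]
of Theorem \ref{best} and isolate the $H=G$ term: here $\widetilde{\pi}_{G,G}^{-1}(o_G)=\{o_G\}$ and $n_{G,o_G}=|Z_G(\mathbb{F}_q)|$ by part (1). Under the hypothesis $\deg S>\max\{2-g,0\}$ the divisor $D=K_X+\sum_{v\in S}v$ has $\deg D\geq 1$ and is positive enough that the standard Hitchin formula $\dim\mathcal{M}_H=\deg D\cdot\dim H$ applies; combined with $\dim\mathcal{R}_H=\deg S\cdot\dim\ttt$ this yields $\tfrac{1}{2}(\dim\mathcal{M}_G-\dim\mathcal{R}_G)=N$.

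I would then invoke Theorem \ref{connectedc} to the effect that $\widetilde{\mathcal{M}}_G^\xi(o_G)$ is a smooth Deligne-Mumford stack of pure dimension $2N$ with $|\pi_1(G)|$ geometric connected components, whose $\ell$-adic Betti numbers are bounded uniformly in terms of $\overbar X$, $S\otimes\overbar{\mathbb{F}}_q$ and the root system of $G$ only. The Grothendieck-Lefschetz trace formula combined with Deligne's weight bound for smooth Deligne-Mumford stacks then gives
\[|\widetilde{\mathcal{M}}_G^\xi(o_G)(\mathbb{F}_q)|=|\pi_1(G)|\,q^{2N}+O(q^{2N-\frac{1}{2}}),\]
and multiplication by $|Z_G(\mathbb{F}_q)|q^{-N}$ produces the announced main term $|Z_G(\mathbb{F}_q)||\pi_1(G)|q^N$ plus an $O(q^{N-\frac{1}{2}})$ remainder.

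For a proper split elliptic coendoscopic subgroup $H\subsetneq G$, the centralizer $G_s^0$ excludes at least one pair of opposite roots, so $\dim H\leq \dim G-2$ and
\[\tfrac{1}{2}(\dim\mathcal{M}_H-\dim\mathcal{R}_H)\leq N-\deg D\leq N-1.\]
A crude Lang-Weil bound $|\widetilde{\mathcal{M}}_H^\xi(o)(\mathbb{F}_q)|\ll q^{\dim\mathcal{M}_H-\dim\mathcal{R}_H}$, uniform in $o$ with an implicit constant controlled by the geometric data alone, combined with $\sum_{H,o}|n_{H,o}|\leq C$ from part (2) of Theorem \ref{best}, bounds the total proper-$H$ contribution by $O(q^{N-1})$, which is absorbed in the announced $O(q^{N-\frac{1}{2}})$ error. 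The non-vanishing for $q$ large is then immediate because the main term is $\geq q^N$ and dominates $Cq^{N-\frac{1}{2}}$ as soon as $q^{1/2}>C$. The main obstacle is securing the uniformity of all constants, in particular the Betti-number bound in Theorem \ref{connectedc} and the Lang-Weil estimate for each fiber $\widetilde{\mathcal{M}}_H^\xi(o)$, so that nothing depends on the characters $(\theta_v)_{v\in S}$ beyond what the statement allows; once this uniformity is in hand the rest is a balancing of orders of magnitude.
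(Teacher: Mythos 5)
Your proposal is correct and follows essentially the same route the paper intends: isolate the $H=G$ term of Theorem \ref{best} (with $n_{G,o_G}=|Z_G(\mathbb{F}_q)|$ and $\tfrac12(\dim\mathcal{M}_G-\dim\mathcal{R}_G)=N$), get the asymptotics $|\widetilde{\mathcal{M}}_G^{\xi}(o_G)(\mathbb{F}_q)|=|\pi_1(G)|q^{2N}+O(q^{2N-\frac12})$, and absorb all proper $H$ via the dimension drop $\dim\hhh\leq\dim\ggg-2$ together with $\sum_{H,o}|n_{H,o}|\leq C$. The only small correction is one of attribution: the uniform point-count estimate (and the cohomological input behind it, Grothendieck–Lefschetz plus Deligne's purity for stacks) is exactly Corollary \ref{estimate}, not part of Theorem \ref{connectedc} and not a smoothness statement about the fibers, so the "uniformity obstacle" you flag is already supplied by the paper, with constants depending only on $\overbar{X}$, $S\otimes\overbar{\mathbb{F}}_q$ and the root datum.
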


\subsection{Some auxiliary results}
Here we gather some results that could be of separate interest. They are either ingredients of the proof or byproducts of the proof.   

\subsubsection{The Hitchin moduli spaces}
%Ngô has introduced (\cite[4.10.5]{Ngo}) an open subset (elliptic semisimple part) \[\mathcal{A}^{\mathrm{ani}}_G \subseteq \mathcal{A}_G, \] and the inverse image of the Hitchin's fibration \[\mathcal{M}^{\mathrm{ani}}_G \subseteq \mathcal{M}_G. \] The stack $\mathcal{M}^{\mathrm{ani}}_G$ is a separated Deligne-Mumford stack, and its coarse moduli space is a scheme, denoted by $\mathrm{M}^{\mathrm{ani}}_G$. 
The methods used in this paper concern an exploration of the connection between Arthur's non-invariant trace formula and Hitchin moduli spaces. As a result, we have the following results  on the geometry of the Hitchin moduli spaces.

\begin{atheorem}(Theorem \ref{Faltings})\label{connectedc}
Suppose that $\xi$ is in general position. 
Suppose that $|\overbar{S}|> 2-g$. 
For any $o\in \widetilde{\mathcal{R}}_G(\mathbb{F}_q)$, the Deligne-Mumford stack $\widetilde{\mathcal{M}}_G^{\xi}(o)$ is equidimensional and has $|\pi_1(G)|$ connected components. Moreover if every factor of $o$ is regular, then each connected component of $\widetilde{\mathcal{M}}_G^{\xi}(o)$ is geometrically irreducible of dimension $\dim \ggg(2g-2+\deg S)- \dim \ttt\deg S$. 
 \end{atheorem}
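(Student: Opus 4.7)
The plan is to factor the residue morphism as
\[
\widetilde{\res}_G:\ \widetilde{\mathcal{M}}_G^\xi \xrightarrow{\ f\ } \widetilde{\mathcal{A}}_G \xrightarrow{\ \widetilde{\ev}_G\ } \widetilde{\mathcal{R}}_G
\]
and to analyse each factor. The hypothesis $|\overbar{S}|>2-g$ ensures $\deg D = 2g-2+\deg S > 0$, so Riemann-Roch shows that $\widetilde{\ev}_G$ is surjective and smooth with fibers of dimension $\dim\mathcal{A}_G-\dim\mathcal{R}_G$. Combined with Chaudouard-Laumon's result that, for $\xi$ in general position, $\widetilde{\mathcal{M}}_G^\xi$ is a smooth separated Deligne-Mumford stack of pure dimension $\dim\ggg(2g-2+\deg S)$ on which $f$ is proper, this reduces the entire statement to an analysis of $f^{-1}(\widetilde{\ev}_G^{-1}(o))$.

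For equidimensionality and the dimension formula I would compute the tangent complex of $\widetilde{\res}_G$. The deformation complex $[\Ad(\mathcal{E})\xrightarrow{[\cdot,\phi]}\Ad(\mathcal{E})\otimes\mathcal{O}_X(D)]$, combined with the étaleness of $\widetilde{\mathcal{R}}_G\to\mathcal{R}_G$ at the regular $\infty$-factor, makes the tangent map of $\widetilde{\res}_G$ surjective on the locus of $\widetilde{\mathcal{M}}_G^\xi$ that dominates $\widetilde{\mathcal{R}}_G$. Since source and target are both smooth, this implies $\widetilde{\res}_G$ is smooth there, so all fibers are smooth and equidimensional of dimension $\dim\ggg(2g-2+\deg S)-\dim\ttt\deg S$.

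For the count of connected components, I would decompose $\widetilde{\mathcal{M}}_G^\xi=\bigsqcup_{d\in\pi_1(G)}\widetilde{\mathcal{M}}_G^{\xi,d}$ by the degree of the underlying $G$-torsor, and ensure non-emptiness of each $\widetilde{\mathcal{M}}_G^{\xi,d}(o)$ by means of suitably twisted Hitchin-type sections of $f$ landing in each degree. The equality $|\pi_0|=|\pi_1(G)|$ on the full fiber would then be deduced by a specialisation from the regular case (the regular locus is dense in $\widetilde{\mathcal{R}}_G$), using properness of $f$ and the fact that limits of irreducible components in a proper flat family remain connected.

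The main obstacle is the geometric irreducibility when every factor of $o$ is regular. For this I plan to appeal to Ngô's description of the Hitchin fibration over the elliptic/regular locus in terms of torsors under the cameral Picard stack $P_a$ acting with finite stabilisers. Regularity of each component of $o$ forces the cameral cover $\widetilde{X}_a\to X$ attached to a lift $a\in\widetilde{\ev}_G^{-1}(o)$ to have étale geometric points above $S$, and the hypothesis $|\overbar{S}|>2-g$ together with the positivity of $\deg D$ guarantees that $\widetilde{X}_a$ is geometrically connected. Consequently $P_a$ is a smooth commutative group stack with $\pi_0(P_a)=\pi_1(G)$, so each degree-$d$ component of the fiber over $a$ is a $P_a^0$-torsor, hence geometrically irreducible. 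Propagating this irreducibility from the generic point of $\widetilde{\ev}_G^{-1}(o)$ to the whole fiber $\widetilde{\mathcal{M}}_G^{\xi,d}(o)$ via properness of $f$ and the smoothness of the affine base $\widetilde{\ev}_G^{-1}(o)$ concludes the argument.
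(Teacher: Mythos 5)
Your proposal breaks down at the two places where the paper has to work hardest. First, the equidimensionality/smoothness step: you assert that surjectivity of the tangent map of $\widetilde{\res}_G$ "on the locus that dominates $\widetilde{\mathcal{R}}_G$" makes $\widetilde{\res}_G$ smooth there and hence "all fibers smooth and equidimensional". That inference is a non sequitur (smoothness on a sublocus says nothing about components of fibers lying over or inside its complement), and the surjectivity itself is unjustified: at a point $(\mathcal{E},\phi)$ the $v$-component of $d\widetilde{\res}_G$ factors through $d\chi_{\phi(v)}$, which fails to be surjective whenever $\phi(v)$ is non-regular — and non-regular values at $v\in S\setminus\{\infty\}$ are allowed, since only the $\infty$-factor of a point of $\widetilde{\mathcal{R}}_G$ is constrained to be regular, while the equidimensionality and $\pi_0$ statements are for arbitrary $o$. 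The paper instead gets equidimensionality from flatness of the Hitchin map over $\mathcal{A}_G^{\heartsuit}$ together with base change along the affine space $\widetilde{\ev}_G^{-1}(o)$; note that even this flatness requires first proving surjectivity of $f$ without the Kostant--Hitchin section (Theorem \ref{surjective}), a point your "suitably twisted Hitchin-type sections landing in each degree" ignores — such sections land in one degree and their existence in every component is exactly the kind of thing the paper avoids relying on (the square-root issue for $\mathcal{O}_X(D)$). The component count in the paper comes from properness, flatness and geometric reducedness of $\widetilde{f}^{\vartheta}$ plus the constancy of the number of geometric connected components of fibers (EGA IV 15.5.9) and Ng\^o/Faltings' computation over $\mathcal{A}_G^{\diamondsuit}$, not from sections plus specialisation.

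Second, and more seriously, the irreducibility argument for regular $o$ has a genuine gap. The torsor description of $\mathcal{M}_a$ under $\mathcal{P}_a$ is available only for $a\in\mathcal{A}_G^{\diamondsuit}$ (elsewhere only the regular locus $\mathcal{M}_a^{\reg}$ is a torsor), and regularity of the factors of $o$ controls the cameral curve only over $S$: for a general $a\in\widetilde{\ev}_G^{-1}(o)$ the cameral curve can be arbitrarily singular away from $S$ and the fiber need not be irreducible. Moreover nothing guarantees that the particular affine subspace $\widetilde{\ev}_G^{-1}(o)$ meets $\mathcal{A}_G^{\diamondsuit}$ at all, and "propagating irreducibility from a dense open of the base by properness" does not rule out a top-dimensional irreducible component sitting entirely over the bad locus. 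This is precisely why the paper invokes Theorem \ref{independent}: the groupoid cardinality $|\widetilde{\mathcal{M}}_G^{\xi}(o)(\mathbb{F}_{q^m})|$ is the same for all regular $o$, a fact proved not geometrically but via the Lie-algebra trace formula and Springer's hypothesis (the test function restricted to nilpotents is independent of the chosen regular elements). Irreducibility is first established for some $o'$ over $\mathbb{F}_{q^n}$ whose fiber of $\widetilde{\ev}_G$ does meet $\mathcal{A}_G^{\diamondsuit}$, and then transferred to an arbitrary regular $o$ by comparing leading terms of point counts (Lang--Weil style), using equidimensionality. Your proposal contains no substitute for this non-geometric input, and without it the passage from "irreducible over a good lift $a$" to "the whole fiber $\widetilde{\mathcal{M}}_G^{\xi}(o)$ has exactly $|\pi_1(G)|$ irreducible components" does not go through.
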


%\subsubsection{Conjugacy classes of semisimple elements}\label{121}
%Results from \ref{121}to \ref{123} apply for any $G$ reductive group defined over $\mathbb{F}_q$ so that the characteristic is very good. 
%Let $\ggg$ be the Lie algebra of $G$. 
%\begin{atheorem}(Theorem \ref{staconj}, \ref{staconjLie})
%Let $x$ be a semisimple element in $G(F)$ (respectively in $\ggg(F)$). If moreover, that for every place $v$ of $F$, $x$ is $G(F_v)$-conjugate to an element in $G(\mathcal{O}_v)$ (respectively in  $\ggg(\ooo_v)$), where $\mathcal{O}_v$ is the ring of integers of $F_v$. Then $x$ is $G(F)$-conjugate to an element in $G(\mathbb{F}_q)$ (respectively in $\ggg(\mathbb{F}_q)$). 
%\end{atheorem}

\subsubsection{Depth zero Hecke algebras}\label{122}
As remarked in Remark \ref{Rmk1}.2., the condition that each $\theta_v$ is absolutely regular is used in the following theorem. It will be useful for local calculations in applications of trace formulas. 
The case that $T'$ is elliptic is well known. 
\begin{atheorem}(Theorem \ref{commutative}) 
Let $v$ be a place of $F$. 
Let $T'$ be a torus of $G$ defined over $\kappa_v$ and $\theta$ be an absolutely regular character of $T'(\kappa_v)$. 
Let $\rho$ be the inflation to $G(\ooo_v)$ of the Deligne-Lusztig induced representation $\epsilon_{\kappa_v}(T')\epsilon_{\kappa_v}(G) R_{T'}^G(\theta)$ of $G(\kappa_v)$. Then the local Hecke algebra $\mathcal{H}(G(F_v), \rho):= \mathrm{End}(c\text{-}Ind_{G(\mathcal{\ooo}_v)}^{G(F_v)}\rho)$ is commutative. 
\end{atheorem}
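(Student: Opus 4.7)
The plan is to reduce the statement to the already known elliptic case by exploiting the compatibility of Deligne--Lusztig induction with Harish--Chandra parabolic induction, and then controlling the support of Hecke operators via Moy--Prasad theory together with absolute regularity of $\theta$.

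First I would let $L \subseteq G$ be the centralizer over $\kappa_v$ of the maximal $\kappa_v$-split subtorus of $T'$; this is a $\kappa_v$-Levi subgroup of $G$ in which $T'$ is an elliptic maximal torus. Fix a $\kappa_v$-parabolic $P = LU$ with Levi factor $L$ and set $\sigma := \epsilon_{\kappa_v}(T')\epsilon_{\kappa_v}(L) R_{T'}^L(\theta)$. By transitivity of Deligne--Lusztig induction and its compatibility with Harish--Chandra induction one has, as $G(\kappa_v)$-representations,
\[ \epsilon_{\kappa_v}(T')\epsilon_{\kappa_v}(G) R_{T'}^G(\theta) \cong \mathrm{Ind}_{P(\kappa_v)}^{G(\kappa_v)}(\sigma). \]
Absolute regularity of $\theta$ ensures that $\sigma$ is an irreducible cuspidal representation of $L(\kappa_v)$, so the already known elliptic case of the theorem applies to $\sigma$: the Hecke algebra $\mathcal{H}(L(F_v),\sigma)$ is commutative. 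In fact absolute regularity identifies the normalizer $N_{L(F_v)}(L(\ooo_v),\sigma)/L(\ooo_v)$ with the abelian group $T'(F_v)/T'(\ooo_v)$, so that $\mathcal{H}(L(F_v),\sigma)$ is isomorphic (up to a central character twist) to its group algebra.

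The main step is then to identify $\mathcal{H}(G(F_v),\rho)$ with a subalgebra of $\mathcal{H}(L(F_v),\sigma)$. Using the Iwasawa decomposition $G(F_v) = P(F_v) G(\ooo_v)$ I would analyze, for $g \in G(F_v)$, the intertwining space $\Hom_{G(\ooo_v) \cap g G(\ooo_v) g^{-1}}(\rho, \rho^g)$ by passing to the reductive quotient modulo the first Moy--Prasad filtration subgroup of $G(\ooo_v)$. Over the residue field, the endomorphism algebra of $\mathrm{Ind}_{P(\kappa_v)}^{G(\kappa_v)}\sigma$ is governed by the Howlett--Lehrer intertwining formula: only relative Weyl group elements stabilizing $\sigma$ contribute, and absolute regularity of $\theta$ forces this stabilizer to be trivial in the relative Weyl group $N_G(L)(\kappa_v)/L(\kappa_v)$. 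Propagating this through the Bruhat--Tits building confines the support of any $f \in \mathcal{H}(G(F_v),\rho)$ to $L(F_v) \cdot G(\ooo_v)$, giving a convolution-preserving injection $\mathcal{H}(G(F_v),\rho) \hookrightarrow \mathcal{H}(L(F_v),\sigma)$. Since a subalgebra of a commutative algebra is commutative, this concludes.

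The hard part will be the support analysis: one must rigorously pass between intertwining on $G(F_v)$ and intertwining on the finite reductive quotient $G(\kappa_v)$, handle the case in which $L$ does not come from a $\kappa_v$-split maximal torus of $G$, and track the interaction of $P$ with the Moy--Prasad filtration of $G(\ooo_v)$. Absolute regularity is used here in its strongest form (over every finite extension of $\kappa_v$), exactly to eliminate all non-trivial Weyl-group contributions on every level of the filtration.
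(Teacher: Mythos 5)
Your skeleton follows the paper's: pass to the Levi $L$ (the paper's $M$), the centralizer of the maximal $\kappa_v$-split subtorus of $T'$, use that $\sigma=\epsilon_{\kappa_v}(T')\epsilon_{\kappa_v}(L)R_{T'}^{L}(\theta)$ is irreducible cuspidal with commutative (lattice group algebra) Hecke algebra on $L(F_v)$, and use absolute regularity to eliminate intertwining by Weyl elements outside $W^{L}$. But there is a genuine gap at the decisive step. From confinement of the intertwining set (which, to be bi-$G(\ooo_v)$-invariant, must be written as $G(\ooo_v)L(F_v)G(\ooo_v)$, not $L(F_v)\cdot G(\ooo_v)$) you assert ``a convolution-preserving injection $\mathcal{H}(G(F_v),\rho)\hookrightarrow\mathcal{H}(L(F_v),\sigma)$.'' No mechanism is given, and the obvious candidate, restriction of Hecke kernels to $L(F_v)$, is \emph{not} an algebra homomorphism: the $G(F_v)$-convolution of two operators supported on $G(\ooo_v)L(F_v)G(\ooo_v)$ involves integration over all of $G(F_v)$ and does not compute the $L(F_v)$-convolution of the restrictions. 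Producing such a comparison map is precisely the content of the Bushnell--Kutzko theory of covers, and it is why the paper does not stay with the maximal compact $G(\ooo_v)$: it first replaces $(G(\ooo_v),\rho)$ by the type $(\mathcal{P},\tau)$ on the parahoric $\mathcal{P}$ (the preimage of $P(\kappa_v)$), which admits an Iwahori factorization with respect to $P$ and satisfies $\mathcal{H}(G(F_v),\rho)\cong\mathcal{H}(G(F_v),\tau)$; then Morris's strongly $(P,\mathcal{P})$-positive invertible elements furnish Bushnell--Kutzko's homomorphism $\mathcal{H}(M(F_v),\tau_M)\rightarrow\mathcal{H}(G(F_v),\tau)$, which is an isomorphism exactly when the intertwining of $\tau$ is confined to $\mathcal{P}M(F_v)\mathcal{P}$ (their Theorem (7.2)(ii)). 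Without this input (or an equivalent construction), your final ``subalgebra of a commutative algebra'' conclusion has nothing to stand on, even granting the support statement.

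A secondary weakness is the support analysis itself. Intertwining of $\rho$ is by non-compact elements $g$, and $G(\ooo_v)\cap gG(\ooo_v)g^{-1}$ does not in general reduce modulo the pro-unipotent radical to a parabolic of $G(\kappa_v)$, so the Howlett--Lehrer formula on the finite quotient does not directly apply; the argument has to be run in the extended affine Weyl group. This is what the paper does following Morris: an intertwining element is reduced to $N_G(T)(F_v)$, its image in the affine Weyl group is shown to differ from a translation in $A_M(F_v)$ by an element fixing the relevant facet data, and then geometric conjugacy of $(w^{-1}T'w,\theta^{w})$ with $(T',\theta)$ in $M$ plus absolute regularity forces $w\in W^{M}$, i.e. the intertwining lies in $\mathcal{P}M(F_v)\mathcal{P}$. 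Your ``propagating this through the Bruhat--Tits building'' gestures at this computation but is where the real work lies; combined with the missing covers step above, the proposal as written does not yet constitute a proof.
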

This theorem implies that any irreducible smooth representation of $G(F_v)$ that contains $\rho_v$ must contain it with multiplicity one. 

\subsubsection{Trace formulas}\label{123}
In \cite{Yu2}, we have introduced a variant of Arthur's truncation by adding an additional parameter $\xi$. It is a distribution $J^{G,\xi}$. We have given a coarse geometric expansion following semisimple conjugacy classes. This coarse expansion decomposes $J^{G,\xi}$ into a sum of distributions $J_{o}^{G,\xi}$ parametrized by semisimple conjugacy classes. 
Let $\mathcal{I}_{\infty}$ (resp. $\mathfrak{I}_{\infty}$) be the Iwahori subgroup (subalgebra) of $G(\ooo_\infty)$ (resp. $\ggg(\ooo_\infty)$). We justify our construction by the following results. 
\begin{atheorem}(Theorem \ref{expansion})
Suppose that $\xi$ is in general position.
Let $f\in \mathcal{C}_c^{\infty}(G(\AAA))$ be a locally constant function supported in $\mathcal{I}_\infty\times \prod_{v\neq \infty}G(\ooo_v)$, then 
$$J^{G,\xi}_o(f)=0$$
if $o$ can not be represented by a semisimple element $\sigma$ in $T(\mathbb{F}_q)$ or if $o$ is not elliptic.
If $o$ is elliptic and is represented by $\sigma\in T(\mathbb{F}_q)$, suppose that $f$ is $\mathcal{I}_{\infty}\times \prod_{v\neq \infty}G(\ooo_v)$-conjugate invariant, we have 
\[J^{G,\xi}_o(f)=  \frac{\vol({\mathcal{I}_{\infty}} )}{\vol(\mathcal{I}_{\infty,\sigma}) |\pi_0(G_{\sigma})|  } \sum_{w}J^{G^{0}_{\sigma}, w\xi}_{[\sigma]}(f^{w^{-1}}|_{G^{0}_{\sigma}(\mathbb{A})}),      \]
where the sum over $w$ is taken over the set of representatives of $W^{(G_\sigma^{0}, T)}\backslash W$ that sends positive roots in $(G_{\sigma}^0,  T)$ to positive roots, and $f^{w^{-1}}$ is the function such that $f^{w^{-1}}(x)=f({w^{-1}}xw)$. 
\end{atheorem}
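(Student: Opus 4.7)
The plan is to combine two tools: (i) constrain the semisimple classes that can contribute by exploiting the Iwahori support at $\infty$, and (ii) run an Arthur-style centralizer descent adapted to the $\xi$-truncation from \cite{Yu2}, carefully tracking how $\xi$ transforms under the Weyl group.

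I would first address the vanishing statement. By the construction of the coarse geometric expansion of $J^{G,\xi}$ in \cite{Yu2}, the distribution $J^{G,\xi}_{o}(f)$ is the integral of a $\xi$-truncated kernel whose defining sum runs over $\gamma\in G(F)$ whose semisimple Jordan part represents $o$. For such a $\gamma$ to contribute, it must be $G(\AAA)$-conjugate into the support of $f$; hence $\gamma_{\infty}$ lies in a $G(F_{\infty})$-conjugate of $\mathcal{I}_{\infty}$. Since $\mathcal{I}_{\infty}$ is the preimage of a Borel $B\supseteq T$ under the reduction $G(\ooo_{\infty})\to G(\kappa_{\infty})$, the topological Jordan decomposition inside $\mathcal{I}_{\infty}$ sends the semisimple part into $T(\kappa_{\infty})=T(\mathbb{F}_q)$, and a standard lifting argument gives a representative of the rational class $o$ inside $T(\mathbb{F}_q)$. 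This rules out all classes that cannot be so represented. The ellipticity requirement then follows from the Chaudouard--Laumon cancellation philosophy adapted to the group setting: for $\xi$ in general position, the parabolic truncation terms of the $\xi$-kernel cancel the contribution of any class whose rational centralizer contains a non-central split torus, since such a class descends to a proper standard Levi where the $\xi$-shifted truncation becomes an exact polynomial identity in $\xi$.

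For the descent identity in the elliptic case with $\sigma \in T(\mathbb{F}_q)$, I would mimic Arthur's classical descent from $G$ to $G^{0}_{\sigma}$. The $G(F)$-conjugates of $\sigma$ that still lie in $T(\mathbb{F}_q)$ are parametrized by $W^{(G^{0}_{\sigma}, T)}\backslash W$, and the canonical choice of representatives is the one sending positive roots of $(G^{0}_{\sigma},T)$ to positive roots of $(G,T)$, since then $w\xi$ lies in the dominant chamber of $G^{0}_{\sigma}$. After reparametrizing the geometric sum by factoring $\gamma$ through a centralizer element and changing variables $x\mapsto wx$, a Fubini-type unfolding replaces the integration on $G(F)\backslash G(\AAA)$ by an integration on $G^{0}_{\sigma}(F)\backslash G^{0}_{\sigma}(\AAA)$ together with a transversal integral. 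The $\mathcal{I}_{\infty}\times\prod_{v\neq\infty}G(\ooo_{v})$-conjugation invariance of $f$ makes the transversal integration trivial away from $\infty$; at $\infty$ it produces precisely the ratio $\vol(\mathcal{I}_{\infty})/\vol(\mathcal{I}_{\infty,\sigma})$ because one is integrating over those $\mathcal{I}_{\infty}$-cosets that meet the centralizer. The factor $|\pi_{0}(G_{\sigma})|^{-1}$ appears when passing from $G_{\sigma}$ to its identity component. What remains inside is by definition $J^{G^{0}_{\sigma},w\xi}_{[\sigma]}(f^{w^{-1}}|_{G^{0}_{\sigma}(\AAA)})$, provided one identifies the restriction of the $\xi$-truncation function of $G$ to $G^{0}_{\sigma}(F)\backslash G^{0}_{\sigma}(\AAA)$ with the $w\xi$-truncation intrinsic to $G^{0}_{\sigma}$.

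The main obstacle is this last identification. The $\xi$-stability of Chaudouard--Laumon is defined in terms of $G$-standard parabolics, whereas the analogous truncation on $G^{0}_{\sigma}$ uses the standard parabolics of that smaller group; matching the two requires comparing the semistability polygons attached to the two root systems, with the twist by $w$ sending the fan of $G$-parabolics containing $T$ correctly onto the fan of $G^{0}_{\sigma}$-parabolics. The condition that $w$ preserves positivity is exactly what guarantees this bijection on chambers, and the general position assumption on $\xi$ ensures that $w\xi$ stays in the interior of the relevant chamber so no boundary cancellations are missed. A subsidiary but delicate point is the $|\pi_{0}(G_{\sigma})|$ factor, which must be tracked through the interplay between rational and geometric conjugacy of $\sigma$; this is standard but needs to be verified explicitly in the Iwahori-normalized setting.
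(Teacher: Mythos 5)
Your overall strategy (constrain the classes via the Iwahori support, then run an Arthur-style descent to $G^{0}_{\sigma}$ while tracking the Weyl twist of $\xi$) is the same as the paper's, but two steps you treat as routine are in fact the substantive content, and as written they are gaps. First, the passage from ``$\gamma$ is $G(\AAA)$-conjugate into $\mathcal{I}_\infty\times\prod_{v\neq\infty}G(\ooo_v)$'' to ``the class $o$ has a representative in $T(\mathbb{F}_q)$'' is not a topological Jordan decomposition plus a ``standard lifting argument.'' What the support gives you is purely local: the semisimple part of $\gamma$ is, at each place, conjugate into $G(\ooo_v)$ (and at $\infty$ its reduction is conjugate into $B(\kappa_\infty)$). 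To conclude that the \emph{global} rational class of the semisimple part meets $G(\mathbb{F}_q)$ (and then $T(\mathbb{F}_q)$) one needs a local-global statement: in the paper this is Theorem \ref{staconj}, whose proof combines stable conjugacy to an $\mathbb{F}_q$-point via Lang's theorem, Kottwitz's result \cite[Proposition 7.1]{Ko3} converting integrality into local conjugacy, and crucially the vanishing of $\ker^{1}(F,G_z)$ for the possibly \emph{disconnected} centralizer $G_z$ (Proposition \ref{Hasse}, resting on Kottwitz duality, Harder's Hasse principle and a Chebotarev argument). Nothing in your sketch supplies this, and without it the first vanishing assertion is unproved.

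Second, your treatment of the truncation is both vague and partly misdirected. The non-elliptic vanishing is not an ``exact polynomial identity in $\xi$'' on a proper Levi: after Arthur's reduction (Proposition \ref{interm}) it comes from Arthur's combinatorial lemma \cite[Lemma 5.2]{Ageom}, which says the alternating sum over $P\in\mathcal{F}_R(M_1)$ is nonzero only if the $\ago_{G_\sigma^0}$-component of $H_{M_1}(m)+s_{\eta x}\xi$ vanishes, forcing $[s_{\eta x}\xi]_{G_\sigma^0}$ into a cocharacter lattice; general position of $\xi$ then forces $A_{G_\sigma^0}=1$. Before one can even invoke this, the place-$\infty$ twist $s_{\delta x}$ of $\xi$ must be pinned down — this is Lemma \ref{okay}, a Bruhat-cell computation over $\kappa_\infty$ showing $W^{Q}s_{w_sx}=W^{Q}ss_x$ — and it is also exactly what later produces the sum over $w\in W^{(G_\sigma^0,T)}\backslash W$ and the shift to $w\xi$ when the $G(\ooo_\infty)$-integration is decomposed along $G_\sigma^0(\ooo_\infty)\dot W\mathcal{I}_\infty$; the volume ratio $\vol(\mathcal{I}_\infty)/\vol(\mathcal{I}_{\infty,\sigma})$ then falls out of counting $|G_\sigma^0(\mathbb{F}_q)wB(\mathbb{F}_q)|/|G(\mathbb{F}_q)|$, and the restriction of the domain to $R(F)\backslash G_\sigma^0(\AAA)G(\ooo)$ needs Chaudouard's support lemma (Lemma \ref{support}). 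By contrast, the ``main obstacle'' you single out — matching Chaudouard–Laumon $\xi$-(semi)stability polygons for $G$ and $G_\sigma^0$ — does not arise here at all: the theorem concerns the truncated distributions $J^{G,\xi}_o$, and the only comparison needed is between the functions $\htau_P$ under the correspondence $(Q,s)\leftrightarrow P\in\mathcal{F}_R(M_1)$, where $H_Q(w_s\eta x)=sH_P(\eta x)$ because $w_s\in G(\mathbb{F}_q)$; nor is dominance of $w\xi$ the relevant property of the representatives $w$ — what matters is the lattice-avoidance encoded in ``general position.''
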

\begin{atheorem}(Theorem \ref{expansion'})
Suppose that $\xi$ is in general position. 
Let $f\in \mathcal{C}_c^{\infty}(\ggg(\AAA))$ be a locally constant function supported in $ \mathfrak{I}_\infty\times \prod_{v\neq \infty}G(\ooo_v)$, then 
$$J^{\ggg,\xi}_o(f)=0$$
if $o$ is not the nilpotent class. 
Hence $$ J^{\ggg, \xi}(f)= J_{nil}^{\ggg, \xi}(f). $$
\end{atheorem}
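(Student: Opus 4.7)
The plan is to run the same strategy as in Theorem \ref{expansion}, but on the Lie algebra side, where the argument simplifies considerably because in very good characteristic the centre of $\ggg$ vanishes.

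First I would use the Jordan--Chevalley decomposition to restrict which semisimple classes can contribute to $J^{\ggg,\xi}(f)$. In very good characteristic, Jordan decomposition is preserved by the integral subalgebras $\mathfrak{I}_\infty$ and $\ggg(\ooo_v)$: if $X$ lies in one of them then so do $X_s$ and $X_n$. Thus if a semisimple class $o$ meets $g \cdot (\mathfrak{I}_\infty \times \prod_{v \ne \infty} \ggg(\ooo_v)) \cdot g^{-1}$ for some $g \in G(\AAA)$, then $o$ has a semisimple representative with the same property. Reducing modulo the pro-unipotent radical of $\mathcal{I}_\infty$ sends this representative into $\bbb(\mathbb{F}_q)$, and after a further $B(\mathbb{F}_q)$-conjugation one may assume it is some $\sigma \in \ttt(\mathbb{F}_q) \subset \ggg(F)$.

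Second, I would transcribe the ellipticity step from the proof of Theorem \ref{expansion} into the additive setting. Using the $\xi$-general-position assumption together with the coarse geometric expansion of the variant trace formula built in \cite{Yu2}, the contributions parametrised by non-elliptic semisimple classes cancel; the mechanism is parabolic descent combined with Arthur's combinatorial identity for the truncation weights, neither of which is sensitive to the passage from the group to its Lie algebra. Since $T$ is split, any $\sigma \in \ttt(\mathbb{F}_q)$ for which the connected centraliser $G_\sigma^{0}$ equals $G$ must lie in $\mathrm{Lie}(Z(G))$, which vanishes in very good characteristic for semisimple $G$. Hence the only non-vanishing contribution comes from $\sigma = 0$, i.e.\ the nilpotent class.

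The main obstacle will be the ellipticity step: one must verify that the additive support $\mathfrak{I}_\infty \times \prod_{v \ne \infty} \ggg(\ooo_v)$ interacts with the $\xi$-truncation in exactly the same way as its multiplicative counterpart does in Theorem \ref{expansion}. This requires a careful audit of the Jordan-decomposition-based parametrisation of the coarse expansion on the Lie algebra, of the $(G,M)$-family combinatorics attached to the truncation parameter $\xi$, and of the parabolic descent identities employed in \cite{Yu2}. Once these technicalities are checked line by line, the support reduction and the triviality of $\mathrm{Lie}(Z(G))$ force vanishing for every class other than the nilpotent one, yielding the claimed identity $J^{\ggg,\xi}(f) = J^{\ggg,\xi}_{\mathrm{nil}}(f)$.
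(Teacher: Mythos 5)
Your outline does track the paper's route (support forces torsion/integrality, Jordan--Chevalley decomposition, the $\xi$-general-position combinatorics killing non-elliptic classes), but the step you treat as automatic is exactly the one that carries the content of Theorem \ref{expansion'}, and as written it has a gap. You pass from ``$o$ is elliptic'' to ``$G_\sigma^{0}=G$, hence $\sigma\in\mathrm{Lie}(Z(G))=0$'' without justification. That implication is false in the group case: there are plenty of elliptic $s\in T(\mathbb{F}_q)$ with $G_s^{0}\subsetneq G$ semisimple of full rank (these are the split elliptic coendoscopic groups, and their contributions are precisely what survives in Theorem \ref{expansion}). So you must explain why no analogous nonzero elliptic $\sigma$ can occur in $\ttt(\mathbb{F}_q)$. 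The missing ingredient, which is the one genuinely Lie-algebra-specific point in the paper's argument, is that in good characteristic the adjoint centralizer of a semisimple element of $\ggg$ is a (twisted) Levi subgroup: its root system is $\{\alpha\in\Phi(G,T)\mid \d\alpha(\sigma)=0\}$, the intersection of $\Phi(G,T)$ with a linear subspace, hence a Levi subsystem by Bourbaki (VI, \S1, no.~7, Prop.~24). A proper Levi has a nontrivial split central torus, so the only elliptic class meeting $\ttt(\mathbb{F}_q)$ is $\sigma=0$, i.e.\ the nilpotent class. Without this fact your conclusion does not follow from ellipticity plus $\mathrm{Lie}(Z(G))=0$.

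A secondary imprecision: you obtain $\sigma\in\ttt(\mathbb{F}_q)$ by ``reducing modulo the pro-unipotent radical of $\mathcal{I}_\infty$'', which is a purely local statement at $\infty$. The coarse expansion $J^{\ggg,\xi}_o$ is indexed by $F$-rational semisimple classes, and the descent in Proposition \ref{interm} is carried out over $F$; to place a global representative of $o$ in $\ggg(\mathbb{F}_q)$ (and then, using the Iwahori support at $\infty$, in $\ttt(\mathbb{F}_q)$) the paper needs Proposition \ref{JordanL} together with the Hasse-principle-type rationality result Theorem \ref{staconjLie}, whose proof rests on the vanishing of $\ker^{1}$ (Proposition \ref{Hasse}). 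Your audit of the truncation combinatorics is the part that genuinely does transfer verbatim from the group case; the two points above are where the extra input is required.
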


\subsection{Outline of the article}
In Section \ref{alggr}, we do some preparation work on linear algebraic groups over positive characteristics. As the results of this section could be interesting in itself, we do not assume that $G$ is semisimple or split, but we assume that $G$ is defined over a finite field. The main results of this section are Theorem \ref{staconj} and \ref{staconjLie}.

In Section \ref{essuni}, we prove Theorem \ref{expansion} and Theorem \ref{expansion'}. These two theorems are the cores of the applications worked out in this article. The idea is inspired by Theorem \cite[6.2.1]{Chau} of Chaudouard, where similar results are proved for $GL_n$ in the ``coprime" case. For us, the introduction of $\xi$ replaces the ``coprime" condition.

In Section \ref{Hitchin's}, we collect some facts on Hitchin's moduli stacks, mainly from the work of Faltings, Ngô, and Chaudouard-Laumon.  Besides, the main aim of this section is to drop some hypotheses on the divisor $D$ in Ngô's work.

In Section \ref{summul}, we prove the main theorems of this article as presented in this introduction. We express the sum of multiplicities in terms of truncated trace for a well-chosen function and then reduce this trace to truncated ``trace" Lie algebra. The Springer hypothesis is the key to this reduction. 

In Appendix \ref{A}, we give a full description of split elliptic coendoscopic groups. The main result that is needed in this article is that when $q-1$ is divisible enough, then any split elliptic coendoscopic group defined over $\overbar{\mathbb{F}}_q$ is defined over $\mathbb{F}_q$.

\section*{Acknowledgements}
I would like to thank Pierre-Henri Chaudouard, Tamas Hausel and Erez Lapid for many valuable discussions and communications. 
I would like to thank Yang Cao for his help on Lemma \ref{Cao}. I thank Peiyi Cui for her help on the representation theory of $p$-adic groups. 
This project has received funding from the European Union's Horizon 2020 research and innovation programme under the Marie Sk\l odowska-Curie Grant Agreement No. 754411.

\section{Linear algebraic groups over positive charactistic}\label{alggr}
In this section, we assume that $G$ is a reductive group defined over a finite field $\mathbb{F}_q$ if not specified otherwise.  We do not assume $G$ to be split or semisimple for the moment. 

\subsection{Notation}
We use the language of group schemes, so a linear algebraic group $G$ over a ring $k$ is an affine group scheme $G\rightarrow spec(k)$ of finite type. We do not require it to be smooth or connected. Reductive groups are connected smooth group schemes with trivial geometric unipotent radical. Intersection and center are taken in the scheme theoretic sense.

$F, F^a, F^s, \mathbb{F}_q, \overbar{\mathbb{F}}_q$:  Throughout the article, $F$ is a global function field with finite constant field $\mathbb{F}_q$, i.e., a field of rational functions over a smooth, projective and geometrically connected curve $X$ defined over $\mathbb{F}_q$. We fix an algebraic closure ${F}^{a}$ of $F$ and denote by $F^{s}$ be the separable closure of $F$ in $F^{a}$. But we prefer to use  $\overbar{\mathbb{F}}_q$ to denote a fixed algebraic closure of $\mathbb{F}_q$. 

$G^0$, $Z(G)$, $G^{der}$, $G^{sc}$:   For a linear algebraic group $G$, we denote by $G^0$, $Z(G)$, $G^{der}$ and $G^{sc}$ the connected component of the identity of $G$, the center of $G$, and the derived group of $G$, and the simply connected covering of $G^{der}$ respectively. 

$G_x$, $G_X$, $C_G(H)$, $N_G(H)$:   When $G$ is defined over a field $F$ and $x\in G(F)$, and $X\in \ggg(F)$, we denote by $G_{x}$ (resp. $G_X$) the centralizer of $x$ (resp. the adjoint centralizer of $X$) in G. If $H$ is a subgroup of $G$ defined over $F$, we denote by $C_G(H)$ and $N_G(H)$ the centralizer and normalizer of $H$ in $G$ respectively. 

 $\ggg$: We denote by $\ggg$ the Lie algebra of $G$. In general, if a group is denoted by a capital letter, we use its lowercase gothic letter for its Lie algebra. 

From now on, $G$ will always be a reductive group defined over a finite field $\mathbb{F}_q$ if not specified otherwise. For a field $k$ containing $\mathbb{F}_q$, we denote by $G_k$ the base change of $G$ to $k$.

\subsection{Characteristic of the base field and Springer's isomorphisms}\label{chara}
Suppose that $p$ is very good for $G$. We recall that if $G$ is simple, then the characteristic $p$ is said to be very good if $p\neq 2$ when $G$ is one of type $B,C,D$;  $p\neq 2,3$ if $G$ is of one of type $E,F,G$; and $p\neq 2,3,5$ if $G$ is of type $E_8$. For type $A_{n-1}$, we require that $p\nmid n$. In general, a prime is very good for $G$ if it is very good for every simple factor of $G^{sc}_{{F}^a}$. 
For more discussion of ``very good primes", see \cite[I.4]{SS} or \cite[2.1]{McN}.
 
\begin{prop}\label{ch}
Let $\mathcal{U}_G$ be the unipotent variety of $G$ and $\mathcal{N}_\ggg$ the nilpotent variety of $\ggg$. 
There is an isomorphism of varieties defined over $\mathbb{F}_q$:
$$l: \mathcal{U}_G\rightarrow \mathcal{N}_\ggg$$ 
whose derivative at $1$:
$$ \d l_1: \mathcal{N}_\ggg \rightarrow \mathcal{N}_\ggg,   $$
 is the identity morphism. 
\end{prop}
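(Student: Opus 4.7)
The plan is to construct $l$ in three stages: existence of a $G$-equivariant Springer isomorphism over $\overline{\mathbb{F}}_q$, Galois descent to $\mathbb{F}_q$, and a final rescaling to normalize the differential at $1$.

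For the first stage, I would invoke the classical theorem of Springer, extended to positive characteristic by Bardsley--Richardson and by McNinch: under the very good characteristic hypothesis, there exists a $G$-equivariant isomorphism of schemes $\widetilde{l} : \mathcal{U}_G \to \mathcal{N}_\ggg$ over $\overline{\mathbb{F}}_q$, where $G$ acts by conjugation on $\mathcal{U}_G$ and by the adjoint action on $\mathcal{N}_\ggg$. A key additional fact is that the set of such $G$-equivariant isomorphisms is naturally a torsor under $\mathbb{G}_m$ acting by dilation on the target; this rigidity is what makes the descent tractable.

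For the second stage, since $G$, $\mathcal{U}_G$ and $\mathcal{N}_\ggg$ are all defined over $\mathbb{F}_q$, the geometric Frobenius acts on the $\mathbb{G}_m$-torsor above. By Hilbert's Theorem 90 one has $H^1(\Gal(\overline{\mathbb{F}}_q / \mathbb{F}_q), \overline{\mathbb{F}}_q^{\times}) = 0$, which is exactly the obstruction to finding a Frobenius-fixed point in this torsor. Hence there is a Springer isomorphism $l_0 : \mathcal{U}_G \to \mathcal{N}_\ggg$ defined over $\mathbb{F}_q$.

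For the third stage, the differential $(d l_0)_1$ is a $G$-equivariant automorphism of the tangent cone of $\mathcal{U}_G$ at $1$, which coincides with $\mathcal{N}_\ggg$. Decomposing $G^{sc}$ into its simple factors (permissible because $p$ is very good) and using $G$-equivariance, a Schur-type argument on each simple factor forces $(d l_0)_1$ to act by a scalar there; rationality of $l_0$ ensures these scalars lie in $\mathbb{F}_q^{\times}$. Rescaling $l_0$ on each simple factor by the inverse of the corresponding scalar produces the desired $l$ with $d l_1 = \mathrm{id}_{\mathcal{N}_\ggg}$. The main obstacle is this last normalization step: making the Schur-type argument precise in positive characteristic, since $\mathcal{N}_\ggg$ is not itself a linear representation. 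One must argue via the formal neighborhood of $0$, using that the differential extends to a $G$-equivariant linear endomorphism of the Lie algebra of each simple factor, on which the adjoint representation is irreducible under the very good hypothesis; one also has to verify that the rescaling remains compatible across the simple factors and with the central part of $\ggg$.
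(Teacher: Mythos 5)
The weak point is your ``key additional fact'' in stage one: the set of $G$-equivariant isomorphisms $\mathcal{U}_G\rightarrow \mathcal{N}_\ggg$ is \emph{not} a torsor under $\mathbb{G}_m$ acting by dilation once the rank is $\geq 2$. Fixing one Springer isomorphism, the set is a torsor under the full group $\Aut_G(\mathcal{N}_\ggg)$ of equivariant automorphisms of the nilpotent cone, which is strictly larger than $\mathbb{G}_m$: already for $SL_3$ the map $X\mapsto X+aX^{2}$ is a $G$-equivariant automorphism of $\mathcal{N}_\ggg$ for every $a$ (its inverse on the cone is $Y\mapsto Y-aY^{2}$, using $Y^{3}=0$), and in general this automorphism group is a connected solvable group of dimension equal to the rank. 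So the Hilbert 90 argument as you state it does not apply, and the descent step is broken as written. It is repairable: since the structure group is a \emph{connected} algebraic group over $\mathbb{F}_q$, Lang's theorem makes every torsor trivial, so a Springer isomorphism rational over $\mathbb{F}_q$ still exists; but you must replace the $\mathbb{G}_m$-rigidity claim by this (or by quoting a construction that is already rational, see below). A second, smaller inaccuracy: the paper does not assume $G$ split in this section, so the Frobenius may permute the geometric simple factors of $\ggg^{der}$; then your scalars $c_i$ need only satisfy $c_{\tau(i)}=\tau(c_i)$ rather than lie in $\mathbb{F}_q^{\times}$, which still suffices to define the block rescaling over $\mathbb{F}_q$, but your statement should be adjusted. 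Your stage three (Schur plus rescaling) can be made to work along the lines you indicate, since the tangent-cone map is the restriction of the linear map $\d (l_0)_1$ on Zariski tangent spaces, which is $G$-equivariant, and in very good characteristic each $\ggg_i$ is an absolutely irreducible $G$-module; but as you admit, this is the part you have only sketched.

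For comparison, the paper avoids both descent and normalization altogether: it quotes the Kazhdan--Varshavsky extension of the Bardsley--Richardson construction, which produces a $G$-equivariant morphism $\Phi: G\rightarrow \ggg$ \emph{already defined over the base field} with $\Phi(1)=0$ and $\d\Phi_1=\mathrm{id}$, and whose restriction to $\mathcal{U}_{G}$ is an isomorphism onto $\mathcal{N}_\ggg$ when $G=G^{sc}$; the remaining work there is the reduction from $G^{sc}$ to a general reductive $G$ via $\ggg\cong\ggg^{sc}\oplus\zzz$, the \'etale isogeny $G^{sc}\rightarrow G^{der}$, and $\mathcal{U}_{G^{der}}\cong\mathcal{U}_G$ --- a reduction your write-up passes over quickly. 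Your route (geometric existence, descent, then a posteriori normalization of the differential) is genuinely different and would be a reasonable alternative once the torsor claim is corrected and the Schur argument written out, but as it stands the descent step rests on a false rigidity statement.
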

\begin{proof}

If $G$ is semisimple and simply connected, the assertion is a corollary of  \cite[1.8.12]{KV} and \cite[9.1, 9.2, 9.3.4 and 6.3]{BR}. In fact, in this case ($G=G^{sc}$) Kazhdan and Varshavsky have constructed a $G$-equivariant morphism $\Phi: G\rightarrow \ggg$ such that $\Phi(1)=0$ and $\d \Phi_{1}: \ggg\rightarrow \ggg$ is the identity. Their work
extends the same construction for the split case given in \cite[9.3.3]{BR}. Then, the proof of  \cite[9.3.4]{BR} shows that the restriction of $\Phi$ to the unipotent variety is an isomorphism between $\mathcal{U}_G$ and $\mathcal{N}_\ggg$.

Consider the case that $G$ is not necessarily simply connected. The relation 
\begin{equation}\label{lied}\ggg\cong\ggg^{sc}\oplus \zzz. \end{equation}
 shows that $\mathcal{N}_{\ggg^{sc}}$ is $G$-equivariant isomorphic to $\mathcal{N}_{\ggg}$. While the $G$-equivariant covering $G^{sc}\rightarrow G^{der}$ is separable, hence it is \'etale at $1$. Therefore we can apply \cite[6.3]{BR}, which says that the fiber over $1$ of the quotient morphism $G^{sc}\rightarrow G^{sc}/\Ad(G)$ is sent isomorphically to the fiber over $1$ of $G^{der}\rightarrow G^{der}/\Ad(G)$, i.e. $\mathcal{U}_{G^{sc}}\cong \mathcal{U}_{G^{der}}$. Finally, $G^{der}\rightarrow G$ is a closed immersion, and it sends $\mathcal{U}_{G^{der}}(\overbar{\mathbb{F}}_q)$ surjectively  to $\mathcal{U}_{G}(\overbar{\mathbb{F}}_q)$ because the quotient $G/G^{der}$ is a torus hence contains no unipotent element except $1$. Hence the closed immersion $\mathcal{U}_{G^{der}}\rightarrow \mathcal{U}_{G}$ is in fact an isomorphism. The semisimple and simply connected case then extends to the general case. 
\end{proof}

McNinch has observed (\cite[Remark 10]{McN2}) that every $G$-equivariant isomorphism $l: \mathcal{U}\rightarrow \mathcal{N}$ has the property that for each parabolic subgroup of $G$, it induces an isomorphism $$l|_{N_P}: N_P\xrightarrow{\sim}\nnn_P,   $$
and an isomorphism 
$$l|_{M_P}: \mathcal{U}_{M_P}\xrightarrow{\sim}\mathcal{N}_{\mmm_P}. $$
Furthermore, we have the following proposition. 
\begin{prop}\label{222}
Let $P$ be a parabolic subgroup of $G$ defined over $\mathbb{F}_q$. Let $k$ be a field containing $\mathbb{F}_q$, then
for any $u\in \mathcal{U}_{M_P}(k)$, the morphism defined by
$n\mapsto l(un)-l(u)$
from $\mathcal{U}_G$ to $\mathcal{N}_\ggg$ induces an isomorphism of varieties over $k$ from $N_P$ to $\nnn_\ppp$. 
\end{prop}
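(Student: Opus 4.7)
The plan is to analyze $\phi_u(n) := l(un)-l(u)$ through a cocharacter argument. I would fix a cocharacter $\lambda\colon \mathbb{G}_m \to Z(M_P)^0$ defined over $\mathbb{F}_q$ whose associated dynamic (Mumford--Tate) parabolic coincides with $P$, so that $\ggg$ decomposes under $\Ad(\lambda)$ into weight spaces $\ggg_i$ with $\mmm_P = \ggg_0$, $\nnn_P = \bigoplus_{i>0}\ggg_i$, and the opposite unipotent radical as $\bigoplus_{i<0}\ggg_i$. Let $\pi\colon \ppp \to \mmm_P$ denote the projection killing $\nnn_P$. The key observation is that $G$-equivariance of $l$ translates the identity $\lim_{t\to 0}\lambda(t)(un)\lambda(t)^{-1} = u$ into $\lim_{t\to 0}\Ad(\lambda(t))(l(un)) = l(u)$; this forces $l(un)$ to have no negative $\lambda$-weights, i.e.\ $l(un)\in \mathcal{N}_\ppp$, and its weight-zero component satisfies $\pi(l(un))=l(u)$. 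In particular $\phi_u(n)\in \nnn_P$, so $\phi_u$ indeed factors through $\nnn_P$.

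Next, I would promote this to the statement that $l$ restricts to an isomorphism $\mathcal{U}_P \xrightarrow{\sim} \mathcal{N}_\ppp$. The reverse inclusion $l^{-1}(\mathcal{N}_\ppp)\subseteq \mathcal{U}_P$ follows from the same argument on the Lie-algebra side: for $X\in \mathcal{N}_\ppp$ the limit $\lim_{t\to 0}\Ad(\lambda(t))(X)$ exists, hence so does $\lim_{t\to 0}\lambda(t)l^{-1}(X)\lambda(t)^{-1}$, forcing $l^{-1}(X)\in P$ and thus in $\mathcal{U}_P$. Combining with the multiplication isomorphism $\mathcal{U}_{M_P}\times N_P \xrightarrow{\sim} \mathcal{U}_P$ and the additive decomposition $\mathcal{N}_\ppp = \mathcal{N}_{\mmm_P}\oplus \nnn_P$ (valid in very good characteristic), I would define the candidate inverse
\[\psi\colon \nnn_P \longrightarrow N_P,\qquad Y \longmapsto u^{-1}\cdot l^{-1}\bigl(l(u)+Y\bigr),\]
which is manifestly a morphism of $k$-varieties. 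To check $\psi=\phi_u^{-1}$, write $l^{-1}(l(u)+Y) = u'n'$ with $u'\in \mathcal{U}_{M_P}$ and $n'\in N_P$; applying $\pi\circ l$ to both sides and using the first step yields $l(u')=\pi(l(u)+Y)=l(u)$, so $u'=u$ by injectivity of $l|_{\mathcal{U}_{M_P}}$. Hence $\psi(Y)=n'$ and $\phi_u(\psi(Y))=l(un')-l(u)=Y$; conversely $\psi(\phi_u(n)) = u^{-1}l^{-1}(l(un))=n$.

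The main obstacle I anticipate is upgrading the cocharacter limit arguments from closed-points statements to scheme-theoretic ones. Specifically, one needs the functorial description $P=P(\lambda)$ of the dynamic parabolic as an $\mathbb{F}_q$-scheme, and the decompositions $\mathcal{N}_\ppp \cong \mathcal{N}_{\mmm_P}\times \nnn_P$ and $\mathcal{U}_P \cong \mathcal{U}_{M_P}\times N_P$ as isomorphisms of $\mathbb{F}_q$-schemes rather than bijections of $\overbar{\mathbb{F}}_q$-points. Granted these --- all standard in very good characteristic, where they ultimately rest on Proposition \ref{ch} --- the construction of $\psi$ as a morphism of $k$-varieties and the verification $\psi\circ\phi_u=\mathrm{id}$ become formal.
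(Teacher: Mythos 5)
Your proposal is correct and follows essentially the same route as the paper: a cocharacter $\lambda$ with $P=P(\lambda)$ and image in $Z_{M_P}$, the $G$-equivariance of $l$ combined with the weight/limit argument to show $l(un)-l(u)\in\nnn_P$, and the same inverse formula $Y\mapsto u^{-1}l^{-1}\bigl(l(u)+Y\bigr)$. The paper's proof is just a terser version of this (it states the equivariance identity on $R$-points and asserts the inverse "by the same argument"), so your extra verification of $\psi=\phi_u^{-1}$ only fills in details the paper leaves implicit.
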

\begin{proof}
It is essentially the same proof as \cite[Remark 10]{McN2}. Let $\lambda$ be a one parameter subgroup of $G$ such that $P=P(\lambda)$. Then as $\lambda$ has image in $Z_{M}$, for any $k$-algebra $R$
and $u$
 $$\Ad(\lambda(t))(l(un)-l(u))= l(u \lambda(t)n\lambda(t^{-1})) -l(u).$$
 It is clear then $u\mapsto l(un)-l(u)$ has image in $\nnn_{\ppp}(R)$. It is an isomorphism as the same argument shows that the inverse is given by $U\mapsto u^{-1}l^{-1}(U+l(u)).$
\end{proof}

%\begin{prop}
%Let $R$ be a parabolic subgroup of $G$ defined over $\mathbb{F}_q$, and $A$ a locally compact $\mathbb{F}_q$-algebra so that a Haar measure exists on $N_P(A)$ and $\nnn_{\ppp}(A)$.
%The above springer isomorphism sends the Haar measure $\d n$ on $N_P(A)$ to the Haar measure $\d u$ of $\nnn_\ppp(A)$ up to a constant. 
%\end{prop}

\subsection{Jordan-Chevalley decomposition and conjugacy classes}
In this subsection, I will collect some facts on algebraic groups in positive characteristics. All the results in this subsection are well-known in characteristic zero and are also well-known to experts in algebraic groups over positive characteristics. We provide a proof when it is difficult to locate exact references in literature. 

\subsubsection{Jordan-Chevalley decomposition}
Let $G$ be a connected linear algebraic group defined over a field $k$ of characteristic $p>0$. The Jordan-Chevalley decomposition holds for $G(k^{a})$, while not necessarily for $G(k)$. 
However, if the characteristic of the base field is large enough, it is known that Jordan-Chevalley decomposition holds in $G(k)$ (see \cite[Proposition 48]{McN}). 
Without any restriction on the characteristic, we still have the following result.
\begin{prop}\label{JordanC}
Let $G$ be a linear algebraic group defined over a field $k$ of positive characteristic. Let $x\in G(k)$ be a torsion element, then $x$ admits Jordan-Chevalley decomposition in $G(k)$. 
\end{prop}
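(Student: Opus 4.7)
The plan is to reduce to the Jordan--Chevalley decomposition already available in $G(k^a)$, then realize the semisimple and unipotent parts as integer powers of $x$ itself, which automatically places them in $G(k)$.

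First, I would fix a closed embedding $G \hookrightarrow GL_n$ over $k$ and take the Jordan decomposition $x = x_s x_u = x_u x_s$ inside $G(k^a)$, with $x_s$ semisimple, $x_u$ unipotent, the two commuting. Let $p=\mathrm{char}(k)$ and write the order of $x$ as $N=p^a m$ with $\gcd(p,m)=1$. The next step is to show that $x_s$ has order dividing $m$ and $x_u$ has order dividing $p^a$. Two classical inputs are used: a semisimple element of $GL_n(k^a)$ is diagonalizable, so a torsion semisimple element has eigenvalues that are roots of unity in $(k^a)^\times$; since $p$-torsion is trivial in $(k^a)^\times$, its order is coprime to $p$. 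On the other hand, a unipotent element $1+\nu$ with $\nu$ nilpotent satisfies $(1+\nu)^{p^b}=1+\nu^{p^b}$ in characteristic $p$, so its order is a power of $p$. Combined with $1=x^N=x_s^N x_u^N$ (using that $x_s$ and $x_u$ commute), these force $x_s^m=1$ and $x_u^{p^a}=1$.

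Then I would apply Bezout: choose integers $u,v$ with $up^a+vm=1$. In $G(k^a)$ one computes
\[
x^{up^a} \;=\; x_s^{up^a}\,x_u^{up^a} \;=\; x_s^{up^a} \;=\; x_s,
\]
since $x_u^{p^a}=1$ and $up^a\equiv 1 \pmod{m}$; symmetrically $x^{vm}=x_u$. Both $x^{up^a}$ and $x^{vm}$ lie in $G(k)$ because they are powers of $x\in G(k)$, and by construction they commute and their product is $x$. By uniqueness of the Jordan decomposition in $G(k^a)$, this exhibits the decomposition $x=x_s x_u$ entirely inside $G(k)$.

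No step presents a real obstacle: the Bezout identity is the essential trick, and the two facts on orders of semisimple and unipotent elements in a linear algebraic group in positive characteristic are standard. The only place one must be slightly careful is in invoking that Jordan decomposition is compatible with the closed embedding $G\hookrightarrow GL_n$, which is classical.
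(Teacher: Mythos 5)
Your proposal is correct and follows essentially the same route as the paper: both rest on the Bezout identity $up^a+vm=1$ for the order $N=p^am$ and on the two order facts (semisimple torsion elements have order prime to $p$, unipotent elements have $p$-power order), identifying the semisimple and unipotent parts as the powers $x^{up^a}$ and $x^{vm}$, which lie in $G(k)$ automatically. The only cosmetic difference is that you start from the decomposition over $k^a$ and recover the parts as powers of $x$, while the paper writes down the candidate decomposition $x=x^{ap^d}x^{bm}$ and verifies it after base change; the content is identical.
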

\begin{proof}
Let $x$ be of order $p^dm$ with $p\nmid m$. Let $a,b\in \mathbb{Z}$ be integers such that $ap^d+bm=1$.
Then the decomposition $x=x^{ap^d}x^{bm}$ is the Jordan-Chevalley decomposition of $x$ where $x^{ap^d}$ is its semisimple part and $x^{bm}$ is its unipotent part. In fact, this can be checked after the base change to $k^{a}$ where Jordan-Chevalley decomposition exists and is unique. It suffices then to observe that semisimple torsion elements have order prime to $p$, while unipotent elements have order some power of $p$. 
\end{proof}

\begin{coro}[of the proof.]\label{intss}
Let $K$ be a subgroup of $G(k)$ and $x\in K$ be a torsion element. 
Then the semisimple and unipotent parts of $x$ both lie in $K$. 
\end{coro}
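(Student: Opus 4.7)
The plan is essentially to read off the conclusion from the explicit form of the Jordan–Chevalley decomposition produced in the proof of Proposition \ref{JordanC}. There, for a torsion element $x \in G(k)$ of order $p^d m$ with $p \nmid m$, we chose integers $a, b \in \mathbb{Z}$ with $a p^d + b m = 1$ and showed that
\[
x_s = x^{a p^d}, \qquad x_u = x^{b m}
\]
are, respectively, the semisimple and unipotent parts of $x$ (verified after base change to $k^a$).

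Given this, I would argue as follows. Since $K$ is a subgroup of $G(k)$ containing $x$, it contains every integer power of $x$. In particular it contains $x^{a p^d} = x_s$ and $x^{b m} = x_u$. Thus both Jordan components of $x$ lie in $K$, which is exactly the claim.

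The only thing worth emphasizing is that there is no genuine obstacle here: the subtlety in positive characteristic is whether the Jordan components of an element of $G(k)$ are even defined over $k$, and Proposition \ref{JordanC} took care of that for torsion elements by exhibiting them as explicit powers. Once the components are known to be powers of $x$ itself, closure of $K$ under the group operation gives the result immediately, with no further hypothesis on $K$ (it need not be Zariski-closed, algebraic, or normal).
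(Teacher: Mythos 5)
Your argument is correct and is exactly the one the paper intends: the corollary is labeled ``of the proof'' precisely because Proposition \ref{JordanC} exhibits $x_s=x^{ap^d}$ and $x_u=x^{bm}$ as powers of $x$, so any subgroup containing $x$ contains both. Nothing further is needed.
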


We also need a Jordan decomposition for Lie algebra. We restrict ourselves to the case of a global function field $F$ with finite constant field $\mathbb{F}_q$. 
\begin{prop}\label{JordanL}
Let $H$ be a linear algebraic group defined over the finite field $\mathbb{F}_q$ and $\hhh$ be its Lie algebra. 
Let $X\in \hhh(F)$. Then $X$ admits Jordan-Chevalley decomposition in $\hhh(F)$ if $X$ is $H(F_v)$-conjugate to an element in $\hhh(\ooo_v)$ for any completion $F_v$ of $F$ with ring of integers $\ooo_v$. 
\end{prop}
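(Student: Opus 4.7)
The plan is to reduce the statement to linear algebra by fixing an $\mathbb{F}_q$-rational closed immersion $\rho\colon H\hookrightarrow GL_n$ and working with $d\rho(X)\in\mathfrak{gl}_n(F)$. Since $\rho$ is defined over $\mathbb{F}_q$, it induces a closed immersion of $\mathbb{F}_q$-schemes $\hhh\hookrightarrow\mathfrak{gl}_n$, and the Jordan decomposition of $X$ in $\hhh(F^a)$, which exists by the theory of linear algebraic groups (Borel, I.4.4), maps to the Jordan decomposition of $d\rho(X)$ in $\mathfrak{gl}_n(F^a)$. Because $\hhh(F)=\hhh(F^a)\cap\mathfrak{gl}_n(F)$, it is enough to show that both $d\rho(X)_s$ and $d\rho(X)_n$ lie in $\mathfrak{gl}_n(F)$.

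The adelic hypothesis is used only to control the characteristic polynomial $p_X(T)\in F[T]$ of $d\rho(X)$. For each place $v$, the element $\Ad(g_v)(X)\in\hhh(\ooo_v)$ maps under $d\rho$ to an element of $\mathfrak{gl}_n(\ooo_v)$ which is $GL_n(F_v)$-conjugate to $d\rho(X)$, so the conjugation-invariance of the characteristic polynomial yields $p_X(T)\in\ooo_v[T]$. Intersecting over all $v$ gives $p_X(T)\in \bigcap_v\ooo_v[T]=\mathbb{F}_q[T]$, using the standard fact that rational functions on $X$ that are regular everywhere are constants. In particular every eigenvalue of $d\rho(X)$ lies in $\overbar{\mathbb{F}}_q$.

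Now I build the semisimple part as an explicit polynomial in $X$ with coefficients in $\mathbb{F}_q$. Over $\overbar{\mathbb{F}}_q$ write $p_X(T)=\prod_j (T-\alpha_j)^{r_j}$ with pairwise distinct $\alpha_j$; the Chinese Remainder Theorem in $\overbar{\mathbb{F}}_q[T]/(p_X)$ produces a polynomial $P(T)$ of degree less than $\deg p_X$ with $P(T)\equiv \alpha_j\pmod{(T-\alpha_j)^{r_j}}$ for every $j$. Since $(d\rho(X)-\alpha_j)^{r_j}$ annihilates the generalized $\alpha_j$-eigenspace, $P(d\rho(X))$ acts as $\alpha_j$ on that eigenspace, hence equals $d\rho(X)_s$. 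The multiset $\{(\alpha_j,r_j)\}_j$ is $\Gal(\overbar{\mathbb{F}}_q/\mathbb{F}_q)$-stable because $p_X\in\mathbb{F}_q[T]$; applying $\sigma$ to the coefficients of $P$ yields another solution of the same CRT system, so by uniqueness modulo $p_X$ together with the degree bound, $P^\sigma=P$. Therefore $P\in\mathbb{F}_q[T]$, and $X_s:=P(X)$, $X_n:=X-P(X)$ are the desired Jordan components, both lying in $\mathfrak{gl}_n(F)$ and, by the first paragraph, in $\hhh(F)$.

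The decisive step is the Galois descent giving $P\in\mathbb{F}_q[T]$; this is clean only because $\mathbb{F}_q$ is perfect, which removes the inseparability obstruction that normally prevents the existence of Jordan decomposition over the non-perfect field $F$. The local integrality hypothesis is precisely what forces $p_X$ to live over the perfect field of constants, and this is the only role it plays in the argument.
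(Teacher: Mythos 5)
Your argument is correct and follows essentially the same route as the paper's proof: fix an $\mathbb{F}_q$-embedding $H\hookrightarrow GL_n$, use the local integrality hypothesis and $F\cap\bigcap_v\ooo_v=\mathbb{F}_q$ to force the characteristic polynomial into $\mathbb{F}_q[T]$, and then exploit the perfectness of $\mathbb{F}_q$ to see that the semisimple and nilpotent parts are $F$-rational, hence lie in $\hhh(F)$. The only difference is expository: you spell out the interpolation/Galois-descent step that the paper compresses into the remark that the characteristic polynomial is separable.
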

\begin{proof}
Let us choose an
 $\mathbb{F}_q$-embedding of algebraic groups $G\hookrightarrow GL_{n,\mathbb{F}_q}$.
 It induces an embedding of their Lie algebras $\ggg\hookrightarrow \ggg\lll_{n,\mathbb{F}_q}$, which is $G$-equivariant. Since it is an  $\mathbb{F}_q$-embedding, it maps $\ggg(\ooo)$ into $\ggg\lll_n(\ooo)$. Now the characteristic polynomial of the image of $X$ in $\ggg\lll_n(F)$ is separable as its coefficients are in $\mathbb{F}_q=\ooo\cap F$, we deduce that the image of semisimple part and that of unipotent part of $X$ both lie in $\ggg\lll_n(F)$ hence in $\ggg(F)$. 
\end{proof}

\subsubsection{Exact sequences}
Let $$1\longrightarrow N\xrightarrow{\ i \ }H\xrightarrow{\ p \ }Q\longrightarrow 1$$ 
be an exact sequence of linear algebraic groups defined over $k$ (i.e. the morphism $p$ is faithfully flat and $i$ induces an isomorphism from $N$ to the scheme theoretic kernel of $p$). Then
we have an exact sequence of groups: $$1\longrightarrow N(k^{a})\longrightarrow H(k^{a})\longrightarrow Q(k^{a})\longrightarrow 1. $$

\begin{prop}\label{Fpoints}
If, in the above diagram, $N$ and $ H$ are smooth, then we have an exact sequence of groups
$$1\longrightarrow N(k^{s})\longrightarrow H(k^{s})\longrightarrow Q(k^{s})\longrightarrow 1.$$
Hence an exact sequence of pointed sets: 
$$1\rightarrow N(k)\rightarrow H(k)\rightarrow Q(k)\rightarrow H^{1}(k, N)\rightarrow H^{1}(k, H)\rightarrow H^{1}(k, Q). $$ 
If, moreover, $N$ is in the center of $H$, the connecting homomorphism $Q(k)\rightarrow H^{1}(k, N)$ is a group homomorphism. 
\end{prop}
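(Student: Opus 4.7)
The plan is to establish the three assertions in order; the only delicate input is the smoothness hypothesis, which is used to promote surjectivity from $k^{a}$-points to $k^{s}$-points. For the first assertion, since $N$ is smooth and $p\colon H\to Q$ is faithfully flat with (scheme-theoretic) kernel $N$, the morphism $p$ is a right $N$-torsor, hence smooth; and then $Q=H/N$ is smooth as the target of a smooth faithfully flat morphism from the smooth scheme $H$. Given $q\in Q(k^{s})$, the fiber $p^{-1}(q)=H\times_{Q}\Spec k^{s}$ is a smooth $k^{s}$-scheme, and it is non-empty because the given exactness on $k^{a}$-points provides a geometric point. Any non-empty smooth scheme over a separably closed field admits a rational point (smooth morphisms admit sections \'etale-locally), so $q$ lifts to $H(k^{s})$; this yields exactness of $1\to N(k^{s})\to H(k^{s})\to Q(k^{s})\to 1$.

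For the second assertion, I would apply the standard non-abelian cohomology machinery (\emph{e.g.} Serre, \emph{Cohomologie Galoisienne}, I.5) to the short exact sequence of discrete $\Gal(k^{s}/k)$-groups just obtained. This formally produces the six-term exact sequence of pointed sets
\[ 1\to N(k)\to H(k)\to Q(k)\to H^{1}(k,N)\to H^{1}(k,H)\to H^{1}(k,Q). \]
The connecting map sends $q\in Q(k)$ to the class of the $1$-cocycle $\sigma\mapsto h^{-1}\sigma(h)$, where $h\in H(k^{s})$ is any lift of $q$; this cocycle takes values in $N(k^{s})$ because $\sigma(h)$ is another lift of $q$, and its class is easily checked to be independent of the choice of $h$.

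For the third assertion, suppose $N$ lies in the center of $H$; then $N$ is commutative, so $H^{1}(k,N)$ is an abelian group. For $q_{1},q_{2}\in Q(k)$, fix lifts $h_{1},h_{2}\in H(k^{s})$; then $h_{1}h_{2}$ lifts $q_{1}q_{2}$ and its associated cocycle is
\[ \sigma\mapsto (h_{1}h_{2})^{-1}\sigma(h_{1}h_{2}) \;=\; h_{2}^{-1}\bigl(h_{1}^{-1}\sigma(h_{1})\bigr)\sigma(h_{2}) \;=\; \bigl(h_{1}^{-1}\sigma(h_{1})\bigr)\bigl(h_{2}^{-1}\sigma(h_{2})\bigr), \]
the last equality using the centrality of $h_{1}^{-1}\sigma(h_{1})\in N(k^{s})$ in $H(k^{s})$. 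Hence the connecting map is a group homomorphism. I anticipate no genuine obstacle here; the only subtle point is the smoothness argument in the first step, which is indispensable because $H^{1}(k,-)$ is being taken of Galois modules given by $k^{s}$-points rather than of the group schemes themselves.
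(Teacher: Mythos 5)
Your proposal is correct and follows essentially the same route as the paper: both reduce the first assertion to surjectivity of $H(k^{s})\to Q(k^{s})$, deduce it from smoothness of the quotient morphism $H\to Q$ (you via the $N$-torsor structure and the existence of rational points on non-empty smooth schemes over a separably closed field, the paper via a Lie-algebra dimension argument and a local factorization into an \'etale map followed by a projection), and then invoke the standard non-abelian Galois cohomology sequence together with the central-cocycle computation for the ``moreover'' part. I see no gaps.
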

\begin{proof}
The long exact sequence follows from the short one, and the ``moreover" part is a direct cocycle calculation. For the first statement, one only needs to prove the surjectivity of $H(k^{s})\rightarrow Q(k^{s})$. As $H$ and $N$ are smooth, the quotient morphism $H\rightarrow Q$ is also smooth (by a dimension argument of the Lie algebras). 
Hence Zariski locally on the target, it can be factored as the composition $U\xrightarrow{\pi}\mathbb{A}_{V}^{n}\xrightarrow{p}V$ of an \'etale morphism $\pi$ with a projection $p$. Clearly, both morphisms admit lifting of $k^{s}$-points. 
\end{proof}

By Lang's theorem, if $H$ is a linear algebraic group defined over $\mathbb{F}_q$, then $H^{1}(\mathbb{F}_q, H^{0})$ vanishes. Hence the map $H(\mathbb{F}_q)\rightarrow (H/H^{0})(\mathbb{F}_q)$ is surjective. 
We also have the following lemma: 
\begin{lemm}\label{cooll}
Let $H$ be a smooth linear algebraic group defined over $\mathbb{F}_q$.
Let $k$ be a field containing $\mathbb{F}_q$ (not necessarily finite).  
 Then the homomorphism 
$$H(k)\rightarrow (H/H^{0})(k)$$ is surjective. 
\end{lemm}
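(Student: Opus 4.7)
The plan is to reduce the problem to the case of a finite field, where Lang's theorem (already invoked just before the lemma) directly applies. The essential observation is that $H/H^0$ is a \emph{finite étale} group scheme over $\mathbb{F}_q$ because $H$ is smooth, so its $k$-points can always be spread out over a finite subextension of $\mathbb{F}_q$ in $k$.

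More precisely, I would first recall that for a smooth linear algebraic group $H$ over $\mathbb{F}_q$, the identity component $H^0$ is a smooth closed normal subgroup scheme and the quotient $\pi\colon H\to H/H^0$ is a finite étale group scheme; write $H/H^0=\mathrm{Spec}(A)$ for some finite étale $\mathbb{F}_q$-algebra $A$. Given $\bar x\in (H/H^0)(k)$, this point corresponds to an $\mathbb{F}_q$-algebra map $\varphi\colon A\to k$. Its image $\varphi(A)$ is a finite subring of $k$ containing $\mathbb{F}_q$, hence a finite field extension $\mathbb{F}_{q^n}\subseteq k$. Therefore $\bar x$ factors through a point $\bar x_0\in (H/H^0)(\mathbb{F}_{q^n})$.

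Next I would apply Proposition \ref{Fpoints} to the exact sequence $1\to H^0\to H\to H/H^0\to 1$ over the finite field $\mathbb{F}_{q^n}$: since $H^0$ and $H$ are smooth, we obtain the long exact sequence
\[ H(\mathbb{F}_{q^n})\longrightarrow (H/H^0)(\mathbb{F}_{q^n})\longrightarrow H^1(\mathbb{F}_{q^n},H^0). \]
By Lang's theorem (recalled just before the lemma in the text), $H^1(\mathbb{F}_{q^n},H^0)$ vanishes because $H^0$ is a connected smooth linear algebraic group over the finite field $\mathbb{F}_{q^n}$. Consequently $\bar x_0$ lifts to some $x\in H(\mathbb{F}_{q^n})\subseteq H(k)$, and $\pi(x)=\bar x$ in $(H/H^0)(k)$, as required.

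I do not anticipate a serious obstacle here: the only subtle point is ensuring that the quotient $H/H^0$ is genuinely finite étale over $\mathbb{F}_q$ (rather than merely finite), which is what allows the "spreading out" step that reduces an arbitrary overfield $k$ to a finite subfield. This uses the smoothness hypothesis on $H$ in an essential way, exactly as in the hypotheses of Proposition \ref{Fpoints}.
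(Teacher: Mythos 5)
Your proof is correct and follows essentially the same route as the paper: the paper also uses that $H/H^{0}$ is finite étale to see that every $k$-point of $H/H^{0}$ is already defined over the algebraic closure of $\mathbb{F}_q$ in $k$ (you spread out to a finite subfield $\mathbb{F}_{q^n}$, which is the same mechanism), and then lifts it via Lang's theorem over the finite field.
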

\begin{proof}
We know that $H/H^{0}$ is a finite \'etale group scheme defined over $\mathbb{F}_q$. Let $l$ be the algebraic closure of $\mathbb{F}_q$ in $k$. Then $(H/H^{0})(l)=(H/H^{0})(k)$. As $H(l)\rightarrow (H/H^{0})(l)$ is surjective, the homomorphism $H(k)\rightarrow (H/H^{0})(k)$ is surjective too.
\end{proof}

\begin{coro}\label{OK}
Under the assumption of Lemma \ref{cooll}, 
 we have \[\ker( H^{1}(k, H^{0})\rightarrow H^{1}(k, H))=\{1\}.\]
\end{coro}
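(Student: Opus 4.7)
The plan is to chase the long exact sequence of non-abelian cohomology coming from the exact sequence
\[ 1 \longrightarrow H^{0} \longrightarrow H \longrightarrow H/H^{0} \longrightarrow 1 \]
of smooth linear algebraic groups over $k$ (note $H^0$ is smooth since $H$ is, and $H/H^0$ is finite étale). Applying Proposition \ref{Fpoints} to this sequence yields the exact sequence of pointed sets
\[ 1 \to H^{0}(k) \to H(k) \xrightarrow{\ p\ } (H/H^{0})(k) \xrightarrow{\ \delta\ } H^{1}(k, H^{0}) \to H^{1}(k, H). \]

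The key observation is that by the very definition of the connecting map $\delta$, the kernel of $H^{1}(k, H^{0}) \to H^{1}(k, H)$ equals the image of $\delta$. So it suffices to show that $\delta$ is trivial, equivalently that $\ker(\delta) = (H/H^{0})(k)$ as pointed sets.

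By exactness at $(H/H^{0})(k)$, we have $\ker(\delta) = \mathrm{Im}(p)$. But Lemma \ref{cooll} states exactly that $p$ is surjective, so $\mathrm{Im}(p) = (H/H^{0})(k)$. Hence $\delta$ is trivial and the desired kernel is $\{1\}$. This is essentially a one-line deduction from Lemma \ref{cooll} and the long exact sequence, so there is no real obstacle; the only thing to be a little careful about is that we are working with pointed sets rather than groups, but triviality of $\delta$ in the pointed-set sense is precisely what is needed.
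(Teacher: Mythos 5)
Your argument is correct and is exactly the paper's: the paper proves this corollary by combining the long exact sequence of Proposition \ref{Fpoints} with the surjectivity of $H(k)\rightarrow (H/H^{0})(k)$ from Lemma \ref{cooll}, which forces the connecting map to be trivial and hence the kernel to be $\{1\}$. Your care about the pointed-set (rather than group) nature of the sequence is the right point to flag, and it is handled correctly.
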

\begin{proof}
This is a corollary of Lemma \ref{cooll} and Proposition \ref{Fpoints}. 
\end{proof}

\subsubsection{Various notions of conjugacy}

\begin{definition}
Let $x$ and $y$ be two semisimple elements in $G(k)$ (resp. in $\ggg(k)$). Let $E|k$ be a field extension. Then we say that $x$ and $y$ are $G(E)$-conjugate if there exists $h\in G(E)$ such that $hxh^{-1}=y$ (resp. $\Ad(h)x=y$). We say that $x$ and $y$ are stably conjugate if there exists $h\in G(k^{s})$ such that $hxh^{-1}=y$ (resp. $\Ad(h)x=y$), and for any $\sigma\in \Gal(k^s|k)$, we have $h^{-1} ( {}^{\sigma} h) \in G_x^{0}(k^{s})$.%, where $G_x^{\circ}$ is the connected component of the identity in the centraliser $G_x$ of $x$ in $G$.  
\end{definition}
\begin{remark}\normalfont
If $G$ is reductive, $p$ is not a torsion prime (in the sense of \cite[Definition 1.3]{Steinberg}), and $x$ is a semisimple element in $\ggg(k)$, the centralizer $G_x$ is automatically connected (\cite[Theorem 3.14]{Steinberg}). %, see also Section \ref{chara}). 
Hence in the Lie algebra case, there is no difference between stable conjugation and $F^{s}$-conjugation for semisimple elements if the characteristic is not a torsion prime. 
\end{remark}

Clearly:  stable conjugacy $\implies$ $G(k^{s})$-conjugacy$\implies$ $G(k^{a})$-conjugacy. For reductive groups, we have the following:
\begin{prop}\label{tran}
Let $x, x' \in G(k)$ be two semisimple elements. 
They are $G(k^{a})$-conjugate if and only if  they are $G(k^{s})$-conjugate. The same statement applies to semisimple elements in $\ggg(k)$. 
\end{prop}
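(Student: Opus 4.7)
The plan is to recast the question as a descent problem: I will exhibit a smooth $k$-scheme whose $k^s$-points are exactly the sought-after $k^s$-conjugators, and then invoke the general fact that a smooth $k$-scheme of finite type with a $k^a$-point automatically has a $k^s$-point. The forward direction is of course immediate from $k^s\subseteq k^a$.

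First, I introduce the transporter subscheme
\[ T := \{\, g\in G : g x g^{-1} = x'\,\}, \]
a closed $k$-subscheme of $G$. It is naturally a right fppf torsor under the scheme-theoretic centralizer $G_x$, and any $k^a$-conjugator trivialises it over $k^a$, so $T_{k^a}\simeq (G_x)_{k^a}$. In the Lie algebra case I do the exact same thing, with $\Ad(g)X=X'$ in place of $gxg^{-1}=x'$, obtaining a right $G_X$-torsor.

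The next step is to verify that $T$ is smooth over $k$. By Steinberg's theorem, the centralizer of a semisimple element $x\in G(k)$ in the reductive group $G$ is reductive, hence smooth. In the Lie algebra case, the very good hypothesis on $p$ from \ref{chara} ensures that $p$ is not a torsion prime, so by the remark preceding the present statement the centralizer $G_X$ of a semisimple $X\in\ggg(k)$ is connected and reductive, hence again smooth. Since $T_{k^a}\simeq (G_x)_{k^a}$ (resp.\ $(G_X)_{k^a}$) is smooth, so is $T$, by fpqc descent of smoothness. Now $T_{k^s}$ is non-empty (because $(T_{k^s})_{k^a}=T_{k^a}$ is) and smooth over the separably closed field $k^s$, so its $k^s$-rational points form a Zariski dense subset of $T_{k^s}$ and are in particular non-empty; any such point is a $k^s$-element of $G$ conjugating $x$ to $x'$ (respectively $X$ to $X'$).

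The only genuinely delicate point is the smoothness of the centralizer of a semisimple element in positive characteristic, particularly in the Lie algebra case where one must exclude torsion primes; once the very good hypothesis from \ref{chara} is in force, everything else reduces to a formal torsor-and-smoothness argument.
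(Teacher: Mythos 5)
Your proposal is correct and follows essentially the same route as the paper: the transporter scheme is a torsor under the (smooth, because the element is semisimple) centralizer, and smoothness of a nonempty finite-type $k$-scheme yields a $k^s$-point. The only cosmetic difference is that you invoke connectedness/torsion-prime considerations in the Lie algebra case, which are not actually needed for smoothness of the transporter and play no role in the paper's argument either.
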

\begin{proof}
Let $x$ and $x'$ be two elements in $G(k)$ that are $k^{a}$-conjugate. The transporter $\mathrm{Tran}(x, x')$, defined as an $k$-scheme  by $$\mathrm{Tran}(x, x') = \{h\in G \mid hxh^{-1}=x' \}, $$
is a $G_x$ torsor since $\mathrm{Tran}(x, x')(k^a)\neq \emptyset$. As $x$ is semisimple, $G_x$ is smooth, so is $\mathrm{Tran}(x,x')$. By smoothness,  $\mathrm{Tran}(x, x')(k^a)\neq \emptyset$ $\implies$ $\mathrm{Tran}(x, x')(k^s)\neq \emptyset. $ The Lie algebra case can be proved with the same argument. 
%Suppose that $x$ (resp. $x'$) lies in $T(F)$ (resp. $T'(F)$) for a maximal torus $T$ (resp. $T'$) defined over $F$. Every torus defined over $F$ splits over $F^{s}$ (Lemma B.1.5 \cite{Conrad}), and split maximal torus in $G_{F^{s}}$ are conjugate under $G(F^s)$ by Th\'eor\`eme 4.21 of $\cite{BTu}$. Hence without loss of generality,  we may suppose that both $x$ and $x'$ lie in $T(F^{s})$. As $x$ and $y$ are $F^{a}$-conjugate, they are conjugate by a Weyl element by Bruhat decomposition.   As $N_G(T)/T$ is a finite \'etale group scheme defined over $F$, we have $$\frac{N_G(T)}{T}{(F^{s})}=\frac{N_G(T)}{T}(F^{a}), $$  so the Weyl element can be chosen in $N_G(T)(F^s)$ by Lemma \ref{cooll}.
\end{proof}

The following result will be frequently used in this article, sometimes implicitly. 
\begin{prop}\label{smc}
Let $k$ be a field containing $\mathbb{F}_q$ such that $\mathbb{F}_q$ is algebraically closed in $k$.  
Let $x$ and $x'$ be two semisimple elements in $G(\mathbb{F}_{q})$ (or $\ggg(\mathbb{F}_q)$). They are $G(k)$-conjugate if and only if they are $G(\mathbb{F}_{q})$-conjugate. 
\end{prop}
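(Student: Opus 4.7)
The plan is as follows. The implication ``$G(\mathbb{F}_q)$-conjugate $\Rightarrow$ $G(k)$-conjugate'' is immediate, so I focus on the converse.

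Introduce the transporter scheme
\[
T := \mathrm{Tran}(x, x') = \{h \in G \mid hxh^{-1} = x'\} \subseteq G
\]
(and $\Ad(h)X = X'$ in the Lie algebra case), viewed as a closed subscheme of $G$ defined over $\mathbb{F}_q$. As in the proof of Proposition \ref{tran}, $T$ is a right $G_x$-torsor, and since $x$ (resp.\ $X$) is semisimple, $G_x$ (resp.\ $G_X$) is smooth; in the Lie algebra case I would invoke the remark preceding Proposition \ref{tran} and the fact that very good characteristic is not torsion, so the centralizer is connected and smooth. Smoothness is inherited by $T$. The hypothesis that $x$ and $x'$ are $G(k)$-conjugate means exactly $T(k) \neq \emptyset$; the goal is to promote this to $T(\mathbb{F}_q) \neq \emptyset$.

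The key step is to pass to the quotient $Z := T / G_x^0$, which is a right $\pi_0(G_x)$-torsor over $\mathbb{F}_q$ and in particular a finite étale $\mathbb{F}_q$-scheme. Since $T \twoheadrightarrow Z$ is surjective on $k$-points (for instance by Proposition \ref{Fpoints} applied over $k$), $Z(k) \neq \emptyset$. Now every finite étale $\mathbb{F}_q$-scheme decomposes as a disjoint union $\bigsqcup_i \Spec(l_i)$ with each $l_i/\mathbb{F}_q$ a finite separable extension, and a $k$-point picks out an index $i$ together with an $\mathbb{F}_q$-embedding $l_i \hookrightarrow k$. Since $\mathbb{F}_q$ is algebraically closed in $k$, such an embedding is possible only when $l_i = \mathbb{F}_q$; hence $Z(\mathbb{F}_q) \neq \emptyset$.

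Finally, the fiber of $T \to Z$ over any $\mathbb{F}_q$-point of $Z$ is a $G_x^0$-torsor over $\mathbb{F}_q$, and by Lang's theorem $H^1(\mathbb{F}_q, G_x^0) = \{1\}$, so this torsor is trivial and carries an $\mathbb{F}_q$-point. Lifting through $T \to Z$ yields an element of $T(\mathbb{F}_q)$, which witnesses the $G(\mathbb{F}_q)$-conjugacy of $x$ and $x'$. The main subtlety is the ``key step'' about finite étale schemes, where the hypothesis that $\mathbb{F}_q$ be algebraically closed in $k$ is exactly what is needed; the rest is a standard torsor argument combined with Lang. The Lie algebra case is handled identically, once smoothness of $G_X$ has been recorded.
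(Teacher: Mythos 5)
Your argument is correct and is essentially the paper's own proof: both exhibit the transporter $\mathrm{Tran}(x,x')$ as a torsor under $G_x$, use the hypothesis that $\mathbb{F}_q$ is algebraically closed in $k$ to produce an $\mathbb{F}_q$-rational geometrically connected piece (your $Z=T/G_x^{0}$ is just the scheme of connected components of the transporter, while the paper works directly with the component containing the $k$-point), and then conclude with Lang's theorem for the connected group $G_x^{0}$. One cosmetic remark: you do not need, and should not claim, surjectivity of $T(k)\to Z(k)$ (it can fail, since $H^{1}(k,G_x^{0})$ need not vanish for a general field $k$); the mere existence of the map $T(k)\to Z(k)$ already gives $Z(k)\neq\emptyset$ from $T(k)\neq\emptyset$, which is all your argument uses.
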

\begin{proof}
Let $\mathrm{Tran}(x, x')$ be the transporter from $x$ to $x'$. It is a $G_x$ torsor since $\mathrm{Tran}(x, x')(k)\neq \emptyset$. 
If $G_x$ is connected, then by Lang's theorem $\mathrm{Tran}(x, x')(\mathbb{F}_q)\neq \emptyset$. In general, let $\mathrm{Tran}(x,x')^{1}$ be the connected component of $\mathrm{Tran}(x,x')$ defined over $\mathbb{F}_q$ that admits a $k$-point. It ensures that $\mathrm{Tran}(x,x')^{1}$ is geometrically connected since $\mathbb{F}_q$ is algebraically closed in $F$. It implies that $\mathrm{Tran}(x,x')^{1}$ is a $G_x^{0}$-torsor. Therefore it has an $\mathbb{F}_q$-point. 
\end{proof}

\begin{prop}
%Let $G$ be a reductive group defined over a finite field $\mathbb{F}_q$ of characteristic $p$. 

Two semisimple elements $X, Y\in \ggg(k)$ are $G(k^s)$-conjugate if and only if their images under the quotient morphism $\ggg\rightarrow \ggg\sslash G $ coincide. Here $\ggg\sslash G=\Spec(k[\ggg]^{G})$ is the quotient of $\ggg$ by the adjoint action of $G$.  

\end{prop}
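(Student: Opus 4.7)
The forward implication is immediate: if $Y = \Ad(h)X$ for some $h \in G(k^s)$, then for every $G$-invariant polynomial $f \in k[\ggg]^G$ one has $f(Y) = f(\Ad(h)X) = f(X)$, so $X$ and $Y$ map to the same point of $\ggg\sslash G$.

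For the converse, I would first reduce to the problem of showing that two semisimple elements in $\ggg(k^a)$ with the same image in $\ggg\sslash G$ are $G(k^a)$-conjugate; then Proposition~\ref{tran} upgrades this $G(k^a)$-conjugacy to $G(k^s)$-conjugacy. So the task reduces to the classical statement over an algebraically closed field.

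Over $k^a$ the argument is the Chevalley restriction theorem. Since the characteristic is very good for $G$ (\ref{chara}), restriction to a fixed maximal torus $T \subset G_{k^a}$ induces an isomorphism $k^a[\ggg]^G \xrightarrow{\sim} k^a[\ttt]^W$, so $\ggg\sslash G$ is identified with $\ttt/W$. Every semisimple element of $\ggg(k^a)$ is $G(k^a)$-conjugate into $\ttt(k^a)$; conjugating $X$ and $Y$ into $\ttt$, their images in $\ttt/W$ coincide by hypothesis, so they lie in a common $W$-orbit, hence are $G(k^a)$-conjugate. Equivalently, one may argue that in very good characteristic the $G$-orbit of a semisimple element is closed in $\ggg$, and closed orbits in an affine variety with reductive group action are separated by the invariant-theoretic quotient.

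Combining the two steps yields the statement. The only point requiring a little care is the appeal to Chevalley restriction and closedness of semisimple orbits in positive characteristic; both are standard under the very-good-prime hypothesis already in force in the paper, so I do not expect a serious obstacle here.
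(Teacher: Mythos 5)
Your proposal is correct, and it actually contains two arguments. Your parenthetical alternative --- that semisimple orbits are closed and that the GIT quotient separates closed orbits, followed by Proposition~\ref{tran} to upgrade $G(k^a)$-conjugacy to $G(k^s)$-conjugacy --- is precisely the paper's proof, word for word in substance. Your primary route via the Chevalley restriction theorem is a genuinely different (and slightly heavier) way to settle the algebraically closed case: you identify $\ggg\sslash G$ with $\ttt\sslash W$, conjugate both elements into $\ttt(k^a)$, and use that the fibres of $\ttt \to \ttt\sslash W$ over $k^a$ are exactly the $W$-orbits. This is fine under the very-good-characteristic hypothesis in force (the paper itself invokes $\ttt\sslash W \cong \mathfrak{c}_G$ via Riche's results), and it has the advantage of being completely explicit about which elements get identified; but it needs two inputs the paper's argument avoids --- the restriction isomorphism $k^a[\ggg]^G \cong k^a[\ttt]^W$ in positive characteristic, and the fact that every semisimple element of $\ggg(k^a)$ can be conjugated into $\ttt(k^a)$ --- whereas closedness of semisimple orbits plus separation of closed orbits by the affine quotient works with no detour through the torus. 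Either way, the reduction from $k^s$ to $k^a$ via Proposition~\ref{tran} is exactly what the paper does, so there is no gap.
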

\begin{proof}
It is clear that if $X, Y\in \ggg(k)$ are $G(k^s)$-conjugate, then they have the same image in $(\ggg\sslash G)(k)$. 
For the converse, we know that the orbit of a semisimple element is closed, and the quotient morphism separates closed orbits, hence $X$ and $Y$ are $G(k^a)$-conjugate, thus $G(k^s)$-conjugate by Proposition \ref{tran}. 
\end{proof}

\subsection{A theorem on conjugacy classes}\label{lgp}
In this subsection, $G$ is a reductive group defined over a finite field $\mathbb{F}_q$, $F$ is the function field of a projective smooth and geometrically connected curve defined over $\mathbb{F}_q$. We do not require $G$ to be split as the results in this subsection could have other interests than its application in this article.

This subsection aims to prove Theorem \ref{staconj} and Theorem \ref{staconjLie} below. It will be used in a later section to analyse the geometric side of the trace formula.  Note that the first part of Theorem \ref{staconj} in the case that $G$ is semisimple and simply connected has already been proved in \cite[Proposition 8.3]{Gross}.

\begin{theorem}\label{staconj}
Let $x$ be a semisimple torsion element in $G(F)$. Then the stable conjugacy class of $x$ contains a point in $G(\mathbb{F}_q)$. 

If moreover, that for every place $v$ of $F$, $x$ is $G(F_v)$-conjugate to an element in $G(\mathcal{O}_v)$, where $\mathcal{O}_v$ is the ring of integers of $F_v$. Then $x$ is $G(F)$-conjugate to an element in $G(\mathbb{F}_q)$. 
\end{theorem}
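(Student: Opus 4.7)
The plan is to (i) reduce $x$ by $G(F^s)$-conjugacy into $G(\overline{\mathbb{F}}_q)$, exploiting that $x$ has order coprime to $p$; (ii) apply Steinberg's theorem over $\mathbb{F}_q$ to extract an $\mathbb{F}_q$-rational candidate $x''$ in the same geometric conjugacy class; (iii) refine geometric conjugacy to stable conjugacy over $F$; and (iv) for the second assertion, combine the local integrality hypothesis with a Hasse-principle type argument to conclude global $G(F)$-conjugacy with an $\mathbb{F}_q$-point.

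For the first assertion, Proposition \ref{JordanC} shows $x$ has order $n$ coprime to $p$. Since $x$ is semisimple it lies in some maximal $F$-torus $T\subset G$; fix a maximal torus $T_0$ of $G$ defined over $\mathbb{F}_q$, and use that over $F^s$ all maximal tori of $G_{F^s}$ are $G(F^s)$-conjugate to find $g\in G(F^s)$ with $gTg^{-1}=T_0\otimes F^s$. Then $x':=gxg^{-1}\in T_0(F^s)$ is $n$-torsion, and $T_0[n]$ being finite \'etale over $\mathbb{F}_q$ (because $\gcd(n,p)=1$) forces $x'\in T_0[n](\overline{\mathbb{F}}_q)\subseteq G(\overline{\mathbb{F}}_q)$. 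Let $c\colon G\to G\sslash G$ be the categorical quotient, defined over $\mathbb{F}_q$; then $c(x)=c(x')$ belongs to $(G\sslash G)(F)\cap (G\sslash G)(\overline{\mathbb{F}}_q)$, which equals $(G\sslash G)(\mathbb{F}_q)$ by the geometric connectedness of $X$. Because $G$ is quasi-split over $\mathbb{F}_q$ by Lang's theorem, Steinberg's theorem then produces $x''\in G(\mathbb{F}_q)$ with $c(x'')=c(x)$, so that $x$ and $x''$ are $G(F^s)$-conjugate. To promote this to stable conjugacy over $F$, I would analyze $\pi_0(\mathrm{Tran}(x,x''))$, a torsor over $F$ under the finite \'etale $\mathbb{F}_q$-group scheme $G_{x''}/G_{x''}^0$; by replacing $x''$ (if necessary) within its geometric $\mathbb{F}_q$-conjugacy class, and parameterizing such replacements through the surjectivity of $G_{x''}(\mathbb{F}_q)\to \pi_0(G_{x''})(\mathbb{F}_q)$ (Lemma \ref{cooll}), the component-group obstruction can be killed, leaving a cocycle valued in $G_{x''}^0(F^s)$.

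For the second assertion, take $x''$ from Part~1, stably conjugate to $x$ over $F$, so that $\mathrm{Tran}(x,x'')$ carries the structure of a $G_{x''}^0$-torsor whose class in $H^1(F,G_{x''}^0)$ obstructs $G(F)$-conjugacy. At each place $v$, the local integrality hypothesis supplies $y_v\in G(\mathcal{O}_v)$ that is $G(F_v)$-conjugate to $x$; its reduction $\bar{y}_v\in G(\kappa_v)$ is semisimple with $c(\bar{y}_v)=c(x'')$. Using Steinberg's theorem over $\kappa_v$, and then lifting via smoothness of the transporter $\mathrm{Tran}(x'',y_v)$ over $\mathcal{O}_v$ together with Hensel's lemma, I would show $y_v$ is $G(\mathcal{O}_v)$-conjugate to $x''$, hence $x$ is $G(F_v)$-conjugate to $x''$. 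Therefore $[\mathrm{Tran}(x,x'')]$ is locally trivial everywhere, and the Hasse principle for the connected reductive $\mathbb{F}_q$-group $G_{x''}^0$---reducing through its simply connected cover (where $H^1(F,-)=0$ by Harder's theorem) to the toric cokernel (where local-global behaviour over $F$ can be analyzed explicitly using that the torus is defined over $\mathbb{F}_q$ together with Albert--Brauer--Hasse--Noether)---yields the triviality of the class and hence $G(F)$-conjugacy. The main obstacles I expect are (iii), the refinement from geometric to stable conjugacy, which demands careful cocycle bookkeeping around the possibly disconnected component group of $G_{x''}$; and (iv), the precise form of the Hasse principle, whose verification will hinge on exploiting the $\mathbb{F}_q$-structure of $G_{x''}^0$ and that the local conditions fix the class locally at every place.
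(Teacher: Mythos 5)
Part 1 of your proposal follows essentially the paper's route: reduce $x$ to a torsion element of a torus over $\overline{\mathbb{F}}_q$, note that its image in the adjoint quotient is $\mathbb{F}_q$-rational, produce an $\mathbb{F}_q$-representative of the geometric class by a Lang/Steinberg argument, and then repair the component-group obstruction. One caveat: conjugating $x''$ by elements of $G(\mathbb{F}_q)$ does not change the cocycle $\sigma\mapsto g\,{}^{\sigma}g^{-1}$ at all, so the $\pi_0$-obstruction is not killed by the surjectivity of $G_{x''}(\mathbb{F}_q)\rightarrow \pi_0(G_{x''})(\mathbb{F}_q)$; what is needed (and what the paper does) is to replace the representative by $h^{-1}yh$ for a non-rational $h\in G(\overline{\mathbb{F}}_q)$, chosen by Lang's theorem so that ${}^{\tau}hh^{-1}$ lies in the same connected component of $G_y$ as ${}^{\tau}gg^{-1}$.

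Part 2 has genuine gaps, both caused by the possible disconnectedness of $G_{x''}$ (recall $G$ is neither simply connected nor adjoint here). First, the local step: from $c(\bar{y}_v)=c(x'')$ in $(G\sslash G)(\kappa_v)$ you only get that $\bar{y}_v$ and $x''$ are conjugate over $\overline{\kappa}_v$; ``Steinberg over $\kappa_v$'' does not upgrade this to $G(\kappa_v)$-conjugacy, and the implication is false in general: for $G=\mathrm{PGL}_2$ and $q$ odd, the split and non-split involutions are geometrically conjugate and have the same image in $G\sslash G$, yet are not $\mathrm{PGL}_2(\kappa_v)$-conjugate. The statement actually needed — that stably conjugate semisimple elements which both lie in (a conjugate of) the hyperspecial subgroup are $G(F_v)$-conjugate — must exploit the identity-component-valued cocycle through an integral (Bruhat--Tits) argument; this is exactly Kottwitz's Proposition 7.1 of \cite{Ko3}, which the paper invokes and which your reduction-plus-Hensel sketch does not recover. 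Second, the global step: $\mathrm{Tran}(x,x'')$ is a torsor under the full centralizer $G_{x''}$, not under $G_{x''}^{0}$; stable conjugacy yields a (choice-dependent) class in $H^{1}(F,G_{x''}^{0})$, but local conjugacy at $v$ only trivializes its image in $H^{1}(F_v,G_{x''})$. To run a Hasse principle for the connected group $G_{x''}^{0}$ you must also know $\ker\bigl(H^{1}(F_v,G_{x''}^{0})\rightarrow H^{1}(F_v,G_{x''})\bigr)=\{1\}$ at every place — true here because $G_{x''}$ is defined over $\mathbb{F}_q$ (this is Lemma \ref{cooll}/Corollary \ref{OK} applied with $k=F_v$), but your sketch never addresses it; the paper avoids the issue by proving $\ker^{1}(F,G_z)=\{1\}$ directly for the possibly disconnected centralizer (Proposition \ref{Hasse}). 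Finally, $\ker^{1}(F,T)$ of a general torus does not vanish, so the ``simply connected cover plus toric cokernel'' dévissage only closes because $G_{x''}$ splits over a constant field extension: the correct input is the Chebotarev argument of Lemma \ref{Cao} combined with Kottwitz's duality (extended to function fields, as in Proposition \ref{Hasse}), not Albert--Brauer--Hasse--Noether.
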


  \begin{theorem}\label{staconjLie}
Suppose that the characteristic $p$ is very good for $G$.  
Let $X\in \ggg(F)$ be a semisimple element. If for every place $v$ of $F$, $X$ is $G(F_v)$-conjugate to an element in $\ggg(\mathcal{O}_v)$. Then $X$ is $G(F)$-conjugate to an element in $\ggg(\mathbb{F}_q)$. 
\end{theorem}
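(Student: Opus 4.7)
The plan is to imitate the approach of the group version Theorem~\ref{staconj} in four stages: descend the characteristic invariants of $X$ to $\mathbb{F}_q$; lift them to a semisimple $X_0 \in \ggg(\mathbb{F}_q)$; show that $X$ is $G(F_v)$-conjugate to $X_0$ at every place $v$; and finally upgrade to global $G(F)$-conjugacy. For the first two stages, observe that the image $a \in (\ggg\sslash G)(F)$ of $X$ is integral at every $v$ by $G$-invariance of the quotient morphism. Because $\ggg\sslash G$ is affine over $\mathbb{F}_q$ (in very good characteristic it is an affine space by Chevalley restriction) and $\mathcal{X}$ is projective geometrically connected, $(\ggg\sslash G)(\mathcal{X}) = (\ggg\sslash G)(\mathbb{F}_q)$, hence $a \in (\ggg\sslash G)(\mathbb{F}_q)$. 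The unique closed $G$-orbit $O_a \subset \ggg_a$ (the semisimple orbit) is then defined over $\mathbb{F}_q$ and is isomorphic to $G/G_{X_0}$ for any geometric semisimple lift. Since $p$ is very good for $G$, it is not a torsion prime, so $G_{X_0}$ is connected reductive, and Lang-Steinberg yields $O_a(\mathbb{F}_q) \neq \emptyset$. I fix a lift $X_0 \in O_a(\mathbb{F}_q)$.

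For the local conjugacy stage, given $h_v \in G(F_v)$ with $Y_v := \Ad(h_v) X \in \ggg(\ooo_v)$, the map $\Spec \ooo_v \to \ggg$ defined by $Y_v$ has its generic fiber in the closed subscheme $O_a \subset \ggg$, and hence factors through $O_a$, so $Y_v \in O_a(\ooo_v)$. The transporter $\mathrm{Tran}(X_0, Y_v) \subset G_{\ooo_v}$, pulled back along $Y_v$ from the smooth $G_{X_0}$-torsor $G \to G/G_{X_0} \cong O_a$, is a smooth $G_{X_0}$-torsor over $\ooo_v$. Lang-Steinberg over the residue field $\kappa_v$ together with Hensel's lemma give an $\ooo_v$-section $g_v$, and so $X = \Ad(h_v^{-1} g_v) X_0$ is $G(F_v)$-conjugate to $X_0$.

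The main obstacle is the global step. The transporter $T := \mathrm{Tran}(X_0, X) \subset G_F$ is a $G_{X_0, F}$-torsor with $T(F_v) \neq \emptyset$ for every $v$, and glueing with the integral transporters $\mathrm{Tran}(X_0, Y_v)$ (via right translation by $h_v$) extends $T$ to a $G_{X_0}$-bundle on $\mathcal{X}$ trivial on every local ring $\ooo_v$. Concluding $T(F) \neq \emptyset$ is not formal, because for a general reductive $G_{X_0}$ the Tate-Shafarevich group may be nontrivial, so local triviality alone does not suffice. I would resolve this via the exact sequence relating $G_{X_0}$ to its simply-connected derived cover $G_{X_0}^{\mathrm{sc}}$ and its maximal central torus $Z(G_{X_0})^0$: Harder's Hasse principle for function fields handles the simply-connected semisimple factor, while the central torus contribution is treated by a direct ad\`elic argument exploiting that $G_{X_0}$ descends from the constant field $\mathbb{F}_q$ and that the obstruction class is everywhere unramified in the strong integral sense furnished by the previous step.
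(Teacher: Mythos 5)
Your reduction steps are sound and essentially follow the paper's route: the characteristic $a=\chi(X)$ is integral everywhere, hence lies in $(\ggg\sslash G)(\mathbb{F}_q)$ because a global section of an affine bundle coming from a vector space over a projective geometrically connected curve is constant; a rational semisimple lift $X_0\in\ggg(\mathbb{F}_q)$ exists by Lang--Steinberg (connectedness of $G_{X_0}$ uses that very good primes are not torsion primes); and your local step, producing $g_v\in G(\ooo_v)$ with $\Ad(g_v)X_0=Y_v$ via the smooth transporter torsor over $\ooo_v$, Lang over $\kappa_v$ and Hensel, is a correct self-contained substitute for the paper's citation of Kottwitz \cite[Proposition 7.1]{Ko3}. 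Up to this point you have exactly the situation of step 2 of the proof of Theorem \ref{staconj}: the class of the torsor $\mathrm{Tran}(X_0,X)$ lies in $\ker^{1}(F,G_{X_0})$, and everything hinges on showing this kernel is trivial.

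That last step is where your proposal has a genuine gap. Your plan (Harder's Hasse principle for the simply connected part, plus an ad\`elic argument for the maximal central torus) does not account for the isogeny kernel between $G_{X_0}^{sc}\times Z(G_{X_0})^{0}$ and $G_{X_0}$, and it is precisely in this finite central/fundamental-group layer that everywhere-locally-trivial torsors under a connected reductive group can live; a d\'evissage through $1\to G_{X_0}^{der}\to G_{X_0}\to D\to 1$ runs into the fact that $G_{X_0}^{der}$ need not be simply connected (centralizers of Lie algebra elements frequently are not), so Harder's theorem does not apply to it, and the non-abelian exactness issues ($H^2$ of the isogeny kernel, twisting) are not addressed by your sketch. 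Moreover, the property you invoke --- that the obstruction class is ``everywhere unramified in the strong integral sense'' --- is not what makes the kernel vanish; what matters is that $G_{X_0}$, being defined over $\mathbb{F}_q$, splits over a \emph{constant} field extension, so that all relevant Galois modules become trivial over $F\otimes_{\mathbb{F}_q}\mathbb{F}_{q^n}$ and a Chebotarev argument applies. This is exactly the content of Lemma \ref{Cao} and Proposition \ref{Hasse} in the paper, where the vanishing of $\ker^{1}(F,G_{X_0})$ is obtained via Kottwitz's duality $\ker^{1}(F,H)\simeq \ker^{1}(F,Z_{\hat H}(\mathbb{C}))^{D}$ (valid over function fields, resting on Harder only through the simply connected case). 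To complete your argument you must either prove such a statement (e.g.\ via Kottwitz duality, or via a $z$-extension of $G_{X_0}$ chosen over $\mathbb{F}_q$ together with abelianized cohomology and the cyclic-splitting Chebotarev argument) or cite it; as written, the global step is an outline of intent rather than a proof, and the specific decomposition you propose would not close it.
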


The proofs will be given in \ref{stablyconjugate} and \ref{Liestably}. 

\subsubsection{A result on the vanishing of $\ker^{1}$}
The proof of the following result is communicated to me by Yang Cao. 
\begin{lemm}\label{Cao}
Let us denote $\Gamma$ for $\Gal(F^{s}|F)$, $\Gamma_v$ for $\Gal(F^{s}_v|F_v)$ for every place $v$ of $F$. Let $M$ be a discrete group with a continuous $\Gamma$-action, that splits over $F\otimes_{\mathbb{F}_q}\mathbb{F}_{q^n}$ for some $n$ (i.e. $\Gal(F^{s}|F\otimes_{\mathbb{F}_q}\mathbb{F}_{q^n})$ acts trivially). 
Then $$\ker^{1}(F, M):= \ker(H^{1}(\Gamma, M)\rightarrow \prod_v  H^{1}(\Gamma_v, M)) $$ is trivial.
\end{lemm}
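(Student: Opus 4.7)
The plan is to reduce $\ker^1(F,M)$ to the cohomology of a finite cyclic group. Set $L = F\otimes_{\mathbb{F}_q}\mathbb{F}_{q^n}$, which is a field (hence a cyclic degree-$n$ Galois extension of $F$ embedded in $F^s$) because $\mathbb{F}_q$ is algebraically closed in $F$. By hypothesis the $\Gamma$-action on $M$ factors through $\Gal(L|F)\cong\mathbb{Z}/n\mathbb{Z}$, generated by the Frobenius $\Frob$. I fix a continuous cocycle $c\colon\Gamma\to M$ representing a class in $\ker^1(F,M)$.

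The first step is to show $c|_{\Gamma_L}=0$. Since $M$ is discrete and $c$ is continuous, $c$ has finite image (by compactness of $\Gamma$), and since $\Gamma_L$ acts trivially on $M$, the restriction $c|_{\Gamma_L}$ is a continuous group homomorphism with finite image, cutting out a finite Galois extension $K/L$. Local triviality of $c$ at each place $v$ of $F$ produces $m_v\in M$ with $c(\gamma)=\gamma\cdot m_v-m_v$ for all $\gamma\in\Gamma_v$; restricting to $\gamma\in\Gamma_{L,w}$ for any place $w$ of $L$ above $v$ kills the right-hand side, so $c|_{\Gamma_{L,w}}=0$. Thus every place of $L$ splits completely in $K$, and Chebotarev density forces $K=L$, i.e.\ $c|_{\Gamma_L}=0$.

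Now $c$ descends to a cocycle $\bar c\colon\Gal(L|F)\to M$, determined by $\bar c(\Frob)$. To finish, I choose a place $v$ of $F$ with $\gcd(\deg v,n)=1$, which exists (for instance by a count of closed points of $X$ of coprime degree, or by Chebotarev applied to $L|F$). For such $v$ the composition $\Gamma_v\hookrightarrow\Gamma\twoheadrightarrow\Gal(L|F)$ is surjective, since its image is generated by $\Frob^{\deg v}$. Local triviality at $v$ then yields $m\in M$ with $c(\tilde\Frob)=\Frob\cdot m-m$ for any lift $\tilde\Frob\in\Gamma_v$ of $\Frob$, exhibiting $\bar c$ as a coboundary on $\mathbb{Z}/n\mathbb{Z}$. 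The main obstacle is organizing this two-step Chebotarev reduction and checking the continuity/finiteness properties needed to pass through finite Galois extensions; once those are in hand, the vanishing of $\ker^1(F,M)$ is a formal consequence.
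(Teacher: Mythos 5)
Your argument is correct and is essentially the paper's proof in cocycle language: both reduce through the constant-field extension $L=F\otimes_{\mathbb{F}_q}\mathbb{F}_{q^n}$, kill the restriction to $\Gal(F^{s}|L)$ by Chebotarev via local triviality, and then trivialize the descended cocycle on the cyclic quotient $\Gal(L|F)$ using a place of degree prime to $n$ (the paper packages these same two steps as the inflation--restriction sequence plus a non-abelian five lemma). Two points to make explicit when writing it up: $M$ need not be abelian, so the computation should be done multiplicatively (it goes through verbatim, since $c|_{\Gal(F^{s}|L)}$ is still a homomorphism), and the claim that \emph{every} place of $L$ above $v$ splits completely in $K$ needs either the observation that $\ker(c|_{\Gal(F^{s}|L)})$ is normal in $\Gal(F^{s}|F)$ --- the analogue of the paper's remark that $F_1|F$ is Galois --- or the standard fact that local triviality at $v$ is independent of the chosen decomposition group.
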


\begin{proof}
Let  $N$ be $\Gal(F^s| F\otimes_{\mathbb{F}_q}\mathbb{F}_{q^n})$ which acts trivially on $M$. 
By restriction-inflation exact sequence, one has the following commutative diagram (of pointed sets) with exact rows:
$$\begin{CD}
0@>>> H^{1}(\Gamma/N, M)@>>> H^{1}(\Gamma, M )@>>> H^{1}(N, M)^{\Gamma/N}  \\
@.@V\iota_1VV   @V\iota_2VV@V\iota_3VV  \\
0@>>>\prod_{v}H^{1}(\Gamma_v/\Gamma_v\cap N, M)@>>>\prod_{v}H^{1}(\Gamma_v, M )@>>>\prod_{v}H^{1}(N\cap \Gamma_v, M )^{\Gamma_v/\Gamma_v\cap N}
\end{CD}.$$

Let $v$ be a place of $F$ of degree prime to $n$, then $\Gamma/N\cong \Gamma_v/\Gamma_v\cap N$. It implies $\iota_1$ is injective. 

Let $\varphi\in\ker(\iota_3) $. Then $\varphi$ is represented by an element of $\Hom(N, M)$. Since $\varphi$ is  continuous, its kernel is an open normal subgroup $N'$ of $N$, that corresponds to a finite Galois extension $F_1$ of $F\otimes_{\mathbb{F}_q}\mathbb{F}_{q^n}$. The field $F_{1}$ is in fact Galois over  $F$ because $\varphi$ is fixed by $\Gamma/N$. It implies that $N'$ is normal in $\Gamma$. 
For each place $v$ of $F$, the implicitly chosen embedding $\Gamma_v\hookrightarrow \Gamma$ specifies a place $w$ of $F\otimes_{\mathbb{F}_q}\mathbb{F}_{q^n}$ dividing $v$. 
Note that $\varphi$ is trivial on $\Gamma_v\cap N$ for the place $v$ if and only if the place $w$ is totally split for the extension $F_1|F\otimes_{\mathbb{F}_q}\mathbb{F}_{q^n}$. As $F_1|F$ is Galois, it implies that every place of $F\otimes_{\mathbb{F}_q}\mathbb{F}_{q^n}$ dividing $v$ is totally split for the extension $F_1|F\otimes_{\mathbb{F}_q}\mathbb{F}_{q^n}$. 
It forces $F_1=F\otimes_{\mathbb{F}_q}\mathbb{F}_{q^n}$ by Chebotarev density theorem. Thus $\ker(\iota_3)$ is trivial.

By the (non-abelian) five lemma, $\ker(\iota_2)$ is trivial. 
\end{proof}

When $H$ is connected, the following result is well-known to experts. We extend it to non-connected cases. 
\begin{prop}\label{Hasse}
Let $H$ be a linear algebraic group (not necessarily connected) defined over a finite field $\mathbb{F}_q$ such that $H^{0}$ is reductive. Then 
$$\ker^{1}(F, H):= \ker(H^{1}(F, H)\rightarrow \prod_v  H^{1}(F_v, H)) $$ is trivial. 
\end{prop}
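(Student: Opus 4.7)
The strategy is a dévissage along the exact sequence $1 \to H^{0} \to H \to \pi_{0}(H) \to 1$, combining two inputs: Harder's Hasse principle, which gives $\ker^{1}(F, H^{0}) = 1$ for the connected reductive group $H^{0}$ over the function field $F$, and Lemma \ref{Cao} applied to the finite discrete Galois module $M := \pi_{0}(H)(F^{s})$. Because $\pi_{0}(H)$ is a finite étale $\mathbb{F}_{q}$-group scheme, one has $\pi_{0}(H)(F^{s}) = \pi_{0}(H)(\overbar{\mathbb{F}}_{q})$ with a continuous $\Gal(\overbar{\mathbb{F}}_{q}|\mathbb{F}_{q})$-action that factors through a finite quotient $\Gal(\mathbb{F}_{q^{n}}|\mathbb{F}_{q})$. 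Thus $M$ splits over $F \otimes_{\mathbb{F}_{q}} \mathbb{F}_{q^{n}}$, and Lemma \ref{Cao} yields $\ker^{1}(F, \pi_{0}(H)) = 1$.

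Now let $c \in \ker^{1}(F, H)$ be represented by a cocycle $\sigma \mapsto h_{\sigma}$, and write $\bar{h}_{\sigma}$ for its image in $\pi_{0}(H)(F^{s})$. The class $[\bar{h}] \in H^{1}(F, \pi_{0}(H))$ is locally trivial, hence globally trivial by the previous step, so there exists $\bar{g} \in \pi_{0}(H)(F^{s})$ with $\bar{h}_{\sigma} = \bar{g}^{-1} \sigma(\bar{g})$. Lifting $\bar{g}$ to some $g \in H(F^{s})$ and replacing $h$ by the cohomologous cocycle $h'_{\sigma} := g\, h_{\sigma}\, \sigma(g)^{-1}$, one immediately checks that $h'$ takes values in $H^{0}(F^{s})$ and therefore defines a class $c_{0} \in H^{1}(F, H^{0})$ mapping to $c$.

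The main technical point, and the step where care is needed, is to show that $c_{0}$ itself lies in $\ker^{1}(F, H^{0})$; this is nontrivial since the map $H^{1}(F_{v}, H^{0}) \to H^{1}(F_{v}, H)$ need not be injective. For each place $v$, pick $g_{v} \in H(F_{v}^{s})$ with $h_{\sigma} = g_{v}^{-1} \sigma(g_{v})$ for $\sigma \in \Gamma_{v}$, and set $k_{v} := g\, g_{v}^{-1}$; a direct computation gives $h'_{\sigma} = k_{v}\, \sigma(k_{v})^{-1}$ in $H(F_{v}^{s})$. Since $h'_{\sigma} \in H^{0}$, the image of $k_{v}$ in $\pi_{0}(H)(F_{v}^{s})$ is $\Gamma_{v}$-invariant, hence lies in $\pi_{0}(H)(F_{v})$; by Lemma \ref{cooll} it lifts to some $m_{v} \in H(F_{v})$. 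Setting $\ell_{v} := k_{v}\, m_{v}^{-1} \in H^{0}(F_{v}^{s})$ and using that $m_{v}$ is $\Gamma_{v}$-fixed, one obtains $h'_{\sigma} = \ell_{v}\, \sigma(\ell_{v})^{-1}$, which trivializes $c_{0}$ over $F_{v}$. Hence $c_{0} \in \ker^{1}(F, H^{0}) = 1$ by Harder's theorem, forcing $c = 1$ and completing the argument.
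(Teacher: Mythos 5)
Your dévissage along $1\to H^{0}\to H\to \pi_{0}(H)\to 1$ is sound: the reduction of a class $c\in\ker^{1}(F,H)$ to a class $c_{0}\in H^{1}(F,H^{0})$, and the verification via the elements $k_{v}=g\,g_{v}^{-1}$, Lemma \ref{cooll} and the lifts $m_{v}\in H(F_{v})$ that $c_{0}$ is everywhere locally trivial as a class for $H^{0}$, is a correct explicit version of what the paper does more tersely with Proposition \ref{Fpoints}, Corollary \ref{OK} and the non-abelian five lemma; the treatment of $\pi_{0}(H)$ via Lemma \ref{Cao} also matches the paper.

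The gap is the final step: you dispose of $c_{0}$ by asserting that ``Harder's Hasse principle gives $\ker^{1}(F,H^{0})=1$ for the connected reductive group $H^{0}$.'' Harder's theorem concerns semisimple \emph{simply connected} groups, and the vanishing of $\ker^{1}$ is false for general connected reductive groups over a global (function) field: already for a norm-one torus $R^{1}_{E/F}\mathbb{G}_m$ of a biquadratic extension $E/F$, $\ker^{1}(F,T)\cong F^{\times}\cap\prod_v N(E_w^{\times})/N_{E/F}(E^{\times})$ is nontrivial (failure of the Hasse norm theorem), and such examples exist in odd characteristic. What saves the day here is precisely the hypothesis, unused in your last step, that $H^{0}$ is defined over $\mathbb{F}_{q}$, hence split by a constant (cyclic) extension. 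The paper's proof of $\ker^{1}(F,H^{0})=1$ goes through Kottwitz's duality theorem, $\ker^{1}(F,H^{0})\cong \ker^{1}(F,Z_{\hat{H^{0}}}(\mathbb{C}))^{\vee}$ (whose proof over function fields indeed rests on Harder's theorem for the simply connected case), and then applies Lemma \ref{Cao} to the module $Z_{\hat{H^{0}}}(\mathbb{C})\cong X^{*}(H^{0})_{F^{s}}\otimes\mathbb{C}^{\times}$, which splits over $F\otimes_{\mathbb{F}_q}\mathbb{F}_{q^{n}}$. Without this (or an equivalent argument exploiting the constant splitting field), your proof assumes the hardest part of the statement.
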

\begin{proof}

By the duality theorem of Kottwitz (\cite[(4.2.2) and Remark4.4]{Ko2}), $\ker^{1}(F, H^0)$ is dual to $\ker^{1}(F, Z_{\hat{H^0}}(\mathbb{C}))$ where $\hat{H^0}$ is the Langlands dual group of $H^0$. Recall that $$Z_{\hat{H^0}}(\mathbb{C})\cong X^{*}(H^0)_{F^{s}}\otimes_{\mathbb{Z}}\mathbb{C}^{\times},$$
the action of $\Gal(F^{s}|F)$ on  $Z_{\hat{H^0}}(\mathbb{C})$ is induced from  its action on $X^{*}(H^0)_{F^{s}}$. 
Note that this result of Kottwitz is proved upon the validity of the Hasse principle (vanishing of $\ker^{1}$) for semisimple and simply connected reductive groups. Over a function field, the Hasse principle is proved by Harder (1974). The proof of Kottwitz described in \cite[Remark 4.4]{Ko2}, stated for a number field, works for a function field too
(see also \cite[Theorem 2.6]{Thang}). 

By Proposition \ref{Fpoints} and Corollary \ref{OK}, we have the following commutative diagram with exact rows:
$$\begin{CD}
1@>>> H^{1}(F, H^{0})@>>> H^{1}(F, H )@>>> H^{1}(F, H/H^{0})\\
@.@Vi_1VV   @Vi_2VV@Vi_3VV  \\
 1 @>>>\prod_{v}H^{1}(F_v, H^{0})@>>>\prod_{v}H^{1}(F, H )@>>>\prod_{v}H^{1}(F_v, H/H^{0})
\end{CD}.$$
As $H$ is defined over the finite field $\mathbb{F}_q$, it splits over $F\otimes_{\mathbb{F}_q}\mathbb{F}_{q^n}$ for some $n$. Hence both $H/H^{0}(F^{s})$ and $Z_{\hat{H^0}}(\mathbb{C})$ splits over $F\otimes_{\mathbb{F}_q}\mathbb{F}_{q^n}$. 
By Lemma \ref{Cao}, $\ker(i_3)$  and $\ker^{1}(F, Z_{\hat{H^0}}(\mathbb{C}))$ are trivial. 
After Kottwitz duality, $\ker(i_1)$ is then trivial. Hence $\ker(i_2)$ is trivial by the (non-abelian) five lemma. \end{proof}

\subsubsection{Proof of Theorem \ref{staconj}.}\label{stablyconjugate}
1.
We first show that there is an element $z\in G(\mathbb{F}_q)$ that is stably conjugate to $x$. 

Let $T$ be a maximally split maximal torus of $G$ defined over $\mathbb{F}_q$ (i.e., one contained in a Borel subgroup). 
The semisimple element $x$ is contained in a maximal torus of $G$ defined over $F^s$. Then $x$ is $G(F^s)$-conjugate to an element in $T(F^s)$. 
Note that torsion elements in $F^s$ coincide with torsion elements in $\bar{\mathbb{F}}_{q}$. The element $x$ is therefore $G(F^s)$-conjugate to an element in $y\in T(\bar{\mathbb{F}}_{q})$. 

Let $\tau\in \Gal(F^{s}| F)$ be an element that is sent to the Frobenius element via the projection $$\Gal(F^{s}| F)\rightarrow \Gal( F\otimes \overbar{\mathbb{F}}_q | F)\cong  \Gal(\overbar{\mathbb{F}}_q|\mathbb{F}_q). $$
Since $x$ is fixed by $\tau$, we see that ${}^{\tau}y$ is $G(F^s)$-conjugate to $y$. Hence they are $G(\overbar{\mathbb{F}}_q)$-conjugate. It shows that the $G({\overbar{\mathbb{F}}_q})$-conjugacy class of $y$ in $G({\overbar{\mathbb{F}}_q})$ is stable under the Frobenius action. By Lang's theorem, the $G({\overbar{\mathbb{F}}_q})$-conjugacy class of $y$ contains an element in $G(\mathbb{F}_q)$ so we can assume $y\in G(\mathbb{F}_q)$. 

By our assumption, there is a $g\in G(F^s)$ such that $y=gxg^{-1}$. Therefore ${}^{\tau} gg^{-1}\in G_{y}(F^s)$. Note that $G_y$ is defined over $\mathbb{F}_q$, hence 
${}^{\tau} gg^{-1}\in G_{y, \bar{\mathbb{F}}_q}^{1}(F^s)$ for some connected component $G_{y, \bar{\mathbb{F}}_q}^{1}$ of $G_{y, \bar{\mathbb{F}}_q}$. 
By Lang's theorem, there is an $h\in G(\overbar{\mathbb{F}}_q)$ such that $^{\tau} hh^{-1}\in G_{y,\overbar{\mathbb{F}}_q}^{1}(\overbar{\mathbb{F}}_q)$. Let $z:= h^{-1}yh$, one can verify that  $z\in G(\mathbb{F}_q)$ and $z$ is stably conjugate to $x$. 
%This is because $h^{-1} gxg^{-1} h=z$ and 
%$$ ^{\sigma}(h^{-1} g)(g^{-1} h)\in {^{\sigma} h^{-1}}G_{y,\mathbb{F}}^{1}(F') h =h^{-1}G_{y}^{0}(F')h=G_z^{0}(F').$$

2. Now, we prove that $z$ is in fact $G(F)$-conjugate to $x$ under our further hypothesis. 

Take an element $z\in G(\mathbb{F}_q)$ that is stably conjugate to $x$. In fact, by \cite[\S3]{Ko1}, the $G(F)$-conjugacy classes within the $G(F^s)$-conjugacy class of $z$ are parametrized by $$\ker(H^{1}(F, G_z)\rightarrow H^{1}(F, G)),$$ so that the trivial cohomological class corresponds to the $G(F)$-conjugacy class of $z$. 
Let \[\delta_{x}\in \ker(H^{1}(F, G_z)\rightarrow H^{1}(F, G))\]  
be the cohomological class corresponding to the conjugacy class of $x$ inside the $G(F^s)$-conjugacy class of $z\in G(F)$. 

For any completion $F_v$ of $F$, $x$ is $F_v$-conjugate to an element in $G(\mathcal{O}_v)$. Then after another theorem of Kottwitz \cite[Proposition 7.1]{Ko3}, we know that $z$ and $x$ are in fact $G(F_v)$-conjugate. Therefore the localization of $\delta_x$ at a place $v$ is the trivial element of $H^{1}(F_v, G_y)$ for any $v$. We conclude that $\delta_x\in \ker^{1}(F, G_z)$. 
Since $z\in G(\mathbb{F}_q)$, $G_{z}$ is defined over $\mathbb{F}_q$. By Theorem \ref{Hasse}, $\delta_x$ is trivial. That is, $x$ and $z$ are indeed $G(F)$-conjugate. 

\subsubsection{Proof of Theorem \ref{staconjLie}}\label{Liestably}
Two semisimple elements in $\ggg(F)$ are stably conjugate if they have the same image in $(\ggg\sslash G)(F)$ via the map induced by the quotient morphism $\ggg\rightarrow \ggg\sslash G$. By our hypothesis, the image of $x$ lies in $(\ggg\sslash G)(\ooo_v)\cap (\ggg\sslash G)(F)$ for every $v$, hence it is in $(\ggg\sslash G)(\mathbb{F}_q)$. As every $\Gal(\overbar{\mathbb{F}}_q| \mathbb{F}_q)$-stable conjugacy class in     $\ggg(\overbar{\mathbb{F}}_q)$ contains an $\mathbb{F}_q$-rational point, the map $\ggg(\mathbb{F}_q)\rightarrow (\ggg\sslash G)(\mathbb{F}_q)$ is surjective, hence $x$ is stably to an element $y\in \ggg(\mathbb{F}_q)$.

The rest of the proof is the same as step 2. of the proof of Theorem \ref{staconj}. 

\section{Unipotent/nilpotent reduction of the trace formulas}\label{essuni}
In \cite{Yu2}, we have introduced a variant of Arthur's trace formula coordinating with the stability parameter $\xi$ used in this article and have established some of its basic properties. The purpose of this section is to justify the need for this variant version by proving Theorem \ref{expansion} and  \ref{expansion'}. Although they apply only to a small class of test functions, these results are key to the main results of this article. 
\subsection{Notation}
Note that the statements in Arthur's trace formula involve a large number of notations, and these notations are also part of the theory. We refer the reader to the first chapter of \cite{LabWal}(more precise references will be given in the text) for understanding the notation. For the reader's convenience, we gather some notations frequently used in this article.  
\subsubsection{}
As in the introduction, $F$ is the global function field of a smooth projective and geometrically connected curve defined over $\mathbb{F}_q$. Let $|X|$ be the set of places of $F$. For each $v\in |X|$, let $F_v$ be the completion of $F$ at $v$, $\ooo_v$ be the ring of integers of $F_v$, $\wp_v$ its maximal idea and $\kappa_v$ its residue field. Let $\AAA$ be the ring of ad\`eles of $F$; $\ooo$ the ring of integral ad\`eles. 

Let $G$ be a split semisimple algebraic group defined over $\mathbb{F}_q$. Let us fix a Borel subgroup $B$ defined over $\mathbb{F}_q$, a maximal torus $T$ defined over $\mathbb{F}_q$ contained in $B$. We denote by $W=W^{(G,T)}=N_G(T)/C_G(T)$ be the Weyl group of $G$ with respect to $T$.
A parabolic subgroup of $G$ containing $B$ (resp. $T$) is called standard (resp. semistandard).
A Levi subgroup of a parabolic subgroup is often called a Levi subgroup of $G$.

\subsubsection{}
For any algebraic group $P$ defined over $F$, let $X^{*}(P)=X^*(P)_F$ be the group of rational characters and \[X_*(P)=\Hom(X^*(P), \mathbb{Z}).\] 
If $P$ is a torus, $X_*(P)$ is identified with the group of rational cocharacters of $P$. Let  
\[\ago_P:=\Hom(X^{*}(P), \mathbb{Q}).\] 
When $P$ is a semistandard parabolic subgroup of $G$ defined over $F$, the constructions above depend only on the semistandard Levi factor $M_P$ of $P$. Therefore we can denote $\ago_P$ by $\ago_{M_P}$. 

Let $M$ be a semistandard Levi-subgroup of $G$ and 
 $\mathcal{F}(M)$ be the set of semistandard parabolic subgroups of $G$ containing $M$.
For each $P\in \mathcal{F}(M)$, let $A_P$ be the central torus of $P$.  The restriction map identifies $X^{*}(P)$ with a sub-lattice in $X^{*}(A_P)$, both of which are viewed as lattices in the dual space of $\ago_P$.
The restriction $X^{*}(M)\rightarrow X^{*}(A_P)$ allows us to view $\ago_P$ as a linear subspace of $\ago_M$, and we have a projection induced from the morphism $X^{*}(P)\rightarrow X^{*}(M)$, whose kernel is denoted by $\ago_M^P$. Therefore we have a decomposition of linear space: 
\[\ago_M=\ago_M^P \oplus \ago_P. \] 
Moreover, we have the following notation:
\begin{enumerate}
\item[$\bullet$] $\Delta_B$ the set of simple roots of the root system $\subseteq \Phi(G, A)$; $\Delta_B^{\vee}$ the set of simple coroots. 
\item[$\bullet$] $\hat{\Delta}_B$ the set of fundamental weights, i.e. the dual base of $\Delta_B^{\vee}$; 
\item[$\bullet$] $\Delta_B^P=\Delta_B\cap \Phi(M_P,A);$ $(\Delta_B^{P})^{\vee}$ the set of corresponding coroots; \item[$\bullet$] $\Delta_P^Q=\{\alpha|_{A_P}\mid \alpha \in \Delta_B^Q-\Delta_B^{P}\}$, viewed as a set of linear functions on $\ago_B$ via the projection $\ago_B\rightarrow \ago_P^Q$;
\item[$\bullet$] $\htau_P$ the characteristic function of $H\in \ago_B$ such that $\langle\varpi,H\rangle>0$ for all $\varpi\in \hat{\Delta}_P$; 
\end{enumerate}

\subsubsection{Harish-Chandra's map}
For each parabolic subgroup $P$ of $G$ defined over $F$, we define the Harish-Chandra's map $H_P: P(\AAA)\rightarrow \ago_{P}$ by requiring   
%\begin{equation}q^{\langle\alpha,  H_P(p)\rangle}= |\alpha(p)|_\AAA,  \quad \forall \alpha\in X^{*}(P)_F; p\in P(\AAA). \end{equation}
\begin{equation}\langle\alpha,  H_P(p)\rangle= -\sum_{v} \deg v \deg (\alpha(p_v)),  \quad \forall \alpha\in X^{*}(P)_F, p=(p_v)_{v\in |X|}\in P(\AAA). \end{equation}
Here 
Using Iwasawa decomposition we may extend $H_P$ to be a function over $G(\AAA)$ by asking it to be $G(\ooo)$-right invariant, i.e. $H_P(g)=H_P(p)$ for $g=pk$ with $p\in P(\mathbb{A})$ and $k\in G(\ooo)$. 

\subsubsection{Haar measures}
The Haar measures are normalized in the following way.
For any place $v$ of $F$ and 
for a Lie algebra $\ggg$ defined over $\mathbb{F}_q$, the volumes of the sets $\ggg(\ooo_v)$ and $\ggg(\ooo)$ are normalized to be $1$; the volume of $G(\ooo_v)$ and $G(\ooo)$ are normalized to be $1$; for every semistandard parabolic subgroup $P$ of $G$ defined over $\mathbb{F}_q$, the measure on $N_P(\AAA)$ is normalized so that $\vol(N_P(F)\backslash N_P(\AAA))=1$.

\subsection{Geometric side of the trace formulas}
\subsubsection{}\label{recalltrace}
First, we briefly recall the constructions of the trace formulas in the following. More details can be found in \cite{Yu2}.

We define two elements $\gamma$ and $\gamma'$ in $G(F)$ (resp. in $\ggg(F)$) to be equivalent if either both of them admit Jordan-Chevalley decompositions and their semisimple parts $\gamma_s$ and $\gamma'_{s}$ are $G(F)$-conjugate, or neither of them admits Jordan-Chevalley decomposition. Let \[\mathcal{E}=\mathcal{E}(G(F)) \text{ or }\mathcal{E}(\ggg(F)) \] be the set of equivalent classes for either the group case or the Lie algebra case if it does not cause confusion.

Let us recall the construction of  \cite[Proposition 9.1]{Yu2} for a semisimple split algebraic group, where we have given a relatively simpler expression for our variant of Arthur's truncated trace.

Let $o\in \mathcal{E}$ be an equivalence class. %Assume first that $o\in \mathcal{E}$ admits Jordan-Chevalley decomposition. 
Let $f\in \mathcal{C}_c^{\infty}(G(\mathbb{A}))$ be a smooth complex function with compact support over $G(\AAA)$. 
We take a standard parabolic subgroup $Q$ of $G$, and define for every $x\in G(\AAA)$: 
\begin{equation}
j_{Q,o}(x)=\sum_{\gamma\in M_Q(F)\cap o}\sum_{n\in N_P(F)} f(x^{-1}\gamma n x).  
\end{equation}
In the Lie algebra case, a similar definition is given by analogy. Let $f\in \mathcal{C}_c^{\infty}(\ggg(\mathbb{A}))$, we define a function $\mathfrak{j}_Q(x)$ for any $x\in G(\AAA)$ by:
 \begin{equation}\mathfrak{j}_{Q,o}(x)= \sum_{\Gamma\in {\mmm_Q}(F)\cap o}\sum_{U\in \nnn_Q(F)} f( \mathrm{Ad}(x^{-1})(\Gamma+ U)). \end{equation}

Suppose there is a place $\infty$ of $F$ of degree $1$. The root datum of $G$ over $F$ coincides with that of $G$ over $F_{\infty}$ since $G$ is split, so the Weyl group of $(G, T)$ over $F$ is identified with that of $(G_{F_{\infty}}, T_{F_\infty})$. For any $x\in  G(\mathbb{A})$, we have an Iwasawa decomposition: $x = pk$, with $p\in B(\AAA)$  and $k\in G(\mathcal{O})$. The element  $k$ is unique up to left translation by $B(\AAA)\cap G(\mathcal{O}) = B(\mathcal{O})$, hence determines an element in $$ \mathcal{I}_\infty \backslash G(\mathcal{O}_{\infty})/ \mathcal{I}_\infty  \cong B({\kappa}_\infty)\backslash G({\kappa}_\infty)/B({\kappa}_\infty)\cong W.$$ where $\mathcal{I}_\infty$ is the Iwahori subgroup of $G$ with respect to $B$. 
We denote this Weyl element by $s_{x}$. 
For any vector $\xi \in \ago_{B}$, 
we use a variant of Arthur's truncated kernel given by:
\begin{equation} {j}_o^{\xi}(x) =\sum_{Q\in \mathcal{P}(B)}(-1)^{\dim\ago_Q^G}\sum_{\delta\in Q(F)\backslash G(F)} \htau_{Q}\left(H_0(\delta x) +  s_{\delta x}\xi   \right) {j}_{Q,o}(\delta x), \end{equation}
for any $x\in G(\AAA)$.  
In the Lie algebra case, it is given by an analogue formula:
\begin{equation}\mathfrak{j}_o^{\xi}(x)=\sum_{Q\in \mathcal{P}(B)}(-1)^{\dim\ago_Q^G}\sum_{\delta\in Q(F)\backslash G(F)} \htau_{Q}\left(H_0(\delta x) +  s_{\delta x}\xi   \right) \mathfrak{j}_{Q,o}(\delta x), \end{equation}
for any $x\in G(\AAA)$.

We have shown in \cite[Proposition 9.1]{Yu2} that for each $o\in \mathcal{E}$, the function $j_o^{\xi}(x)$ in $x\in G(F)\backslash G(\AAA)$ has compact support. We define \[J_o^{G, \xi}(f)= \int_{G(F)\backslash G(\AAA)}{j}_o^{\xi}(x) \d x.   \] 
There are only finitely many $o\in \mathcal{E}$ such that $J_o^{G, \xi}(f)$ is non-zero and we set
\[ J^{G, \xi}(f)=\sum_{o\in \mathcal{E}} J_o^{G, \xi}(f) .  \]

\subsubsection{}
We recall the definition of elliptic elements and elliptic classes.
\begin{definition}
We say that an equivalent class $o\in \mathcal{E}$ ($\mathcal{E}_G$ or $\mathcal{E}_{\ggg}$) is elliptic, if $o$ contains a semisimple element $\sigma$ (hence admits Jordan-Chevalley decomposition) so that the torus $Z_{G_\sigma^{0}}^{0}$ is anisotropic. 
 \end{definition}
 
The definition below is taken from \cite[6.1.3]{C-L1}. 
\begin{definition}\label{gepo}
We say that $\xi\in \ago_T$ is in general position if for every $P\in \mathcal{F}(T)$, a semistandard parabolic subgroup, such that $P\subsetneq G$, the projection of $\xi$ to $\ago_P$ under the decomposition $\ago_T = \ago_T^P\oplus \ago_P$ does not belong to the lattice $X_{*}(P)$. 
\end{definition}

 We have the following theorem. It is inspired by a theorem for $GL_n$ of Chaudouard \cite[6.2.1]{Chau}. We observe that Arthur has found results of the same nature which are more general but involve more terms (\cite[Lemma 6.2]{Ageom}). 

\begin{theorem}\label{expansion}
Suppose that $\xi$ is in general position defined in Definition \ref{gepo}.
Let $f\in \mathcal{C}_c^{\infty}(G(\AAA))$ be a smooth function supported in $\mathcal{I}_\infty\times \prod_{v\neq \infty}G(\ooo_v)$, then if $o$ can not be represented by a semisimple element $\sigma$ in $T(\mathbb{F}_q)$ or if $o$ is not elliptic, we have
$$J^{G,\xi}_o(f)=0.$$
If $o$ is elliptic and is represented by $\sigma\in T(\mathbb{F}_q)$, suppose that $f$ is $\mathcal{I}_{\infty}\times \prod_{v\neq \infty}G(\ooo_v)$-conjugate invariant, we have 
\[J^{G,\xi}_o(f)=  \frac{\vol({\mathcal{I}_{\infty}} )}{\vol(\mathcal{I}_{\infty,\sigma}) |\pi_0(G_{\sigma})|  } \sum_{w}J^{G^{0}_{\sigma}, w\xi}_{[\sigma]}(f^{w^{-1}}|_{G^{0}_{\sigma}(\mathbb{A})}),      \]
where the sum over $w$ is taken over the set of representatives of $W^{(G_\sigma^{0}, T)}\backslash W$ that send positive roots in $(G_{\sigma}^0,  T)$ to positive roots, and $f^{w^{-1}}$ is the function such that $f^{w^{-1}}(x)=f({w^{-1}}xw)$. 
\end{theorem}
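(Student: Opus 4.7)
The first goal is to cut down to classes representable by an $\mathbb{F}_q$-point of $T$. For $j_{Q,o}(x)\neq 0$ one needs $f(x^{-1}\gamma n x)\neq 0$ for some $\gamma\in M_Q(F)\cap o$ and $n\in N_Q(F)$, and the support hypothesis on $f$ forces $x_v^{-1}\gamma n x_v$ into a maximal compact subgroup at every place $v$. Hence $\gamma n\in G(F)$ is everywhere locally bounded, so it is torsion, and Proposition \ref{JordanC} gives its Jordan decomposition inside $G(F)$. By Corollary \ref{intss} the semisimple part of $\gamma n$ lies in $x_v G(\mathcal{O}_v)x_v^{-1}$ at each $v$; a standard check using the decomposition $P_Q=M_QN_Q$ identifies it, up to stable conjugacy, with $\gamma_s\in M_Q(F)$. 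By the second part of Theorem \ref{staconj} the semisimple part of $\gamma n$ is $G(F)$-conjugate to some $\sigma\in G(\mathbb{F}_q)$; splitness of $G$ together with the $G(\mathbb{F}_q)$-conjugacy of $\mathbb{F}_q$-split maximal tori lets us choose $\sigma\in T(\mathbb{F}_q)$. This gives the first vanishing statement.

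Next I would address the non-elliptic case, so assume $o$ is represented by $\sigma\in T(\mathbb{F}_q)$ with $A_\sigma:=Z_{G_\sigma^0}^0$ a nontrivial split subtorus of $T$, and set $M=C_G(A_\sigma)$, a proper semistandard Levi containing $G_\sigma^0$. The plan is the standard Arthur telescoping over $\mathcal{P}(B)$: regroup parabolics according to $Q\cap \mathcal{F}(M)$ and exploit the alternating signs $(-1)^{\dim\ago_Q^G}$. Here the genericity hypothesis on $\xi$ (Definition \ref{gepo}) is crucial: because the projection of $\xi$ to every proper $\ago_P$ avoids the cocharacter lattice $X_\ast(P)$, the truncation $\htau_Q\bigl(H_0(\delta x)+s_{\delta x}\xi\bigr)$ is never on the boundary of any cone, producing a clean pairing between parabolics with opposite sign and hence total cancellation. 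This is the analogue, for general split semisimple $G$, of the argument of Chaudouard \cite[6.2.1]{Chau} for $GL_n$ with the coprime condition, $\xi$ playing here the role of coprimality.

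In the elliptic case, $G_\sigma^0$ is split semisimple over $\mathbb{F}_q$ with maximal torus $T$. At the place $\infty$, Iwasawa decomposition combined with the Bruhat decomposition $G(\kappa_\infty)=\bigsqcup_{w\in W}B(\kappa_\infty)\,w\,B(\kappa_\infty)$ attaches to each $x$ the Weyl element $s_x\in W$ appearing in the truncation. Using the conjugation-invariance of $f$, I would collect, for each coset $w\in W^{(G_\sigma^0,T)}\backslash W$ represented by an element sending positive roots of $(G_\sigma^0,T)$ to positive roots of $(G,T)$, the contribution where $s_{\delta x}\in W^{(G_\sigma^0,T)}w$, and replace $f$ by $f^{w^{-1}}$ after conjugation. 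The bijection between $G(F)$-conjugates of $\sigma$ in $T(\mathbb{F}_q)$ and the cosets $G_\sigma^0(F)\backslash G(F)$ then identifies the remaining inner integral with $J^{G_\sigma^0,w\xi}_{[\sigma]}(f^{w^{-1}}|_{G_\sigma^0(\AAA)})$. The factor $\vol(\mathcal{I}_\infty)/\vol(\mathcal{I}_{\infty,\sigma})$ arises from the change of measure when integrating over $G_\sigma^0(F)\backslash G_\sigma^0(\AAA)$ instead of $G(F)\backslash G(\AAA)$, while $|\pi_0(G_\sigma)|^{-1}$ accounts for the passage from $G_\sigma$ to $G_\sigma^0$ in the enumeration of rational conjugates of $\sigma$.

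The main technical obstacle lies in this last step: one must match the $G$-truncation $\htau_Q\bigl(H_0(\cdot)+s_{\cdot}\xi\bigr)$ with its $G_\sigma^0$-analogue $\htau^{G_\sigma^0}_{Q\cap G_\sigma^0}\bigl(H_0(\cdot)+w\xi\bigr)$, and check that the only surviving $W$-cosets are those represented by elements sending positive roots of $(G_\sigma^0,T)$ to positive roots of $(G,T)$. This is a delicate combinatorial bookkeeping of standard parabolics under $G_\sigma^0\subseteq G$ and of the associated Weyl chambers, controlled by the general position of $\xi$ used to justify the needed cancellations. Once this identification is carried out, both assertions of the theorem follow.
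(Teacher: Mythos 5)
There is a genuine gap, in fact two. First, your reduction to a representative $\sigma\in T(\mathbb{F}_q)$ is wrong as stated: you only use that $f$ is supported in a maximal compact at every place, conclude via Theorem \ref{staconj} that $\sigma\in G(\mathbb{F}_q)$, and then claim that ``splitness of $G$'' lets you move $\sigma$ into $T(\mathbb{F}_q)$. But a semisimple element of $G(\mathbb{F}_q)$ need not lie in any split maximal torus (take a norm-one element of $\mathbb{F}_{q^2}^{\times}$ inside $SL_2(\mathbb{F}_q)$): such classes meet $\prod_v G(\ooo_v)$ everywhere locally, so your argument would not make them vanish, yet the theorem asserts they contribute $0$. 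The paper's proof of this very point uses the Iwahori condition at $\infty$: by Lemma \ref{support} the support hypothesis forces $\sigma$ to be $G(\kappa_\infty)$-conjugate into $B(\kappa_\infty)$ with $\kappa_\infty=\mathbb{F}_q$, which is what forces the minimal Levi $M_1$ to be $T$, hence $\sigma\in T(\mathbb{F}_q)$ up to conjugacy. You never exploit the Iwahori (rather than hyperspecial) support at $\infty$ in this step, and without it the first assertion fails.

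Second, the heart of the proof is exactly what you defer as ``the main technical obstacle'': the comparison of the $G$-truncations $\htau_P(H_P(\eta x)+s_{\cdot}\xi)$ with their $G_\sigma^0$-analogues and the identification of the surviving Weyl cosets. In the paper this is done by (i) Arthur's reduction (Proposition \ref{interm}) together with Lemma \ref{WaQG} fixing representatives $s\in W(\ago_1,Q;G_\sigma^0)$ and producing the factor $|\pi_0(G_\sigma)|^{-1}$; (ii) Lemma \ref{okay}, a Bruhat-decomposition computation over $\kappa_\infty$ relating $s_{w_s x}$ to $s\,s_x$, which is the only way the truncation parameters on $G$ and on $G_\sigma^0$ can be matched; and (iii) Arthur's combinatorial lemma \cite[Lemma 5.2]{Ageom}, which shows the alternating sum over $\mathcal{F}_R(M_1)$ is nonzero only when $[H_{M_1}(m)+s_{\eta x}\xi]_{G_\sigma^0}=0$, i.e.\ when a projection of $\xi$ lands in the lattice $X_*(L)$; general position of $\xi$ then forces $A_{G_\sigma^0}$ to be trivial, and in the elliptic case collapses the sum to a single $(-1)^{\dim\ago_R}\htau_R$. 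Your description of the non-elliptic vanishing as a ``clean pairing between parabolics with opposite sign'' is not the actual mechanism and would not by itself yield the vanishing; and since you explicitly leave the truncation-matching and the determination of the surviving cosets unproved, the proposal does not establish either assertion of the theorem.
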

Note that any function $f\in \mathcal{C}_c^{\infty}(G(\AAA))$ can be made to be $\mathcal{I}_{\infty}\times \prod_{v\neq\infty}G(\ooo_v)$-conjugate invariant by taking its average which does not affect the value $J^{G,\xi}_o(f)$. 

In the Lie algebra case, when the characteristic is good, the adjoint centralizer $G_s$ of an element in $s\in \ggg(\mathbb{F}_q)$ is a twisted Levi subgroup. It means that it becomes a Levi subgroup after a field extension. In fact, it is clear that the root system of $(G_s)_{\overbar{\mathbb{F}}_q}$ as a subroot system of that of $G$ is the intersection of a vector space with the root system of $G$ when the characteristic is good (\cite[VI, 1, 7, Proposition 24]{Bourbaki}), so $(G_s)_{\overbar{\mathbb{F}}_q}$ is a Levi subgroup.
Therefore, there are no elliptic elements in $\ttt(\mathbb{F}_q)$ as $T$ is split. Similar arguments to the group case then show the following theorem. Let  $\mathfrak{I}_{\infty}$ be the  the Iwahori subalgebra of $\ggg(\mathcal{O}_\infty)$ consisting of elements whose reduction modulo $\wp_\infty$ belongs to  $\bbb(\kappa_\infty)$.  

\begin{theorem}\label{expansion'}
Suppose that $\xi$ is in general position defined in Definition \ref{gepo}. 
Let $f\in \mathcal{C}_c^{\infty}(\ggg(\AAA))$ be a smooth function supported in $ \mathfrak{I}_\infty\times \prod_{v\neq \infty}G(\ooo_v)$, then if $o$ is not the nilpotent class, we have
$$J^{\ggg,\xi}_o(f)=0. $$
Hence $$ J^{\ggg, \xi}(f)= J_{nil}^{\ggg, \xi}(f). $$
\end{theorem}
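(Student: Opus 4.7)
The plan is to adapt the proof of Theorem~\ref{expansion} to the Lie algebra, taking advantage of the fact that in very good characteristic semisimple centralizers in $\ggg$ are Levi subgroups, which makes ellipticity far more restrictive here than in the group setting. The argument will split into (i) a reduction of the possible semisimple part of a contributing class to $\ttt(\mathbb{F}_q)$, (ii) the same descent/cancellation used in the group case to kill non-elliptic contributions, and (iii) showing that in the Lie algebra only the nilpotent class can be elliptic.

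First I would show that any class $o\in\mathcal{E}(\ggg(F))$ contributing to $J_o^{\ggg,\xi}(f)$ is represented by a semisimple element of $\ttt(\mathbb{F}_q)$. Non-vanishing of $\mathfrak{j}_{Q,o}(x)$ forces a pair $(\Gamma, U)\in (\mmm_Q(F)\cap o)\times \nnn_Q(F)$ such that $\Ad(x^{-1})(\Gamma+U)$ lies in the support of $f$; since $\mathfrak{I}_\infty\subseteq\ggg(\ooo_\infty)$, this means $\Gamma+U$ is $G(F_v)$-conjugate to an element of $\ggg(\ooo_v)$ at every place $v$. Proposition~\ref{JordanL} then yields the Jordan decomposition $\Gamma = \Gamma_s + \Gamma_n$ over $F$, and because the image of $\Gamma+U$ in $\ggg\sslash G$ equals that of $\Gamma_s$, the semisimple part $\Gamma_s$ is also locally $G(F_v)$-conjugate to an integral element everywhere. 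Applying Theorem~\ref{staconjLie} places $\Gamma_s$ in the $G(F)$-orbit of an element of $\ggg(\mathbb{F}_q)$, which (by Lang's theorem, since $G$ is split) is $G(\mathbb{F}_q)$-conjugate to some $\sigma\in\ttt(\mathbb{F}_q)$; hence $o=[\sigma]$.

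Next I would run the same descent and alternating-sum cancellation as in the proof of Theorem~\ref{expansion}. The combinatorics of the $\xi$-shifted truncation $\htau_Q(H_0(\delta x)+s_{\delta x}\xi)$ and the alternating sum over $Q\in\mathcal{P}(B)$ are insensitive to whether one uses $\mathfrak{j}_{Q,o}$ or $j_{Q,o}$, and the transformation of $\mathfrak{j}_{Q,o}$ under left $G(F)$-translation and the Levi decomposition $\mmm_Q(F)\oplus\nnn_Q(F)$ is completely parallel to the group case. For $o=[\sigma]$ not elliptic, the resulting sum over $w\in W^{(G_\sigma^0,T)}\backslash W$ vanishes by Chaudouard's cancellation \cite[6.2.1]{Chau}, in the form extended in \cite{Yu2}, applied to the $\xi$ in general position of Definition~\ref{gepo}.

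Finally, to dispose of any elliptic $o=[\sigma]$ with $\sigma\in\ttt(\mathbb{F}_q)$, I would invoke the structural remark made just before the statement: in very good characteristic the centralizer $G_\sigma$ is a Levi subgroup of $G$ containing the split torus $T$, hence itself split. Then $Z_{G_\sigma^0}^0$ is a split torus, so the anisotropy condition in the definition of elliptic forces $Z_{G_\sigma^0}^0=\{1\}$ and hence $G_\sigma^0=G$, placing $\sigma\in\zzz(\ggg)(\mathbb{F}_q)$. But $G$ is semisimple in very good characteristic, so $Z(G)$ is étale and $\zzz(\ggg)=0$; therefore $\sigma=0$ and $o$ is the nilpotent class. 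The main obstacle I foresee is verifying that the group-case descent/cancellation transfers cleanly to the Lie algebra with the $\xi$-shift and the Iwahori-at-$\infty$ support, but I expect this to be routine since the argument in \cite{Yu2} is purely combinatorial in the truncation and formal in the Levi decomposition.
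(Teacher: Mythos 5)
Your overall architecture matches the paper's (reduce the semisimple part to a rational semisimple element, kill non-elliptic classes by the same $\xi$-truncation combinatorics as in Theorem \ref{expansion}, and observe that for $\sigma\in\ttt(\mathbb{F}_q)$ the centralizer is a Levi containing the split torus $T$, so ellipticity forces $\sigma=0$). But there is a genuine gap in step (i): after applying Theorem \ref{staconjLie} you assert that the resulting semisimple element of $\ggg(\mathbb{F}_q)$ is, ``by Lang's theorem, since $G$ is split,'' $G(\mathbb{F}_q)$-conjugate to some $\sigma\in\ttt(\mathbb{F}_q)$. This is false: a semisimple element of $\ggg(\mathbb{F}_q)$ lies in a maximal torus defined over $\mathbb{F}_q$ which need not be split (e.g.\ a regular element of $\mathfrak{sl}_2(\mathbb{F}_q)$ with irreducible characteristic polynomial), and Lang's theorem does not conjugate non-split tori into $T$. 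Such elements are precisely the problematic ones: their centralizers are anisotropic over $F$, so their classes are elliptic and non-nilpotent, and neither your step (ii) (which only treats non-elliptic classes) nor your step (iii) (which assumes $\sigma\in\ttt(\mathbb{F}_q)$) eliminates them. Indeed they cannot be eliminated without using the full strength of the support hypothesis: for a test function supported in $\ggg(\ooo_\infty)\times\prod_{v\neq\infty}\ggg(\ooo_v)$ these classes do contribute.

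The missing ingredient is the one the paper's group-case argument supplies and that you discard when you only use $\mathfrak{I}_\infty\subseteq\ggg(\ooo_\infty)$: the Iwahori condition at $\infty$. One must show that the semisimple representative is $G(F_\infty)$-conjugate into $\mathfrak{I}_\infty$, hence (after a Kottwitz-type integral conjugacy argument over $\ooo_\infty$ and reduction mod $\wp_\infty$) that its reduction lies in $\bbb(\kappa_\infty)$; since maximal tori of $B$ over $\kappa_\infty=\mathbb{F}_q$ are conjugate to $T$, this is what actually places $\sigma$ in $\ttt(\mathbb{F}_q)$ and forces the minimal Levi to be $T$ --- the Lie-algebra analogue of the sentence ``the element $\sigma$ is $G(\kappa_\infty)$-conjugate to an element in $B(\kappa_\infty)$, hence $M_1=T$'' in the proof of Theorem \ref{expansion}. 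The same control at $\infty$ is also what underlies Lemma \ref{okay}, which you need even in your step (ii) to compare $s_{w_s\eta x}\xi$ with $s_x\xi$ before invoking the alternating-sum cancellation (which, incidentally, is Arthur's combinatorial lemma applied with $\xi$ in general position, not the sum over $w\in W^{(G_\sigma^0,T)}\backslash W$, which only enters the elliptic term of the group case). Once the reduction to $\ttt(\mathbb{F}_q)$ is done correctly, your steps (ii) and (iii) do complete the proof along the paper's lines.
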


We concentrate the rest of this subsection on the proof of these theorems. 
We only deal with the group case. The Lie algebra case will follow from the parallel arguments, and an indication will be provided if necessary modifications are needed.  

\subsection{A first reduction after Arthur}\label{reAr}
\subsubsection{}
Now we follow Arthur \cite[Section 3]{Ageom} to reduce 
\begin{equation}
j^{\xi}_o(x)=\sum_{Q\in\mathcal{P}(B)}\sum_{\delta\in Q(F)\backslash G(F)}(-1)^{\dim\ago_Q^G}\htau_Q(H_0(\delta x)+s_{\delta x}\xi )j_{Q,o}(\delta x)
\end{equation}
 into a nicer expression. The reader will notice that our arguments are almost identical to Arthur's, although we deal with different functions. For the reader's convenience, we elaborate in several places. Experts familiar with Arthur's work can skip this subsection. 

\subsubsection{}
Let $o\in \mathcal{E}$ be an equivalent class that admits the Jordan-Chevalley decomposition. We
 fix a semisimple element $\sigma\in o$. Up to a conjugation, we assume that $\sigma$ belongs to $M_{P_1}(F)$ for a fixed standard parabolic subgroup $P_1$ of $G$ so that it belongs to no proper parabolic subgroup of $M_{P_1}$. We shall write $M_1=M_{P_1}$, $A_1=A_{M_1}$ and $\ago_1=\ago_{M_1}$.

Note that $A_1$ is a maximal split torus defined over $F$ of $G_{\sigma}^{0}$. In fact, if $A'$ is an $F$-split torus of $G_{\sigma}^{0}$ such that $A'\supseteq A_1$, then $C_G(A')$ is a Levi subgroup of $G$ defined over $F$ included in $M_1=C_G(A_1)$ that contains $\sigma$. 
By the minimality of $M_1$, we have $A_1=A'$.  Therefore $G_{\sigma}^{0}$ is a connected reductive group with a minimal parabolic subgroup $P_{1\sigma}^0$ and minimal Levi subgroup $M_{1\sigma}^0$. We choose $P_{1\sigma}^0$ for the minimal standard parabolic subgroup of $G_{1\sigma}^0$.

Given a standard parabolic subgroup $Q$ of $G$, let us look at $j_{Q,o}$.

For any $\gamma\in M_Q(F)\cap o$, and let $\gamma=\gamma_s\gamma_u$ be its Jordan-Chevalley decomposition with $\gamma_s$ semisimple and $\gamma_u$ unipotent. 
Since $A_Q\subseteq M_{Q,\gamma_s}^0$ and $M_{Q,\gamma_s}^0$ is the centralizer of $A_Q$ in $G_{Q,\gamma_s}^0$, $M_{Q,\gamma_s}^0$ is a Levi subgroup of $G_{Q,\gamma_s}^0$. As $\gamma_s$ is $G(F)$-conjugate to $\sigma$, $\gamma_s$ commutes with a split torus that is a $G(F)$-conjugate of $A_1$. We can choose the torus to lie in $M_{Q,\gamma_s}^0$. The torus is, in turn, $M_Q(F)$-conjugation to the torus $A_{Q_1}$, for some standard parabolic subgroup $Q_1$ of $G$ associated to $P_1$ and contained in $Q$.
Equivalently there is a ${\mu}\in M_Q(F)$ such that ${\mu}\gamma_s{\mu}^{-1}\in M_{Q_1}(F)=C_G(A_{Q_1})(F)$. 

Let $h\in G(F)$ be an element that conjugates $\sigma$ to ${\mu}\gamma_s {\mu}^{-1}$. Since $A_1$ and $h^{-1} a_{Q_1} h$ are maximal $F$-split tori   
in $G_{\sigma}^0$, modifying $h$ from right by an element of $G_{\sigma}^0(F)$, 
we may suppose that $A_{1}$ is also conjugate to $A_{Q_1}$ by $h$. 
Let ${s}\in W(\ago_{1}, \ago_{Q_1})$ be the element that is represented by $h$. Then $h{w}_{s}^{-1}\in C_G(A_{Q_1})(F)\subseteq M_Q(F)$. 

Thus we can write 
\begin{equation}\label{var-ch}\gamma= \mu^{-1}w_s\sigma uw_s^{-1}\mu \end{equation}
for some  $s\in W(\ago_{1}, \ago_{Q_1})$, $\mu\in M_Q(F)$ and $u\in \mathcal{U}_{G_\sigma}(F)\cap w_s^{-1}M_Q(F)w_s $, where $Q_1$ is a standard parabolic subgroup of $G$ that is associated to $P_1$. The Weyl element $s$ is uniquely determined up to multiplication from left by a Weyl element of $M_Q$ (this may change $Q_1$), and multiplication from the right by elements of the Weyl group of $(G_{\sigma}, A_1)$ (instead of that of $(G_{\sigma}^0, A_1)$). Let $W(\ago_1, Q)$ be the set of double cosets in $W^Q\backslash W/W^{P_1}$ (equivalently, the set of cosets in $W^Q\backslash W$) that can be represented by elements $s\in W$ such that $$s(\ago_1)\supseteq \ago_Q. $$ 
Thus $s$ is  well-defined and uniquely determined by $\sigma$ and $\gamma$ in the quotient $W(\ago_1, Q)/W^{(G_\sigma, A_1)}$. 
Once a representative of $s$ is fixed, $\mu$ is uniquely determined modulo $M_Q(F)\cap w_sG_{\sigma}(F)w_s^{-1}$. The element $u$ is uniquely determined by $\mu$ and $w_s$.  
To fix representatives of $s$, the following result is used in Arthur's arguments, although it has not been written down explicitly. 
\begin{lemm}\label{WaQG}
The inclusion $N_{G_\sigma}(A_1)\rightarrow N_G(A_1)$ induces an injective homomorphism
 \[W^{(G_\sigma, A_1)}\longrightarrow W(\ago_1, \ago_1). \]
Let $W(\ago_1, Q; G_\sigma^0)$ be the union over all standard parabolic subgroups $Q_1\subseteq Q$ of those elements
$$s\in W(\ago_{1}, \ago_{Q_1})$$
such that $s^{-1}\alpha$ is positive for every  root $\alpha\in \Delta_{Q_1}^{Q}$ and $s\beta$ is  positive for every positive root $\beta$  of $(G_\sigma^0, A_1)$ (viewed as a root of $(G,A_1)$). Then the natural map $W(\ago_1; Q, G_\sigma^{0})\rightarrow W(\ago_1, Q)/W^{(G_\sigma^{0}, A_1)}$ is bijective. 
\end{lemm}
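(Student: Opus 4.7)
The plan is to treat the two assertions separately. For the injectivity of $W^{(G_\sigma,A_1)}\to W(\ago_1,\ago_1)$, I would observe that $N_{G_\sigma}(A_1)\cap C_G(A_1)=C_{G_\sigma}(A_1)$: any element of $N_{G_\sigma}(A_1)$ that acts trivially on $A_1$ already centralizes $A_1$ inside $G_\sigma$. Hence the induced map $N_{G_\sigma}(A_1)/C_{G_\sigma}(A_1)\to N_G(A_1)/C_G(A_1)$ has trivial kernel.

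For the bijectivity in the second assertion, the strategy is to pin down a canonical representative of each class by two independent positivity conditions, one controlling the left ambiguity by $W^Q$ and the other the right ambiguity by $W^{(G_\sigma^0,A_1)}$. The starting point is that $W^{P_1}$ fixes $\ago_1$ pointwise (being the Weyl group of $M_1=C_G(A_1)$), while the reflection subgroup $W^{(G_\sigma^0,A_1)}\subseteq W$ preserves $\ago_1$ setwise (it acts on $\ago_1$ through its natural reflection action). Consequently, for any $s\in W(\ago_1,Q)$ the subspace $s(\ago_1)\subseteq\ago_T$ depends only on the class of $s$ in $W(\ago_1,Q)/W^{(G_\sigma^0,A_1)}$; by hypothesis it contains $\ago_Q$, and since Weyl translates of facets of $\ago_T$ are facets, there is a unique standard parabolic $Q_1\subseteq Q$, associated to $P_1$, such that $s(\ago_1)=\ago_{Q_1}$.

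Having fixed $Q_1$, any lift of the class belongs to $W(\ago_1,\ago_{Q_1})$, and the remaining ambiguity is left multiplication by those elements of $W^Q$ that preserve $\ago_{Q_1}$ together with right multiplication by $W^{(G_\sigma^0,A_1)}$. I would then argue that the condition $s^{-1}\alpha>0$ for $\alpha\in\Delta_{Q_1}^{Q}$ is precisely the minimal length condition selecting a unique representative modulo the left ambiguity, since $\Delta_{Q_1}^Q$ cuts out the positive chamber in $\ago_{Q_1}^{Q}$ on which $W^{Q_1}\backslash W^Q$ acts simply transitively; and symmetrically, the condition $s\beta>0$ for every positive root $\beta$ of $(G_\sigma^0,A_1)$ selects the unique representative modulo the right ambiguity. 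The two conditions act on opposite sides of $s$, so imposing them jointly yields both existence and uniqueness of the representative in $W(\ago_1;Q,G_\sigma^{0})$.

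The main obstacle is that $W^{(G_\sigma^0,A_1)}$ is a reflection subgroup of $W$ but not in general a standard parabolic subgroup, so the classical theory of shortest coset representatives for parabolic subgroups does not apply verbatim on the right. The required extension is the theorem of Deodhar and Dyer on reflection subgroups of Coxeter groups: once a positive system of the sub-root system is chosen compatibly with a positive system of $W$ (which is automatic here, taking the positive roots of $(G_\sigma^0,A_1)$ to be those inherited from $(G,A_1)$, since the root subsystem of $G_\sigma^0$ is closed in that of $G$ by \cite[VI,1,7]{Bourbaki}), each right coset of $W^{(G_\sigma^0,A_1)}$ in $W$ contains a unique element sending the chosen positive roots of $(G_\sigma^0,A_1)$ into the positive roots of $(G,A_1)$. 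This is exactly the positivity condition in the lemma, and it makes the argument go through.
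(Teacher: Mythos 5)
Your injectivity argument for $W^{(G_\sigma,A_1)}\rightarrow W(\ago_1,\ago_1)$ is fine and agrees in substance with the paper. The problems are in the bijectivity, and they begin with bookkeeping: an element of $W(\ago_1,Q)$ is by definition a coset $W^Q s$ (equivalently a double coset $W^Q s W^{P_1}$), not a Weyl element, so the subspace $s(\ago_1)$ is \emph{not} determined by the class modulo $W^{(G_\sigma^0,A_1)}$ --- left multiplication by $W^Q$ moves it --- and for an arbitrary representative $s$ with $s(\ago_1)\supseteq \ago_Q$ one only gets $s(\ago_1)=\ago_{P'}$ for a \emph{semistandard} parabolic $P'\subseteq Q$, not a standard one (already in type $A_3$ with $Q=G$, where the containment $s(\ago_1)\supseteq\ago_G$ is vacuous, $s(\ago_1)$ ranges over all translates of $\ago_1$, including non-standard ones, and several distinct standard $Q_1$ lie in the same orbit). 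So your ``canonical'' $Q_1$ does not exist prior to choosing a good representative; producing it is exactly what the appeal to distinguished representatives of $W^Q\backslash W$ (\cite[1.3.7]{LabWal}) accomplishes, and that choice is part of the proof, not of the setup. Relatedly, the justification that $W^{Q_1}\backslash W^Q$ ``acts simply transitively'' on chambers of $\ago_{Q_1}^Q$ is not a correct statement, and $W^{(G_\sigma^0,A_1)}$ is not literally a subgroup of $W$ (it embeds only into $W(\ago_1,\ago_1)$).

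More seriously, the two positivity conditions do not decouple, contrary to your closing sentence: replacing $s$ by $ws$ with $w\in W^Q$ changes $s\beta$ and can destroy positivity on $\Phi(G_\sigma^0,A_1)^+$, while replacing $s$ by $su$ with $u\in W^{(G_\sigma^0,A_1)}$ changes $s^{-1}\alpha$ and hence the condition on $\Delta_{Q_1}^Q$. The crux of surjectivity is precisely this interaction. The paper first adjusts on the right by a chamber argument (every $\Phi(G,A_1)$-chamber is contained in a $\Phi(G_\sigma^0,A_1)$-chamber, and $W^{(G_\sigma^0,A_1)}$ acts simply transitively on the latter) to arrange $su^{-1}\beta>0$; then it takes the distinguished element $w\in W^Q su^{-1}$ with $w^{-1}\alpha>0$ for $\alpha\in\Delta_B^Q$, which is what makes $w(\ago_1)=\ago_{Q_1}$ with $Q_1$ standard and gives the left condition; and then it must still \emph{verify} that $w$ sends $\Phi(G_\sigma^0,A_1)^+$ to positive roots --- this is the final argument expanding $w\beta$ in the basis $w^{-1}\Delta_{Q_1}$ and ruling out mixed signs. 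Your proposal contains no substitute for that verification, so existence of a joint representative is not established. (The appeal to Deodhar--Dyer is also unnecessary: $W^{(G_\sigma^0,A_1)}$ is the full Weyl group of the sub-root system $\Phi(G_\sigma^0,A_1)$, whose positive system is induced from that of $\Phi(G,A_1)$, so simple transitivity on positive systems already yields the unique right-coset representative; but this only handles the right side, not the joint problem.)
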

\begin{proof}
For the first statement, note that we have $N_G(A_1)=N_G(M_1)$ and each coset of the quotient $N_G(M_1)/M_1$ can be represented by a Weyl element of $(G,T)$, hence  $N_G(M_1)/M_1\cong W(\ago_1, \ago_1)$. The injectivity is clear.

Let us prove the last statement. 

Injectivity is clear. In fact, the map $W(\ago_1, Q; G_{\sigma}^0)\rightarrow W(\ago_1, Q)$ is injective by \cite[1.3.7]{LabWal}, the result follows from the fact that the only element in a Weyl group preserving the set of positive roots is the identity.

Now we prove the surjectivity  of the map $ W(\ago_1, Q; G_\sigma^0)\rightarrow W(\ago_1, Q)/W^{(G^{0}_\sigma, A_1)}$. Let $s\in W(\ago_1, Q)$. Then there is a standard parabolic subgroup $R$ such that $s$ is represented by an element in $W(\ago_1, \ago_R)$ (see \cite[1.3.7]{LabWal} for the fact that $R$ can be assumed to be standard). Let $C$ be the positive chamber in $\ago_{R}$ (i.e., $C$ is determined by the roots in $\Delta_R$). Then $s^{-1}(C)$ is a chamber in $\ago_1$ cut up by the roots in $\Phi(G, A_1)$, hence it is contained in $u^{-1}(C_{\sigma})$ for some $u\in W^{(G_\sigma^0, A_1)}$ where $C_{\sigma}$ is the positive chamber for the root system $\Phi(G_\sigma^{0}, A_1)$. 
It is clear that if $\beta\in \Phi(G_\sigma^0, A_1)$, then $s\beta>0$ iff $u\beta>0$. In particular, we have $su^{-1}\beta>0$ for any positive $\beta$ in $\Phi(G_\sigma^0, A_1)$. It is proved in \cite[1.3.7]{LabWal} that there is an element $w$ in the coset $W^{Q}su^{-1}$  such that $w^{-1}\alpha>0$ for any $\alpha\in \Delta_{B}^Q$. Using \cite[1.3.7]{LabWal} again, we know that $w$ sends $\ago_1$ to $\ago_{Q_1}$ for some standard parabolic subgroup $Q_1$. Hence the restriction of $w^{-1}$ to $\ago_{Q_1}$ sends roots in $\Delta_{Q_1}^Q$ to positive roots. Note that as an element in $W^{Q}su^{-1}$, $w$ must sends a positive root $\beta$ in $\Phi(G_\sigma^0, A_1)$ to a root in $\Phi(Q, A_{Q_1})$. Hence we may write $$w\beta=\sum_{\gamma\in \Delta_{Q_1}^Q}m_{\gamma}\gamma+\sum_{\gamma\in\Delta_{Q_1}- \Delta_{Q_1}^Q}n_{\gamma}\gamma,  $$
with $n_{\gamma}\geq 0$ for any $\gamma\in \Delta_{Q_1}- \Delta_{Q_1}^Q$. 
While $w^{-1}\Delta_{Q_1}$ is a base for $\Phi(G, A_1)$, any element in $\Phi(G, A_1)$ can be written as a linear combination of roots in $w^{-1}\Delta_{Q_1}$ with coefficients either being all non-negative or being all non-positive. If $m_{\gamma}$ and $n_{\gamma}$ above are all non-negative, then we are done. Otherwise, they are all non-positive. Hence $n_{\gamma}=0 $ for any $\gamma\in \Delta_{Q_1}- \Delta_{Q_1}^Q$, and in this case we have
$$\beta=\sum_{\gamma\in \Delta_{Q_1}^Q}m_{\gamma}w^{-1}\gamma,$$
with $m_{\gamma}\leq 0$ and $w^{-1}\gamma>0$. This is a contradiction because $\beta$ is positive. 
\end{proof}

We are going to use the  formula \eqref{var-ch} to replace the sum over $\gamma\in M_Q(F)\cap o$ in $j_{Q,o}$ by allowing $s$ to be taken in the set $W(\ago_1, Q; G_{\sigma}^0)$, $\mu$ to be taken in the set $M_Q(F)\cap w_sG_{\sigma}^0(F)w_s^{-1}\backslash M_Q(F)$ and $u$ to be taken in the set $\mathcal{U}_{G_\sigma}(F)\cap w_s^{-1}M_Q(F)w_s $. In this way, for each $s\in W(\ago_1, \ago_{Q_1})$, the element $\gamma$ is repeated 
\begin{equation}\label{times}
|{sW^{(G_{\sigma}, A_1)} \cap W(\ago_1; Q, G_\sigma^0)   }| |\frac{M_Q(F) \cap w_sG_{\sigma}(F) w_s^{-1} }{ M_Q(F)  \cap  w_s G_\sigma^0(F)w_s^{-1}} |  \end{equation}
times.  
We have \begin{equation}\label{W1}
\frac{M_Q(F)  \cap  w_sG_{\sigma}(F)w_s^{-1} }{M_Q(F)  \cap w_s G_\sigma^{0}(F)  w_s^{-1} }\cong  \frac{  {s}^{-1}W^{Q}(\ago_{Q_1},\ago_{Q_1}){s}\cap W^{(G_\sigma , A_1)}}{ s^{-1}W^{Q}(\ago_{Q_1},\ago_{Q_1})s \cap W^{(G_\sigma^{0} , A_1 )}}.\end{equation}
While by our Lemma \ref{WaQG} and \cite[1.3.7]{LabWal}, we know that
\begin{align*}| W(\ago_1, Q; G_{\sigma}^{0})\cap sW^{(G_\sigma, A_1)}| &=   | W^Q(\ago_{Q_1}, \ago_{Q_1})  \backslash  W^Q(\ago_{Q_1}, \ago_{Q_1})sW^{(G_\sigma, A_1)}/W^{(G_\sigma^0,A_1)}|       \\
&= |  {s^{-1}W^Q(\ago_{Q_1}, \ago_{Q_1})s\cap W^{(G_\sigma, A_1)}  }\backslash {W^{(G_\sigma, A_1)}}/    W^{(G_\sigma^{0},A_1 ) }  | . \end{align*}
As $W^{(G_\sigma^{0}, A_1)}$ is normal in $W^{(G_\sigma, A_1)}$, the above cardinality equals:
 \begin{equation}\label{W2}
 \frac{ |    {s^{-1}W^Q(\ago_{Q_1}, \ago_{Q_1})s\cap W^{(G_\sigma^{0},A_1)} }|| W^{(G_\sigma, A_1)}|  }{   |  {s^{-1}W^Q(\ago_{Q_1}, \ago_{Q_1})s\cap W^{(G_\sigma, A_1)}||  W^{(G_\sigma^0,A_1)}     }| }.   \end{equation}
Combining (\ref{W1}) and (\ref{W2}), we conclude that \eqref{times} equals: 
$\frac{|W^{(G_\sigma,A_1)}|}{| W^{(G_\sigma^0,A_1)}|}. $
This is just $|\pi_{0}(G_{\sigma})(F)|$.

\subsubsection{}
It follows from above discussions that $j_{Q,o}$ equals the sum over $s$ in  $W(a_{1}; Q, G_\sigma^{0})$ of 
\begin{equation}\label{334E1} \frac{1}{| \pi_{0}(G_{\sigma})(F)| }\sum_{\mu}\sum_{\nu}\sum_{u}\sum_{n} f(y^{-1}\nu^{-1}\mu^{-1}w_s\sigma \mu w_s^{-1}\mu n\nu y   ), \end{equation}
in which $\mu$, $\nu$, $u$ and $n$ are summed over 
$$ M_Q(F)\cap w_sG_{\sigma}^{0}(F)w_s^{-1}\backslash M_Q(F), $$
$$  N_{Q, \mu^{-1}w_s \sigma w_s^{-1}\mu}(F) \backslash N_Q(F), $$
 $$ w_s^{-1}M_Q(F)w_s \cap \mathcal{U}_{G_\sigma}(F), $$
 and 
 $$N_{Q,  \mu^{-1}w_s\sigma w^{-1}_s\mu  }(F)$$
respectively. 
We replace $w_s^{-1}\mu n$ by $nw_s^{-1}\mu$ in the expression \eqref{334E1}, then the sum in $n$ needs to be changed to the sum taken over $n\in w_sN_{Q, \sigma}(F)w_s.$
Besides, since $$N_{Q, \mu^{-1}w_s\sigma w_s^{-1}\mu}(F)=\mu^{-1}(N_Q(F)\cap w_sG^0_{\sigma}(F)w_s^{-1})\mu,   $$
we can combine the sum in $\mu$ and $\nu$ to a sum over $$\pi\in Q(F)\cap w_sG_{\sigma}^{0}(F)w_s^{-1}\backslash Q(F),$$
and the set that $u$ is taken in is unchanged. We obtain
$$j_{Q, o}(y)=\frac{1}{| \pi_{0}(G_{\sigma})(F)|  }   \sum_{s}\sum_{\pi}\sum_{u}\sum_{n\in w_{s}^{-1}N_{Q, \sigma}(F)w_s } f(y^{-1} \pi^{-1} w_s\sigma u n w_s^{-1}\pi y ). $$
We substitute this into the expression
\begin{equation}\label{3.1}
\sum_{\delta\in Q(F)\backslash G(F)}  \htau_Q(H_Q(\delta x)+  s_{\delta x}\xi  )j_{Q, o}(\delta x).
\end{equation}
Then we take the sum over $\delta$ inside the sum over $s$ and combine it with the sum over $\pi$. For a given $s$, this produces a  sum over 
$$\eta\in  Q(F)\cap w_sG_{\sigma}^{0}(F)w_s^{-1}\backslash G(F), $$
and the expression $(\ref{3.1})$ becomes 
\begin{equation}\frac{1}{|\pi_{0}(G_{\sigma})(F)|}  \sum_{s}\sum_{\eta}\sum_{u}\sum_{n} f(x^{-1}\eta^{-1}w_s \sigma u n w_s^{-1}\eta x   )  \htau_{Q}(H_Q(\eta x) +   s_{\eta x}\xi  ).   \end{equation}

Let $R= w_s^{-1}Qw_s\cap G_{\sigma}^{0}$. Then $R$ is a standard parabolic subgroup of $G_\sigma$ with Levi decomposition $$R=M_R N_R=( w_s^{-1}M_Q w_s\cap G_{\sigma}^{0})( w_s^{-1}N_Qw_s\cap G_{\sigma}^{0}). $$
Replace $\eta$ by $w_s\eta$, changing the corresponding sum to one over $R(F)\backslash G(F)$. It follows that the expression (\ref{3.1}) equals the sum over $$s\in W(\ago_1, Q; G_{\sigma}^{0})$$
of $$\frac{1}{|\pi_{0}(G_{\sigma})(F) |}\sum_{\eta   \in R(F)\backslash G(F)   }\htau_Q(H_Q(w_s \eta x)+ s_{w_s\eta x}\xi ) \sum_{u\in \mathcal{U}_{M_R}(F) } \sum_{n\in N_R(F)}  f(x^{-1}\eta^{-1} \sigma un \eta x  ) .   $$

%%%%%%%%%%%%%%%%%%%%%%

%Now use the same discussion in \cite[p.186-188]{Ageom}, 
%%%%%%%%%%%%%%%%%%%%%%%
We see that $j_{o}^{\xi}(x)$ equals the sum over standard parabolic subgroups $R$ of $G_{\sigma}^{0}$ and elements $\eta$ in $R(F)\backslash G(F)$ of the product of 
$$\frac{1}{| \pi_{0}(G_{\sigma})(F) | } \sum_{u\in \mathcal{U}_{M_R}(F)}\sum_{n\in N_R(F)} f(x^{-1}\eta^{-1}\sigma un \eta x    ) \d n         $$
and 
\begin{equation} \label{3.2}\sum_{(Q,s)}(-1)^{\dim \ago_Q^G}\htau_Q(H_Q(w_s\eta x)  + s_{w_s \eta x}\xi    )  ,  \end{equation}
where $(Q, s)$ is to be summed over the set \begin{equation}\label{set.}
\{ (Q,s)\in \mathcal{P}(B)\times W(\ago_1, Q; G_{\sigma}^{0} )|     w_s^{-1}Qw_s\cap G_\sigma^{0}=R \}. \end{equation}
We shall write $\mathcal{F}_R(M_1)$ for the set of parabolic subgroups $P\in \mathcal{F}(M_1)$ such that $P_{\sigma}^{0}=R$. It turns out that the map $(Q,s)\mapsto w^{-1}_sQw_s\cap G_{\sigma}^0$ establishes a bijection between the set \eqref{set.} and the set $\mathcal{F}_R(M_1)$. 
 
 \subsubsection{}
 Suppose that $(Q,s)$ is a pair in \eqref{set.}. Then $P=w_s^{-1}Qw_s$ is  a parabolic subgroup in $\mathcal{F}_R(M_1)$. Conversely, for any $P\in \mathcal{F}_R(M_1)$, there is a unique standard parabolic subgroup $Q$ and an element $s\in W$ such that $P=w_s^{-1}Qw_s$. We demand that $s^{-1}\alpha$ be positive for each root $\alpha\in \Delta_{B}^{Q}$, so that $s$ will be also uniquely determined. Since the space $s(\ago_{P_1})$ includes $\ago_Q$, it must be of the form $\ago_{Q_1}$ for a standard parabolic subgroup $Q_1$. Otherwise, $s^{-1}$ would map some positive, non-simple linear combination of roots in $\Delta_{B}^{Q}$ to a simple root in $\Delta_{B}$, a contradiction. Moreover, combining this property with the fact that $Q$ includes $w_sRw_s^{-1}$, we see that $s\beta$ is positive for every positive root $\beta$ of $(G_{\sigma}^{0}, A_1)$. It follows that the restriction of $s$ to $\ago_{P_1}$ defines a unique element in $W(\ago_1, Q; G_{\sigma}^{0})$. Therefore, the sum in (\ref{3.2}) can be replaced by the sum over $P\in \mathcal{F}_R(M_1)$.

As the Weyl element $w_s$ is chosen in $G(\mathbb{F}_q)$, we have
 \begin{equation}H_Q(w_s\eta x) =sH_P(\eta x).  \end{equation}
So $$\htau_Q(H_Q(w_s \eta x) +  s_{w_s\eta x}\xi )=\htau_P(H_P(\eta x) +  \xi_{\eta x, P}), $$
where $\xi_{\eta x, P}=s^{-1}s_{w_s\eta x}\xi$.

 \begin{prop}(Compare  \cite[Lemma 3.1.]{Ageom})\label{interm}
Given a class $o\in \mathcal{E}$ admitting the Jordan-Chevalley decomposition, we choose a semisimple representative $\sigma$ contained in a standard minimal Levi subgroup. Then
$j_{o}^{\xi}(x)   $ equals the sum over standard parabolic subgroups $R$ of $G_{\sigma}^0$ and elements $\eta\in R(F)\backslash G(F)$ of the product of 
$$  \frac{1}{| \pi_{0}(G_{\sigma})(F)|}    \sum_{u\in \mathcal{U}_{M_R}(F)   }\sum_{n\in N_R(F)} f(x^{-1}\eta^{-1}\sigma u n\eta x  )  $$
with $$   \sum_{P\in {\mathcal{F}}_R(M_1)}(-1)^{\dim\ago_{P}^{G}}  \htau_P(H_P(\eta x)+ \xi_{\eta x, P}) . $$
 \end{prop}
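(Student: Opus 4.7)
The plan is to follow Arthur's template and rewrite each contribution $j_{Q,o}(\delta x)$ by parametrizing the elements of $M_Q(F)\cap o$ via their Jordan decomposition, merging the outer sum over $\delta\in Q(F)\backslash G(F)$ with the resulting inner sums, and finally reinterpreting the surviving combinatorial data as a pair consisting of a standard parabolic $R$ of $G_\sigma^0$ and a parabolic $P\in\mathcal{F}_R(M_1)$. Everything needed is already assembled in the preceding subsection; the task is to organize it into the statement.

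Concretely, I would start from the parametrization \eqref{var-ch}: every $\gamma\in M_Q(F)\cap o$ has the form $\gamma=\mu^{-1}w_s\sigma u w_s^{-1}\mu$ for a Weyl element $s\in W(\ago_1,\ago_{Q_1})$, a coset representative $\mu\in M_Q(F)$, and a unipotent $u\in \mathcal{U}_{G_\sigma}(F)\cap w_s^{-1}M_Q(F)w_s$. Using Lemma \ref{WaQG} I would fix canonical representatives $s\in W(\ago_1,Q;G_\sigma^0)$; the resulting over-counting is precisely the factor $|\pi_0(G_\sigma)(F)|$ produced by the double-coset calculation \eqref{times}, \eqref{W1}, \eqref{W2}. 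Substituting into $j_{Q,o}$ and combining the outer sum over $\delta\in Q(F)\backslash G(F)$ with the sum over $\mu$ yields a single sum over $\eta\in Q(F)\cap w_sG_\sigma^0(F)w_s^{-1}\backslash G(F)$, which after the substitution $\eta\mapsto w_s\eta$ becomes a sum over $R(F)\backslash G(F)$ with $R=w_s^{-1}Qw_s\cap G_\sigma^0$ a standard parabolic of $G_\sigma^0$.

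At this point the unipotent piece is already in the desired shape $\sum_{u\in\mathcal{U}_{M_R}(F)}\sum_{n\in N_R(F)}f(x^{-1}\eta^{-1}\sigma un\eta x)$, and it remains to swap the order of summation so that $R$ becomes the outer index. The key combinatorial step is the bijection between pairs $(Q,s)$ with $w_s^{-1}Qw_s\cap G_\sigma^0=R$ and elements $P\in\mathcal{F}_R(M_1)$ through $P=w_s^{-1}Qw_s$; uniqueness of the reconstruction of $(Q,s)$ from $P$ uses the positivity constraint $s^{-1}\alpha>0$ for all $\alpha\in\Delta_B^Q$, which forces $s(\ago_{P_1})=\ago_{Q_1}$ for a standard $Q_1$ and then forces $s$ to lie in $W(\ago_1,Q;G_\sigma^0)$. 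Finally, since $w_s$ can be chosen in $G(\mathbb{F}_q)\subseteq G(\mathcal{O})$, the identity $H_Q(w_s\eta x)=sH_P(\eta x)$ holds, so $\htau_Q(H_Q(w_s\eta x)+s_{w_s\eta x}\xi)=\htau_P(H_P(\eta x)+\xi_{\eta x,P})$ with $\xi_{\eta x,P}=s^{-1}s_{w_s\eta x}\xi$, yielding the claimed formula.

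The main obstacle I anticipate is bookkeeping rather than any new idea: tracking multiplicities carefully enough to distinguish $W^{(G_\sigma,A_1)}$ from $W^{(G_\sigma^0,A_1)}$ so that exactly $|\pi_0(G_\sigma)(F)|$ falls out, and verifying the bijection $(Q,s)\leftrightarrow P$ with the correct positivity conventions. Once these identifications are clean, the passage from $\htau_Q$ to $\htau_P$ via the $\mathbb{F}_q$-rationality of $w_s$ is immediate and the proposition follows.
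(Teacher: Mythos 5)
Your proposal is correct and follows essentially the same route as the paper's own proof: the parametrization \eqref{var-ch} with representatives fixed via Lemma \ref{WaQG}, the over-counting factor $|\pi_{0}(G_{\sigma})(F)|$ from \eqref{times}--\eqref{W2}, the merging of the $\delta$- and $\mu$-sums followed by the substitution $\eta\mapsto w_s\eta$ to land on $R(F)\backslash G(F)$, the bijection $(Q,s)\leftrightarrow P\in\mathcal{F}_R(M_1)$, and the identity $H_Q(w_s\eta x)=sH_P(\eta x)$ coming from $w_s\in G(\mathbb{F}_q)$. The only detail elided is the intermediate handling of the unipotent-radical sum (the $\nu$- and $n$-sums that get recombined before merging with $\delta$), which is exactly the bookkeeping you flag.
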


 \subsection{Proof of Theorem \ref{expansion}}\label{vanish}
 \subsubsection{}
 Now suppose that the support of the test function $f$ is contained in $ \mathcal{I}_\infty\times \prod_{v\neq \infty}G(\ooo_v)$. We will further simplify the intermediate expression in Proposition \ref{interm}.
  
First of all, suppose $o\in \mathcal{E}$ is a class such that $J_{o}^{G, \xi}(f)\neq 0$.
%As $X\mapsto J_{o}^{G, \xi, X}(f)$ is a quasi-polynomial (\cite[Theorem 5.6]{Yu2}), there is an $X$ that we can assume to be deep enough in the positive chamber so that $J_{o}^{G, \xi, X}(f)\neq 0$. In \cite[Theorem 5.1]{Yu2}, we have proved that in this case \[J_{o}^{G, \xi, X}(f)=\int_{G(F)\backslash G(\AAA)} F^{G,\xi,X}(x) \sum_{\gamma\in o} f(x^{-1}\gamma x)\d x, \]where $F^{G,\xi,X}(\cdot)$ is the characteristic function of the compact set of $(\xi,X)$-semistable elements in $G(F)\backslash G(\AAA)$ (\cite[Definition 6.1, Proposition 6.5]{Yu2}).
Since the support of $f$ is contained in $G(\ooo)$, it implies that there is an element $\gamma\in o$ and $x\in G(\AAA)$ such that $x^{-1}\gamma x\in G(\ooo)$.  However, this implies that $\gamma^n \in xG(\ooo)x^{-1}\cap G(F)$ for any $n$. Note that $xG(\ooo)x^{-1}\cap G(F)$ is finite, so the element $\gamma$ is a torsion element. By Proposition \ref{JordanC}, $\gamma$ admits the Jordan-Chevalley decomposition. Moreover, Theorem \ref{staconj} implies that we can take as representative a semisimple element $\sigma\in G(\mathbb{F}_q)$. 

In the Lie algebra case, we can use the same argument, then Proposition \ref{JordanL} implies that elements in $o\in \mathcal{E}$ admit Jordan-Chevalley decomposition and Theorem \ref{staconjLie} implies that we can take as representative a semisimple element $y\in \ggg(\mathbb{F}_q)$.

Up to $G(\mathbb{F}_q)$-conjugation, we assume that $\sigma$ lies in a minimal standard Levi subgroup $M_1$ defined over $\mathbb{F}_q$. Suppose that $M_1$ is the standard Levi subgroup of the standard parabolic subgroup $P_1$. The group $M_1$ is also minimal among Levi subgroups defined over $F$ since the split rank of $M_{1\sigma}^{0}$ defined over $\mathbb{F}_q$ is unchanged after the base change to $F$. 

We need to prepare some results thanks to the smallness of the support of $f$. 
 
 \begin{lemm}\label{integral}
Let $P$ be a standard parabolic subgroup of $G$, and $\sigma\in M_P(\mathbb{F}_q)$ be a semisimple element. For any $n\in N(\AAA)$ and $u\in N_{P,\sigma}(\AAA)$ such that $$\sigma^{-1}n^{-1}\sigma un\in  N_P(\mathcal{O}), $$
we have $n\in N_{P, \sigma}(\AAA)N_P(\mathcal{O})$. 
\end{lemm}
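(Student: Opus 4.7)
The statement is adelic, so I would first reduce to a local statement at each place $v$: if $n \in N_P(F_v)$ and $u \in N_{P,\sigma}(F_v)$ satisfy $\sigma^{-1}n^{-1}\sigma u n \in N_P(\mathcal{O}_v)$, then $n \in N_{P,\sigma}(F_v) N_P(\mathcal{O}_v)$. The adelic factorisation then follows by assembling the local ones, since at almost every place everything is already integral. By Proposition \ref{smc}, after conjugating $\sigma$ by an element of $G(\mathbb{F}_q)$ (and conjugating $n, u$ accordingly), I may assume $\sigma \in T(\mathbb{F}_q)$. Then $N_P$ and $N_{P,\sigma}$ are both generated by root subgroups: ordering the roots of $\Phi(N_P)$ by height, $N_P \cong \prod_{\alpha \in \Phi(N_P)} U_\alpha$ and $N_{P,\sigma} = \prod_{\alpha(\sigma)=1} U_\alpha$, and the multiplication map $N_{P,\sigma} \times N_P^{\perp} \to N_P$ is an isomorphism of $\mathbb{F}_q$-varieties, where $N_P^{\perp} := \prod_{\alpha(\sigma)\ne 1} U_\alpha$.

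The key reduction is to write $n = a b$ with $a \in N_{P,\sigma}(F_v)$ and $b \in N_P^\perp(F_v)$; using $\sigma^{-1} a^{-1} \sigma = a^{-1}$, the hypothesis rewrites as $\sigma^{-1} b^{-1} \sigma \cdot u' \cdot b \in N_P(\mathcal{O}_v)$, where $u' := a^{-1} u a \in N_{P,\sigma}(F_v)$, and the goal becomes $b \in N_P(\mathcal{O}_v)$. I would then parametrise $b = \prod_{\gamma \in \Phi(N_P)^{\sigma,\perp}} u_\gamma(X_\gamma)$ and $u' = \prod_{\beta \in \Phi(N_P)^{\sigma}} u_\beta(W_\beta)$ in height order, and expand the product $\sigma^{-1} b^{-1} \sigma \cdot u' \cdot b$ using the Chevalley commutator relations $[U_{\alpha_1}, U_{\alpha_2}] \subseteq U_{\alpha_1+\alpha_2}$. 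The $U_\delta$-coordinate of the result, for $\delta \in \Phi(N_P)$ of height $h$, has the form $L_\delta + P_\delta$, where $L_\delta = (1-\delta(\sigma)^{-1}) X_\delta$ if $\delta(\sigma)\neq 1$ and $L_\delta = W_\delta$ if $\delta(\sigma) = 1$, and $P_\delta$ is a polynomial in the coordinates $X_{\gamma'}, W_{\beta'}$ associated to roots of strictly smaller height than $h$.

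The proof then concludes by induction on the height $h$. At $h=1$, the polynomial $P_\delta$ vanishes, so integrality forces $W_\delta \in \mathcal{O}_v$ whenever $\delta(\sigma)=1$, and $(1-\delta(\sigma)^{-1}) X_\delta \in \mathcal{O}_v$ whenever $\delta(\sigma)\neq 1$. Since $\sigma \in G(\mathbb{F}_q)$ is semisimple torsion, $\delta(\sigma)$ is a root of unity of order coprime to $p = \mathrm{char}(\mathbb{F}_q)$, so $1-\delta(\sigma)^{-1}$ is a unit in $\mathcal{O}_v$ and $X_\delta \in \mathcal{O}_v$. For $h > 1$, $P_\delta \in \mathcal{O}_v$ by the inductive hypothesis applied to strictly lower heights, and the same argument gives the integrality of all $X_\delta$ and $W_\delta$ at height $h$. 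Thus $b \in N_P^\perp(\mathcal{O}_v) \subseteq N_P(\mathcal{O}_v)$, completing the proof. The main technical point is the explicit verification that the commutator-induced polynomial $P_\delta$ involves only variables associated to roots of strictly smaller height; this is a direct consequence of the inclusions $[U_{\alpha_1}, U_{\alpha_2}] \subseteq U_{\alpha_1+\alpha_2}$, because any non-trivial decomposition $\delta = \sum_i \alpha_i$ in $\Phi(N_P)$ forces each $\alpha_i$ to have height strictly less than $h$.
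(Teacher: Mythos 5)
The local-to-adelic reduction and the height induction in Chevalley coordinates are sound, and in the split case they essentially reprove the structural fact the paper imports wholesale: its own proof of Lemma \ref{integral} is two lines, applying the existence part of \cite[Proposition 7.2]{Yu2} (a decomposition statement for elements of $N_P(A)$ of the form $\sigma^{-1}m^{-1}\sigma u' m$, valid over an arbitrary base $A$ with a uniqueness clause) at $A=\mathcal{O}$ and its uniqueness part at $A=\mathbb{A}$. The genuine gap in your argument is the very first step: you claim, citing Proposition \ref{smc}, that after conjugating by an element of $G(\mathbb{F}_q)$ you may assume $\sigma\in T(\mathbb{F}_q)$. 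Proposition \ref{smc} only upgrades $G(k)$-conjugacy of two semisimple elements of $G(\mathbb{F}_q)$ to $G(\mathbb{F}_q)$-conjugacy; it does not place $\sigma$ in the split torus, and in general this is impossible. For instance, for $G=SL_3$ and $P$ the standard parabolic with Levi $M_P\cong GL_2$, a semisimple $\sigma\in M_P(\mathbb{F}_q)$ whose $GL_2$-block has irreducible characteristic polynomial is not $G(\mathbb{F}_q)$-conjugate into $T(\mathbb{F}_q)$. The lemma is invoked (through Lemma \ref{support}, in the proof of Theorem \ref{expansion}) precisely for a general semisimple $\sigma\in G(\mathbb{F}_q)$, before one knows that the minimal Levi is $T$, so the general case cannot be discarded. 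Everything after your reduction genuinely uses splitness: the decomposition of $N_P$ into $T$-root subgroups individually defined over $\mathbb{F}_q$, the scalars $\delta(\sigma)$ lying in $\mathbb{F}_q^{\times}$, and the unit $1-\delta(\sigma)^{-1}$.

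The computation can be repaired, but not for free. One option is to run your coordinate argument after an unramified base change of $F_v$ splitting an $\mathbb{F}_q$-rational maximal torus $T'\subseteq M_P$ containing $\sigma$, and then descend; the descent requires the uniqueness of the factorization $n=ab$ together with Galois-stability of $N_{P,\sigma}$ and of the chosen complement, and the latter is delicate because an ordered product of root subgroups indexed by a Galois-permuted set of roots need not be a Galois-stable subvariety. A cleaner repair avoids root coordinates entirely: filter $N_P$ by $\sigma$-stable normal subgroups defined over $\mathbb{F}_q$ (for example the descending central series) with vector-group quotients; on each graded piece the endomorphism $1-\mathrm{Ad}(\sigma)$ is defined over $\mathbb{F}_q$ and semisimple, so its kernel (the $\sigma$-fixed part) and its image give an $\mathbb{F}_q$-rational decomposition, and it is invertible on its image with inverse defined over $\mathbb{F}_q$, hence preserving $\mathcal{O}_v$-points; your induction then goes through verbatim for an arbitrary semisimple $\sigma\in M_P(\mathbb{F}_q)$, and this is essentially what the cited \cite[Proposition 7.2]{Yu2} packages. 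As written, however, the proof establishes the lemma only for $\sigma$ lying in (a $G(\mathbb{F}_q)$-conjugate of) $T(\mathbb{F}_q)$, which is strictly weaker than the statement needed.
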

\begin{proof}
Applying proposition \cite[Proposition 7.2]{Yu2} to the element $x:=\sigma^{-1}n^{-1}\sigma un$ when $A=\ooo$ and use the uniqueness part of this proposition when $A=\AAA$, we see that the set $N_{\sigma}(\AAA)n$ has a nonzero intersection with $N(\mathcal{O})$. 
\end{proof}

 \begin{lemm}[Chaudouard]\label{support}
 Let $\sigma\in G(\mathbb{F}_q)$, and $R$ be a standard parabolic subgroup of $G_\sigma^0$. 
 If $$ (\sigma \mathcal{U}_{M_R(F)}{N_{R}(F)})^{x}\cap G(\ooo)\neq \emptyset ,   $$
 then $x\in M_R(F)M_{1,\sigma}^{0}(\AAA) N_{1, \sigma}(\AAA) G(\ooo)$. 
 \end{lemm}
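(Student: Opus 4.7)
The plan is to reduce the support condition to a statement inside the centraliser $G_{\sigma}^{0}$ by handling the semisimple and unipotent parts of $\sigma u n$ separately, and then to extract an $F$-rational factor in $M_{R}(F)$ by combining Iwasawa decompositions with Lemma \ref{integral}.

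First I would observe that the element $y = x^{-1}\sigma u n x$ lies in $G(\ooo)$ by hypothesis and is a torsion element, because $\sigma \in G(\mathbb{F}_{q})$ has order prime to $p$, commutes with the unipotent $u n \in R(F)\subseteq G_{\sigma}^{0}(F)$, and unipotent elements have $p$-power order. Corollary \ref{intss} then places both Jordan factors $x^{-1}\sigma x$ and $x^{-1}(un)x$ in $G(\ooo)$. Applying at each place the Kottwitz conjugacy result (Proposition 7.1 of \cite{Ko3}, the same ingredient used in the proof of Theorem \ref{staconj}) to the two $G(\ooo_{v})$-valued semisimple elements $\sigma$ and $x_{v}^{-1}\sigma x_{v}$, which are $G(F_{v})$-conjugate, yields $k_{v}\in G(\ooo_{v})$ with $x_{v}k_{v}^{-1}\in G_{\sigma}(F_{v})$. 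Since $G_{\sigma}$ is smooth and $x_{v}\in G(\ooo_{v})$ almost everywhere, this globalises to $x = gk$ with $g\in G_{\sigma}(\AAA)$ and $k\in G(\ooo)$. Using Lemma \ref{cooll} and the fact that representatives of $\pi_{0}(G_{\sigma})$ can be taken inside $G_{\sigma}(\mathbb{F}_{q})\subseteq G(\ooo)$, I would absorb a component representative into $k$ to assume $g\in G_{\sigma}^{0}(\AAA)$.

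Second, the Iwasawa decomposition of $G_{\sigma}^{0}(\AAA)$ along $R = M_{R}N_{R}$ gives $g = m_{R}n_{R}k_{\sigma}$ with $m_{R}\in M_{R}(\AAA)$, $n_{R}\in N_{R}(\AAA)$ and $k_{\sigma}\in G_{\sigma}^{0}(\ooo)\subseteq G(\ooo)$. Substituting into the unipotent condition $g^{-1}(un)g\in G_{\sigma}^{0}(\AAA)\cap G(\ooo)=G_{\sigma}^{0}(\ooo)$ and expanding via the semidirect product $R = M_{R}\ltimes N_{R}$, the uniqueness of the Levi decomposition forces the $M_{R}$-component $m_{R}^{-1}u m_{R}$ to lie in $M_{R}(\ooo)$ and the $N_{R}$-component to lie in $N_{R}(\ooo)$.

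The last step, which I expect to be the main obstacle, is to extract an $M_{R}(F)$-rational piece from $m_{R}\in M_{R}(\AAA)$ out of the relation $m_{R}^{-1}u m_{R}\in M_{R}(\ooo)$. I would invoke the Borel--Tits theorem to fix an $F$-rational parabolic $Q$ of $M_{R}$ whose unipotent radical contains $u$, then use the Iwasawa decomposition of $M_{R}$ with respect to $Q$ to write $m_{R} = \ell\,\nu\, k_{M_{R}}$ with $\ell$ in a Levi of $Q$, $\nu$ in its unipotent radical, and $k_{M_{R}}\in M_{R}(\ooo)$. Applying Lemma \ref{integral} inside $Q$ to the conjugation relation then traps $\nu$, up to $M_{R}(\ooo)$, in the centraliser $(N_{Q})_{u}$, which is $F$-rational, and the hypothesis $u\in\mathcal{U}_{M_{R}}(F)$ lets the resulting $\ell$-factor be rearranged as a product of an element of $M_{R}(F)$ with factors in the minimal parabolic $M_{1,\sigma}^{0}(N_{1,\sigma}\cap M_{R})$ of $M_{R}$. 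Reassembling and using $N_{R}\subseteq N_{1,\sigma}$ produces the decomposition $x\in M_{R}(F)\,M_{1,\sigma}^{0}(\AAA)\,N_{1,\sigma}(\AAA)\,G(\ooo)$. The delicate point is that Lemma \ref{integral} intertwines $F$-rational and adelic data precisely through the unipotent conjugation, and ensuring that the $F$-rational factor emerges cleanly from this interplay, rather than being lost to the adelic slack in the Iwasawa decomposition of $M_{R}$, is where the real work lies.
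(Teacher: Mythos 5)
Your Step 3 is a genuine gap, and it is exactly where the content of the lemma is supposed to be produced. Lemma \ref{integral} is about the twisted relation $\sigma^{-1}n^{-1}\sigma u n\in N_P(\ooo)$ for a \emph{semisimple} element $\sigma\in M_P(\mathbb{F}_q)$; it cannot be applied ``inside $Q$'' with the unipotent $u$ playing the role of the twisting element, so the claimed trapping of $\nu$ in $(N_Q)_u$ modulo integral points has no justification, and the subsequent rearrangement of the $\ell$-factor into an element of $M_R(F)$ times the minimal parabolic is precisely the point you concede as ``where the real work lies''. As written the proposal therefore does not prove the statement. There is also a looseness in Step 1: what you need is the \emph{integral} conjugacy statement (two $G(F_v)$-conjugate semisimple elements of $G(\ooo_v)$ are $G(\ooo_v)$-conjugate) for a $\sigma$ whose reduction need not be regular and whose centralizer $G_\sigma$ may be disconnected; this is true in the present setting because $G_\sigma$ is a smooth group scheme over $\ooo_v$, so the transporter is a torsor under it and one can conclude by Hensel/Lang together with a $\pi_0$-d\'evissage, but it is more than a bare citation of \cite[Proposition 7.1]{Ko3} (which the paper invokes either for stable-versus-rational conjugacy or, in Lie-algebra form, for elements with regular reduction).

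Ironically, once Step 1 is granted, your Step 3 is unnecessary and your difficulty is self-inflicted: since $P_{1\sigma}^{0}=M_{1\sigma}^{0}N_{1\sigma}$ is an $F$-parabolic of the reductive $\mathbb{F}_q$-group $G_\sigma^{0}$ and $G_\sigma^{0}(\ooo_v)$ is hyperspecial, the Iwasawa decomposition $G_\sigma^{0}(\AAA)=M_{1\sigma}^{0}(\AAA)N_{1\sigma}(\AAA)G_\sigma^{0}(\ooo)$ turns $x\in G_\sigma^{0}(\AAA)G(\ooo)$ directly into the conclusion, with the $M_R(F)$-factor taken to be trivial; decomposing along $R$ and then trying to split $m_R$ further is what created the unsolved problem. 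Note that this is a genuinely different route from the paper's: the paper does not pass through integral conjugacy of semisimple parts at all, but observes that Chaudouard's proof of \cite[Lemme 6.2.5]{Chau} for $GL_n$ goes through verbatim, the only inputs beyond $GL_n$ being that every unipotent element of $G_\sigma^{0}(F)$ lies in the unipotent radical of an $F$-parabolic of $G_\sigma^{0}$ (\cite[Proposition 3.2]{Yu2}) and Lemma \ref{integral}; the $M_R(F)$-factor in the statement is what that elementary argument naturally produces. So either replace your Step 3 by the Iwasawa step above and justify the integral conjugacy carefully, or follow Chaudouard's argument as the paper does.
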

\begin{proof}
This is a generalisation of the case when $G=GL_n$ of Lemma 6.2.5. of \cite{Chau}. His proof is valid in our cases. One argument used in his proved is that any unipotent element in $G_{\sigma}^{0}(F)$ is contained in the unipotent radical of a parabolic subgroup of $G_{\sigma}^0$. This is \cite[Proposition 3.2]{Yu2}(and \cite[Proposiiton 3.3]{Yu2} for the Lie algebra version). Another argument used in his proof is proved in Lemma \ref{integral}. 
\end{proof}

After these lemmas, since the support of the test function $f$ is contained in $\mathcal{I}_{\infty}\times \prod_{v\neq \infty}G(\ooo_v)$, the element $\sigma$ is $G(\kappa_\infty)$-conjugate to an element in $B(\kappa_\infty)$. While $\kappa_\infty=\mathbb{F}_q$, we must have $M_1=T$.  

\subsubsection{}
Now we prove that $J^{G, \xi}_o(f)$ vanishes if $o$ is not elliptic. We need the following result first. 
\begin{lemm}\label{okay}
Let $s\in W(\ago_1, Q; G_{\sigma}^{0})$ as defined in Lemma \ref{WaQG} and $R= w_s^{-1}Qw_s\cap G_{\sigma}^0$. Suppose that $x$ is an element in $G(\AAA)$ such that 
$$ (\sigma \mathcal{U}_{M_R(F)}{N_{R}(F)})^{x}\cap (\mathcal{I}_\infty\times \prod_{v\neq \infty}G(\ooo_v)) \neq \emptyset .   $$
We have $$W^{Q}s_{w_s x}=W^{Q}s s_x. $$
\end{lemm}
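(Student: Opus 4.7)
The plan is to apply Lemma \ref{support} (after refining its conclusion at the place $\infty$) to produce a convenient decomposition of $x$, and then to compare the two Bruhat classes using the structural properties of $s \in W(\ago_1, Q; G_\sigma^0)$. Recall from the discussion preceding the lemma that $\sigma$ is $G(\kappa_\infty)$-conjugate to an element of $B(\kappa_\infty)$, so $M_1 = T$ and hence $M_{1,\sigma}^{0} = T$.

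First, I would establish a strengthening of Lemma \ref{support} at the place $\infty$: under the refined hypothesis
\[ (\sigma \mathcal{U}_{M_R}(F) N_R(F))^x \cap \bigl(\mathcal{I}_\infty \times \prod_{v\neq\infty} G(\ooo_v)\bigr) \neq \emptyset, \]
one obtains $x = mtng$ with $m \in M_R(F)$, $t \in T(\AAA)$, $n \in N_{1,\sigma}(\AAA)$, and $g \in \mathcal{I}_\infty \times \prod_{v\neq\infty} G(\ooo_v)$, so in particular $g_\infty \in \mathcal{I}_\infty$. This refinement follows by repeating the proof of Lemma \ref{support} place by place: the only change occurs at $\infty$, where Lemma \ref{integral} is applied with $\mathcal{I}_\infty$ in place of $G(\ooo_\infty)$. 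Checking that this local step adapts cleanly is the main technical point of the argument.

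Next, set $P := w_s^{-1} Q w_s$, a semistandard parabolic with Levi $M_P = w_s^{-1} M_Q w_s$. I would verify that $mtn \in P(\AAA)$ by noting that $M_R \subseteq M_P$ and $T \subseteq M_P$ are immediate from $M_R = M_P \cap G_\sigma^0$, and that the definition of $W(\ago_1, Q; G_\sigma^0)$ forces $w_s$ to send positive roots of $(G_\sigma^0, T)$ to positive roots of $(G, T)$; hence $w_s N_{1,\sigma} w_s^{-1} \subseteq N_B$, which yields $N_{1,\sigma} \subseteq w_s^{-1} N_B w_s \subseteq w_s^{-1} Q w_s = P$. Thus $x \in P(\AAA) g$. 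Symmetrically, $w_s(mtn) w_s^{-1}$ lies in $M_Q(\AAA) \cdot T(\AAA) \cdot N_B(\AAA) \subseteq Q(\AAA)$, giving $w_s x \in Q(\AAA) \cdot (w_s g)$.

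Finally, I would read off the two Bruhat classes using the decomposition $G(F_\infty) = \bigsqcup_{w \in W^P \backslash W} P(F_\infty) w \mathcal{I}_\infty$ and the analogous one for $Q$. Since $g_\infty \in \mathcal{I}_\infty$ reduces into $B(\kappa_\infty)$ its Bruhat class is trivial, so $x_\infty \in P(F_\infty) \mathcal{I}_\infty$ forces $s_x \in W^P$. Since $w_s \in G(\mathbb{F}_q)$ reduces to a representative of $s$ in $G(\kappa_\infty)$, the element $(w_s g)_\infty$ lies in $B(\ooo_\infty) \cdot w_s \cdot \mathcal{I}_\infty$ and has Bruhat class $s$, so $w_s x_\infty \in Q(F_\infty) s \mathcal{I}_\infty$ yields $s_{w_s x} \in W^Q s$. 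Combining with the identity $W^P = s^{-1} W^Q s$ (from $M_P = w_s^{-1} M_Q w_s$), the containment $s_x \in W^P$ translates to $s s_x \in W^Q s$, and therefore $W^Q s_{w_s x} = W^Q s = W^Q s s_x$, as required.
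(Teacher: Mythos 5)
The central step of your argument --- the claimed strengthening of Lemma \ref{support} according to which one can write $x=mtng$ with $g\in \mathcal{I}_\infty\times\prod_{v\neq\infty}G(\ooo_v)$ --- is false, and everything after it inherits the problem. Concretely: take $\sigma\in T(\mathbb{F}_q)$ regular, so $G_\sigma^0=T$, and take $Q=B$; then both positivity conditions in Lemma \ref{WaQG} are vacuous, so every $s\in W$ lies in $W(\ago_1,Q;G_\sigma^0)$, and $R=T$, so $\mathcal{U}_{M_R}(F)N_R(F)=\{1\}$ and the hypothesis of Lemma \ref{okay} is just $x^{-1}\sigma x\in\mathcal{I}_\infty\times\prod_{v\neq\infty}G(\ooo_v)$. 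Let $x$ be the constant ad\`ele attached to a representative $w\in G(\mathbb{F}_q)$ of a nontrivial Weyl element: then $x^{-1}\sigma x\in T(\mathbb{F}_q)$, so the hypothesis holds, but $x_\infty\notin M_R(F)\,T(F_\infty)\,N_{1,\sigma}(F_\infty)\,\mathcal{I}_\infty=T(F_\infty)\mathcal{I}_\infty$, since the Iwahori class of $w$ is $w\neq 1$. The same example refutes your two intermediate conclusions: here $W^P$ and $W^Q$ are trivial, yet $s_x=w\notin W^P$ and $s_{w_sx}=sw\notin W^Qs=\{s\}$; only the combination $W^Qs_{w_sx}=W^Qss_x$ survives, which is exactly the content of the lemma. (Even more bluntly, for $\sigma$ central and $Q=B$, $s=1$, the hypothesis is satisfied by \emph{every} $x$, so no conclusion of the form $x\in B(\AAA)(\mathcal{I}_\infty\times\prod_{v\neq\infty}G(\ooo_v))$ can be extracted from it.) The underlying point you miss is that membership of $x^{-1}\sigma un x$ in $\mathcal{I}_\infty$ does not trivialize the Iwahori class of $x$ modulo $P$; it only controls how that class transforms under left multiplication by $w_s$.

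For comparison, the paper uses Lemma \ref{support} only in its stated form to reduce, after left multiplication by $R(F_\infty)$ (resp.\ by $w_sR(F_\infty)w_s^{-1}\subseteq Q(F_\infty)$, which is harmless modulo $W^Q$), to $x=k\in G(\ooo_\infty)$, and then works modulo $\wp_\infty$: the hypothesis gives $\overline{k}^{-1}\sigma\overline{k}\in B(\kappa_\infty)$, one conjugates this element back into $T$ by some $b\in B(\kappa_\infty)$ and $w\in N_G(T)(\kappa_\infty)$ with $\overline{k}bw\in G_\sigma^0(\kappa_\infty)$ and $w$ sending positive roots of $(G_\sigma^0,T)$ to positive roots, and Bruhat decomposition of $\overline{k}bw$ in $G_\sigma^0$ relative to $B_\sigma$ yields $\overline{k}\in B_\sigma(\kappa_\infty)\,s_k\,B(\kappa_\infty)$ for some $s_k$ which in general does \emph{not} lie in $W^P$. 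The identity $W^Qs_{w_sx}=W^Qss_x$ then follows because $s$ sends positive roots of $(G_\sigma^0,T)$ to positive roots, so $w_sB_\sigma(\kappa_\infty)\subseteq B(\kappa_\infty)w_s$ and hence $w_sk\in B(\kappa_\infty)w_ss_kB(\kappa_\infty)$: both classes are expressed through the same $s_k$. Your later observations about $P=w_s^{-1}Qw_s$ (that $M_R,T\subseteq M_P$, $N_{1,\sigma}\subseteq P$, and $W^P=s^{-1}W^Qs$) are fine in themselves, but they are fed a decomposition of $x$ that does not exist, so the proof as written does not go through.
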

\begin{proof}
The result is local in nature. Assume simply that $x\in G(F_{\infty})$. 
By Lemma \ref{support}, we have $x\in R(F_\infty)G(\ooo_\infty)$. Recall that $s_x$ is defined to be the Weyl element represented by $k$ in $\mathcal{I}_\infty \backslash G(\ooo_\infty)/\mathcal{I}_\infty$ for any Iwasawa decomposition $x=bk$ with $b\in B(F_\infty)$ and  $k\in G(\ooo_\infty)$. 
Since $w_sR(F_\infty) \subseteq Q(F_\infty)w_s$, we are reduced to the case that $x=k\in G(\ooo_\infty)$.

As everything is unchanged after reduction mod-$\wp_\infty$, we can assume that $k\in G(\kappa_\infty)$ and $k^{-1}\sigma k\in B(\kappa_\infty)$.

The hypothesis implies that $k^{-1}\sigma k$ is a $\kappa_\infty$-point of a split maximal torus in $B$ defined over $\kappa_\infty$.
We can conjugate this torus to $T$ by an element in $B(\kappa_\infty)$, hence for some element $b\in B(\kappa_\infty)$ and $w\in N_G(T)(\kappa_\infty)$, we have $$w^{-1} b^{-1}k^{-1}\sigma kbw =\sigma. $$
By modifying $w$ if necessary, we can assume that $kbw$ lies in the connected centralizer of $\sigma$, i.e. $kbw\in G_{\sigma}^{0}(\kappa_\infty)$. 
We could further assume that $w$ sends positive roots of $\Phi(G_{\sigma}^{0}, T)$ to positive roots (of $\Phi(G, T)$) (see Lemma \ref{WaQG} or rather its proof), hence $ B_{\sigma}w^{-1}\subseteq  w^{-1} b $. Then using Bruhat decomposition of $G_{\sigma}^{0}(\kappa_\infty)$ relative to $B_{\sigma}(\kappa_\infty)$ to $kbw$, we conclude that $k\in B_{\sigma}(\kappa_\infty) s_k B(\kappa_\infty)$ ($s_k=w_0w^{-1}$ if $kbw$ belongs to the Bruhat cell associated to the Weyl element $w_0$).

We conclude that 
$$w_sk\in  w_sB_{\sigma}(\kappa_\infty)s_kB(\kappa_\infty)\subseteq B(\kappa_\infty)w_s s_k B(\kappa_\infty).$$
This implies the result needed.  
\end{proof}

We come back to the expression for $j^{\xi}_o(x)$ in Proposition \ref{interm}. Suppose that there is an $x\in G(\AAA)$, such that $$j^{\xi}_o(x)\neq 0.  $$
Then there is a standard parabolic subgroup $R$ of $G_{\sigma}^0$ and an element $\eta\in R(F)\backslash G(F)$ such that \[(\sigma \mathcal{U}_{M_R(F)}{N_{R}(F)})^{\eta x}\cap (\mathcal{I}_\infty\times \prod_{v\neq \infty}G(\ooo_v))\neq \emptyset , \]
 and \[
 \sum_{P\in \mathcal{F}_{R}(M_1)}(-1)^{\dim \ago_P^G}\htau_P(H_{M_1}(m) + s_{\eta x}\xi ) \neq 0,  \]
 where $\mathcal{F}_{R}(M_1)$ is the set of parabolic subgroups $P$ containing $M_1$ such that $P_{\sigma}=R$.

By Lemma \ref{support}, there is an $m\in M_{1,\sigma}^{0}(\AAA)$ such that $\eta x \in M_R(F)mN_{1,\sigma}(\AAA)G(\ooo)$.
 For any $P\in \mathcal{F}_{R}(M_1)$, 
by Lemma \ref{okay}, we have $$\htau_{P}(H_0(\eta g)+ s_{\eta x, P}\xi     )=\htau_P(H_{M_1}(m)+  s_{\eta x}\xi  ). $$
We can apply Arthur's combinatoric lemma \cite[Lemma 5.2]{Ageom}. It says that since the sum
 \begin{equation}\label{taur}
 \sum_{P\in \mathcal{F}_{R}(M_1)}(-1)^{\dim \ago_P^G}\htau_P(H_{M_1}(m) + s_{\eta x}\xi )   \end{equation}
 is non-zero, under the decomposition \[\ago_T=\ago_T^R\oplus\ago_R,\] we have 
 $$[H_{M_1}(m)+  s_{\eta x}\xi ]_{R}\in \ago_{R}^{G_{\sigma}^{0}},$$ 
 where $[H_{M_1}(m)+  s_{\eta x}\xi ]_{R}$ is the $\ago_R$ part of $H_{M_1}(m)+  s_{\eta x}\xi$. 
 It means that with the decomposition \[[H_{M_1}(m)+  s_{\eta x}\xi ]_{R}= [H_{M_1}(m)+  s_{\eta x}\xi ]_{R}^{G_{\sigma}^0}+  [H_{M_1}(m) + s_{\eta x}\xi ]_{G_\sigma^0}, \]
 we have \[[H_{M_1}(m) + s_{\eta x}\xi ]_{G_\sigma^0}=0.  \]
In particular, \[[s_{\eta x}\xi ]_{G_\sigma^0}= [- H_{M_1}(m)]_{G_\sigma^0}\in X_*(L), \]
where $L=C_{G}(A_{G^0_\sigma})$ is the Levi subgroup equal to the centralizer of the maximal split central torus $A_{G^0_\sigma}$ of $G_{G^{0}_{\sigma}}$ defined over $F$. 
Since $\xi$ is in general position and $G$ is semisimple, this is possible only if $A_{G^0_\sigma}$ is trivial i.e. $\sigma$ must be elliptic and in this case \cite[Lemma 5.2]{Ageom} says that \eqref{taur} equals
%$$ (-1)^{\dim \ago_R^{G_\sigma^0}}\htau_{R}(H_{M_1}(m)+ s_{\eta x}\xi).$$ 
\[(-1)^{\dim \ago_R}\htau_{R}(H_{M_1}(m)+ s_{\eta x}\xi). \]

\subsubsection{}
In this last part, we must deal with some complexities caused by $\xi$ (although we have also benefited from it to simplify things). 

We suppose that $o$ is represented by a semisimple elliptic element in $T(\mathbb{F}_q)$. First of all, notice that $G_\sigma^0$ is split so $\pi_0(G_\sigma)$ is a constant group scheme and we have  $|\pi_0(G_\sigma)(F)|=| \pi_0(G_\sigma)|$, the degree of $\pi_0(G_\sigma)$. 
In this case $J^{G, \xi}_{o}(f)$ equals 
$$ \frac{1}{| \pi_0(G_\sigma)|}\sum_{R}\int_{R(F)\backslash G(\AAA)} 
(-1)^{\dim\ago_R}\htau_{R}(H_{R}(x)+s_{x}\xi  ) \sum_{u\in \mathcal{U}_{M_R}(F)}\sum_{n\in N_R(F)} f(x^{-1}\sigma u nx) \d x, $$
where the sum over $R$ is taken over the set of standard parabolic subgroup of $G_{\sigma}^0$.
By Lemma \ref{support}, the integral over $x\in R(F)\backslash G(\AAA)$ can be taken over the smaller domain $$R(F)\backslash G^0_{\sigma}(\AAA)G(\ooo).$$
Since we have normalized the measure of $G(\AAA)$ and $G^0_{\sigma}(\AAA)$ such that both $G(\ooo)$ and $G_{\sigma}^{0}(\ooo)=G(\ooo)\cap G_{\sigma}^{0}(\AAA)$ have volume $1$, the integration over $x$ can be further decomposed to the double integration over $(x,k)\in G_{\sigma}^0(\AAA)\times G(\ooo)$ by product integration formula. Therefore $J^{G, \xi}_o$ equals ${|\pi_0( G_\sigma )|}^{-1}$ times 
the sum over standard parabolic subgroup $R$ of $G^0_{\sigma}$ of
\begin{equation}\label{integ}(-1)^{\dim\ago_R} \int_{R(F)\backslash G_{\sigma}^{0}(\AAA)} \int_{G(\ooo)}
\htau_{R}(H_{R}(x)+ s_{xk}\xi  ) \sum_{u\in \mathcal{U}_{M_R}(F)}\sum_{n\in N_R(F)} f(k^{-1}x^{-1}\sigma u nxk) \d k \d x. \end{equation}

%\marginpar{can we use integration product formula???}
%\marginpar{use the coset decomposition R(F) slash Gsigma G(O) = R(F) slash Gsigma w I }

%It follows from the proof of Lemma \ref{okay} that the integration over $G(\ooo)$ can be taken over the smaller subset $(G_{\sigma}^{0}(\mathcal{O}_{\infty})W\mathcal{I}_{\infty})\times \prod_{v\neq \infty}G(\ooo_v)$. Moreover, for any $k\in \mathcal{I}_{\infty,\sigma}W\mathcal{I}_{\infty}$, there is an element $k' \in G_{\sigma}^{0}(\mathcal{O}_{\infty})$ such that $k'k\in W\mathcal{I}_{\infty}$, and $(s_{k'k})^{-1}$  sends positive roots of $(G_{\sigma}^0, T)$ to positive roots. It implies that $$s_{xk}=s_{xk'^{-1}}s_{k'k}.$$

It follows from the proof of Lemma \ref{okay} that the integration over $G(\ooo)$ can be taken over the smaller subset $G^0_{\sigma}(\ooo_\infty)\dot{W}\mathcal{I}_{\infty}\times \prod_{v\neq \infty}G(\ooo_v)$. 
where $\dot{W}$ is the subset of elements $w\in W$ such that $w^{-1}$ sends positive roots of $(G_{\sigma}^0, T)$ to positive roots  (hence a set of representative of $W^{(G_{\sigma}^0, T)}\backslash W$). For each $k\in G^0_{\sigma}(\ooo_\infty)\dot{W}\mathcal{I}_{\infty}$, suppose $w(k)\in \dot{W}$ is the element such that $k\in G^0_{\sigma}(\ooo_\infty)w(k)\mathcal{I}_{\infty}$. 
Choose an element $p_k\in G^0_{\sigma}(\ooo_\infty)$ such that $k=p_kw(k)\mathcal{I}_{\infty}$. 
Then we have $$s_{xk}=s_{xp_k}w(k).$$

Therefore we can change the order of the integration in \eqref{integ}. After a change of variable $x\mapsto xp_k^{-1}$,  $J^{G, \xi}_o$ equals the integration over $k\in (G_{\sigma}^{0}(\mathcal{O}_{\infty})\dot{W}\mathcal{I}_{\infty})\times \prod_{v\neq \infty}G(\ooo_v)$ of the sum over standard parabolic subgroup $R$ of $G_{\sigma}^0$ of ${| \pi_0(G_{\sigma})|}^{-1}$ times
$$ (-1)^{\dim\ago_R }\int_{R(F)\backslash G_{\sigma}^{0}(\AAA)}
\htau_{R}(H_{R}(x)+ s_{x}w(k)\xi  ) \sum_{u\in \mathcal{U}_{M_R}(F)}\sum_{n\in N_R(F)} f(w(k)^{-1}x^{-1}\sigma u nxw(k))  \d x\d k. $$
where we have used the fact that $f$ is $\mathcal{I}_{\infty}\times \prod_{v\neq\infty}G(\ooo_v)$-conjugate invariant. Thus we have reduced $J^{G, \xi}_o$ to the following expression:
\[\frac{1}{|\pi_0( G_\sigma ) | }\int_{G_{\sigma}^{0}(\mathcal{O}_{\infty})\dot{W}\mathcal{I}_{\infty}} J^{G_{\sigma}^0, w(k)\xi}_{[\sigma]}(f^{w(k)^{-1}})\d k= \frac{\vol(G_{\sigma}^{0}(\mathcal{O}_{\infty})w\mathcal{I}_{\infty})   }{|\pi_0( G_\sigma ) | }\sum_{w\in \dot{W}} J^{G_{\sigma}^0, w\xi}_{[\sigma]}(f^{w^{-1}})\d k. \]
Note that for any $w\in W$, $wBw^{-1}\cap G^{0}_{\sigma}$ is a Borel subgroup in $G_\sigma^{0}$, therefore 
we have $$\frac{|(G^0_{\sigma}(\mathbb{F}_q)wB(\mathbb{F}_q))|}{|G(\mathbb{F}_q)|}= \frac{| G^0_{\sigma}(\mathbb{F}_q)|| B(\mathbb{F}_q) |}{| B_{\sigma}(\mathbb{F}_q)||G(\mathbb{F}_q)|}. $$
This is independent of $w$. It shows that under our normalization of the Haar measure ($\vol(G(\ooo_\infty))=\vol(G_{\sigma}^{0}(\ooo_\infty))=1$), 
\[{\vol(G_{\sigma}^{0}(\mathcal{O}_{\infty})w\mathcal{I}_{\infty})   }=  \frac{\vol(\mathcal{I}_{\infty})}{\vol(\mathcal{I}_{\infty,\sigma})}.  \]
This completes the proof.

\section{The Hitchin moduli stack}\label{Hitchin's}
We use Ngô, Chaudouard, and Laumon's work on the Hitchin fibration. However, we need to make some remarks first.

A problem for us is that some of Ngô's results require that the divisor $D$ has degree $>2g$. For the divisor we will use, it means that $\deg S\geq 3$, which is unpleasant for us. His condition is not essential in our case. In fact, our divisor is the sum of a canonical divisor and an effective divisor, and we can replace some of Ngô's dimension calculations to make his arguments still work. In \ref{Hitchinbase}, we do these dimension calculations and recall some of Ngô's results that we need. The results that we need are Ngô's calculations of some connected components. 

\label{rmk321} Another problem for us is that Ngô's results are based on the existence of Hitchin-Kostant section $\epsilon: \mathcal{A}_G\rightarrow \mathcal{M}_G$, which is constructed under the hypothesis that there exists a line bundle $\mathcal{L}$ on $X$ such that $\mathcal{O}_X(D)\cong \mathcal{L}^{\otimes 2}$, as shown in \cite[2.5]{NgoH}. This ``square root" $\mathcal{L}$ is needed since the half sum of the positive coroots $\rho$ sometimes does not give a morphism $\mathbb{G}_m\rightarrow G$. Although such a phenomenon never happens for some types of root systems and never happens if $G$ is of adjoint type, we still want to avoid it. Otherwise, we have to assume at least that $S$ has an even degree. For our purpose, it is only relevant to the fact that the Hitchin fibration is surjective and flat (for generic regular semisimple parts). We will deal with this in Theorem \ref{Faltings} (see also Remark \ref{surjective}).

\subsection{The Hitchin base}\label{Hitchinbase}\label{4.1}
\subsubsection{}
As a general rule, if a notation uses the group $G$ as the subscript and it is clear from the context, we will omit $G$ from the notation. 

Let $M$ be a semistandard Levi subgroup of $G$ defined over $\mathbb{F}_q$. 
We define \begin{equation}\mathfrak{c}_M=\mmm\sslash M, \end{equation}
the categorical quotient of $\mmm$ by the adjoint action of $M$. So $\mathfrak{c}_M = \Spec(\mathbb{F}_q[\mmm]^{M})$.  Since $\mathbb{F}_q[\mmm]^{M}$ is (non-canonically) isomorphic to a polynomial ring, $\mathfrak{c}_M$ is an affine space. It has a structure of vector space by choosing a Kostant section (\cite[1.2]{Ngo}). It is  known that when the characteristic is very good, we have an isomorphism (see, for example, \cite[2.3.2]{Riche}) \begin{equation}\ttt\sslash W^M\cong \mathfrak{c}_M,\end{equation}
where $W^M$ is the Weyl group of $M$ relative to $T$. 
Let \[\chi_M: \mmm\rightarrow \car_M, \] be the natural projection. 
Let $ \ggg^{\mathrm{reg}}\subseteq \ggg$ be the open set of regular semisimple elements, $\ttt=\ttt\cap \ggg^{\rs}$ and $\mmm^{\rs}=\mmm\cap \ggg^{\rs}$. 
Let $\mathfrak{c}_M^{\rs}$ be the open subset of $\car_M$ that consists of the image of $G$-regular semisimple elements.

We fix a finite set of closed points $S$ of $X$.  We sometimes identify a point $x\in S$ with an associated point in $X(\kappa_x)$. 
We suppose that $S$ contains a fixed point $\infty\in X(\mathbb{F}_q)$. 
Let \[D=K_X+\sum_{x\in S}  x, \] 
where $K_X$ is any canonical divisor of $X$. 
Let $\mathcal{O}_X(D)$ be the associated line bundle over $X$. Note that the divisor $D$ furnishes $\mathcal{O}_X(D)$ a canonical trivialization outside the support of $D$. 
Define \begin{equation}\car_{M,D}:=\car_{M}\times^{\mathbb{G}_m}\mathcal{O}_{X}(D), \end{equation} as an affine  bundle over $X$. 
Let \[\mathcal{A}_{M}=H^{0}(X, \car_{M,D}), \] 
as an $\mathbb{F}_q$-scheme. When $M=G$, we will omit $M$ from the notation. 

The canonical trivialization of $\mathcal{O}_X(D)$ over $X-\mathrm{supp}(D)$ induces a generic trivialization of $\mathcal{O}_X(D)$. With this generic trivialization, we have an injective map \begin{equation}\label{Acar}
\mathcal{A}_G(\overbar{\mathbb{F}}_q) \hookrightarrow \car_G(F\otimes\overbar{\mathbb{F}}_q),  \end{equation}
that sends a section $a\in \mathcal{A}_G(\overbar{\mathbb{F}}_q)$ to its value $a_\eta$ at the generic point $\eta$ of $X\otimes \overbar{\mathbb{F}}_q$. Sometimes, especially when dealing with the ring of ad\`eles, it is convenient and useful to view $\mathcal{A}_G(\overbar{\mathbb{F}}_q)$ as a subset of $\car_G(F\otimes\overbar{\mathbb{F}}_q)$.

\subsubsection{}
Let $\overbar{X}:=X\otimes_{\mathbb{F}_q}\overbar{\mathbb{F}}_q$ and $\overbar{S}:=S\otimes_{\mathbb{F}_q}\overbar{\mathbb{F}}_q$. By abusing the notation, we still denote by $\car_{G, D}$ the base change of $\car_{G, D}$ over $X$ to $\overbar{X}$. 
\begin{lemm}\label{511}
Suppose the cardinality $|\overbar{S}|>2-g$, where $g$ is the genus of the curve $X$. Let $M$ be a semistandard Levi subgroup of $G$ defined over $\mathbb{F}_q$. 
\begin{enumerate}
\item
We have a linear map 
\begin{equation}\label{555}
\ev_{M}: H^{0}(\overbar{X}, {\car}_{M, D})  \rightarrow  \prod_{\overbar{v}\in \overbar{S}}( \car_{M, D}\otimes_{\mathcal{O}_{\overbar{X}}}(\mathcal{O}_{\overbar{X}, \overbar{v}}/\wp_{\overbar{v}}) ),\end{equation}
where $\wp_{\overbar{v}}$ is the maximal idea of $\mathcal{O}_{\overbar{X}, {\overbar{v}}}$. 
%\begin{equation}H^{0}(\overbar{X}, \widetilde{\car}_D)  \rightarrow (\ttt_D \otimes_{\mathcal{O}_{\overbar{X}}}(\mathcal{O}_{\overbar{X}}/\wp_\infty )) \prod_{v\in \overbar{S}-\{\infty\}}( \mathfrak{car}_D \otimes_{\mathcal{O}_{\overbar{X}}}(\mathcal{O}_v/\wp_v) ) .\end{equation}
The kernel of the map has dimension 
\[\frac{1}{2}(2g-2+|\overbar{S}|) \dim \mmm- \frac{1}{2}|\overbar{S}| \dim  \ttt +\dim \zzz_M,  \] 
and the image of the map has codimension $\dim \zzz_M$ in the target. In particular, the map is surjective when $M=G$ ($G$ is assumed to be semisimple). 
\item
For any closed point ${\overbar{v}}$ of $\overbar{X}$, the map
\begin{equation}\label{666}
H^{0}(\overbar{X}, {\car}_{G, D})  \rightarrow {\car}_{G, D}\otimes_{\mathcal{O}_{\overbar{X}}}(\mathcal{O}_{\overbar{X}, {\overbar{v}}}/\wp^2_{\overbar{v}}), \end{equation}
is surjective. 
\end{enumerate}
\end{lemm}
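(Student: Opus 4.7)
The plan is to exploit the $\mathbb{G}_m$-grading on $\mathfrak{c}_M$ to split $\mathfrak{c}_{M,D}$ into line bundles, and then reduce every assertion to Riemann--Roch on $\overline{X}$. Concretely, using $\mathfrak{c}_M\cong \mathfrak{t}\sslash W^M$ (valid in very good characteristic) and the decomposition $\mathfrak{t}=(\mathfrak{t}\cap \mathfrak{m}^{\mathrm{der}})\oplus \mathfrak{z}_M$, where $W^M$ acts as a reflection group on the first summand and trivially on the second, I write $\mathfrak{c}_M$ as a weighted affine space with weights $d_1,\dots,d_{\dim \mathfrak{t}}$: the fundamental degrees of $W^M$ acting on $\mathfrak{t}\cap \mathfrak{m}^{\mathrm{der}}$ (all $\geq 2$), together with $\dim \mathfrak{z}_M$ coordinates of weight $1$. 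This yields
\[\mathfrak{c}_{M,D}\cong \bigoplus_{i=1}^{\dim \mathfrak{t}}\mathcal{O}_{\overline{X}}(d_i D).\]

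For part (1), the evaluation map $\mathrm{ev}_M$ splits as a direct sum over $i$. For each $i$ the short exact sequence
\[0\to \mathcal{O}(d_i D-\overline{S})\to \mathcal{O}(d_i D)\to \bigoplus_{\overline{v}\in\overline{S}}\mathcal{O}(d_i D)_{\overline{v}}/\wp_{\overline{v}}\to 0\]
identifies the cokernel of evaluation with $H^1(\overline{X},\mathcal{O}(d_i D-\overline{S}))$. For $d_i\geq 2$, the hypothesis $|\overline{S}|>2-g$ gives $\deg(d_i D-\overline{S})=d_i(2g-2+|\overline{S}|)-|\overline{S}|>2g-2$, so the $H^1$ vanishes. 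For $d_i=1$ one has $d_i D-\overline{S}=K_{\overline{X}}$, and Serre duality gives a one-dimensional $H^1$. Summing yields codimension exactly $\dim \mathfrak{z}_M$, which is $0$ when $M=G$ is semisimple.

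The kernel dimension follows from a second Riemann--Roch application. Since $\deg(d_i D)\geq \deg D=2g-2+|\overline{S}|>2g-2$, we have $H^1(\mathcal{O}(d_iD))=0$, whence $\dim H^0(\mathcal{O}(d_iD))=d_i(2g-2+|\overline{S}|)+1-g$. Summing over $i$ and using the Weyl-group identity
\[\sum_{i=1}^{\dim\mathfrak{t}} d_i=\frac{1}{2}(\dim \mathfrak{t}+\dim \mathfrak{m}),\]
which follows from $\sum_{i:\,d_i\geq 2}(d_i-1)=|\Phi_M^+|=\tfrac{1}{2}(\dim \mathfrak{m}-\dim \mathfrak{t})$ plus the $\dim \mathfrak{z}_M$ weights equal to $1$, a short arithmetic rearrangement produces the claimed kernel dimension $\tfrac{1}{2}(2g-2+|\overline{S}|)\dim \mathfrak{m}-\tfrac{1}{2}|\overline{S}|\dim \mathfrak{t}+\dim \mathfrak{z}_M$.

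For part (2), I use the analogous sequence
\[0\to \mathcal{O}(d_i D-2\overline{v})\to \mathcal{O}(d_i D)\to \mathcal{O}(d_i D)\otimes \mathcal{O}_{\overline{X},\overline{v}}/\wp_{\overline{v}}^{2}\to 0,\]
so that surjectivity on $H^0$ reduces to $H^1(\mathcal{O}(d_iD-2\overline{v}))=0$. For $G$ semisimple every $d_i\geq 2$, and $\deg(d_iD-2\overline{v})\geq 2(2g-2+|\overline{S}|)-2$, which exceeds $2g-2$ exactly under the hypothesis $|\overline{S}|>2-g$. Summing over $i$ gives the conclusion. The only non-routine ingredient is the Weyl-group degree identity above; everything else is bookkeeping, and the hypothesis $|\overline{S}|>2-g$ is used precisely to kill the problematic $H^1$'s in both parts.
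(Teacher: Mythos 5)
Your proof is correct and follows essentially the same route as the paper: both split $\car_{M,D}$ into line bundles $\mathcal{O}_{\overbar{X}}((m_i+1)D)$ plus $\mathcal{O}_{\overbar{X}}(D)^{\dim\zzz_M}$ via the $\mathbb{G}_m$-weights (fundamental degrees of $W^M$, i.e.\ exponents plus one), use $\sum_i m_i=\tfrac12(\dim\mmm-\dim\ttt)$, and reduce everything to Riemann--Roch and $H^1$-vanishing under $|\overbar{S}|>2-g$. The only cosmetic difference is that you compute the kernel as total $H^0$ minus the image while the paper applies Riemann--Roch directly to the twists $m_iD+K_{\overbar{X}}$ and $K_{\overbar{X}}$; the content is identical.
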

\begin{proof}
$(1).$ Since the characteristic $p$ is very good, we have \[\ttt\sslash W^{M}\cong \zzz_M \oplus \car_{M^{der}}. \]
We know that $\car_{M^{der}}$ is an affine space and the weights of the $\mathbb{G}_m$-action (induced from the $\mathbb{G}_m$ action on $\mmm$) are $m_1+1, \ldots, m_r+1$ where $m_i$ are exponents of the Weyl group of $M$ and $r$ is the rank of $M^{der}$. The integers $m_i$ are strictly positive (\cite[p118.(1)]{Bourbaki}). Moreover, $\mathbb{G}_m$ acts on $\zzz_M$ by homothety, hence has weight $1$. 
Therefore, the vector bundle $\car_{M^{der}, D}$ is isomorphic to the direct sum of 
\[\mathcal{O}_X((m_i+1)D)\] for \( i=1, \ldots, r.  \)
We also have \[\car_{M, D}\cong \car_{M^{der}, D}\oplus \mathcal{O}_X(D)^{\dim\zzz_M}.  \]  

The first assertion is hence a direct conclusion of the following dimension calculations. 
The target of the map \eqref{555} has dimension 
\[|\overbar{S}|  \dim\ttt.  \]
By Riemann-Roch theorem, we have, \[\dim H^{0}(\overbar{X}, {\car}_{G, D})  =\frac{1}{2}|\overbar{S}| \dim  \ttt  +  \frac{1}{2}(2g-2+|\overbar{S}|) \dim \ggg ,   \]
where we have used the fact (\cite[p118.(3), p.119. Theorem 1.(ii)]{Bourbaki}) that \[\sum_{i=1}^{r}m_i =\frac{1}{2}|\Phi(M,T)|= \frac{1}{2}(\dim \mmm-\dim \ttt).\] 
Suppose $|\overbar{S}|>2-g\geq 2-2g$, we
apply Riemann-Roch theorem to $\car_{M^{der}, D}$ and $\mathcal{O}_X(D)^{\dim\zzz_M}$ respectively. Since the kernel is the space of global sections of the direct sum of $\mathcal{O}_{\overbar{X}}(m_i D+K_{\overbar{X}})$ or that of $\mathcal{O}_{\overbar{X}}(K_{\overbar{X}})^{\dim \zzz_M}$ respectively, we conclude that the kernel of the map has dimension 
\[\frac{1}{2}(2g-2+|\overbar{S}|) \dim \mmm- \frac{1}{2}|\overbar{S}| \dim  \ttt +\dim \zzz_M.  \]

$(2).$ It results from the above arguments and the fact that when $|\overbar{S}|>2-g$, the map
 \[H^{0}(\overbar{X},  \mathcal{O}_{\overbar{X}}(mD)) \longrightarrow  \mathcal{O}_{\overbar{X}}(mD)\otimes \mathcal{O}_{\overbar{X}, {\overbar{v}}}/\wp^2_{{\overbar{v}}}\]
 is surjective for any $m\geq 2$. 

\end{proof}

We can upgrade the linear map \eqref{555} to a morphism of $\mathbb{F}_q$-schemes, so that we have a surjective morphism
\begin{equation}\label{resi}
\ev_{G}: {\mathcal{A}}_G\longrightarrow \prod_{v\in S}\mathrm{R}_{\kappa_v | \mathbb{F}_q}\car_{G, D, v}.  \end{equation}
We denote the target by $\mathcal{R}_G$:
\[\mathcal{R}_G:= \prod_{v\in S}\mathrm{R}_{\kappa_v | \mathbb{F}_q}\car_{G, D, v} . \]
The first corollary we obtain from the precedent lemma is the following.

\begin{prop}\label{ARs}
The morphism $\ev_{G}: {\mathcal{A}}_G\rightarrow \mathcal{R}_G$ is surjective when $G$ is semisimple and $\deg S> 2-g$. 
\end{prop}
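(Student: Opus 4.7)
The plan is to deduce this proposition as a direct corollary of Lemma \ref{511}(1), applied with $M=G$. Since $G$ is assumed to be semisimple, its center $Z(G)$ is finite, so the Lie algebra $\zzz_G$ has dimension zero. Moreover, the hypothesis $\deg S > 2-g$ gives $|\overbar S| = \deg S > 2-g$, which is exactly the condition required by Lemma \ref{511}. Thus the lemma applies and tells us that the $\overbar{\mathbb{F}}_q$-linear map
\[
H^{0}(\overbar X,\car_{G,D}) \longrightarrow \prod_{\overbar v\in \overbar S}\bigl(\car_{G,D}\otimes_{\mathcal{O}_{\overbar X}}(\mathcal{O}_{\overbar X,\overbar v}/\wp_{\overbar v})\bigr)
\]
has image of codimension $\dim\zzz_G=0$, i.e.\ is surjective.

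Now the morphism of $\mathbb{F}_q$-schemes $\ev_G:\mathcal{A}_G\to \mathcal{R}_G$ becomes, after base change to $\overbar{\mathbb{F}}_q$, precisely the map displayed above on underlying vector spaces (both $\mathcal{A}_G$ and $\mathcal{R}_G$ are finite-type affine $\mathbb{F}_q$-schemes whose $\overbar{\mathbb{F}}_q$-points match the spaces appearing in Lemma \ref{511}). Since the induced morphism on $\overbar{\mathbb{F}}_q$-points is surjective, the base-changed morphism of schemes $\ev_G\otimes_{\mathbb{F}_q}\overbar{\mathbb{F}}_q$ is surjective. Surjectivity of a morphism of finite-type schemes can be checked after faithfully flat base change, so $\ev_G$ is surjective as a morphism of $\mathbb{F}_q$-schemes.

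There is no real obstacle here: the content is entirely packaged into Lemma \ref{511}(1), and the only thing to verify is the bookkeeping that $\dim \zzz_G = 0$ for $G$ semisimple and that $\deg S = |\overbar S|$. One small care point is that the lemma is phrased on the level of $\overbar{\mathbb{F}}_q$-vector spaces while the proposition is about a morphism of $\mathbb{F}_q$-schemes; this is handled by observing that $\mathcal{A}_G$ and $\mathcal{R}_G$ are (vector) $\mathbb{F}_q$-schemes whose geometric points recover exactly the spaces in the lemma, so surjectivity descends from $\overbar{\mathbb{F}}_q$ to $\mathbb{F}_q$ by faithfully flat descent.
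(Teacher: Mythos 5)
Your proposal is correct and matches the paper's own reasoning: Proposition \ref{ARs} is stated there as an immediate corollary of Lemma \ref{511}(1) with $M=G$, using exactly the observation that $\dim\zzz_G=0$ for semisimple $G$ and $|\overbar S|=\deg S>2-g$. The extra bookkeeping you supply (passing from the $\overbar{\mathbb{F}}_q$-linear statement to surjectivity of the $\mathbb{F}_q$-scheme morphism via geometric points and faithfully flat descent) is sound and only makes explicit what the paper leaves implicit.
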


Ngô has introduced several open $\mathbb{F}_q$-subschemes of $\mathcal{A}_G$: 
\begin{equation}\mathcal{A}_G^{\diamondsuit}\subseteq\mathcal{A}_G^{\mathrm{ani}}\subseteq  \mathcal{A}_G^{\heartsuit}\subseteq \mathcal{A}_G, \end{equation}
where $\mathcal{A}_G^{\heartsuit}$ consists of global sections of $\car_{G,D}$ of generic semisimple regular elements (\cite[4.5]{Ngo}), 
$\mathcal{A}_G^{\mathrm{ani}}$ consists of generic semisimple regular elements that are elliptic over $\overbar{\mathbb{F}}_q$ (\cite[4.10.5, 5.4.7]{Ngo}), and $\mathcal{A}_G^{\diamondsuit}$ is defined in \cite[4.7]{Ngo}(which is called the subset of generic characteristic by Faltings, see \cite[III.1]{Faltings0} and \cite[p.328]{Faltings}). If a point $a\in \mathcal{A}_G(\overbar{\mathbb{F}}_q)$ lies in the open subset $\mathcal{A}_G^{\diamondsuit}$, then the associated cameral curve is smooth. The converse statement is also true if the characteristic $p$ does not divide the order of the Weyl group (\cite[4.7.3]{Ngo}). 

The following two propositions are needed in Ngô's work when he discusses connected components of the Hitchin fibers. They are results of Faltings and Ngô. Here we need to relax the hypothesis on the divisor $D$.  
\begin{prop}\label{nonvide}
Suppose that $G$ is semisimple and $|\overbar{S}|>2-g$. Then if $a\in \mathcal{A}_G^{\heartsuit}(\overbar{\mathbb{F}}_q)$, the cameral curve $\widetilde{X}_a$ (defined in \cite[4.5]{Ngo}) is connected. 
 \end{prop}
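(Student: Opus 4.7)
\emph{Plan.} The strategy is to realize $\widetilde{X}_a$ as a fiber of the universal cameral cover $\widetilde{\mathcal{X}} := (\mathcal{A}_G^{\heartsuit}\otimes\overbar{\mathbb{F}}_q \times \overbar{X})\times_{\car_{G,D}}\ttt_D$, built via the evaluation morphism $\ev\colon(a,x)\mapsto a(x)$, and to propagate connectedness from the geometric generic fiber to every fiber. The projection $\pi\colon\widetilde{\mathcal{X}}\to\mathcal{A}_G^{\heartsuit}$ is proper, as $\overbar{X}$ is proper over $\overbar{\mathbb{F}}_q$ and $\ttt_D\to\car_{G,D}$ is finite; and it is flat by miracle flatness applied to $\ttt_D\to\car_{G,D}$, a finite surjective morphism between smooth equidimensional schemes (valid in very good characteristic).

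The first key step is to prove that $\widetilde{\mathcal{X}}$ is irreducible. Over the $G$-regular semisimple locus, $\ttt_D^{\reg}\to\car_{G,D}^{\reg}$ is a Galois $W$-torsor whose total space is irreducible, because $\ttt^{\reg}$ is a dense open of the affine space $\ttt$; consequently, its monodromy surjects onto $W$. By Lemma~\ref{511}, part (1) applied with $\zzz_G=0$ (since $G$ is semisimple) together with the surjectivity at any additional single point coming from part (2), the evaluation map $\ev$ is dominant with geometrically irreducible generic fibers, which are dense opens of affine spaces of dimension $\dim\mathcal{A}_G-\dim\ttt$. Hence $\ev\colon\ev^{-1}(\car_{G,D}^{\reg})\to\car_{G,D}^{\reg}$ induces a surjection on étale fundamental groups, so the pulled-back $W$-torsor $\widetilde{\mathcal{X}}^{\reg}$ retains full $W$-monodromy; it is therefore irreducible, and so is its closure $\widetilde{\mathcal{X}}$.

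The final propagation proceeds through Stein factorization: write $\pi=\beta\circ\alpha$ where $\alpha\colon\widetilde{\mathcal{X}}\to\mathcal{S}$ has geometrically connected fibers and $\beta\colon\mathcal{S}\to\mathcal{A}_G^{\heartsuit}$ is finite. The irreducibility of $\widetilde{\mathcal{X}}$ forces that of $\mathcal{S}$, and it suffices to show that $\beta$ has degree one; combined with the normality of $\mathcal{A}_G^{\heartsuit}$ (an open in affine space), Zariski's main theorem then yields $\mathcal{S}\cong\mathcal{A}_G^{\heartsuit}$, whence every geometric fiber $\widetilde{X}_a$ is connected. The degree of $\beta$ equals the number of geometric connected components of the geometric generic cameral cover $\widetilde{X}_{a_{\overbar{\eta}}}$, so the task reduces to showing that this cover has monodromy equal to $W$. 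Using Lemma~\ref{511}(2), for each simple reflection $s_\alpha\in W$ I would construct $a_\alpha\in\mathcal{A}_G^{\heartsuit}$ whose cameral cover has $s_\alpha$ as a local monodromy, by prescribing the value and the first jet of $a_\alpha$ at some closed point of $\overbar{X}$ so that $a_\alpha$ crosses transversally the wall of the discriminant of $\chi$ corresponding to $\alpha$; then Grothendieck's specialization principle for étale monodromy places each $s_\alpha$, up to conjugacy, in the generic monodromy image, which therefore contains all simple reflections and thus equals $W$.

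The principal obstacle is the last verification that the generic cameral cover has full $W$-monodromy; this is where the hypothesis $|\overbar{S}|>2-g$ enters essentially, via Lemma~\ref{511}(2), to guarantee enough flexibility in prescribing local jets of sections to realize each simple reflection as a local monodromy at some point of $\mathcal{A}_G^{\heartsuit}$.
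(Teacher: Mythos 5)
Your skeleton — irreducibility of the universal cameral cover, Stein factorization of the proper map to $\mathcal{A}_G^{\heartsuit}$, normality of $\mathcal{A}_G^{\heartsuit}$ plus Zariski's main theorem to propagate connectedness from the geometric generic fibre to all fibres — can be made to work. The genuine gap is in the step that is supposed to give full $W$-monodromy for the generic cameral curve, and it is twofold. First, the conclusion you draw is too weak: knowing that the generic monodromy contains, for each simple root $\alpha$, \emph{some} $W$-conjugate of $s_\alpha$ does not force it to be $W$. Conjugates of simple reflections are just reflections, and reflections attached to roots in one $W$-orbit are all conjugate; in type $A_{n-1}$ every reflection is conjugate to every other, so your argument only guarantees one transposition of $S_n$, and the subgroup it generates may have order $2$. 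Without controlling the conjugating elements (which a wall-crossing construction does not do), ``one reflection per conjugacy class'' does not generate $W$. Second, the specialization step is itself unjustified: the fibres $U_a$ (complement in $\overbar{X}$ of the locus where $a$ meets the discriminant) are affine curves and the family is neither proper nor smooth along the discriminant, so Grothendieck's specialization results do not transport the local monodromy created at the special parameter $a_\alpha$ into the monodromy of the geometric generic fibre; the natural comparison passes through $\pi_1$ of the universal open locus $\mathcal{U}\subset\mathcal{A}_G^{\heartsuit}\times\overbar{X}$, and the homotopy exact sequence needed to come back to $\pi_1(U_{\overbar{\eta}})$ is unavailable here ($\pi_1(\mathcal{A}_G^{\heartsuit})$ is enormous in characteristic $p$). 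Worse, very good characteristic does not exclude $p\mid |W|$ (e.g. $p=2$ in type $A_{n-1}$ with $n$ odd), so the inertia at a transversal wall crossing has order $2=p$ and is wild, and the transversal-slice/Abhyankar-type arguments one would use to spread local monodromy into nearby fibres break down.

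This is exactly why the paper does not argue by monodromy at all: its proof is to re-run Ngô's proof of \cite[4.6.1]{Ngo}, where the cameral curve sits as a complete intersection in the total space of $\ttt_D$ cut out by sections of the line bundles $\mathcal{O}_X((m_i+1)D)$ and connectedness comes from a positivity (connectedness-theorem) argument; the only property of $D$ used is that these line bundles are very ample. Since $G$ is semisimple every exponent satisfies $m_i\geq 1$, so $\deg\big((m_i+1)D\big)=(m_i+1)(2g-2+|\overbar{S}|)\geq 2(2g-2+3-g)=2g+2$ once $|\overbar{S}|>2-g$, which gives very ampleness and removes Ngô's hypothesis $\deg D>2g$. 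If you insist on a monodromy proof, you would need the finer Lefschetz-pencil mechanism in which the vanishing reflections are shown to be conjugate under the monodromy group itself; that is a substantially harder argument than the one you sketch, and in any case quite different from (and longer than) the paper's one-paragraph verification.
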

 \begin{proof}
  This proposition is \cite[4.6.1]{Ngo} where it is stated under the hypothesis that $\deg D>2g$. However, all one needs in the proof is that the line bundle $\mathcal{O}_{X}((m_i+1)D)$ is very ample (where $m_i\geq 1$ are exponents of the Weyl group as in Lemma \ref{511}). When $G$ is semisimple, we have $m_i\geq 1$, hence 
\[\deg D= (m_i+1)(2g-2+|\overbar{S}|)\geq 2(2g-2+3-g)=2g+2,  \]
when $|\overbar{S}|>2-g$. Therefore $\mathcal{O}_{X}((m_i+1)D)$ is very ample in our case. 
 \end{proof}

\begin{prop}
Suppose that $|\overbar{S}|> 2-g$, then the open subset $\mathcal{A}_G^{\diamondsuit}$ of $\mathcal{A}_G$ is non-empty. 
\end{prop}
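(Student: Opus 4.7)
The plan is to reduce the statement to Ngô's argument in \cite[4.7.1]{Ngo}, where non-emptiness is established under the stronger hypothesis $\deg D > 2g$, and to verify that under our weaker hypothesis $|\overline{S}| > 2-g$ the only input actually used by Ngô, namely a sufficient positivity of the summands of $\car_{G,D}$, is still available.

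Concretely, I would proceed as follows. First, recall the description of $\mathcal{A}_G^{\diamondsuit}$: it is the locus where $a \in \mathcal{A}_G$ meets the discriminant divisor $\mathfrak{D} \subset \car_{G,D}$ transversally along the base $X$; equivalently, the cameral cover $\widetilde{X}_a$ is smooth (\cite[4.7.3]{Ngo}). Thus non-emptiness is a Bertini-type transversality statement for the family of sections of $\car_{G,D}$ with respect to a closed subvariety $\mathfrak{D}$. Second, decompose
\[
\car_{G,D} \cong \bigoplus_{i=1}^{\rk G}\mathcal{O}_X((m_i+1)D),
\]
where $m_1,\ldots,m_{\rk G}\geq 1$ are the exponents of $W$ (as in Lemma~\ref{511}, using that $G$ is semisimple and that $p$ is very good). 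The required positivity in Ngô's proof is the very ampleness of each line bundle $\mathcal{O}_X((m_i+1)D)$, which amounts to
\[
(m_i+1)\deg D = (m_i+1)(2g-2+|\overline{S}|) \geq 2(2g-2+|\overline{S}|) \geq 2g+2,
\]
when $|\overline{S}|>2-g$; this is enough. Equivalently, the $2$-jet surjectivity statement of Lemma~\ref{511}(2) gives us that $\mathcal{A}_G$ separates points and tangent directions on $\overline{X}$.

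Third, given this input, I would run the classical Bertini argument exactly as in \cite[4.7.1]{Ngo}. One considers the incidence subvariety
\[
Z = \bigl\{ (a,x)\in \mathcal{A}_G\times \overline{X} \mid a(x)\in \mathfrak{D}\text{ and } a\not\pitchfork \mathfrak{D}\text{ at }x\bigr\},
\]
shows using the $2$-jet surjectivity of Lemma~\ref{511}(2) that the fibers of the second projection $Z\to \overline{X}$ have codimension $\geq 2$ in $\mathcal{A}_G$, and concludes that the image of $Z$ in $\mathcal{A}_G$ has codimension $\geq 1$. Its complement is the non-empty open $\mathcal{A}_G^{\diamondsuit}$.

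The only mildly delicate point is verifying that the formal second-order jet information from Lemma~\ref{511}(2) really suffices to enforce transversality to the (possibly singular) discriminant $\mathfrak{D}$ at every point of $\overline{X}$; this is handled by restricting to the smooth locus of $\mathfrak{D}$ (which has the same codimension) and using that the $1$-jet map is surjective on each fiber of $\car_{G,D}\to X$, since the map to $(\mathcal{O}_{\overline{X},\overline{v}}/\wp_{\overline{v}}^2)$-valued sections already has surjective component in each direct summand. Apart from this verification, the argument is routine Bertini, and no hypothesis beyond $|\overline{S}|>2-g$ enters.
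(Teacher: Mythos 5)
Your proposal is correct and takes essentially the same route as the paper: the paper's proof just observes that the hypothesis $\deg D>2g$ enters Ngô's argument for \cite[4.7.1]{Ngo} only through the jet-surjectivity statement \cite[4.7.2]{Ngo}, and replaces that input by the second part of Lemma \ref{511} — exactly the substitution you make before rerunning the Bertini-type incidence argument. The only cosmetic difference is that you phrase the needed positivity as very ampleness of the summands $\mathcal{O}_X((m_i+1)D)$ (which the paper invokes for the connectedness of cameral curves rather than here), but this is equivalent to the $2$-jet surjectivity actually used.
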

\begin{proof}
This is \cite[4.7.1]{Ngo}. It is proved under the hypothesis that $\deg D>2g$. This condition is only needed in \cite[4.7.2]{Ngo}, so his proof works without modification using the second part of Lemma \ref{511} instead. 
\end{proof}

\subsubsection{}\label{eva}
Following \cite[5.3.1]{Ngo}, we introduce an \'etale open subset $\widetilde{\mathcal{A}}_G\rightarrow \mathcal{A}_G$ defined by the following Cartesian diagram:
\[\begin{CD}
\widetilde{\mathcal{A}}_G@>>>\ttt_{D,\infty}^{\mathrm{reg}}\\
@VVV@VVV\\
\mathcal{A}_G@>\ev_{G, \infty}>> \car_{G, D,\infty}\\
\end{CD}.\]
We have an induced morphism, denoted  by $\widetilde{\ev}_G$: 
\begin{equation}
\widetilde{\ev}_G: \widetilde{\mathcal{A}}_G\longrightarrow \widetilde{\mathcal{R}}_G:=  \ttt_{D,\infty}^{\reg}\times \prod_{v\in S-\{\infty\}}\mathrm{R}_{\kappa_v | \mathbb{F}_q}\car_{G, D, v}, 
\end{equation}
given by the base change of the morphism \eqref{resi}.

\begin{prop}\label{irred}
Suppose that $|\overbar{S}|> 2-g$. 
The scheme $\widetilde{\mathcal{A}}_G$ is smooth and geometrically irreducible. 
\end{prop}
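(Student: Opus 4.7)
The plan is to exploit the Cartesian square defining $\widetilde{\mathcal{A}}_G$ and reduce the statement to the geometry of the evaluation map $\ev_{G,\infty}$ at the distinguished point $\infty$. The key input is part (1) of Lemma \ref{511} applied with $M=G$ (recall $G$ is semisimple, so $\dim\zzz_G=0$), which, under the hypothesis $|\overline{S}|>2-g$, tells us that $\ev_{G,\infty}:\mathcal{A}_G\to\car_{G,D,\infty}$ is a surjective linear map of finite-dimensional $\mathbb{F}_q$-vector spaces with kernel of fixed dimension $\tfrac{1}{2}(2g-2+|\overline{S}|)\dim\ggg-\tfrac{1}{2}|\overline{S}|\dim\ttt$.

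First I would establish smoothness. The map $\ev_{G,\infty}$, being a surjective $\mathbb{F}_q$-linear map between affine spaces, is smooth. Smoothness is preserved by base change, so in the defining Cartesian diagram the morphism
\[
\widetilde{\mathcal{A}}_G \longrightarrow \ttt_{D,\infty}^{\reg}
\]
is smooth. Since $\ttt_{D,\infty}^{\reg}$ is an open subscheme of the affine space $\ttt_{D,\infty}$, it is smooth over $\mathbb{F}_q$, and hence $\widetilde{\mathcal{A}}_G$ is smooth over $\mathbb{F}_q$.

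Next I would prove geometric irreducibility using the same projection $\widetilde{\mathcal{A}}_G\to\ttt_{D,\infty}^{\reg}$. The base $\ttt_{D,\infty}^{\reg}$ is geometrically irreducible: after base change to $\overline{\mathbb{F}}_q$ it is the complement of finitely many linear hyperplanes (corresponding to the roots) in a finite-dimensional $\overline{\mathbb{F}}_q$-vector space. For every geometric point $t\in\ttt_{D,\infty}^{\reg}(\overline{\mathbb{F}}_q)$, the fiber of $\widetilde{\mathcal{A}}_G\to\ttt_{D,\infty}^{\reg}$ over $t$ coincides with the fiber of $\ev_{G,\infty}$ over the image of $t$ in $\car_{G,D,\infty}$, and by surjectivity and linearity of $\ev_{G,\infty}$ this fiber is a translate of $\ker(\ev_{G,\infty})_{\overline{\mathbb{F}}_q}$, i.e.\ an affine space of constant dimension, in particular geometrically irreducible. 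A smooth surjective morphism between finite-type $\mathbb{F}_q$-schemes whose target is geometrically irreducible and all of whose geometric fibers are irreducible of the same dimension has geometrically irreducible source; applying this to the projection yields the desired conclusion.

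There is no real obstacle beyond checking the hypotheses of Lemma \ref{511}: the only subtle point is ensuring that $\ev_{G,\infty}$ is indeed surjective, which is where the assumption $|\overline{S}|>2-g$ enters, and ensuring that the $\ttt$-component at $\infty$ is cut out as an open subscheme of the affine bundle $\car_{G,D}$ at $\infty$ (so that no dimension can drop at the étale step). Both are direct from the setup in \ref{4.1} and \ref{eva}.
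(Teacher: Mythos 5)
Your argument is correct and is essentially the argument underlying the cited result: the paper simply invokes Ngô's [5.3.2], noting that the only input required beyond the degree hypothesis is the surjectivity of $\ev_{G,\infty}$, which Lemma \ref{511} supplies — and that is precisely the key ingredient your fiber-product argument exploits. The only small comment is that for the irreducibility step, the "constant dimension" hypothesis is not actually needed: smoothness of the projection makes each connected component map onto an open subset of the irreducible base, so an argument via connectedness of the smooth source (two components would force a disconnected fiber) already suffices; but including the dimension observation is harmless.
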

\begin{proof}
This is \cite[5.3.2]{Ngo} stated under the hypothesis that $\deg D>2g$, but all one needs are the surjectivity of $\mathcal{A}_G\rightarrow \car_{G, D, \infty}$. This is satisfied because of Lemma \ref{511}. 
\end{proof}

\begin{theorem}Suppose that $|\overbar{S}|> 2-g$. 
There is a unique non-empty maximal open subset of $\mathcal{R}_G$, denoted by $\mathcal{R}^{\ani}_G$, with the property that $\mathrm{ev}_G^{-1}(\mathcal{R}^{\ani}_G)\subseteq \mathcal{A}^{\ani}_G$ (note that the morphism $\mathrm{ev}_G$ is surjective).  
\end{theorem}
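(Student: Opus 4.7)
The plan is to realize $\mathcal{R}_G^{\ani}$ as the explicit complement of the closure of $\ev_G(Z)$, where $Z:=\mathcal{A}_G\setminus\mathcal{A}_G^{\ani}$, and then to prove non-emptiness by a dimension count. Uniqueness is formal: any open $U\subseteq \mathcal{R}_G$ with $\ev_G^{-1}(U)\subseteq \mathcal{A}_G^{\ani}$ must satisfy $U\cap \ev_G(Z)=\emptyset$, and being open it must also avoid $\overline{\ev_G(Z)}$. Setting
\[
\mathcal{R}_G^{\ani}:=\mathcal{R}_G\setminus \overline{\ev_G(Z)},
\]
one checks directly that $\ev_G^{-1}(\mathcal{R}_G^{\ani})\cap Z=\emptyset$, hence $\ev_G^{-1}(\mathcal{R}_G^{\ani})\subseteq \mathcal{A}_G^{\ani}$, and that this open is maximal with this property. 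The whole content of the theorem is therefore that $\overline{\ev_G(Z)}\neq \mathcal{R}_G$, i.e.\ that $\ev_G(Z)$ is not dense in the irreducible scheme $\mathcal{R}_G$.

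To bound $\ev_G(Z)$, I would decompose $Z$ over $\bar{\mathbb{F}}_q$ as
\[
Z\;\subseteq\;(\mathcal{A}_G\setminus\mathcal{A}_G^{\heartsuit})\;\cup\; \bigcup_{M\subsetneq G}\phi_M(\mathcal{A}_M),
\]
where $M$ runs over the finitely many conjugacy classes of proper Levi subgroups of $G$ over $\bar{\mathbb{F}}_q$ and $\phi_M:\mathcal{A}_M\rightarrow \mathcal{A}_G$ is induced by the finite morphism $\car_M=\ttt\sslash W^M \rightarrow \ttt\sslash W=\car_G$. The inclusion of the second piece encodes Ng\^o's characterisation of $\mathcal{A}_G^{\ani}$ inside $\mathcal{A}_G^{\heartsuit}$: if $a_\eta$ is regular semisimple but not elliptic, then the connected centraliser has a non-trivial split central torus, forcing $a$ to factor through $\mathcal{A}_M$ for some proper Levi $M$.

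For each such $M$, the composition $\ev_G\circ \phi_M$ factors as
\[
\mathcal{A}_M\xrightarrow{\ev_M}\mathcal{R}_M\longrightarrow \mathcal{R}_G,
\]
where the second arrow is a finite morphism between varieties of common dimension $|\bar S|\dim\ttt$. By Lemma \ref{511}(1), the image of $\ev_M$ has codimension $\dim \zzz_M\geq 1$ in $\mathcal{R}_M$ (since $G$ is semisimple and $M\subsetneq G$), hence $\ev_G(\phi_M(\mathcal{A}_M))$ has codimension $\geq 1$ in $\mathcal{R}_G$. For the non-$\heartsuit$ part, any section $a$ with $\Delta(a)\equiv 0$ (where $\Delta$ is the discriminant of $\car_G$) satisfies $\Delta(r_v)=0$ for every $v\in S$, and this locus has codimension $|S|\geq 1$ in $\mathcal{R}_G$. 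Taking closures and the finite union, $\overline{\ev_G(Z)}$ sits inside a finite union of proper closed subsets of the irreducible scheme $\mathcal{R}_G$, proving that its complement $\mathcal{R}_G^{\ani}$ is a non-empty open.

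The main obstacle is the Levi-descent step: one must verify that any generically regular semisimple section whose generic centraliser has a non-trivial split central subtorus actually descends to a section of some $\car_M$ over $\bar{\mathbb{F}}_q$. This is standard in Ng\^o's framework under the very-good-characteristic hypothesis (which guarantees $\car_M = \ttt\sslash W^M$ and that the stratification of $\car_G$ by Levi types is geometrically well behaved), but it is the one place where non-trivial input from the Hitchin literature is required beyond what is recalled earlier in this section.
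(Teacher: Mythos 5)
Your construction of $\mathcal{R}_G^{\ani}$ as the complement of $\overline{\ev_G(Z)}$ and your dimension count are the same engine as the paper's proof: in both cases non-emptiness comes down to the fact that, for each proper Levi $M$, the image of the evaluation map of $\mathcal{A}_M$ in the residue space has codimension $\dim \zzz_M\geq 1$ (Lemma \ref{511}) and that the map from the $M$-residues to the $G$-residues is finite, hence closed and dimension-preserving. The genuine difference is the level at which the stratification of the non-anisotropic locus is invoked. The paper works over the \'etale opens $\widetilde{\mathcal{A}}_G$ and $\widetilde{\mathcal{R}}_G$, where every point is automatically generically regular semisimple (its value at $\infty$ lifts to $\ttt^{\reg}$) and where the equality $\widetilde{\mathcal{A}}_G-\widetilde{\mathcal{A}}_G^{\ani}=\bigcup_{M\subsetneq G}\widetilde{\mathcal{A}}_M$ is exactly the cited statement of Ng\^o and Chaudouard--Laumon, with each $\widetilde{\mathcal{A}}_M\rightarrow\widetilde{\mathcal{A}}_G$ a closed immersion; the price is a small extra argument that the scheme-theoretic image of $\res_M$ is compatible with the flat base change $\ttt^{\reg}_{D,\infty}\rightarrow\car^{\rs}_{G,D,\infty}$, which you do not need. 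You instead work directly on $\mathcal{A}_G$ and $\mathcal{R}_G$, which forces two extra items: the non-$\heartsuit$ stratum (your discriminant argument handles it, and codimension $\geq 1$ suffices there), and the Levi-descent claim that any $a\in\mathcal{A}_G^{\heartsuit}(\overbar{\mathbb{F}}_q)\setminus\mathcal{A}_G^{\ani}(\overbar{\mathbb{F}}_q)$ factors through $\phi_M(\mathcal{A}_M)$ for a proper Levi $M\supseteq T$. That claim is true, but be aware it is not literally the statement recalled in this section (which is for the tilde versions, rigidified by the choice of a regular lift at $\infty$): to justify it you should either re-base the \'etale cover at a point of $\overbar{X}$ where $a$ is regular semisimple and apply the same citation there (then spread the generic lift over all of $\overbar{X}$ using finiteness of $\car_{M,D}\rightarrow\car_{G,D}$ and the valuative criterion on the smooth curve), or simply run your whole argument on the tilde level as the paper does, since non-density of the bad image can be checked after the \'etale base change. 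With that step made precise, your proof is complete and marginally more self-contained on the base-change side, while the paper's route buys a cleaner stratification at the cost of the flat-base-change remark.
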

\begin{proof}
By definition, the open subset $\mathcal{R}^{\ani}_G$ is unique. We need to show that it is non-empty. For this we can work over the \'etale open subset $\widetilde{\mathcal{A}}_G$ of $\mathcal{A}_G$.    
We have the following commutative diagram:
\[\begin{CD}
\widetilde{\mathcal{A}}_M@>\widetilde{\ev}_{M}>> \ttt_{D,\infty}^{\mathrm{reg}}\prod_{v\in S-\{\infty\}}\mathrm{R}_{\kappa_v | \mathbb{F}_q}\car_{M, D, v}\\
@VVV@VVpV\\
\widetilde{\mathcal{A}}_G@>\widetilde{\ev}_{G}>>  \ttt_{D,\infty}^{\mathrm{reg}}\prod_{v\in S-\{\infty\}}\mathrm{R}_{\kappa_v | \mathbb{F}_q}\car_{G, D, v}\\
\end{CD}.\]

The left vertical morphism is a closed immersion (see \cite[3.5.1]{C-L1}), and \(\widetilde{\mathcal{A}}_G-\widetilde{\mathcal{A}}^{\mathrm{ani}}_G \) is a union of $\widetilde{\mathcal{A}}_M$ over the set of proper semistandard Levi subgroups $L$ of $G$ 
 (see \cite[6.3.5]{Ngo} and also \cite[3.6.2]{C-L1}): 
\[\widetilde{\mathcal{A}}_G - \widetilde{\mathcal{A}}_G^{\mathrm{ani}}= \bigcup_{M\in\mathcal{L}(T), M\neq G}\widetilde{\mathcal{A}}_M. \]
Therefore for any point $a$ in $\widetilde{\mathcal{A}}_G$, if $a$ does not belong to $ \widetilde{\mathcal{A}}_M$ for any proper semistandard Levi subgroup, then $a$ belongs to $\widetilde{\mathcal{A}}^{\mathrm{ani}}$. 

The morphism \[\mathcal{A}_M\longrightarrow \prod_{v\in S}\mathrm{R}_{\kappa_v | \mathbb{F}_q}\car_{M, D, v}\] is affine hence quasi-compact, so its scheme theoretic image commutes with flat base change induced by the morphism \(\ttt^{\reg}_{M,D,\infty}\rightarrow \car^{\rs}_{M,D,\infty}. \)
Moreover, the morphism \(\ttt^{\reg}_{M,D,\infty}\rightarrow \car^{\rs}_{M,D,\infty}\) is finite, while finite morphism is closed and preserves dimension. We conclude that the scheme theoretic image of the morphism \(\res_{M}\) has codimension $\dim \zzz_M$ (Lemma \ref{511}). 
The morphism $p$ in the commutative diagram above is also finite, so the scheme theoretic image of $p\circ \res_M$ also has codimension $\dim \zzz_M$. 
It implies that the union of the images of \(p\circ \res_{M}\) over the set of proper semistandard Levi subgroups (there are finitely many of them) is contained in a closed sub-scheme of strictly smaller dimension. So the complement is a non-empty open subset whose inverse image under the morphism $\mathrm{ev}$ is contained in $\widetilde{\mathcal{A}}_G^{\ani}$.   \end{proof}

\subsection{Chaudouard-Laumon's $\xi$-stability}\label{4.2}
In this subsection, we review Chaudouard-Laumon's work (\cite{C-L1}) and move some of their results from $\overbar{\mathbb{F}}_q$ to $\mathbb{F}_q$. For what we need, it will also be possible to work with the semistable part of the parabolic Hitchin moduli stack. However, it is more convenient for us to work with Chaudouard-Laumon's version since we know more about them. 

\subsubsection{More on the \'etale open $\widetilde{\mathcal{A}}_{M}$}

Since the maximal torus $T$ is split, every semistandard Levi subgroup of $G_{\overbar{\mathbb{F}}_q}$ is defined over $\mathbb{F}_q$.
For each Levi subgroup $M$ of $G$ defined over $\mathbb{F}_q$, we have the Hitchin base $\mathcal{A}_M$ as well as its \'etale open set $\widetilde{\mathcal{A}}_M$ defined by the Cartesian diagram
\[\begin{CD}
\widetilde{\mathcal{A}}_M @>>> \ttt^{\reg}_{D, \infty}\\
@VVV@VVV\\
{\mathcal{A}}_M @>\ev_{\infty}>> \car_{M,D, \infty}
\end{CD}. \]
Recall that in the definition of $\ttt^{\reg}_{D, \infty}$,  we only consider $G$-regular elements instead of $M$-regular ones. The set $\widetilde{\mathcal{A}}_M(\mathbb{F}_q)$ consists of pairs $(a, t)$ with $a\in \mathcal{\mathcal{A}}_M(\mathbb{F}_q)$ and $\ttt^{\reg}_{D, \infty}(\mathbb{F}_q)$ such that 
\[\ev_\infty(a) = \chi_M(t). \]
The obviously defined morphism \begin{equation}\chi^M_G: \widetilde{\mathcal{A}}_M \longrightarrow \widetilde{\mathcal{A}}_G,  \end{equation}
is a closed immersion (\cite[3.5.1]{C-L1}).
Let
\begin{equation}\widetilde{\mathcal{A}}_{M}^{\mathrm{ell}}=\widetilde{\mathcal{A}}_{M}-\cup_{L\in \mathcal{L}(T), L\subsetneq M}\widetilde{\mathcal{A}}_L. \end{equation}
It is an open subscheme of $\widetilde{\mathcal{A}}_M$ (\cite[3.6.1]{C-L1}). 
We have \[\widetilde{\mathcal{A}}_{G}^{\mathrm{ell}}= \widetilde{\mathcal{A}}_{G}^{\mathrm{ani}}, \] see \cite[6.3.5]{Ngo} or \cite[3.6.2]{C-L1}. And any point of $\widetilde{\mathcal{A}}_G$ is in the elliptic part $\widetilde{\mathcal{A}}_M^{\mathrm{ell}}$ for exactly one semistandard Levi subgroup $M$ (\cite[3.6.2]{C-L1}):
\[\widetilde{\mathcal{A}}_G= \cup_{M}\widetilde{\mathcal{A}}_M^{\mathrm{ell}}. \]

Suppose $n_v$ are integers so that \[D=\sum_{v\in |X|}n_v v.  \]
As we have mentioned in \eqref{Acar} that $D$ furnishes $\mathcal{O}_X(D)$ a generic trivialization. Consider the formal neighbourhood $\Spec(\ooo_\infty)$ of the closed point $\infty$ of $X$. 
We get an induced generic trivialization of the bundle $\car_{M,D,\ooo_\infty}:=\car_{M,D}|_{\Spec(\ooo_\infty)}$.
Therefore we can identify $\car_{M,D, \mathcal{O}_\infty}(F_\infty) \cong  \car_{M}(F_\infty)$ and we have an injection $\mathcal{A}_{M}(\mathbb{F}_q)\hookrightarrow \car_{M,D}(F_\infty)$ given by restricting a section in $\mathcal{A}_{M}(\mathbb{F}_q)$ to $\Spec(\ooo_{\infty})$. Moreover,  we have the following commutative diagram.
\begin{equation}\label{41222}\begin{tikzcd}
&\ttt_{D,\ooo_\infty}(\ooo_\infty)\arrow[d,hook] \arrow[r, hook] & \ttt_{D,\ooo_\infty}(F_\infty)  \arrow{r}{\sim}\arrow[d,hook] &\ttt(F_\infty) \arrow[d,hook]  \\ 
&\mmm_{D,\ooo_\infty}(\ooo_\infty)\arrow[d] \arrow[r, hook] & \mmm_{D,\ooo_\infty}(F_\infty)  \arrow{r}{\sim}\arrow[d] &\mmm(F_\infty) \arrow[d]  \\ 
\mathcal{A}_{M}(\mathbb{F}_q)\arrow[r, hook] & \car_{M,D,\ooo_\infty}(\ooo_\infty)\arrow[r, hook] &\car_{M,D,\ooo_\infty}(F_\infty) \arrow{r}{\sim}& \car_{M}(F_\infty)  
\end{tikzcd} \end{equation}
Note that the image of $\mmm_{D,\ooo_\infty}(\ooo_\infty)$ in $\mmm(F_\infty)$ via the above diagram is \[\wp_{\infty}^{-n_\infty}\mmm(\ooo_\infty)\] (``functions admitting poles of order $\leq n_\infty$"), since $\mathbb{G}_m$ has weight $1$ on $\mmm$. 

The the morphism $\ev_\infty: \mathcal{A}_M(\mathbb{F}_q) \rightarrow \car_{M,D,\infty}(\kappa_\infty)$ can be factored as 
\[\ev_{\infty}: \mathcal{A}_M(\mathbb{F}_q)\longrightarrow \car_{M,D,\ooo_\infty}(\ooo_\infty)\longrightarrow \car_{M,D,\infty}(\kappa_\infty). \]
Note that under the above point of view, we have the following commutative diagram:
\begin{equation}\label{41333}\begin{tikzcd}
\wp_{\infty}^{-n_\infty}\mmm(\ooo_\infty)\arrow[d] \arrow[r] & \mmm(\kappa_\infty)   \arrow[d]   \\ 
\car_{M,D,\ooo_\infty}(\ooo_\infty)\arrow[r] &\car_{M,D,\infty}(\kappa_\infty)  \end{tikzcd}, \end{equation}
where the top horizontal arrow is given by ``mod-$\wp_{\infty}^{-n_\infty+1}$".

\subsubsection{$\mathbb{F}_q$ vs. $\overbar{\mathbb{F}}_q$}\label{422}
Suppose $(a,t)\in \widetilde{\mathcal{A}}_{M}(\mathbb{F}_q)$, 
since the morphism $\ttt^{\reg}\rightarrow \ttt^{\reg}\sslash W$ is \'etale, for any $(a,t)\in \widetilde{\mathcal{A}}_{M}(\mathbb{F}_q)$, there is a unique element \[t_{a}\in \ttt^{\reg}_{D,\ooo_\infty}(\ooo_\infty) \cong  \wp_\infty^{-n_\infty} \ttt^{\reg}(\mathcal{O}_\infty),\] such that the restriction of $a$ to $\Spec(\mathcal{O}_{\infty})$ equals $\chi_M(t_a)$. 
The  following lemma is an ``${\mathbb{F}}_q$-version" of part of \cite[Proposition 3.9.1]{C-L1} which is proved over $\overbar{\mathbb{F}}_q$. 
\begin{lemm}\label{rational'}
Suppose that $M$ is a semistandard Levi subgroup defined over $\mathbb{F}_q$ and $(a,t)\in \widetilde{\mathcal{A}}_{M}(\mathbb{F}_q)$. %Let $t_a\in \ttt(\mathcal{O}_{\infty})$ be the unique element that lifts 
Then there exists a semisimple $G$-regular semisimple element $X\in \mmm(F)$ such that 
$$\chi_M(X)=a_{M,\eta}.$$
Moreover for any such $X$, there is an $m\in M(F_{\infty})$ such that 
 $\Ad(m)(X)=t_a$. 
\end{lemm}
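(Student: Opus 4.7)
The lemma decomposes into two claims: first, the existence of a $G$-regular semisimple lift $X \in \mmm(F)$ of $a_{M,\eta}$; second, that any such $X$ is $M(F_\infty)$-conjugate to the local representative $t_a$. The plan is to solve the first via the Kostant section defined over $\mathbb{F}_q$, and the second via Hilbert 90 applied to the split torus $T$.

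For the first claim, I first verify that $a_{M,\eta}$ lies in the $G$-regular semisimple open subset $\car_M^{\rs}(F) \subseteq \car_M(F)$. Using the generic trivialization of $\mathcal{O}_X(D)$, the element $a_{M,\eta}$ is naturally an element of $\car_M(F)$. The hypothesis $\ev_\infty(a) = \chi_M(t)$ with $t \in \ttt^{\reg}$ forces $a$ to take a $G$-regular semisimple value at $\infty$; as $\car_M^{\rs}$ is open in $\car_M$ and $X$ is irreducible, this implies $a_{M,\eta} \in \car_M^{\rs}(F)$. Since $M$ is split over $\mathbb{F}_q$ and the characteristic is very good, one has a Kostant section $\epsilon_M: \car_M \to \mmm$ defined over $\mathbb{F}_q$ (by choosing a pinning of $M^{sc}$ and extending by the identity on $\zzz$). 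Restricted to $\car_M^{\rs}$, this section lands in $\mmm^{\rs}$, because the fiber of $\chi_M$ over a point of $\car_M^{\rs}$ consists entirely of $G$-regular semisimple elements and $\epsilon_M(a_{M,\eta})$ is one such. Thus $X := \epsilon_M(a_{M,\eta}) \in \mmm^{\rs}(F)$ is the desired lift.

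For the second claim, fix any such $X$ and view it in $\mmm(F_\infty)$ via the inclusion $F \hookrightarrow F_\infty$. Under the identifications of diagram \eqref{41222}, one has $\chi_M(X) = \chi_M(t_a)$ in $\car_M(F_\infty)$, and both elements are $G$-regular semisimple. Since the fibers of $\chi_M: \mmm^{\rs} \to \car_M^{\rs}$ over an algebraically closed base consist of a single $M$-orbit, there exists $m_0 \in M(\overbar{F_\infty})$ with $\Ad(m_0)(X) = t_a$. The cochain $\sigma \mapsto \sigma(m_0) m_0^{-1}$ is then a Galois $1$-cocycle with values in the centralizer $Z_M(t_a)$; because $t_a \in \ttt$ is $G$-regular and $T \subseteq M$, this centralizer equals $T$. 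Since $T$ is split over $\mathbb{F}_q$, hence over $F_\infty$, Hilbert 90 gives $H^1(F_\infty, T) = 0$, so one can write $\sigma(m_0) m_0^{-1} = \sigma(t_0) t_0^{-1}$ for some $t_0 \in T(\overbar{F_\infty})$. Then $m := t_0^{-1} m_0$ is $F_\infty$-rational, and $\Ad(m)(X) = \Ad(t_0^{-1})(t_a) = t_a$ since $t_0 \in T$ centralizes $t_a$.

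The proof is mostly mechanical given this framework. The main conceptual point is identifying the descent obstruction in part (2) with $H^1(F_\infty, T)$ for a split torus $T$; the split assumption on $T$ is exactly what makes Hilbert 90 applicable, and this is where working over $\mathbb{F}_q$ (rather than an arbitrary base) pays off. The only place requiring care is the bookkeeping around the identifications in diagram \eqref{41222} together with the generic trivialization of $\mathcal{O}_X(D)$, ensuring the claimed equality $\chi_M(X) = \chi_M(t_a)$ in $\car_M(F_\infty)$; this is straightforward but notationally heavy.
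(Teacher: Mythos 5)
Your proposal is correct and follows essentially the same route as the paper: a Kostant section defined over $\mathbb{F}_q$ (hence over $F$) produces the regular semisimple lift $X$ of $a_{M,\eta}$, and the $F_\infty$-conjugacy to $t_a$ is obtained by twisting a geometric conjugation by a cocycle valued in the centralizer $T$ of $t_a$, which dies because $T$ is split. The only cosmetic difference is that the paper quotes \cite[3.8.1]{C-L1} for the geometric conjugacy over $F_\infty\otimes\overbar{\mathbb{F}}_q$ and uses triviality of $\ker(H^{1}(F_\infty,T)\rightarrow H^{1}(F_\infty,M))$, whereas you argue the single-orbit statement over the closure directly and invoke Hilbert 90 for the split torus; both rest on the same vanishing.
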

\begin{proof}
The existence of semisimple element $X\in \mmm(F)$ such that $\chi_M(X)=a_{\eta}$ can be deduced using a Kostant section. If the characteristic is very large (larger than two times the Coxeter number), we refer the reader to \cite[1.2, 1.3.3]{Ngo} for references and more detailed discussions. In \cite{Riche}, over an algebraically closed field, S. Riche proved that under very mild conditions (in particular if $p$ is very good), a Kostant section exists. 
In fact, fix a pining $(T,B,e)$ defined over $\mathbb{F}_q$, where $e=\sum_{\alpha\in \Delta_B}x_{\alpha}$ with $x_{\alpha}$ being a non-zero vector in the root space $\ggg_\alpha$. %In particular, its projection to $\ggg_{\alpha'}$ for a positive simple root $\alpha'$ of $\Phi(G_{\overbar{\mathbb{F}}_q},T_{\overbar{\mathbb{F}}_q})$ is non-zero. 
Choose a $\mathbb{G}_m$-stable complement $\mathfrak{s}$ to $[e, \nnn_B]$ in $\bbb$ that is defined over $\mathbb{F}_q$ and let $\mathcal{S}=e+\mathfrak{s}$ as in \cite[p.230]{Riche}, Riche proves that \cite[3.2.1, 3.2.2]{Riche} the $\mathbb{F}_q$-morphism $\mathcal{S}\rightarrow \ggg/G$ is an isomorphism.% over $\mathbb{F}_q$. 

By \cite[3.8.1]{C-L1}, $X$ and $t_a$ are $M(F_\infty\otimes\overbar{\mathbb{F}}_q)$-conjugate in $\mmm(F_{\infty})$. 
Hence using an element in $M(F_\infty\otimes\overbar{\mathbb{F}}_q)$ that conjugates $X$ to $t_a$ we can define a cocyle in $\ker(H^{1}(F_\infty, T)\rightarrow H^{1}(F_{\infty}, M))$ which is trivial as $T$ is split. Therefore $X$ and $t_a$ are $M(F_\infty)$-conjugate. 
\end{proof}

\begin{lemm}\label{lev}
Let $Y\in \ggg(F)$ be a semisimple regular element, such that $Y$ is stably conjugate to an element in $\mmm(F)$ for a Levi subgroup $M$ of (an $F$-parabolic subgroup of) $G$ defined over $F$. Then there is an element in $\mmm(F)$ conjugate to $Y$.  
\end{lemm}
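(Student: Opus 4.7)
The plan is to leverage the $F$-splitness of $G$ to transport the $F$-rational Levi subgroup $gMg^{-1}$ onto $M$ by an element of $G(F)$. Write $Y = \Ad(g) X$ for some $X \in \mmm(F)$ and $g \in G(F^s)$. Since $Y$ is $G$-regular semisimple, the centralizer $G_X$ coincides with a maximal $F$-torus $T_X \subseteq M$, and the $F$-rationality of both $X$ and $Y$ forces the cocycle $c_\sigma := g^{-1}\sigma(g)$ to lie in $T_X(F^s)$ for every $\sigma \in \Gal(F^s|F)$.

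Fix an $F$-parabolic $P$ of $G$ with Levi $M$ and unipotent radical $N$. First I would verify that both $gPg^{-1}$ and $gMg^{-1}$ are defined over $F$: since $c_\sigma \in T_X(F^s) \subseteq M(F^s) \subseteq P(F^s)$ normalizes both $P$ and $M$, we have $\sigma(gPg^{-1}) = g c_\sigma P c_\sigma^{-1} g^{-1} = gPg^{-1}$ and similarly $\sigma(gMg^{-1}) = gMg^{-1}$. Thus $gMg^{-1}$ is an $F$-rational Levi factor of the $F$-rational parabolic $gPg^{-1}$, and $Y$ lies in $(\Ad(g)\mmm)(F)$.

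Next, since $G$ is $F$-split, any two $F$-parabolics of $G$ that are $G(F^s)$-conjugate are already $G(F)$-conjugate (their common geometric type determines a unique $G(F)$-conjugacy class, represented by a standard parabolic). Pick $h \in G(F)$ with $hPh^{-1} = gPg^{-1}$; then $h^{-1}(gMg^{-1})h$ is an $F$-Levi of $P$, and by the standard fact that two $F$-Levi factors of an $F$-parabolic are conjugate by an element of $N(F)$ (valid over any field), there exists $n \in N(F)$ with $nMn^{-1} = h^{-1}(gMg^{-1})h$. Setting $\gamma := hn \in G(F)$ gives $\gamma^{-1}g \in N_G(M)(F^s)$, so $X' := \Ad(\gamma^{-1}g) X$ lies in $\mmm$; but $X' = \Ad(\gamma^{-1})Y$ is also $F$-rational since $\gamma \in G(F)$ and $Y \in \ggg(F)$, whence $X' \in \mmm(F)$ and $Y = \Ad(\gamma) X'$ is $G(F)$-conjugate to $X'$.

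The only delicate point is the step where two geometrically conjugate $F$-parabolics are shown to be $G(F)$-conjugate and the conjugator is further adjusted by $N(F)$; the first claim rests on the splitness of $G$, via the facts that all maximal $F$-split tori of $G$ are $G(F)$-conjugate and that every $F$-parabolic contains such a torus, so that each is $G(F)$-conjugate to a standard parabolic; the second is the standard uniqueness of Levi decompositions over a field.
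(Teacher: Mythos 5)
Your argument is correct and is essentially the paper's proof: you show the transported parabolic and Levi are Galois-stable because the cocycle $g^{-1}\sigma(g)$ lies in $G_X=T_X\subseteq M$, then use conjugacy of $F$-parabolics and conjugacy of $F$-Levi factors of a fixed $F$-parabolic by points of its unipotent radical to produce a rational conjugator. The only (inessential) difference is at the parabolic-conjugacy step, where the paper cites Borel--Tits (Th\'eor\`eme 4.13.c), valid for any reductive group over any field, while you justify it via splitness of $G$ and standard parabolics.
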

\begin{proof}
Suppose that $Y$ is stably conjugate to $X\in \mmm(F)$ by $g\in G(F^s)$, i.e. 
$$\Ad(g)Y=X. $$ 
Let $P$ be an $F$-parabolic subgroup of $G$ such that $M$ is a Levi subgroup of $P$. 
Note that $g^{-1}Mg$ is then a Levi subgroup of $g^{-1}Pg$. A priori, they are defined over $F^{s}$. However, as $X$ is regular, we have $G_X(F^s)\subseteq M(F^s)$. It implies that both $g^{-1}Pg$ and $g^{-1}Mg$ are stable under $\Gal(F^s|F)$ hence are defined over $F$. Following Borel-Tits, \cite[Th\'eor\`eme4.13.c]{BTu}, two $F$-parabolic subgroups are conjugate if they are $F^s$-conjugate. The result of the lemma then follows as two $F$-Levi subgroups in $P$ are conjugate by an element in $N_P(F)$. 
\end{proof}

\begin{prop}\label{423}
Let $X\in \mmm(F)$ with $M$ a standard Levi subgroup of $G$ and $(a_M, t)\in \widetilde{\mathcal{A}}_{M}(\mathbb{F}_q)$. 
Suppose that $$ \chi_M(X)=a_{M, \eta}. $$

Then $X$ is elliptic in $\mmm(F)$ (equivalently $M$ is the minimal Levi subgroup defined over $\mathbb{F}_q$ such that $X\in \mmm(F)$) if and only if   $X$ is elliptic in   $\mmm(F\otimes\overbar{\mathbb{F}}_q)$ if and only if $(a_M, t)\in \widetilde{\mathcal{A}}_{M}^{\mathrm{ell}}(\mathbb{F}_q) $. %\marginpar{completes things}
\end{prop}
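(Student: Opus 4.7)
The strategy is to deduce (1) $\Leftrightarrow$ (2) by a Galois-cohomological analysis of the maximal $F$-torus $G_X\subseteq M$ that exploits the splitting of $G_X$ at the rational place $\infty$ provided by Lemma \ref{rational'}; then (2) $\Leftrightarrow$ (3) follows from the $\overbar{\mathbb{F}}_q$-version, which is \cite[Proposition 3.9.1]{C-L1}, together with the fact that $\widetilde{\mathcal{A}}_M^{\mathrm{ell}}\subseteq\widetilde{\mathcal{A}}_M$ is an $\mathbb{F}_q$-open subscheme, so membership of an $\mathbb{F}_q$-point is a geometric property. The implication (2) $\Rightarrow$ (1) is immediate since any $F$-Levi of $M$ becomes an $F\otimes\overbar{\mathbb{F}}_q$-Levi.

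For (1) $\Rightarrow$ (2), pick $g\in M(F^s)$ with $\Ad(g^{-1})G_X=T_{F^s}$ (which exists since $G_X$ and $T$ are both maximal $F$-tori of $M$); the resulting cocycle defines a homomorphism $\rho:\Gal(F^s/F)\to W^M=N_M(T)/T$. For any intermediate field $k$ with $F\subseteq k\subseteq F^s$, the dimension of the maximal $k$-split subtorus of $G_X$ equals $\dim_{\mathbb{Q}}(X_*(T)\otimes\mathbb{Q})^{\rho(\Gal(F^s/k))}$, and $X$ is elliptic in $\mmm(k)$ iff this dimension equals $\dim A_M$. Since $\rho(\Gal(F^s/F))\supseteq\rho(\Gal(F^s/F\otimes\overbar{\mathbb{F}}_q))$ in $W^M$, the equivalence (1) $\Leftrightarrow$ (2) reduces to the equality of these two images.

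To establish this equality, Lemma \ref{rational'} produces $m\in M(F_\infty)$ with $\Ad(m)X=t_a\in\ttt(F_\infty)$, so $G_X\otimes F_\infty$ is split and $\rho$ vanishes on the decomposition subgroup $D_\infty\subset\Gal(F^s/F)$. Since $\infty\in X(\mathbb{F}_q)$ has residue field $\mathbb{F}_q$, the projection $\Gal(F^s/F)\twoheadrightarrow\Gal(F\otimes\overbar{\mathbb{F}}_q/F)\cong\hat{\mathbb{Z}}$ restricts to a surjection on $D_\infty$, factoring through $D_\infty/I_\infty\cong\Gal(\overbar{\mathbb{F}}_q/\mathbb{F}_q)$. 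Lifting a topological generator of $\Gal(F\otimes\overbar{\mathbb{F}}_q/F)$ to some $\tilde{\sigma}\in D_\infty$, one has $\rho(\tilde{\sigma})=1$, so the induced surjection $\hat{\mathbb{Z}}\twoheadrightarrow\rho(\Gal(F^s/F))/\rho(\Gal(F^s/F\otimes\overbar{\mathbb{F}}_q))$ kills a generator, forcing the quotient to be trivial. The parenthetical reformulation of (1) in terms of minimal $\mathbb{F}_q$-Levis is a consequence of Lemma \ref{lev} and Borel--Tits \cite[Th\'eor\`eme 4.13.c]{BTu}, combined with the splitness of $T$ that makes every semistandard Levi of $M$ automatically $\mathbb{F}_q$-defined. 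The main technical point I expect to require care is the verification that $D_\infty$ surjects onto $\Gal(F\otimes\overbar{\mathbb{F}}_q/F)$, for which the rationality $\kappa_\infty=\mathbb{F}_q$ is essential; the rest is routine bookkeeping with rational invariants of finite group actions on cocharacter lattices.
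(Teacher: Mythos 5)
Your proof is correct, and the route you take for the core equivalence is genuinely different from the paper's. The paper proves $(1)\Rightarrow(3)$ directly: it invokes \cite[3.6.2]{C-L1} to locate an $\overbar{\mathbb{F}}_q$-Levi $M'\subseteq M$ with $(a,t)\in\widetilde{\mathcal{A}}_{M'}^{\mathrm{ell}}(\overbar{\mathbb{F}}_q)$, descends $M'$ to $\mathbb{F}_q$ using Frobenius-invariance and uniqueness of the elliptic stratum, produces a stably conjugate $X'\in\mmm'(F)$ via Lemma \ref{rational'} and \cite[3.8.1]{C-L1}, and then appeals to Lemma \ref{lev} (which is Borel--Tits plus the regularity of $X$) to upgrade stable conjugacy to $G(F)$-conjugacy, whence $M=M'$. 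You instead prove $(1)\Rightarrow(2)$ directly by Galois cohomology of the maximal torus $G_X$: Lemma \ref{rational'} splits $G_X$ at $F_\infty$, so the twisting homomorphism $\rho:\Gal(F^s/F)\to W^M$ vanishes on the decomposition group $D_\infty$, and since $\kappa_\infty=\mathbb{F}_q$ forces $D_\infty$ to surject onto $\Gal(F\otimes\overbar{\mathbb{F}}_q/F)\cong\hat{\mathbb{Z}}$, the image of $\rho$ is already attained on $\Gal(F^s/F\otimes\overbar{\mathbb{F}}_q)$; $(2)\Leftrightarrow(3)$ then drops out of the fact that $\widetilde{\mathcal{A}}_M^{\mathrm{ell}}$ is an $\mathbb{F}_q$-open subscheme together with the $\overbar{\mathbb{F}}_q$-result of Chaudouard--Laumon. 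Your argument is shorter for the $(1)\Leftrightarrow(2)$ part and avoids Lemma \ref{lev} entirely; the paper's route is more uniform with the surrounding $\mathbb{F}_q$-descent machinery used elsewhere (Propositions \ref{Hasse}, \ref{rationality}) and isolates the rational Levi $M'$ explicitly, which is reused in the surrounding discussion. Both crucially exploit the same input, namely the splitness of $G_X$ over $F_\infty$ furnished by Lemma \ref{rational'}. One small citation note: the paper also invokes \cite[3.9.2]{C-L1}, not just 3.9.1, to connect $\widetilde{\mathcal{A}}_M^{\mathrm{ell}}(\overbar{\mathbb{F}}_q)$ with ellipticity of $X$ over $F\otimes\overbar{\mathbb{F}}_q$; your step $(2)\Leftrightarrow(3)$ implicitly needs both.
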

\begin{proof}
If $(a_M, t)\in \widetilde{\mathcal{A}}_{M}^{\mathrm{ell}}(\mathbb{F}_q)$, then by Lemma \ref{rational'} (cf. \cite[3.9.1]{C-L1}) and \cite[3.9.2]{C-L1}, we know that that $X$ is elliptic over $F\otimes \overbar{\mathbb{F}}_q$. Therefore it is elliptic over $F$.

Consider the other direction. Suppose $X$ is elliptic in $\mmm(F)$.  By \cite[3.6.2]{C-L1}, there is a Levi-subgroup $M'$ of $M$ defined over $\overbar{\mathbb{F}}_q$ such that $(a,t)=\chi_G^M(a_M,t)$ is  contained in $\widetilde{\mathcal{A}}_{M'}^{\mathrm{ell}}(\overbar{\mathbb{F}}_q)$. Since $(a,t)$ is fixed by the Frobenius element $\tau$ and by uniqueness of $M'$, we see that $M'$ is defined over $\mathbb{F}_q$ and $(a,t)\in \widetilde{\mathcal{A}}_{M'}^{\mathrm{ell}}(\mathbb{F}_q)$. By Lemma \ref{rational} and \cite[3.8.1]{C-L1}, there is an $X'\in \mmm'(F)$ such that $X$ and $X'$ are stably conjugate. Hence by Lemma \ref{lev}, there is an $X''\in \mmm'(F)$ that is $G(F)$-conjugate to $X$. Since $X$ is elliptic in $\mmm(F)$, it shows that $M=M'$. This means that $(a_M,t)\in \widetilde{\mathcal{A}}_{M}^{\mathrm{ell}}(\mathbb{F}_q)$. 
\end{proof}

\begin{prop}\label{rationality}

Moreover, for any $h\in M(F\otimes\overbar{\mathbb{F}}_q)\backslash M(\AAA\otimes \overbar{\mathbb{F}}_q)/M(\AAA)$ such that $h\tau(h)^{-1}\in M(F\otimes\overbar{\mathbb{F}}_q)$, %and  the set $$\{X\in \mmm(F\otimes\overbar{\mathbb{F}}_q) \mid  \Ad(\delta)\circ \tau(X)=X, \text{and } \chi_M(X)=a_{M,\eta}   \}$$is non-empty, 
the double coset represented by  $h$ contains the trivial element. 
\end{prop}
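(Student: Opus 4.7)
The plan is to reinterpret the condition $h\tau(h)^{-1}\in M(F\otimes\overbar{\mathbb{F}}_q)$ as a $1$-cocycle condition in non-abelian Galois cohomology and then invoke the Hasse principle (Proposition \ref{Hasse}) for groups defined over $\mathbb{F}_q$. Set $c := h\tau(h)^{-1}\in M(F\otimes\overbar{\mathbb{F}}_q)$. The Frobenius $\tau$ topologically generates $\widehat{\mathbb{Z}} = \Gal(F\otimes\overbar{\mathbb{F}}_q/F)$, and by continuity $c$ actually lies in $M(F\otimes\mathbb{F}_{q^n})$ for some $n$, so $c$ defines a genuine $1$-cocycle of the finite cyclic group $\Gal(F\otimes\mathbb{F}_{q^n}/F)$ with values in $M(F\otimes\mathbb{F}_{q^n})$. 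Inflating along the surjection $\Gal(F^s/F)\twoheadrightarrow \widehat{\mathbb{Z}}$, we obtain a class $[c]\in H^1(F,M)$.

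The next step is to show that $[c]$ is trivial locally at every place $v$ of $F$: since the $v$-component $h_v$ of $h$ lies in $M(F_v\otimes\overbar{\mathbb{F}}_q)\subseteq M(F_v^s)$, the identity $c=h_v\tau(h_v)^{-1}$ exhibits $c$ as a coboundary of $\Gal(F_v^s/F_v)$ with values in $M(F_v^s)$. Hence $[c]\in \ker^1(F,M)$, and since $M$ is connected reductive and defined over $\mathbb{F}_q$, Proposition \ref{Hasse} yields $[c]=1$ in $H^1(F,M)$.

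To descend from $H^1(F,M)$ back to $H^1(\widehat{\mathbb{Z}}, M(F\otimes\overbar{\mathbb{F}}_q))$, I will use the inflation-restriction sequence for the normal subgroup $\Gal(F^s/F\otimes\overbar{\mathbb{F}}_q)\triangleleft\Gal(F^s/F)$, which gives an injection of pointed sets
\[
H^1(\widehat{\mathbb{Z}}, M(F\otimes\overbar{\mathbb{F}}_q))\longhookrightarrow H^1(F,M).
\]
Consequently $c$ is already a coboundary in $M(F\otimes\overbar{\mathbb{F}}_q)$: there exists $m\in M(F\otimes\overbar{\mathbb{F}}_q)$ with $c=m\tau(m)^{-1}$. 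A direct computation then gives $\tau(m^{-1}h)=m^{-1}h$, so $m^{-1}h\in M(\AAA)$; that is $h\in M(F\otimes\overbar{\mathbb{F}}_q)\cdot M(\AAA)$, and the double coset of $h$ contains the identity. The one substantial step in the argument is the application of Proposition \ref{Hasse} (which in turn rests on Kottwitz's vanishing of $\ker^1$ for reductive groups over $\mathbb{F}_q$); the cocycle reinterpretation, the verification of local triviality, and the inflation-restriction reduction are purely formal manipulations of non-abelian $H^1$.
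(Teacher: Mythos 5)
Your argument is correct and is essentially the paper's own proof, just with the formal steps (finite-level continuity, local triviality, injectivity of inflation from $H^1(\Gal(F\otimes\overbar{\mathbb{F}}_q|F), M(F\otimes\overbar{\mathbb{F}}_q))$ into $H^1(F,M)$) spelled out rather than left implicit; the key input in both is Proposition \ref{Hasse}. One cosmetic caveat: when $\deg v>1$ the ring $F_v\otimes_{\mathbb{F}_q}\overbar{\mathbb{F}}_q$ is a product of fields rather than a subfield of $F_v^{s}$, so in the local-triviality step you should fix a place of the constant-field extension above $v$ and use the corresponding component of $h$, after which your computation goes through verbatim.
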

\begin{proof}

Suppose that $h\in M(\AAA\otimes \overbar{\mathbb{F}}_q)$ is an element such that $$h\tau(h)^{-1}\in M(F\otimes \overbar{\mathbb{F}}_q). $$
We get a cocycle in $$\ker(H^{1}(F, M)\rightarrow  \prod_{v}H^{1}(F_v ,  M)). $$
However, this kernel is trivial (Proposition \ref{Hasse}), so after replacing $h$ by an element $mh$ for some $m\in M(F\otimes \overbar{\mathbb{F}}_q)$ we can force $h=\tau(h)$. This implies that $h\in M(\AAA)$.
\end{proof}

\subsubsection{The Hitchin moduli stacks}
For the reader's convenience and to fix notation, we introduce the Hitchin moduli stacks and Chaudouard-Laumon's $\xi$-stable \'etale open substack.
We refer the reader to \cite[Chapitre 4]{Ngo} and \cite[Section 4 - Section 6]{C-L1} for more details.

Let $S$ be a finite set of closed points of $X$ containing a point $\infty$ of degree $1$ as before, and \[D=K_X+\sum_{v\in S}v. \] Let $\mathcal{M}_G$ be the algebraic stack of the Hitchin pairs parametrizing $G$-torsors $\mathcal{E}$ over $X$ together with a section of $\ad(\mathcal{E})\otimes \mathcal{O}_X(D)$ over $X$. 
We have a morphism (\cite[4.2.3]{Ngo}), called the Hitchin fibration: \[f:\mathcal{M}_G\longrightarrow \mathcal{A}_G. \] 
Let $\widetilde{\mathcal{M}}_G$ be the base change of $\mathcal{M}_G$ along the Hitchin fibration from $\mathcal{A}_G$ to $\widetilde{\mathcal{A}}_G$. 

% By pullback along the Hitchin's fibration, we get relative to the open subsets $\mathcal{A}^{\diamondsuit}\subseteq \mathcal{A}^{\ani}\subseteq \mathcal{A}$,  the open substacks $\mathcal{M}^{\diamondsuit}\subseteq \mathcal{M}^{\ani}\subseteq \mathcal{M}$. Similarly, we have \[\widetilde{\mathcal{M}}^{\diamondsuit}\subseteq \widetilde{\mathcal{M}}^{\ani}\subseteq \widetilde{\mathcal{M}}\] with Hitchin's morphisms to $\widetilde{\mathcal{A}}^{\diamondsuit}\subseteq \widetilde{\mathcal{A}}^{\ani}\subseteq \widetilde{\mathcal{A}}$ respectively. 

Let $\xi\in \ago_T=\Hom(X^{*}(T), \mathbb{R})$ be any vector. Chaudouard and Laumon have defined (\cite[6.1.1, 6.1.2]{C-L1}) a stability condition on \(\widetilde{\mathcal{M}}_G\).  When $\xi$ is in general position as defined in Definition \ref{gepo}, being semistable is equivalent to being stable, and in this case, the stable part \(\widetilde{\mathcal{M}}_G^{\xi}\) is a separated Deligne-Mumford stack which is also smooth and of finite type. Moreover, the induced Hitchin morphism 
\[\widetilde{f}^{\xi}:  \widetilde{\mathcal{M}}^{\xi}_G\longrightarrow \widetilde{\mathcal{A}}_G, \]
is proper (\cite[6.2.2]{C-L1}).

Let $M$ be a semistandard Levi subgroup of $G$, $\varphi \in \mathcal{C}_c^{\infty}(\ggg(\AAA))$ and $Y\in \mmm(F)$ be an elliptic element, the weighted orbital integral $J_{M}^{\ggg}(Y,  \varphi)$ is defined by
\begin{equation}J_{M}^{\ggg}(Y, \varphi )= \int_{G_Y(\AAA)\backslash G(\AAA)}\mathrm{v}_{M}(x)\varphi(\Ad(x^{-1})Y)\d x , \end{equation}
where $\mathrm{v}_{M}(x)$ is the weight factor of Arthur. Recall that the weight factor is defined to be the volume of the convex envelope in $\ago_{M}$ spanned by $(-H_{P}(x))_{P\in \mathcal{P}(M)}$. 
We have not chosen a Haar measure on $G_Y(\AAA)$ nor on $\ago_M$. But for the result that we will state, it does not depend on these choices if they are chosen compatible. 
We can state \cite[Th\'eor\`eme 11.1.1]{C-L1} in the following form. 
\begin{theorem}[Chaudouard-Laumon]\label{C-L1}
Suppose that $\xi$ is in general position. 
Suppose $n_v$ are integers such that $D=\sum_{v\in |X|}n_v v$, let $\mathbbm{1}_{D}\in \mathcal{C}_c^{\infty}(\ggg(\AAA)) $ be the characteristic function of the set 
\[\prod_{v\in |X|}\wp_v^{-n_v}\ggg(\mathcal{O}_v),\] where $\wp_v$ is the maximal ideal of $\mathcal{O}_v$. 

Let $(a_M,t)\in \widetilde{\mathcal{A}}_{M}^{\mathrm{ell}}(\mathbb{F}_q)$ and $(a,t)=\chi_G^{M}((a_M,t))$. The cardinality of the groupoid \[\widetilde{f}^{\xi, -1}(a,t)(\mathbb{F}_q)\] of the rational points of the fiber $\widetilde{f}^{\xi}$ in the point $(a, t)$ is equal to 
$$\frac {\vol(G_{X}(F)\backslash G_{X}(\AAA)^{1})}{\vol(\ago_{M}/X_{*}(M) )} \sum_{X}J_M^\ggg(X, \mathbbm{1}_{D}),  $$
where the sum over $X$ is taken over the set \[\{X\in \mmm(F) \mid  \chi_M(X)= a_{M, \eta}\}.\]
\end{theorem}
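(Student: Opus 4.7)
The plan is to follow Chaudouard--Laumon's strategy from \cite{C-L1}: first ad\`elically uniformize the Hitchin fiber, then identify the $\xi$-stability condition with an Arthur-style truncation, and finally reorganize the resulting sum into a weighted orbital integral. I will omit the routine bookkeeping.

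First, by Weil's uniformization of $G$-bundles combined with the Kostant section argument (cf.\ Lemma \ref{rational'}), the groupoid of $\mathbb{F}_q$-points of the Hitchin fiber over $(a,t)$ with $(a_M, t)\in \widetilde{\mathcal{A}}_M^{\mathrm{ell}}(\mathbb{F}_q)$ is equivalent to a disjoint union, over $G(F)$-conjugacy classes $X\in \mmm(F)$ satisfying $\chi_M(X)=a_{M,\eta}$, of the set of cosets $g\in G_X(F)\backslash G(\AAA)/G(\mathcal{O})$ with $\Ad(g^{-1})X\in \prod_v\wp_v^{-n_v}\ggg(\mathcal{O}_v)$, i.e.\ $\mathbbm{1}_D(\Ad(g^{-1})X)=1$. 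The refinement by $t$ at $\infty$ together with the ellipticity of $(a_M,t)$ pin down $M$ as the minimal Levi subgroup defined over $\mathbb{F}_q$ in which $X$ lives (Proposition \ref{423}); ellipticity of $X$ in $\mmm(F)$ also ensures that the groupoid weights $|\mathrm{Aut}|^{-1}$ are finite.

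Second, I would import from \cite{C-L1} the combinatorial identity expressing the characteristic function of $\xi$-stability on the ad\`elic side as an Arthur-style alternating sum $\sum_{P\in\mathcal{F}(M)}(-1)^{\dim\ago_P^G}\htau_P(H_P(g)+s_g\xi)$; the general-position hypothesis on $\xi$ guarantees that semistable equals stable and avoids degenerate boundary configurations. Substituting this into the uniformization above and interchanging summations, the count of $\xi$-stable points in the fiber becomes
\[
\sum_{X}\;\sum_{g\in G_X(F)\backslash G(\AAA)/G(\mathcal{O})} \mathbbm{1}_D(\Ad(g^{-1})X)\cdot \sum_{P\supseteq M}(-1)^{\dim\ago_P^G}\htau_P(H_P(g)+s_g\xi),
\]
where the outer sum is over the $G(F)$-conjugacy classes $X$ specified above.

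Third, since $X$ is elliptic in $\mmm(F)$, Arthur's combinatorial lemma collapses the inner alternating sum of $\htau_P$ over $P\supseteq M$ to the weight factor $\mathrm{v}_M(g)$, after integration over the quotient $\ago_M/X_*(M)$ that accounts for the non-compact central direction in $G_X(\AAA)$. Unfolding the $G_X(F)$-action converts the sum over double cosets into the integral $J_M^{\ggg}(X,\mathbbm{1}_D)$, with normalising measure factor $\vol(G_X(F)\backslash G_X(\AAA)^1)/\vol(\ago_M/X_*(M))$ coming from the standard comparison of measures on $G_X(F)\backslash G(\AAA)$ and $G_X(\AAA)\backslash G(\AAA)$. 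The main obstacle is precisely the equivalence of $\xi$-stability with the Arthur truncation; this is the genuinely hard content of \cite{C-L1} and rests on a Harder--Narasimhan-type analysis of Hitchin bundles adapted to the parameter $\xi$. Once that bridge is in place, the remaining steps are ad\`elic bookkeeping and measure normalisations.
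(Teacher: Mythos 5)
You should first be aware that the paper gives no proof of this statement: it is quoted (in slightly reformulated notation) from \cite[Th\'eor\`eme 11.1.1]{C-L1}, and the only place the underlying mechanism surfaces in the paper is the proof of Proposition \ref{reg-ss}, which invokes \cite[11.9.1]{C-L1} and \cite[Corollaire 11.14.3]{C-L1}. Your outline does follow the broad architecture of Chaudouard--Laumon's actual argument (Weil uniformization of the fiber, translation of $\xi$-stability into an alternating sum of truncation functions, unfolding into an ad\`elic integral), and you correctly locate the hard content in the stability/truncation dictionary, which you, like the paper, ultimately defer to \cite{C-L1}.

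There is, however, a genuine gap in your third step: the way you produce the weight $\mathrm{v}_M$ and the factor $\vol(\ago_M/X_*(M))^{-1}$ is not correct as stated. The alternating sum $\sum_{P\supseteq M}(-1)^{\dim\ago_P^G}\htau_P(H_P(g)+s_g\xi)$ is not collapsed by Arthur's combinatorial lemma to $\mathrm{v}_M(g)$; by the convex-hull characterization (cf.\ Lemma \ref{wsgxi} and \cite[1.8.7]{LabWal}) it is the characteristic function of the condition that $[\xi]_M$ lies in the convex envelope of $(-H_P(g))_{P\in\mathcal{P}(M)}$. Unfolding the integral from $G_X(F)\backslash G(\AAA)$ to $G_X(\AAA)\backslash G(\AAA)$ then yields the \emph{lattice-point counting} weight $\mathrm{w}_M^{\xi}(g)$, the number of points of $[\xi]_M+X_*(M)$ in that convex envelope, exactly as in the displayed formula in the proof of Proposition \ref{reg-ss} -- not the volume $\mathrm{v}_M(g)$ that enters $J_M^{\ggg}(X,\mathbbm{1}_D)$. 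Converting $\mathrm{w}_M^{\xi}$ into $\mathrm{v}_M$ is a separate, nontrivial step: it is \cite[Corollaire 11.14.3]{C-L1}, which rests on the fact that the total $\xi$-stable count does not change with $\xi$ in general position and on averaging $\xi$ over a fundamental domain of $\ago_M/X_*(M)$; it is precisely this averaging that produces the factor $\vol(\ago_M/X_*(M))^{-1}$, not a ``measure normalisation of the central direction'' as you assert. Without this ingredient your formula computes an a priori $\xi$-dependent lattice count rather than the stated weighted orbital integral (note that in this paper $\xi$-independence is deduced as Corollary \ref{indepxi} \emph{from} the theorem, so it cannot be taken for granted in its proof). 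A smaller point: the uniformization naturally indexes the fiber by $G(F)$-conjugacy classes with characteristic $a_\eta$, whereas the statement sums over $X\in\mmm(F)$ with $\chi_M(X)=a_{M,\eta}$; matching the two requires arguments of the kind in Lemma \ref{lev} and Corollary \ref{533} and should not be dismissed as routine bookkeeping.
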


\subsection{A residue morphism on the Hitchin stack}
\subsubsection{}
Recall that we have introduced in \ref{Hitchinbase} the scheme $\mathcal{R}_G$ as well as the \'etale open \[\widetilde{\mathcal{R}}_G=\ttt_{D,\infty}^{\reg}\prod_{v\in S-\{\infty\}}\mathrm{R}_{\kappa_v|\mathbb{F}_q}\car_{G, D,v}. \] We have a morphism that we call the evaluation morphism: 
\[\widetilde{\ev}_G: \widetilde{\mathcal{R}}_G\longrightarrow \widetilde{\mathcal{A}}_G.\]
It is flat and surjective if $|\overbar{S}|>2-g$ (Proposition \ref{ARs}).
We define the composition of $\widetilde{\ev}$ with the Hitchin fibration $f: \widetilde{\mathcal{M}}_G\longrightarrow \widetilde{\mathcal{A}}_G$ as the residue morphism: \[\widetilde{\res}_G: \widetilde{\mathcal{M}}_G\longrightarrow \widetilde{\mathcal{R}}_G.\] Given $o\in \widetilde{\mathcal{R}}_G(\overbar{\mathbb{F}}_q)$, we denote $\widetilde{\res}_G^{-1}(o)$ by $\widetilde{\mathcal{M}}_G^{\xi}(o)$. It has an $\mathbb{F}_q$-structure if $o\in \widetilde{\mathcal{R}}_G({\mathbb{F}}_q)$.

We are interested in the cardinality of $\mathbb{F}_q$-points of $\widetilde{\mathcal{M}}_G^{\xi}(o)$ for some $o\in \widetilde{\mathcal{R}}_G(\mathbb{F}_q)$. 
To estimate its cardinality of $\mathbb{F}_q$-points. We have to understand their irreducible components. The following theorem will be proved using the Lie algebra version of the Arthur-Selberg trace formula, and we need it later in Theorem \ref{Faltings}. 
\begin{theorem}\label{independent}
Let $\xi$ be in general position. 
For a family of $\mathbb{F}_q$-tori $(T_v)_{v\in S}$ with $T_\infty$ being split, the cardinality of $\widetilde{\mathcal{M}}_{G}^{\xi}(o)(\mathbb{F}_q)$ is independent of $o\in \widetilde{\mathcal{R}}_G(\mathbb{F}_q)$ as long as $o$ lies in the image of $ \prod_{v\in S}\ttt_v^{\mathrm{reg}}(\mathbb{F}_q)\rightarrow \widetilde{\mathcal{R}}_G(\mathbb{F}_q)$. 
\end{theorem}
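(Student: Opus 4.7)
The plan is to combine the Chaudouard-Laumon formula (Theorem \ref{C-L1}) with the vanishing result Theorem \ref{expansion'} for the Lie algebra trace formula. The strategy is to package the count $|\widetilde{\mathcal{M}}_G^\xi(o)(\mathbb{F}_q)|$ as a value of $J^{\ggg, \xi}$ on a carefully chosen test function and then exploit that only the nilpotent contribution survives.

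First, I would sum Theorem \ref{C-L1} over $(a,t) \in \widetilde{\ev}_G^{-1}(o)(\mathbb{F}_q)$ to obtain
\[|\widetilde{\mathcal{M}}_G^\xi(o)(\mathbb{F}_q)| = c \cdot J^{\ggg, \xi}(f_o),\]
where $c$ is a volume constant independent of $o$ and $f_o = \prod_v f_{o,v}$ is a test function with $f_{o,v} = \mathbf{1}_{\ggg(\mathcal{O}_v)}$ for $v \notin S$, and $f_{o,v}$ at $v \in S$ equal to the characteristic function of local sections of $\car_{G,D}$ with leading term $o_v$. The geometric expansion of $J^{\ggg, \xi}(f_o)$ then matches the weighted orbital integral expression given by Theorem \ref{C-L1}, summed over $M$-elliptic classes meeting $\widetilde{\ev}_G^{-1}(o)(\mathbb{F}_q)$.

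Second, I would bring $f_o$ into a form amenable to Theorem \ref{expansion'}, which requires support in $\mathfrak{I}_\infty \times \prod_{v \neq \infty} \ggg(\mathcal{O}_v)$. A priori $f_{o,v}$ at $v \in S \setminus \{\infty\}$ is supported on $\wp_v^{-n_v}\ggg(\mathcal{O}_v)$, and at $\infty$ on $\wp_\infty^{-n_\infty}\ggg(\mathcal{O}_\infty)$. The key observation is that since $o_v$ is $G$-regular semisimple and lies in $\ttt_v$, every local Higgs field with residue $o_v$ is $G(\mathcal{O}_v)$-conjugate to an element $\wp_v^{-n_v}\tilde o_v + Y_v$ with $\tilde o_v \in \ttt_v(\mathcal{O}_v)$ a fixed lift of $o_v$ and $Y_v$ in a lower order piece. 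Using the $G(\AAA)$-conjugation invariance of $J^{\ggg, \xi}$ together with the generic trivialization of $\mathcal{O}_X(D)$ at each place in $S$ (as in diagrams \eqref{41222} and \eqref{41333}), one can replace $f_o$ by a conjugate $\widetilde f_o$ whose support lies in $\mathfrak{I}_\infty \times \prod_{v \neq \infty} \ggg(\mathcal{O}_v)$; at $\infty$ this uses that $T_\infty$ is split, so the $\xi$-stability interacts predictably with $\mathfrak{I}_\infty$. Applying Theorem \ref{expansion'} yields $J^{\ggg, \xi}(\widetilde f_o) = J^{\ggg, \xi}_{\mathrm{nil}}(\widetilde f_o)$, which depends on $\widetilde f_o$ only through its restriction to a neighbourhood of the nilpotent cone; after the above normalization, this restriction is determined by universal data (the local types of the tori $T_v$ and valuations of $D$) rather than by the specific regular semisimple value $o_v$, so the result is independent of $o$.

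The main obstacle is the intermediate normalization step: writing down the precise $G(\AAA)$-conjugation that absorbs the poles of $f_o$ into a function supported in $\mathfrak{I}_\infty \times \prod_{v \neq \infty} \ggg(\mathcal{O}_v)$, and checking that the transformation is uniform in $o \in \prod_v \ttt_v^{\reg}(\kappa_v)$ so that the residual nilpotent contribution genuinely does not see $o_v$. This requires a careful local analysis at each $v \in S$, matching the scaling by the local uniformizer in the $\mathbb{G}_m$-action on $\car_{G,D}$ against the Iwahori-type filtration on $\ggg(F_v)$, and verifying compatibility with the $\xi$-stability condition used in Chaudouard-Laumon's framework.
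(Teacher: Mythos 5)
Your overall architecture --- express the point count via Theorem \ref{C-L1}, then reduce to a nilpotent contribution via Theorem \ref{expansion'} --- is the right skeleton, but the bridge you build between the two sides fails. The normalization step replacing $f_o$ by a conjugate $\widetilde f_o$ supported in $\mathfrak{I}_\infty\times\prod_{v\neq\infty}\ggg(\ooo_v)$ cannot work, for two separate reasons. First, $J^{\ggg,\xi}$ is a truncated, non-invariant distribution: it is not invariant under conjugation by $G(\AAA)$ (this failure of invariance is precisely why the truncation and the parameter $\xi$ are there at all), so you may not replace a test function by a $G(\AAA)$-conjugate without changing the value. Second, and more fundamentally, no conjugation whatsoever can move the support: your $f_{o,v}$ at $v\in S$ is supported on elements whose image in $\car_{G,D}$ has a genuine pole at $v$ (with regular residue $o_v\neq 0$), the characteristic is a conjugation invariant, and every element of $\ggg(\ooo_v)$ has integral characteristic; the two supports are disjoint unions of adjoint orbits, so the function $\widetilde f_o$ you need does not exist. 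Consequently your final claim that the nilpotent contribution ``does not see $o_v$'' is left unsupported, and that is exactly the nontrivial point of the theorem.

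What actually bridges the two sides in the paper is the Fourier transform, not conjugation: the Lie-algebra trace formula satisfies the Poisson-type identity $J^{\ggg,\xi}(\varphi)=q^{(1-g)\dim\ggg}\sum_{o\in\mathcal{E}}J_o^{\ggg,\xi}(\hat\varphi)$ (\cite[5.7]{Yu2}, used as \eqref{method2}), where $\varphi$ is the function of Section \ref{s532}: it is supported in $\mathfrak{I}_\infty\times\prod_{v\neq\infty}\ggg(\ooo_v)$, so Theorem \ref{expansion'} applies to it and gives $J^{\ggg,\xi}(\varphi)=J^{\ggg,\xi}_{nil}(\varphi)$, while its Fourier transform $\hat\varphi$ is supported on the polar sets $\wp_v^{-d_v-1}\Omega_{t_v}$, which is where Proposition \ref{reg-ss} and Chaudouard--Laumon (Theorem \ref{C-L1}) identify $\sum_{o}J_o^{\ggg,\xi}(\hat\varphi)$ with the Hitchin point count; this is Corollary \ref{corollary}. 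The independence of $o$ then follows because $J^{\ggg,\xi}_{nil}(\varphi)$ only sees the restriction of $\varphi$ to the nilpotent cone, and that restriction is independent of $(t_v)_{v\in S}$: at $\infty$ the character $\chi$ is trivial on nilpotent elements, and at $v\in S\setminus\{\infty\}$ this is precisely Springer's hypothesis (Proposition \ref{ft}(3), \cite[Theorem A.1]{KV}), which says that the restriction to nilpotents of the Fourier transform of a regular semisimple orbit depends only on the $G(\kappa_v)$-conjugacy class of the torus $T_v$ and not on the chosen regular element $t_v$. Your proposal never invokes the global Fourier transform identity or Springer's hypothesis, and these are the two ingredients your ``universal data'' assertion would require.
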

A proof of this theorem will be provided in Section \ref{proofind}. The reader can verify that the proof is not circular. In fact, its proof relies only on results of Section \ref{4.1}-\ref{4.2}.

\subsubsection{}
As explained in the beginning of this section, the purpose of the following theorem is to avoid using the Kostant-Hitchin section constructed by Ngô. 
\begin{theorem}\label{surjective}
The morphism $f:\mathcal{M}_G^{\heartsuit}\longrightarrow \mathcal{A}_G^{\heartsuit}$ is surjective and flat.
\end{theorem}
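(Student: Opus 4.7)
I plan to prove flatness and surjectivity separately; throughout the argument the standing hypothesis $|\overbar{S}| > 2 - g$ from earlier in this section is in force.

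\emph{Flatness.} The target $\mathcal{A}_G^{\heartsuit}$ is smooth, being open in the affine space $\mathcal{A}_G$. The source $\mathcal{M}_G^{\heartsuit}$ is smooth because at a point $(\mathcal{E},\varphi)$ with generically regular semisimple $\varphi$ the tangent-obstruction complex governing deformations of $G$-Higgs pairs is concentrated in degrees $0$ and $1$; the very-good-characteristic hypothesis is what makes this work. It therefore suffices by miracle flatness to check that every fiber over $\mathcal{A}_G^{\heartsuit}$ is equidimensional of the expected dimension $\dim\mathcal{M}_G - \dim\mathcal{A}_G$, and this can be done via the Donagi-Gaitsgory description of the fiber over $a$ as a torsor under the Picard stack $\mathcal{P}_a$ of the cameral cover $\widetilde{X}_a$, whose dimension is constant on $\mathcal{A}_G^{\heartsuit}$. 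The description needs neither a global section nor the Hitchin-Kostant construction.

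\emph{Surjectivity.} Base change to the \'etale cover $\widetilde{\mathcal{A}}_G \to \mathcal{A}_G$ and restrict to the Chaudouard-Laumon $\xi$-stable substack $\widetilde{\mathcal{M}}_G^{\xi}$ for some $\xi$ in general position. The morphism $\widetilde{f}^{\xi}:\widetilde{\mathcal{M}}_G^{\xi} \to \widetilde{\mathcal{A}}_G$ is proper by \cite[6.2.2]{C-L1}, and flat over the $\heartsuit$-locus by the previous paragraph. Its image over $\widetilde{\mathcal{A}}_G^{\heartsuit}$ is therefore both open (flatness) and closed (properness). Since $\widetilde{\mathcal{A}}_G$ is geometrically irreducible by Proposition \ref{irred}, so is the non-empty open subset $\widetilde{\mathcal{A}}_G^{\heartsuit}$, and the image must be either empty or all of $\widetilde{\mathcal{A}}_G^{\heartsuit}$. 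To exclude the empty case it is enough to exhibit one $\xi$-stable $G$-Higgs bundle mapping into the $\heartsuit$ locus, which can be produced using the Donagi-Gaitsgory description applied to any point of the non-empty open subset $\widetilde{\mathcal{A}}_G^{\diamondsuit}$ together with a $W$-equivariant line bundle on its smooth cameral cover (perturbing $\xi$ slightly if necessary so the resulting bundle is $\xi$-stable). For an arbitrary $a \in \mathcal{A}_G^{\heartsuit}(\overbar{\mathbb{F}}_q)$, choose a closed point $v$ of $X_{\overbar{\mathbb{F}}_q}$ at which $\mathrm{ev}_v(a)$ is regular semisimple (such $v$ exists because $a$ is generically regular) and rerun the same argument with the \'etale cover indexed by $v$ in place of $\infty$, obtaining a Higgs bundle mapping to $a$.

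\emph{Main obstacle.} The principal technical difficulty is equidimensionality of the fibers on the full $\heartsuit$ locus without invoking the Hitchin-Kostant section: on $\mathcal{A}_G^{\diamondsuit}$ the Donagi-Gaitsgory description gives it at once, but extending the dimension count to $\mathcal{A}_G^{\heartsuit}$ requires analysing how $\mathcal{P}_a$ degenerates as $\widetilde{X}_a$ acquires singularities. This is essentially Ng\^o's analysis in \cite{Ngo}, which I expect to port over verbatim once the Kostant-Hitchin section is no longer used as its starting point.
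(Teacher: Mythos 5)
Your flatness step contains the real gap. You reduce flatness to miracle flatness plus the statement that every fiber over $\mathcal{A}_G^{\heartsuit}$ is equidimensional of dimension $\dim\mathcal{M}_G-\dim\mathcal{A}_G$, and you justify the latter by describing ``the fiber over $a$ as a torsor under the Picard stack $\mathcal{P}_a$''. That is not what the Donagi--Gaitsgory/Ng\^o picture gives: only the open locus $\mathcal{M}_a^{\reg}$ of everywhere-regular Higgs pairs is a \emph{pseudo}-torsor under $\mathcal{P}_a$, and a pseudo-torsor can be empty --- establishing that it is non-empty is exactly the role the Hitchin--Kostant section plays in \cite[4.3.3]{Ngo}, i.e.\ exactly the point this theorem is designed to circumvent when $\mathcal{O}_X(D)$ has no square root. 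Moreover, for $a\in\mathcal{A}_G^{\heartsuit}\setminus\mathcal{A}_G^{\diamondsuit}$ the fiber strictly contains $\mathcal{M}_a^{\reg}$, and bounding the non-regular locus is Ng\^o's dimension analysis (product formula, affine Springer fibers), which again has the section/non-emptiness at its root. Your closing paragraph concedes that this is the ``main obstacle'' and proposes to ``port over verbatim'' Ng\^o's analysis once the section is removed --- but removing the section \emph{is} the content of the theorem, so the proposal defers rather than supplies the missing idea. The gap then propagates into your surjectivity step, since you use flatness to make the image open (this particular use is repairable: closedness of the image of the proper $\widetilde{f}^{\xi}$ together with irreducibility of $\widetilde{\mathcal{A}}_G$ and non-emptiness over $\widetilde{\mathcal{A}}_G^{\diamondsuit}$ already suffices, no openness needed).

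For comparison, the paper inverts your logical order: it first proves surjectivity directly, by fixing a Kostant section $\epsilon:\car_G\to\ggg$ defined over $\mathbb{F}_q$ (available in very good characteristic by \cite{Riche}, with no square-root hypothesis), viewing $a\in\mathcal{A}_G^{\heartsuit}(\overbar{\mathbb{F}}_q)$ as a generically regular semisimple characteristic, and checking place by place that $\epsilon(a)$ is ad\`elically conjugate into $\wp^{-D}\ggg(\ooo)$; Weil's dictionary then produces a point of $\mathcal{M}_a$ for \emph{every} $a$, with no properness, irreducibility, or stability input. Flatness is then deduced from Ng\^o's miracle-flatness argument, with the just-proved surjectivity supplying the non-emptiness that the Hitchin--Kostant section provided. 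Your surjectivity route (Faltings-type non-emptiness over the $\diamondsuit$ locus, properness of $\widetilde{f}^{\xi}$, irreducibility of $\widetilde{\mathcal{A}}_G$, then varying the auxiliary point $\infty$) is essentially the alternative the paper itself records in the remark following Theorem \ref{surjective}; what it cannot do on its own is deliver the fiber-dimension statement needed for flatness, which is why the direct construction of points in all heart fibers has to come first.
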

\begin{proof}
Without using a Kostant-Hitchin section constructed by Ngô, we can still use a Kostant section. It suffices to show that for every $a\in \mathcal{A}_G^{\heartsuit}(\overbar{\mathbb{F}}_q)$, the fiber $\mathcal{M}_{a}$ is non-empty. We can fix a Kostant section $\epsilon: \car_G \rightarrow \ggg$. Identify $\mathcal{A}_G^{\heartsuit}(\overbar{\mathbb{F}}_q)$ with a subset of $\car_G^{\mathrm{reg}}(F\otimes\overbar{\mathbb{F}}_q)$, the set consisting of regular semisimple characteristics. The element $\epsilon(a)\in \ggg(F\otimes\overbar{\mathbb{F}}_q)$ will be conjugate by an element in $x\in G(\AAA\otimes \overbar{\mathbb{F}}_q)$ to an element in \[\wp^{-D}\ggg(\prod_{\overbar{v}\in | X\otimes \overbar{\mathbb{F}}_q| }\ooo_{\overbar{v}}), \] where \[\wp^{-D}=\prod_{\overbar{v}}\wp_{\overbar{v}}^{-n_{\overbar{v}}}, \] and \( D=\sum n_{\overbar{v}}\overbar{v}\) ($| X\otimes \overbar{\mathbb{F}}_q|$ is the set of closed points of $ X\otimes \overbar{\mathbb{F}}_q$). The point is that the Kostant section is defined over $\mathbb{F}_q$, so it sends $\car_G(\ooo_{\overbar{v}})$ to $\ggg(\ooo_{\overbar{v}})$. While for any $t\in F_{ \overbar{v}}$ (in particular for a power of uniformizer), we know that  $t^{-1}\epsilon (t.a)$ has the same characteristic as $\epsilon(a)$. So they are conjugate by an element in $G(F_{ \overbar{v}})$ (see \cite[3.8.1]{C-L1}).  Note that the element $(x, \epsilon(a))$ defines an element in $\mathcal{M}_a(\overbar{\mathbb{F}}_q)$ using Weil's dictionary.

The flatness of $f$ is shown in \cite[4.16.4]{Ngo}. In fact, the proof uses \cite[4.3.3]{Ngo} where a Kostant-Hitchin section is needed to show the non-emptiness of the fibers of the Hitchin fibration. It then implies that the regular part $\mathcal{M}_G^{\reg}$ of $\mathcal{M}_G$ a torsor under a Picard stack $\mathcal{P}$. The flatness comes from the miracle flatness theorem and dimension calculations. Therefore being surjective implies that $f$ is flat. 
\end{proof}
\begin{remark}\normalfont
There is another way to prove the surjectivity hence the flatness. 
In \cite{Faltings}, Faltings has showed that the fibers of the morphism $\mathcal{M}_G^{\mathrm{\diamondsuit}}\rightarrow \mathcal{A}_G^{\mathrm{\diamondsuit}}$ is non-empty (\cite[p.319]{Faltings} and \cite[Proposition 11]{Faltings}, see also \cite[Theorem III.2.(ii)]{Faltings0}).
His statement is made for the case of a simply connected, semisimple group and with the canonical divisor, but it works without these restrictions. 
Besides, the morphism  \[\widetilde{f}^{\xi}:  \widetilde{\mathcal{M}}_G^{\xi}\longrightarrow \widetilde{\mathcal{A}}_G\]
is proper (\cite[6.2.2]{C-L1}). 
Therefore, the image of $\widetilde{f}^{\xi}$ is a closed subscheme containing the open subset $\widetilde{\mathcal{A}}_G^{\diamondsuit}$, which is non-empty (Proposition \ref{nonvide}). By irreducibility of $\widetilde{\mathcal{A}}_G$ (Proposition \ref{irred}), 
we deduce that $\widetilde{f}^{\xi}$ is surjective. By varying the point $\infty$, we also get the desired surjectivity. 
\end{remark}

The essential part of the following result is due to Faltings and Ngô's study on the Hitchin fibers.
\begin{theorem}\label{Faltings}
Suppose that $\xi$ is in general position. 
Suppose that $|\overbar{S}|> 2-g$. 
For any $o\in \widetilde{\mathcal{R}}_G(\mathbb{F}_q)$, the stack $\widetilde{\mathcal{M}}_G^{\xi}(o)$ is equidimensional and has $|\pi_1(G)|$ connected components. Moreover if every factor of $o$ is regular, then each connected component of $\widetilde{\mathcal{M}}_G^{\xi}(o)$ is geometrically irreducible of dimension $\dim \mathcal{M}_G-\dim\mathcal{R}_G= \dim \ggg \deg D- \dim \ttt\deg S$. 
\end{theorem}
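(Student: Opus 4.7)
The plan is to factor the residue map as $\widetilde{\res}_G = \widetilde{\ev}_G \circ \widetilde{f}^{\xi}$ and analyze each factor in turn. By Lemma~\ref{511} together with the hypothesis $|\overbar{S}|>2-g$, the evaluation $\widetilde{\ev}_G:\widetilde{\mathcal{A}}_G\to\widetilde{\mathcal{R}}_G$ is an \'etale base change of a surjective linear map between affine spaces, hence smooth and surjective with geometrically irreducible fibers of dimension $\dim\mathcal{A}_G-\dim\mathcal{R}_G$. The Hitchin morphism $\widetilde{f}^{\xi}:\widetilde{\mathcal{M}}_G^{\xi}\to\widetilde{\mathcal{A}}_G$ is proper by \cite[6.2.2]{C-L1}, and surjective: its image is closed in the irreducible base $\widetilde{\mathcal{A}}_G$ (Proposition~\ref{irred}) and contains the non-empty open $\widetilde{\mathcal{A}}_G^{\diamondsuit}$ by Theorem~\ref{surjective}. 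Combined with smoothness of source and target, the expected dimension of every fiber (from the $\mathcal{P}$-torsor structure on the regular locus and Theorem~\ref{C-L1}) yields flatness of $\widetilde{f}^{\xi}$ by miracle flatness. Composing, $\widetilde{\res}_G$ is surjective and flat, with equidimensional fibers of dimension $\dim\mathcal{M}_G-\dim\mathcal{R}_G=\dim\ggg\deg D-\dim\ttt\deg S$, proving the dimension count and equidimensionality.

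For the component count I would use the decomposition $\widetilde{\mathcal{M}}_G^{\xi}=\bigsqcup_{d\in\pi_1(G)}\widetilde{\mathcal{M}}_G^{\xi,d}$ by the topological type of the underlying $G$-torsor, which refines any fiber into at most $|\pi_1(G)|$ pieces. Each restricted residue $\widetilde{\res}_G^d$ is proper and flat onto the irreducible target $\widetilde{\mathcal{R}}_G$. Over the non-empty open $\widetilde{\mathcal{A}}_G^{\ani}\cap\widetilde{\mathcal{A}}_G^{\diamondsuit}$, Ng\^o's analysis of the Picard stack $\mathcal{P}_a^{\xi}$ acting on the Hitchin fiber (with component group identified with $\pi_1(G)$, see \cite[Sections~4--6]{Ngo}) shows that every topological sheet is realized and is geometrically connected. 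Stein factorization for the proper flat morphism $\widetilde{\res}_G^d$ (valid for proper separated DM stacks over a normal base) propagates geometric connectedness and non-emptiness of the generic fiber to every fiber, giving exactly $|\pi_1(G)|$ connected components of $\widetilde{\mathcal{M}}_G^{\xi}(o)$.

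For the regular case, any preimage $a\in\widetilde{\ev}_G^{-1}(o)$ has $a_v$ $G$-regular semisimple at every $v\in S$; openness of the regular semisimple locus in $\car_G$ forces $a_\eta$ to be regular semisimple, so $a\in\widetilde{\mathcal{A}}_G^{\heartsuit}$, and the geometric generic point of the irreducible affine space $\widetilde{\ev}_G^{-1}(o)$ lies in $\widetilde{\mathcal{A}}_G^{\diamondsuit}$ by generic smoothness of the cameral curve. Upgrading connectedness to geometric irreducibility on each topological sheet (since each connected component of $\mathcal{P}_a^{\xi}$ is irreducible) and combining with the geometric irreducibility of $\widetilde{\ev}_G^{-1}(o)$ yields the geometric irreducibility of each $\widetilde{\mathcal{M}}_G^{\xi,d}(o)$ of the claimed dimension. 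The main obstacle is the second step: namely the propagation of $\pi_0$ from the generic Hitchin fiber to special fibers via Stein factorization. The essential input is Ng\^o's identification of the component group $\pi_0(\mathcal{P}_a^{\xi})$ with $\pi_1(G)$ on the anisotropic generic locus, which one cannot read off directly from non-regular fibers; the flatness of $\widetilde{\res}_G^d$ established in the first step is what ensures that this count remains constant on the entire base.
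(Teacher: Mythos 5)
Your first step (flatness of the Hitchin morphism via non-emptiness of the fibers plus miracle flatness, hence equidimensionality and the dimension count for the residue fibers) is essentially the paper's argument (Theorem \ref{surjective} together with \cite[4.16.4]{Ngo}). The second step, however, rests on a false claim: $\widetilde{\res}_G^{d}$ is \emph{not} proper, because $\widetilde{\ev}_G$ has positive-dimensional affine fibers, so neither Stein factorization nor the constancy of the number of geometric connected components of fibers (EGA IV 15.5.9, which requires a proper flat morphism with geometrically reduced fibers) can be applied to the residue morphism. The correct route — and the one the paper takes — is to apply that constancy statement to the proper flat Hitchin morphism $\widetilde{f}^{\xi,\vartheta}$ over the irreducible base $\widetilde{\mathcal{A}}_G$, conclude that \emph{every} Hitchin fiber on each sheet $\vartheta$ is geometrically connected, and only then restrict over the connected affine space $\widetilde{\ev}_G^{-1}(o)$ to get connectedness of $\widetilde{\mathcal{M}}_G^{\xi,\vartheta}(o)$. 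As written, your propagation of $\pi_0$ does not go through, although it is repairable along these lines.

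The serious gap is in the regular case. You assert that the geometric generic point of $\widetilde{\ev}_G^{-1}(o)$ lies in $\widetilde{\mathcal{A}}_G^{\diamondsuit}$ "by generic smoothness of the cameral curve", but no such statement is available: regularity of the residues at $S$ only puts sections of $\widetilde{\ev}_G^{-1}(o)$ into $\mathcal{A}_G^{\heartsuit}$, whereas membership in $\mathcal{A}_G^{\diamondsuit}$ is a transversality condition along the whole curve, and the linear subspace $\widetilde{\ev}_G^{-1}(o)$ (sections with prescribed values on $S$) could a priori be entirely contained in the complement of $\mathcal{A}_G^{\diamondsuit}$; since you are not varying over all of $\widetilde{\mathcal{A}}_G$, density of $\mathcal{A}_G^{\diamondsuit}$ gives you nothing. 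This is precisely the difficulty the paper circumvents: it only produces, after passing to $\mathbb{F}_{q^n}$ for $n$ divisible enough, \emph{some} regular point $o'$ lying in $\widetilde{\ev}_G(\widetilde{\mathcal{A}}_G^{\diamondsuit})$, proves geometric irreducibility of each component of $\widetilde{\mathcal{M}}_G^{\xi}(o')$ by an argument of the kind you sketch, and then transfers the number of top-dimensional geometrically irreducible components from $o'$ to the given $o$ using Theorem \ref{independent} — the statement that $|\widetilde{\mathcal{M}}_G^{\xi}(o)(\mathbb{F}_q)|$ depends only on the tori containing the residues, proved via the Lie-algebra trace formula and Springer's hypothesis (Corollary \ref{corollary}) — combined with a Lang--Weil-type point count and the already established equidimensionality. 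Your proposal contains no substitute for this transfer (nor a Bertini-type argument for the constrained linear system), so the geometric irreducibility of the components of $\widetilde{\mathcal{M}}_G^{\xi}(o)$ for an arbitrary regular $o\in\widetilde{\mathcal{R}}_G(\mathbb{F}_q)$ is not established.
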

\begin{proof}
Now we apply Faltings arguments to calculate the number of connected components.

Let $\mathcal{P}$ be the Picard stack over $\mathcal{A}_G$ introduced by Ngô in \cite[4.3.1]{Ngo}. It acts on $\mathcal{M}_G$. For every $a\in {\mathcal{A}}_G^{\diamondsuit}(\overbar{\mathbb{F}}_q)$, $\mathcal{M}_a$ is a $\mathcal{P}_a$-torsor (\cite[4.3]{NgoH}). 
We know that (see \cite[4.10.3, 4.10.4]{NgoH}, or \cite[III.2.(iv)]{Faltings0}) for every $a\in {\mathcal{A}}_G^{\diamondsuit}(\overbar{\mathbb{F}}_q)$, one has a canonical isomorphism \[\pi_0(\mathcal{P}_a) \cong X_{*}(T)/\mathbb{Z}\Phi(G, T)^{\vee}. \]
There is also a canonical isomorphism (it can be obtained from \cite[Proposition 5]{DS} for simply connected case) \[\pi_0(\mathrm{Bun}_G)\rightarrow X_{*}(T)/\mathbb{Z}\Phi(G, T)^{\vee}. \] 
So we have an isomorphism from $\pi_0(\mathcal{M}_a)$ to $ X_{*}(T)/\mathbb{Z}\Phi(G, T)^{\vee}$ that is compatible with the action of $\mathcal{P}_a$.

For any $\vartheta\in  X_{*}(T)/\Phi(G, T)^{\vee}$, let $\mathrm{Bun}_G^{\vartheta}$ be the corresponding connected component of $\mathrm{Bun}_G$. Let $\mathcal{M}^{\vartheta}$ the open substack of $\mathcal{M}$ consisting of Hitchin pairs $(\mathcal{E}, \varphi)$ with the underling $G$-torsor $\mathcal{E}$ being in $\mathrm{Bun}_G^{\vartheta}$. 
The morphism
 \[\widetilde{f}^{\vartheta}: \widetilde{\mathcal{M}}_G^{ \xi, \vartheta}\longrightarrow \widetilde{\mathcal{A}}_G, \] is the restriction of the Hitchin morphism to $\widetilde{\mathcal{M}}^{\xi, \vartheta}_G$. It has geometrically connected fibers over the open subset $\widetilde{\mathcal{A}}^{\diamondsuit}_G$. Observe that $\widetilde{f}^{\vartheta}$ is flat and proper, locally of finite presentation with geometrically reduced fiber (\cite[4.16.4]{Ngo}). By  \cite[15.5.9]{EGA}, the number of geometric connected components of the fibers of $\widetilde{f}^{\vartheta}$ is locally constant in $\widetilde{\mathcal{A}}_G$. By irreducibility of $\widetilde{\mathcal{A}}_G$, every fiber of $\widetilde{f}^{\vartheta}$ is geometrically connected. As $\widetilde{f}^{\vartheta}$ is open, $\widetilde{\mathcal{M}}_G^{ \xi, \vartheta}$ is connected as well. 
Therefore we have the desired number of connected components because $|\pi_1(G)|= |X_{*}(T)/\mathbb{Z}\Phi(G, T)^{\vee}|$. 

Let  $\widetilde{f}(o)$ be the induced morphism 
\[\widetilde{f}(o): \widetilde{\mathcal{M}}_G^{\xi, \vartheta}(o)\longrightarrow \widetilde{\ev}_G^{-1}(o) ,\] 
obtained by base change.  
Since $\widetilde{f}(o)$ is a flat, surjective and equidimensional morphism (Theorem \ref{surjective} and \cite[4.16.4]{Ngo}), we conclude that $\widetilde{\mathcal{M}}_G^{\xi}(o)$ is equidimensional. 

Since $\widetilde{\mathcal{R}}_G\cap \widetilde{\ev}_G(\widetilde{\mathcal{A}}_G^{\diamondsuit})$ is non-empty, for $n$ divisible enough, there is a point $o'\in (\widetilde{\mathcal{R}}_G\cap \widetilde{\ev}_G(\widetilde{\mathcal{A}}_G^{\diamondsuit}))(\mathbb{F}_{q^n})$. Since $\widetilde{\ev}_G^{-1}(o')\cap \widetilde{\mathcal{A}}_G^{\diamondsuit}\neq \emptyset$, and $\widetilde{\ev}_G^{-1}(o')$ is irreducible, there is a dense open subset of $\widetilde{\ev}_G^{-1}(o')$ so that the fiber of $\widetilde{f}(o')$ is smooth. So these fibers are geometrically irreducible. As $\widetilde{f}(o')$ is open and it has geometric irreducible fibers over a dense subset of the geometric irreducible scheme $\widetilde{\ev}_G^{-1}(o')$, by a simple topological argument, $ \widetilde{\mathcal{M}}_G^{\xi, \vartheta}(o')$ is geometrically irreducible too. 
Therefore, the last statement of the theorem is proved for such $o'$. 

Now, given any $o\in\widetilde{\mathcal{R}}_G(\mathbb{F}_q)$, whose factors are all regular, 
there is a family of maximal torus $(T_v)_{v\in S}$, so that $o$ lies in the image of $\prod_{v\in S}\mathrm{R}_{\kappa_v|\mathbb{F}_q}\ttt_v^{\reg}(\mathbb{F}_q)$ and $T_\infty$ is split over $\kappa_\infty=\mathbb{F}_q$. 
Noting that the morphism from $\ttt_v^{\reg}$ to $\car_G^{\reg}$ is surjective. For $n$ divisible enough, there is an $\mathbb{F}_{q^n}$-point $o'$ in $(\widetilde{\mathcal{R}}_G\cap \widetilde{\ev}_G(\widetilde{\mathcal{A}}_G^{\diamondsuit}))(\mathbb{F}_{q^n})$ that is also in the image of $\mathbb{F}_{q^n}$-point of $\prod_{v\in S}\mathrm{R}_{\kappa_v|\mathbb{F}_q}\ttt_v^{\reg}$.  After Theorem \ref{independent}, $\widetilde{\mathcal{M}}_G^{\xi}(o)_{\mathbb{F}_{q^n}}$ and $\widetilde{\mathcal{M}}_G^{\xi}(o')$ have the same number of geometric irreducible components of dimension \[\dim \widetilde{\mathcal{M}}_G^{\xi}(o)= \dim \mathcal{M}_G-\dim \mathcal{R}_G\]
as the dominant term of the number of $\mathbb{F}_{q^{mn}}$-points of a geometric irreducible variety over $\mathbb{F}_{q^n}$ is $q^{mn}$ when $m\rightarrow \infty$ (see the proof of Corollary \ref{corollary} too). This finishes the proof since $\widetilde{\mathcal{M}}_G^{\xi}(o)$ is equidimensional. 
\end{proof}

\begin{coro}\label{estimate}
Under the hypothesis of Theorem \ref{Faltings}, suppose that every factor of $o$ is regular. 
There is a constant $C$ depending only on the root datum of $G$, on the curve $X\otimes \overbar{\mathbb{F}}_q$ and on the set $S\otimes \bar{\mathbb{F}}_q$  such that 
\[|  | \widetilde{\mathcal{M}}_G^{\xi}(o)(\mathbb{F}_{q}) | - |\pi_1(G)| q^{\dim  \mathcal{M}-\dim \mathcal{R}}|\leq C q^{\dim \mathcal{M}-\dim \mathcal{R}- \frac{1}{2}}. \]
\end{coro}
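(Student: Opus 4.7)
The plan is to combine Theorem \ref{Faltings} with Behrend's Lefschetz trace formula for Deligne-Mumford stacks, Deligne's purity theorem, and the constructibility of a direct image.

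Set $d := \dim \mathcal{M}_G - \dim \mathcal{R}_G$. Under the hypothesis that every factor of $o$ is regular, Theorem \ref{Faltings} tells us that $\widetilde{\mathcal{M}}_G^{\xi}(o)$ is a separated Deligne-Mumford stack of finite type, equidimensional of dimension $d$, with exactly $|\pi_1(G)|$ geometrically irreducible components. Since $T$ is split, $\Frob_q$ acts trivially on $\pi_1(G) \cong X_*(T)/\mathbb{Z}\Phi(G,T)^{\vee}$, so each geometric irreducible component is already defined over $\mathbb{F}_q$. Behrend's Lefschetz trace formula then yields
\[|\widetilde{\mathcal{M}}_G^{\xi}(o)(\mathbb{F}_q)| = \sum_i (-1)^i \Tr\bigl(\Frob_q \mid H^i_c(\widetilde{\mathcal{M}}_G^{\xi}(o)_{\overbar{\mathbb{F}}_q}, \bar{\mathbb{Q}}_\ell)\bigr).\]
The top-degree group $H^{2d}_c$ has dimension $|\pi_1(G)|$, spanned by the fundamental classes of the components, on which $\Frob_q$ acts by $q^d$; this contributes exactly $|\pi_1(G)|q^d$ to the sum. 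By Deligne's purity theorem (Weil II), for $i < 2d$ the eigenvalues of $\Frob_q$ on $H^i_c$ have absolute value at most $q^{i/2}$, so
\[\Bigl| |\widetilde{\mathcal{M}}_G^{\xi}(o)(\mathbb{F}_q)| - |\pi_1(G)|q^d \Bigr| \leq q^{d-1/2} \sum_{i<2d} \dim H^i_c(\widetilde{\mathcal{M}}_G^{\xi}(o)_{\overbar{\mathbb{F}}_q}, \bar{\mathbb{Q}}_\ell).\]

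The main obstacle is producing a uniform bound on $\sum_i \dim H^i_c(\widetilde{\mathcal{M}}_G^{\xi}(o)_{\overbar{\mathbb{F}}_q}, \bar{\mathbb{Q}}_\ell)$ depending only on the geometric data over $\overbar{\mathbb{F}}_q$ and not on $o$ or on $q$. For this, I would use that the residue morphism $\widetilde{\res}_G \colon \widetilde{\mathcal{M}}_G^{\xi} \to \widetilde{\mathcal{R}}_G$ is of finite type (composition of the proper $\widetilde{f}^{\xi}$ with $\widetilde{\ev}_G$), so that $R(\widetilde{\res}_G)_!\, \bar{\mathbb{Q}}_\ell$ is a constructible complex on the finite-type $\mathbb{F}_q$-scheme $\widetilde{\mathcal{R}}_G$. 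Base change for the $!$-direct image (Laszlo--Olsson in the DM-stack setting) identifies its stalk at a geometric point $\bar o$ above $o$ with $H^\bullet_c(\widetilde{\mathcal{M}}_G^{\xi}(o)_{\overbar{\mathbb{F}}_q}, \bar{\mathbb{Q}}_\ell)$, while constructibility supplies a finite stratification of $\widetilde{\mathcal{R}}_{G,\overbar{\mathbb{F}}_q}$ on whose strata the cohomology sheaves of $R(\widetilde{\res}_G)_!\, \bar{\mathbb{Q}}_\ell$ are locally constant of finite rank. Hence the total stalk dimension is uniformly bounded over all $\bar o$, by a constant that depends only on the geometric morphism $\widetilde{\res}_{G,\overbar{\mathbb{F}}_q}$, i.e.\ only on the root datum of $G$, on $X \otimes \overbar{\mathbb{F}}_q$, and on $S \otimes \overbar{\mathbb{F}}_q$. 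Taking $C$ to be this bound concludes the proof.
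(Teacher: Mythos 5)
Your proof is correct and follows essentially the same route as the paper, whose own argument simply invokes the Grothendieck--Lefschetz fixed point formula and Deligne's purity theorem in the form extended to stacks by Behrend and Sun, combined with Theorem \ref{Faltings}. The only difference is that you spell out the uniform bound on the total compactly supported Betti numbers via constructibility of $R(\widetilde{\res}_G)_!\,\bar{\mathbb{Q}}_\ell$ and proper/smooth base change, a step the paper leaves implicit.
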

\begin{proof}

The statement is a corollary of the Grothendieck-Lefschetz fixed point formula and Deligne's purity theorem  \cite{Weil2}. 
These results are for schemes. They are generalized to Artin stacks due to the work of K. Behrend \cite{Behrendladic} and Shenghao Sun's thesis. 
\end{proof}

\section{Sum of multiplicities of cuspidal automorphic forms}\label{summul}
\subsection{The main result}\label{DLi}
Let $\mathcal{C}_{cusp}(G)$ be the space of cusp forms over $G$, i.e., the space of complex-valued functions $\varphi$ over the quotient $G(F)\backslash G(\AAA)$ that is right $G(\ooo)$-finite and their constant terms along all proper parabolic standard subgroups $P$ of $G$ are identically zero.

The space $\mathcal{C}_{cusp}(G)$ is a semisimple $G(\AAA)$-representation:
\[\mathcal{C}_{cusp}(G)=\bigoplus_{\pi} m_{\pi}\pi,   \]
with finite multiplicities $m_\pi$. Given an irreducible smooth representation $\rho$ of $G(\ooo)$, we say that a cuspidal automorphic representation $\pi$ contains $\rho$ if the $(G(\ooo), \rho)$-isotypic  subspace $\pi_{\rho}$ is non-trivial. We are mainly interested in the sum of multiplicities of cuspidal automorphic representations containing $\rho$: 
$$\sum_{\pi: \pi_{\rho}\neq 0}m_{\pi}.   $$

In this article, we only consider a special type of $\rho$. The following definition is introduced in \cite[p.24]{Kaletha}, note that if the group $G$ has connected center, then absolute regularity is equivalent to the general position defined by Deligne-Lusztig (\cite[3.4.15]{Kaletha}). 
\begin{definition}\label{abreg}
For a torus $T_v$ defined over $\kappa_v$, a character $\theta_v$ of $T_v(\kappa_v)$ is called absolutely regular if for some (hence any) finite fields extension $k|\kappa_v$ that splits $T_v$, the associated character $\theta_v\circ N$ of $T_v(k)$ has trivial centralizer in $W^{(G_{\kappa_v},T_v)}(k)$, where $N: k\rightarrow \kappa_v$ is the norm map. 
\end{definition}

Let $S$ be a finite set of places of $F$. For each $v\in S$, fix a maximal torus $T_v$ of $G$ defined over $\kappa_v$, let $\theta_v$ be an absolutely regular character of $T_v(\kappa_v)$. Let $$\rho_v=\epsilon_{\kappa_v}(T_v)\epsilon_{\kappa_v}(G) R_{T_v}^{G}\theta_v$$ be the Deligne-Lusztig induced representation (by choosing an isomorphism between $\mathbb{C}$ and $\overbar{\mathbb{Q}}_\ell$) of $G(\kappa_v)$, where $\epsilon_{\kappa_v}(\cdot)$ equals $(-1)^{r}$ if the rank of a maximal split subtorus over $\kappa_v$ of the group is $r$. It is an irreducible representation (\cite[Proposition 7.4]{DL}). We view $\rho_v$ as a representation of $G(\ooo_v)$ by inflation and consider the representation $\rho=\otimes_{v}\rho_v$ of $G(\ooo)$ (outside $S$, $\rho_v$ is the trivial representation). We say that such a representation $\rho$ is of the Deligne-Lusztig type.

The following proposition is inspired by Deligne's criterion on irreducibility of $\ell$-adic local systems \cite[(2.10.3), Remarque 2.12]{Deligne}. In this article, all the automorphic representations are meant to be irreducible $L^2$-automorphic representations, i.e., (topologically) irreducible subquotients in the $L^2$-spectrum. 
\begin{prop}\label{generic}
Let $(\theta_v)_{v\in S}$ of be a tuple of characters of $(T_v(\kappa_v))_{v\in S}$. 
If the character \[\prod_{v\in S}\theta_v|_{Z_G(\mathbb{F}_q)}\] of $Z_G(\mathbb{F}_q)$  is non-trivial, then for any automorphic representation $\pi$ of $G(\AAA)$, we have \[\pi_\rho=0. \]

Moreover, suppose that for any standard proper Levi subgroup $M$ of $G$ defined over $\mathbb{F}_q$, any tuple of elements $x_v\in G(\kappa_v)$, such that $T_v\subseteq x_vM_{\kappa_v}x_v^{-1}$, the character $\prod_{v\in S}\theta_v^{x_v}|_{Z_{M}(\mathbb{F}_q)}$ is non-trivial. Then the following condition holds: 

$(\ast)$ Let $\pi$ be a constituent of the parabolically induced representation $\Ind_{P(\AAA)}^{G(\AAA)}\sigma$ built from an irreducible automorphic representation $\sigma$ of $M_P(\AAA)$ with $P\subsetneq G$ being a proper standard parabolic subgroup, we have \[\pi_{\rho}=0 .  \]

In particular, for an automorphic representation $\pi$, we have $\pi_{\rho}\neq 0\implies$ $\pi$ is cuspidal. 
\end{prop}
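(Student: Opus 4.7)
Both claims reduce to vanishing of coinvariants by a common mechanism. Write $\pi_\rho=\Hom_{G(\ooo)}(\rho,\pi)$. Since $G$ is affine over $\mathbb{F}_q$ and $X$ is projective, $G(F)\cap G(\ooo)=G(\mathbb{F}_q)$; moreover, for any $z\in Z_G(F)$, right translation by $z$ equals left translation by $z^{-1}$, which is trivial on the automorphic realization of $\pi$. Thus $Z_G(\mathbb{F}_q)\subseteq Z_G(F)\cap G(\ooo)$ acts trivially on $\pi$ from the right, so every $\phi\in\pi_\rho$ must factor through the coinvariants $\rho_{Z_G(\mathbb{F}_q)}$.

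For the first assertion, scheme-theoretic centrality forces $Z_G(\mathbb{F}_q)$ to be central in each $G(\ooo_v)$, so by Schur it acts on the irreducible $\rho_v$ by a scalar, equal to $\theta_v|_{Z_G(\mathbb{F}_q)}$ by the Deligne-Lusztig character formula. Hence $Z_G(\mathbb{F}_q)$ acts on $\rho$ by the global character $\prod_{v\in S}\theta_v|_{Z_G(\mathbb{F}_q)}$; its non-triviality forces $\rho_{Z_G(\mathbb{F}_q)}=0$, giving $\pi_\rho=0$.

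For the second assertion, since $\Hom_{G(\ooo)}(\rho,\cdot)$ is exact on smooth representations of the compact group $G(\ooo)$, it suffices to establish $\Hom_{G(\ooo)}(\rho,\Ind_{P(\AAA)}^{G(\AAA)}\sigma)=0$. The Iwasawa decomposition $G(\AAA)=P(\AAA)G(\ooo)$ together with Mackey and Frobenius reciprocity identifies this abstractly with $\Hom_{M_P(\ooo)}(\rho_{N_P(\ooo)},\sigma)$ (the modulus $\delta_P^{1/2}$ disappears since it is trivial on the compact group $N_P(\ooo)$). Each $\rho_v$ is inflated from $G(\kappa_v)$, so the coinvariants collapse to the Harish-Chandra restriction $(\rho_v)_{N_P(\kappa_v)}$, which by the Mackey formula for Deligne-Lusztig induction decomposes as a sum of Deligne-Lusztig representations $R^{M_P}_{x_v^{-1}T_vx_v}(x_v^{-1}\cdot\theta_v)$ of $M_P(\kappa_v)$, indexed by double-coset representatives $x_v\in G(\kappa_v)$ satisfying $T_v\subseteq x_v M_{P,\kappa_v}x_v^{-1}$.

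Now repeat the first-assertion argument inside $M_P$: $Z_{M_P}(\mathbb{F}_q)\subseteq Z_{M_P}(F)\cap M_P(\ooo)$ acts trivially on $\sigma$ by the same centrality-plus-automorphy reasoning, so any intertwiner $\rho_{N_P(\ooo)}\to\sigma$ factors through the $Z_{M_P}(\mathbb{F}_q)$-coinvariants. The $Z_{M_P}(\mathbb{F}_q)$-central character of each Mackey summand, assembled across $v\in S$, equals $\prod_{v\in S}\theta_v^{x_v}|_{Z_{M_P}(\mathbb{F}_q)}$ (the conjugation by $x_v$ enters exactly because of the twist $x_v^{-1}\cdot\theta_v$), which is non-trivial by hypothesis; hence all such coinvariants vanish. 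I expect the main technical care to lie in the bookkeeping of the Mackey decomposition, tracking the conjugation parameters $x_v$ so that they line up with the formulation of the hypothesis; beyond this the argument is formal. The ``in particular'' implication then follows because every non-cuspidal irreducible $L^2$-automorphic representation is, by the Langlands decomposition of the $L^2$-spectrum, a constituent of some such $\Ind_{P(\AAA)}^{G(\AAA)}\sigma$ with $P$ proper.
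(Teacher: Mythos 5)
Your argument is correct, but for the main claim $(\ast)$ it takes a genuinely different route from the paper. The paper's proof shares your ``key point'' (the central character of an automorphic representation of $M_P(\AAA)$ is trivial on $Z_{M_P}(F)\supseteq Z_{M_P}(\mathbb{F}_q)$, and is the product of the local central characters), but it extracts the local central characters of $\sigma_v$ from Moy--Prasad depth-zero type theory: it identifies the supercuspidal support of $\pi_v$ as $[(M_v,\phi_v)]$ with $\phi_v=\cInd_{A_{M_v}(F_v)M_v(\ooo_v)}^{M_v(F_v)}\tau_v$, compares it with the supercuspidal support coming from $\Ind_{P(F_v)}^{G(F_v)}\sigma_v$, and reads off $\chi_{\sigma_v}|_{Z_{M_P}(\mathbb{F}_q)}=\theta_v^{x_v}|_{Z_{M_P}(\mathbb{F}_q)}$ from there (also invoking Langlands' theorem to reduce the ``in particular'' to induction from cuspidal data). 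You instead stay entirely at the level of the compact group $G(\ooo)$: Iwasawa decomposition plus Frobenius reciprocity reduces $\Hom_{G(\ooo)}(\rho,\Ind_{P(\AAA)}^{G(\AAA)}\sigma)$ to $\Hom_{M_P(\ooo)}(\rho_{N_P(\ooo)},\sigma)$, and the Deligne--Lusztig restriction (Mackey) formula computes the $N_P(\kappa_v)$-coinvariants of $\rho_v$ as a sum of $R^{M_P}_{x_v^{-1}T_vx_v}(\theta_v^{x_v})$, so every irreducible constituent carries the central character $\theta_v^{x_v}|_{Z_{M_P}(\kappa_v)}$; then the same global triviality kills all homomorphisms. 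Your route is more elementary and self-contained (no $p$-adic type theory, no identification of supercuspidal supports) and it proves $(\ast)$ directly for an arbitrary irreducible automorphic $\sigma$, exactly as stated; the paper's route yields sharper local information (the supercuspidal support of $\pi_v$), consistent with the Hecke-algebra analysis it uses elsewhere. Two small points to make explicit in your write-up: when passing from a constituent $\pi$ to the full induced representation you should note that the multiplicity-positivity argument only requires nonvanishing against at least one term of the (virtual) Mackey sum, since different $R^{M_P}$-terms could a priori cancel; and the places $v\notin S$ contribute trivially to the diagonal $Z_{M_P}(\mathbb{F}_q)$-action because $\rho_v$ is trivial there, which is what makes the assembled character equal to $\prod_{v\in S}\theta_v^{x_v}|_{Z_{M_P}(\mathbb{F}_q)}$.
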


Before we give proof of the proposition, let us state the main theorem of this section and give an outline of its proof first. We call a representation $\rho$ of $G(\ooo)$ a cuspidal filter if the condition $(\ast)$ in the precedent proposition is satisfied. 

\begin{theorem}\label{main}
Suppose that $\prod_{v\in S}\theta_v|_{Z_G(\mathbb{F}_q)}$ is trivial. Moreover, suppose that $T_\infty$ is split and that $\rho$ constructed above is a cuspidal filter. 
Let $o_G$ be any point in the image of the natural map \(\prod_{v\in S}\ttt_v^{\reg}(\kappa_v) \rightarrow  \widetilde{\mathcal{R}}_G(\mathbb{F}_q). \)
 Then for each conjugacy class of split elliptic coendoscopic group $H$ of $G$ and each $o\in \widetilde{\pi}_{H,G}^{-1}(o_G)(\mathbb{F}_q)$, there is an explicit integer $n_{H,o}\in \mathbb{Z}$ which is explicit in terms of $(\theta_v)_{v\in S}$, 
 such that 
\begin{equation}\sum_{\pi: \pi_{\rho}\neq 0}m_\pi = \sum_{H}\sum_{o\in \widetilde{\pi}_{H,G}^{-1}(o_G)(\mathbb{F}_q)}n_{H,o}q^{-\frac{1}{2}(\dim \mathcal{M}_H- \dim \mathcal{R}_H) }  |\widetilde{\mathcal{M}}_{H}^{\xi}(o)(\mathbb{F}_q)|.  \end{equation}
\end{theorem}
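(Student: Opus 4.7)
The plan is to realize the sum $\sum_{\pi:\pi_\rho\neq 0}m_\pi$ as the value $J^{G,\xi}(f)$ of the truncated trace formula from Section~\ref{essuni} for a carefully chosen test function, and then to unwind this value using the tools already developed: Theorem~\ref{expansion}, the Springer isomorphism (Proposition~\ref{ch} and Proposition~\ref{222}), Theorem~\ref{expansion'}, and Chaudouard--Laumon's Theorem~\ref{C-L1}. Concretely, I would take $f=\bigotimes_v f_v$ where $f_v$ at $v\notin S$ is the normalized characteristic function of $G(\ooo_v)$, and at $v\in S$ is a scalar multiple of $\overline{\chi_{\rho_v}}$ supported on $G(\ooo_v)$, so that $\pi_v(f_v)$ is the projector onto the $\rho_v$-isotypic subspace. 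The cuspidal filter hypothesis guarantees that $\pi_\rho\neq 0$ forces $\pi$ to be cuspidal, and Theorem~\ref{commutative} makes the $\rho_v$-isotypic component of $\pi_v$ one-dimensional over $\rho_v$, so the spectral side of $J^{G,\xi}(f)$ is exactly (a known scalar multiple of) $\sum_{\pi:\pi_\rho\neq 0}m_\pi$. Since $T_\infty$ is split, the Deligne--Lusztig character $\chi_{\rho_\infty}$ restricted to the Iwahori $\mathcal{I}_\infty$ (which is the preimage of $B(\kappa_\infty)$) still captures the full trace on $\pi_\infty^{\rho_\infty}$, so we may replace $f_\infty$ by its restriction to $\mathcal{I}_\infty$ without changing the value. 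This arranges exactly the support hypothesis of Theorem~\ref{expansion}.

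Next I would apply the coarse geometric expansion together with Theorem~\ref{expansion}: only elliptic classes $[\sigma]$ with $\sigma\in T(\mathbb{F}_q)$ contribute, and the inner distribution breaks into
\[
J^{G,\xi}_{[\sigma]}(f)=\frac{\vol(\mathcal{I}_\infty)}{\vol(\mathcal{I}_{\infty,\sigma})|\pi_0(G_\sigma)|}\sum_{w}J^{G_\sigma^0,w\xi}_{[\sigma]}\bigl(f^{w^{-1}}\big|_{G_\sigma^0(\AAA)}\bigr),
\]
where $H:=G_\sigma^0$ is a split elliptic coendoscopic group of $G$. Grouping $\sigma$'s by their $G(\mathbb{F}_q)$-conjugate coendoscopic group $H$ and restricting attention to elliptic $\sigma$ reduces the problem, for each pair $(H,\sigma)$, to computing $J^{H,w\xi}_{[\sigma]}\bigl(f^{w^{-1}}\big|_{H(\AAA)}\bigr)$. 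For this I would use the Springer isomorphism $l:\mathcal{U}_H\xrightarrow{\sim}\mathcal{N}_\hhh$ (Proposition~\ref{ch}) together with its compatibility with parabolic decompositions (Proposition~\ref{222}) to transfer the restriction of $f^{w^{-1}}$ to the coset $\sigma\cdot\mathcal{U}_H$ into a test function $\phi_{\hhh}\in\mathcal{C}_c^\infty(\hhh(\AAA))$ supported in $\mathfrak{I}_\infty\times\prod_{v\neq\infty}H(\ooo_v)$. The Deligne--Lusztig character formula factors $\chi_{\rho_v}(\sigma\cdot u)$ as $\theta_v(\sigma)$ times a term depending only on $u$ and matching the corresponding value on $l(u)\in\nnn_\hhh$; the scalar $\prod_v\theta_v(\sigma)$, possibly twisted by the Weyl element $w$, is pulled out and will later be absorbed into the coefficient $n_{H,o}$.

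The resulting distribution $J^{\hhh,w\xi}(\phi_\hhh)$ is the truncated Lie-algebra trace formula for $H$. By Theorem~\ref{expansion'}, only the nilpotent class contributes, so $J^{\hhh,w\xi}(\phi_\hhh)=J^{\hhh,w\xi}_{\mathrm{nil}}(\phi_\hhh)$, which after the standard unwinding is a sum of weighted orbital integrals $J^\hhh_M(X,\phi_\hhh)$ taken over semisimple $X$ elliptic in Levi subgroups $M\subseteq H$. Chaudouard--Laumon's Theorem~\ref{C-L1} identifies these with the groupoid cardinalities $|\widetilde{\mathcal{M}}_H^\xi(a,t)(\mathbb{F}_q)|$ for $(a,t)\in\widetilde{\mathcal{A}}_H^{\mathrm{ell}}(\mathbb{F}_q)$. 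The local data at $v\in S$ of $\chi_M(X_v)$, read off the Deligne--Lusztig characteristics $t_v\in\ttt_v^{\mathrm{reg}}(\kappa_v)$, lifts $o_G$ to a point of $\widetilde{\pi}_{H,G}^{-1}(o_G)(\mathbb{F}_q)$ (using Proposition~\ref{423} to match elliptic Levi data on both sides). Assembling volume factors $\vol(G_X(F)\backslash G_X(\AAA)^1)/\vol(\ago_M/X_*(M))$ from Theorem~\ref{C-L1} with the normalization $q^{-\frac12(\dim\mathcal{M}_H-\dim\mathcal{R}_H)}$ and the scalars $\prod_v\theta_v(\sigma)$ produces the integer $n_{H,o}$ and yields the identity \eqref{Eq0}.

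The main obstacle will be the Springer-reduction step. One must verify that the Deligne--Lusztig character values at $\sigma\cdot u$ split cleanly as a scalar factor $\theta_v(\sigma)$ (up to Weyl twists from the sum over $w$) times a quantity depending only on the unipotent part $u$ that matches the corresponding Lie-algebra integrand on $l(u)$; this relies on the Deligne--Lusztig character formula and the Springer hypothesis equating Fourier transforms of generalized Green functions with Deligne--Lusztig characters. The compatibility in Proposition~\ref{222} of the Springer map with parabolic decompositions is precisely what allows this to happen compatibly with the unwinding of the Lie-algebra trace formula. A second delicate step is the bookkeeping of volume factors and signs $\epsilon_{\kappa_v}(T_v)\epsilon_{\kappa_v}(G)$ so that they combine into the stated normalization $q^{-\frac12(\dim\mathcal{M}_H-\dim\mathcal{R}_H)}$ and into the explicit formula for $n_{H,o}$; verifying $n_{G,o_G}=|Z_G(\mathbb{F}_q)|$ and the uniform bound on $\sum|n_{H,o}|$ will follow from this explicit expression.
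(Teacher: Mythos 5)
Your proposal follows essentially the same route as the paper's proof: express the sum of multiplicities as $J^{G,\xi}$ of a test function built from the trace of $\rho$ (cuspidal filter plus the commutativity of Theorem \ref{commutative} giving multiplicity one), move the test function at $\infty$ to Iwahori level so that Theorem \ref{expansion} applies, group the surviving elliptic classes $\sigma\in T(\mathbb{F}_q)$ by $H=G_\sigma^0$, factor the Deligne--Lusztig character values as $\theta_v(\sigma)$ times a unipotent part, transfer to the Lie algebra by the Springer isomorphism, and finish with Theorem \ref{expansion'}, Springer's hypothesis and Chaudouard--Laumon's Theorem \ref{C-L1}, the scalars being absorbed into $n_{H,o}$. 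This is exactly the paper's architecture (Propositions \ref{mul2}, \ref{Frobenius}, the partition of $T(\mathbb{F}_q)$, \eqref{tt''}--\eqref{finalll}).

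Two steps as written need repair, though both are fixable inside your plan. First, at $\infty$ you cannot ``replace $f_\infty$ by its restriction to $\mathcal{I}_\infty$ without changing the value'': the restriction of $\rho_\infty$ to $B(\kappa_\infty)$ contains many characters besides $\theta_\infty$, so pairing the restricted Deligne--Lusztig character against $\pi$ over $\mathcal{I}_\infty$ does not compute $\dim\Hom_{G(\ooo_\infty)}(\rho_\infty,\pi)$. The correct move (Proposition \ref{Frobenius}) is to replace $e_{\rho_\infty}$ by the inflation of $\theta_\infty$ itself to $\mathcal{I}_\infty$, which gives the same multiplicity by Frobenius reciprocity because $T_\infty$ is split, so $\rho_\infty$ is parabolically induced from $\theta_\infty$. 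Second, the nilpotent distribution $J^{\hhh,w\xi}_{\mathrm{nil}}(\phi_\hhh)$ does not ``unwind'' directly into semisimple weighted orbital integrals of $\phi_\hhh$; the bridge is the Fourier-transform form of the Lie-algebra trace formula, $J^{\ggg,\xi}(\varphi)=q^{(1-g)\dim\ggg}\sum_{o}J^{\ggg,\xi}_{o}(\hat{\varphi})$, combined with Springer's hypothesis (Proposition \ref{ft}(3)) identifying your Green-function test function with the Fourier transform of the indicator of a regular semisimple orbit, so that on the dual side only regular semisimple classes contribute (Proposition \ref{reg-ss}) and Theorem \ref{C-L1} applies; this is the content of Corollary \ref{corollary}. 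Relatedly, the twists $w\xi$ produced by Theorem \ref{expansion} must be removed before the terms can be grouped into the coefficients $n_{H,o}$, which is why the paper first proves the $\xi$-independence of $J^{\ggg_\iota,\xi}_{\mathrm{nil}}$ (Corollary \ref{indepxi}) on the Lie-algebra side and only then assembles the Weyl-group bookkeeping ($W_\iota$, $C_W(\iota)$, the partition of $T(\mathbb{F}_q)$) that yields the explicit formula \eqref{537}.
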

The explicit formula for $n_{H,o}$ is given in \eqref{537}. 
This whole section is concentrated on the proof of this theorem, and we will discuss the properties of the numbers $n_{H,o}\in \mathbb{Z}$ in the above theorem later in Proposition \ref{nho}. For the reader's convenience, let us briefly outline the structure of the proof.

\subsubsection{Outline of the proof of the main theorem \ref{main}.}
We use our variant of Arthur's trace formula. It is a distribution $J^{G,\xi}$ on $C_c^{\infty}(G(\AAA))$. For any function $f\in C_c^{\infty}(G(\AAA))$ called test function, we have an identity:
\begin{equation}\label{cdots}J^{G,\xi}(f)=\sum_{\pi}m_{\pi}\Tr(f| \pi ) + \cdots,   \end{equation}
where the sum is taken over the set of cuspidal automorphic representations of $G(\AAA)$ and 
$f$ acts on $\pi$ via the regular representation of $G(\AAA)$ on $C_{cusp}(G(\AAA))$. 
We will choose a test function $f=e_\rho$ constructed form the trace function of $\rho$ as in Proposition \ref{mul2}, so that $\Tr(e_{\rho}|\pi)=1$ if and only if $\pi_{\rho}\neq \{0\}$. The proof of it involves a local calculation where essentially we need to show that a relevant Hecke algebra is commutative (Theorem \ref{commutative}). It is done by Corollary \ref{mul1}.
We prove in Proposition \ref{generic} that the condition $(\ast)$ in Proposition \ref{generic} allows us to kill the extra ``$\cdots$" parts in \eqref{cdots}. Therefore, we obtain:
\[J^{G,\xi}(e_\rho) =  \sum_{\pi: \pi_\rho\neq 0}m_{\pi}.   \] 

Our next task is to relate $J^{G,\xi}(e_\rho)$ with distributions $J^{\ggg,\xi}$, the analogue truncated trace for Lie algebras. The key point is to use the coarse geometric expansion established in \cite{Yu2} and then use Theorem \ref{expansion} to reduce $J^{G,\xi}(e_\rho)$ to essentially unipotent contributions (i.e., in terms of Jordan decomposition, the semisimple part is central) of the relevant groups. A springer isomorphism then easily reduces the unipotent contributions to nilpotent contributions of the Lie algebra. The last step comes from an observation on the Springer's hypothesis: the Fourier transform of a semisimple adjoint orbit coincides with the Green function on unipotent elements. Therefore, via a trace formula of the Lie algebra established in \cite{Yu2}, we will reduce the contribution to a better result. 

However, there are some technical difficulties from the validity of Theorem \ref{expansion} and Theorem \ref{expansion'}, and from the parameter $\xi$. The first difficulty is bypassed if we set the torus $T_\infty$ to be split and introduce another test function $\widetilde{e}_\rho$ which does not change the value of the distribution $J^{G,\xi}$. It is interesting to see that this operation is similar to Ngô's introduction of the \'etale cover $\widetilde{\mathcal{A}}$ of the Hitchin base $\mathcal{A}$. 
For difficulties raised by $\xi$, we need to deal with the Lie algebra first to see the independence of $\xi$  (Corollary \ref{indepxi}), then we can deal with the group case easier.

\subsubsection{Proof of Proposition \ref{generic}}\label{5111}
The key point is very simple and can be summarized as follows. 

For any reductive group $M$ defined over $\mathbb{F}_q$. Let $\sigma$ be an irreducible automorphic representation of $M(\AAA)$ with central character $\chi_{\sigma}$, then $\chi_{\sigma}|_{Z_M(F)}$ is trivial, in particular 
$\chi_{\sigma}|_{Z_M(\mathbb{F}_q)}$ is trivial. Besides, the central character of $\chi_{\sigma}$ is also the product of those of local components of $\sigma$.

Now we provide more details. After Langlands \cite[Proposition 2]{Langlands}, every automorphic representation $\pi$ is an irreducible sub-quotient of $\Ind_{P(\AAA)}^{G(\AAA)}\sigma$, for a standard  parabolic subgroup $P$ and a cuspidal automorphic representation $\sigma$ of $M_P(\AAA)$. 

If $\pi $ is not cuspidal, then we may suppose that $P$ is a proper parabolic standard subgroup $P$ of $G$. 

If $\pi_{\rho}\neq 0$, then for any $v\in S$, we have $\pi_{v, \rho_v}\neq 0$ and $\pi_v$ is unramified for $v$ outside $S$.  Let $v\in S$.
After Moy-Prasad, the supercuspidal support of   $\sigma_v$ can be determined in the following way. Let $M_v$ be the minimal Levi subgroup of $G_{\kappa_v}$ defined over $\kappa_v$ containing $T_v$. Let $\tau_v$ be an extension of $\rho_v$ to $A_{M_v}(F_v) M_v(\ooo_v)$, which is the normalizer of $M_v(\ooo_v)$ in $M_v(F_v)$ by Cartan decomposition. 
The representation $\phi_v=\cInd_{A_{M_v}(F_v) M_v(\ooo)}^{M_v(F_v)}(\tau_v)$ is irreducible and supercuspidal (\cite[Proposition 6.6]{MP2}).
It is showed in \cite[Theorem 6.11(2)]{MP2},  that  for some extesion $\tau_v$ of $\rho_v$ to $A_{M_v}(F_v) M_v(\ooo_v)$, the $G(F_v)$-conjugacy class $[(M_v, \phi_v)]$ is the supercuspidal support of $\pi_v$.

For each place $v\in S$, the local component $\pi_v$ is a subquotient of $Ind_{P(F_v)}^{G(F_v) }\sigma_v$. Therefore there is an element $x_v\in G(F_v)$ such that $x_v^{-1}M_v x_v \subseteq M_P$ and $\sigma_v$ has supercuspidal support $[(x_vM_v x_v^{-1}, \phi_v^{x_v})]$. Since $P$ is defined over $\mathbb{F}_q$ and $M_v$ is defined over $\kappa_v$, we can take $x_v\in G(\kappa_v)$. 
The central character $\chi_{\sigma_v}$ of $\sigma_v$ is the restriction to $Z_{M_P}(F_v)$ of the central character of $\phi_v^{x_v}$. It equals
$\chi_{\tau_v}^{x_v}$. Its restriction to $Z_{M_P}(\kappa_v)$ equals to $\theta_{v}^{x_v}|_{Z_{M_P}(\kappa_v)}$. In particular, $$  \chi_{\sigma_v}|_{Z_{M_P}(\mathbb{F}_q)}=\theta_{v}^{x_v}|_{Z_{M_P}(\mathbb{F}_q)}. $$
 However, we also have $$1=\chi_{\sigma}|_{Z_{M_P}(\mathbb{F}_q)}=\prod_{v}\chi_{\sigma_{v}}|_{Z_{M_P}(\mathbb{F}_q)},$$
where $\chi_{\sigma_v}$ is the central character of local component $\sigma_v$ of $\sigma$. This contradicts our assumption if $P\neq G$. 

\subsection{Local calculations through unrefined depth zero types}

The main result in this section is to prove Proposition \ref{mul2}. 
We need first to have some preparations on the Hecke algebras. 

\subsubsection{Some subgroups}\label{BT}
To facilitate some statements and proofs, we must introduce some subgroups of $G(F_v)$. 
Denote by $\mathcal{B}(G)_v$ the Bruhat-Tits building of $G(F_v)$. For every point $x\in \mathcal{B}(G)_v$, we denote by $G(F_v)_{x}\subseteq  G(F_v)$ (resp. $\ggg(F_v)_{x}\subseteq \ggg(F_v)$)
the corresponding parahoric subgroup (resp. subalgebra). We define  $ G(F_v)_{x+}\subseteq G(F_v)_{x}$ (resp. $\ggg(F_v)_{x+}\subseteq \ggg(F_v)$) to be the pro-unipotent (resp. pro-nilpotent) radical of $G(F_v)_{x}$ (resp. of $\ggg(F_v)_{x}$). For the Iwarhori subgroup $\mathcal{I}_{\infty}$ of $G(F_{\infty})$, we denote $\mathcal{I}_{\infty+}$ its pro-unipotent radical.

\subsubsection{}

In this subsection, we do some local calculations. Let us fix a place $v$ of $F$. 

For a compact open subgroup $K$ of $G(F_v)$ and a finite-dimensional irreducible smooth complex representation $\rho$ of $K$. Let  $\Theta_{\rho}: K\rightarrow \mathbb{C}$ be the trace function of $\rho$. 
Let $$e_\rho(x):=\begin{cases}
\frac{1}{\vol(K)}\Theta_{\rho}(x^{-1}), \quad x\in K;\\
0, \quad x\notin K. 
\end{cases}$$
Then $({\dim \rho}) e_{\rho}$ is an idempotent under the convolution product.
 
We will only consider the case that $K=G(\ooo_v)$ or $K=\mathcal{I}_{v}$, the standard Iwahori subgroup. 

Let $\mathcal{H}(G(F_v), \rho):= \End_{G(F_v)}(\cInd_{G(\ooo_v)}^{G(F_v)}\rho)$. It is well known that $\mathcal{H}(G(F_v), \rho)$ is finitely generated.  In fact, let $\mathcal{H}(G(F_v))$ be the local Hecke algebra of $G(F_v)$ which is the space $\mathcal{C}_c^{\infty}(G(F_v))$ equipped with the convolution product $\ast$.  
Then (see \cite[(2.12)]{BK}) $$\mathcal{H}(G(F_v), \rho)\otimes \End_{\mathbb{C}}(V_{\rho})\cong e_{\rho}\ast\mathcal{H}(G(F_v))\ast e_{\rho}, $$ where $V_{\rho}$ is the underlying vector space of $\rho$. As $e_{\rho}\ast\mathcal{H}(G(F_v)\ast e_{\rho}$ is finitely generated as $\mathbb{C}$-algebra (with $1$ but not necessarily commutative), $\mathcal{H}(G(F_v), \rho)$ is also finitely generated. Usually the algebra $\mathcal{H}(G(F_v), \rho)$ is not commutative.  However, we have the following result, which will be crucial for us. This theorem is well known when $T'$ is maximally anisotropic. 
\begin{theorem}\label{commutative}
Let $T'$ be a torus of $G$ defined over $\kappa_v$ and $\theta$ be an absolutely regular character of $T'(\kappa_v)$. 
Let $\rho$ be the inflation to $G(\ooo_v)$ of the Deligne-Lusztig induced representation $\epsilon_{\kappa_v}(T')\epsilon_{\kappa_v}(G) R_{T'}^G(\theta)$ of $G(\kappa_v)$. Then $\mathcal{H}(G_v, \rho)$ is commutative. 
\end{theorem}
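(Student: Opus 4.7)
The strategy is to reduce to the known case where $T'$ is elliptic by appealing to Bushnell--Kutzko's theory of $G$-covers for depth-zero types.

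First I would decompose $\rho$ via Harish-Chandra induction. Let $A$ be the maximal $\kappa_v$-split subtorus of $T'$ and set $M := C_{G_{\kappa_v}}(A)$, a Levi subgroup of a $\kappa_v$-parabolic $P$ of $G_{\kappa_v}$ in which $T'$ is elliptic. Transitivity of Deligne--Lusztig induction gives
\[\rho \cong \mathrm{Ind}_{P(\kappa_v)}^{G(\kappa_v)}\sigma_M, \qquad \sigma_M := \epsilon_{\kappa_v}(T')\epsilon_{\kappa_v}(M)\, R_{T'}^M \theta,\]
where $\sigma_M$ is an irreducible cuspidal representation of $M(\kappa_v)$ (using the absolute regularity of $\theta$, cf.\ \cite[Proposition 7.4]{DL}). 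Let $M_v$ be the unique $F_v$-Levi subgroup of $G$ whose special fiber is $M$, and $P_v$ the corresponding $F_v$-parabolic. By the known elliptic case applied to $(M_v, T', \theta)$, the Hecke algebra $\mathcal{H}(M_v(F_v), \sigma_M)$ is commutative.

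Next I would verify that $(G(\ooo_v), \rho)$ is a $G$-cover of $(M_v(\ooo_v), \sigma_M)$ in the sense of Bushnell--Kutzko: the key input is the Iwahori-type factorization of $G(\ooo_v)$ along the parahoric attached to $P_v$, compatible with $\rho$ being the Harish-Chandra induction of $\sigma_M$. This yields an injective algebra morphism $t_{P_v}\colon \mathcal{H}(M_v(F_v), \sigma_M) \hookrightarrow \mathcal{H}(G(F_v), \rho)$, and reduces the theorem to surjectivity of $t_{P_v}$.

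The crux is this surjectivity. A Mackey computation shows that $\mathcal{H}(G(F_v), \rho)$ is supported on double cosets $G(\ooo_v)\, n\, G(\ooo_v)$ with $n \in N_{G(F_v)}(M_v)$ preserving the inertial class of $\sigma_M$, and the cosets with $n \in M_v(F_v)$ are in the image of $t_{P_v}$. A class $n M_v(F_v) \neq 1$ would produce a non-trivial element of the relative Weyl group $W^{(G, M_v)}$ stabilizing $\sigma_M$ up to equivalence, hence stabilizing $\theta$ modulo $W^{(M,T')}$-conjugacy after passing to any finite extension of $\kappa_v$ that splits $T'$ --- contradicting the absolute regularity of $\theta$. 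Thus $t_{P_v}$ is an isomorphism and $\mathcal{H}(G(F_v), \rho)$ inherits the commutativity of $\mathcal{H}(M_v(F_v), \sigma_M)$. The main obstacle is exactly this last step: bookkeeping the relative Weyl stabilizer and translating \emph{absolute} regularity of $\theta$ (rather than mere regularity over $\kappa_v$) into the vanishing of intertwiners that might only become visible after a finite extension of $\kappa_v$; this is precisely where the hypothesis of the theorem is used in an essential way.
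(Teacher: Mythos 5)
Your overall strategy is the one the paper follows: pass to the Levi $M=C_G(A)$ where $T'$ is elliptic, use the known commutativity of the Hecke algebra there, transfer it to $G$ via Bushnell--Kutzko, and let absolute regularity kill any extra intertwining. But there are two concrete problems with the way you set this up. First, the pair $(G(\ooo_v),\rho)$ is \emph{not} a $G$-cover of $(M_v(\ooo_v),\sigma_M)$: a decomposed pair must have $\rho$ trivial on $G(\ooo_v)\cap N_v(F_v)$ and restrict irreducibly to $G(\ooo_v)\cap M_v(F_v)$, whereas your $\rho$, inflated from $\Ind_{P(\kappa_v)}^{G(\kappa_v)}\sigma_M$, is nontrivial on the preimage of $N(\kappa_v)$ and is highly reducible on $M_v(\ooo_v)$. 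The correct object is the parahoric pair $(\mathcal{P},\tau)$, where $\mathcal{P}\subseteq G(\ooo_v)$ is the preimage of $P(\kappa_v)$ and $\tau$ is the inflation of the cuspidal representation $\epsilon_{\kappa_v}(T')\epsilon_{\kappa_v}(G)R_{T'}^{M}(\theta)$ of the reductive quotient $M(\kappa_v)$; one then uses $\cInd_{G(\ooo_v)}^{G(F_v)}\rho\cong\cInd_{\mathcal{P}}^{G(F_v)}\tau$ to identify $\mathcal{H}(G(F_v),\rho)\cong\mathcal{H}(G(F_v),\tau)$. This is precisely the first move of the paper's proof, and without it your map $t_{P_v}$ does not exist.

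Second, the step you label ``a Mackey computation'' is in fact the crux and cannot be obtained by Mackey theory alone: one must prove that every element of $G(F_v)$ intertwining $\tau$ lies in $\mathcal{P}M(F_v)\mathcal{P}$. The paper does this with the Bruhat--Tits building and Morris's results on depth-zero intertwining: after reducing an intertwiner to $N_G(T)(F_v)$, its image in the extended affine Weyl group is split into a translation part --- which is shown to come from $A_M(F_v)$, hence is already accounted for by the Levi Hecke algebra --- and a finite Weyl part fixing the cuspidal $\overline{\tau}$; by the Deligne--Lusztig exclusion theorem this forces $(w^{-1}T'w,\theta^w)$ and $(T',\theta)$ to be geometrically conjugate in $M$, and absolute regularity then forces $w\in W^M$. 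Your final appeal to absolute regularity via a splitting extension is exactly the paper's, but the separation of the translation part and the confinement of the support require this building-theoretic argument (together with the existence of a strongly positive invertible element, supplied by Morris, to invoke Bushnell--Kutzko's Theorem (7.2)(ii) and conclude that the embedding of $\mathcal{H}(M(F_v),\tau_M)\cong\mathbb{C}[A_M(F_v)M(\ooo_v)/M(\ooo_v)]$ is an isomorphism). As written, your plan asserts this content rather than proving it.
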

\begin{proof}
Morris has given an explicit description for this kind of Hecke algebras in a more general setting by generators and relations (see \cite[7.12 Theorem]{Morris}). To apply his result, one still needs to calculate some subgroups of an affine weyl group. We show a proof based on some of his results and also on \cite{BK} so that the calculation are more direct.

Let $P$ be a minimal parabolic subgroup defined over $\kappa_v$ containing $T'$ and $M$ its $\kappa_v$-Levi subgroup containing $T'$. Note that we can take $M$ as the centralizer of the maximal $\kappa_v$-split subtorus of $T'$. After conjugation, we may suppose that $P$ is standard. 
Let $\mathcal{P}\subseteq G(\ooo_v)$ be the inverse image of $P(\kappa_v)$ of the map $G(\ooo_v)\rightarrow G(\kappa_v)$. 
Let $\tau$ be the inflation to $\mathcal{P}$ of the Deligne-Lusztig induced representation  $\epsilon_{\kappa_v}(T') \epsilon_{\kappa_v}(G) R_{T'}^{M}(\theta)$ of $M(\kappa_v)$. By definition and composition property of compact induction (and that the Deligne-Lusztig induction $R_M^G$ coincides with Harish-Chandra's induction), we have $$\mathcal{H}(G(F_v), \rho)\cong \mathcal{H}(G(F_v), \tau).$$ 
Let $\tau_M:=\tau|_{M(\ooo_v)}$ be the restriction of $\tau$ to $M(\ooo_v)$, then it is just the inflation of the representation $$\overbar{\tau}=\epsilon_{\kappa_v}(T') \epsilon_{\kappa_v}(G) R_{T(\kappa_v)}^{M(\kappa_v)}(\theta)$$ to $M(\ooo_v)$, hence irreducible.
Since $T'$ is elliptic in $M$ and $\theta$ is in general position, the representation $\overbar{\tau}$ is cuspidal. 
It is a well known that $\mathcal{H}(M(F_v), \tau_M)\cong \mathbb{C}[A_M(F_v)M(\ooo_v)/M(\ooo_v)]$ where $A_M$ is the $F_v$-maximal split subtorus in the center of $M$ (see the proof of \cite[Proposition 6.6]{MP2}). 

We are going to show that $\mathcal{H}(M(F_v), \tau_M)\cong \mathcal{H}(G(F_v), \tau).$  In \cite{BK}, Bushnell and Kutzko have developed a general framework for these kinds of results. However, their criteria are not easy to verify. We shall use their intermediate result \cite[Theorem(7.2)(ii)]{BK} instead. 
We recall that our group $G$ is unramified over $F_v$ since it is defined over the finite base field $\mathbb{F}_q$, the maximal compact subgroup $G(\ooo_v)$ gives a hyperspecial point (in particular a special point) in the extended Bruhat-Tits building of $G$, denoted by $0$. Moreover, the choice of $B$ gives rise to a fixed alcove in the building and a choice of basis of affine roots. We shall use the language of Bruhat-Tits buildings and affine Weyl groups for the rest of the proof. A reader unfamiliar with these is invited to \cite[1.1-1.4]{Morris} or \cite{Tits1} for introductions.

Let $\mathcal{A}=\mathcal{A}(G,T)$ be the apartment associated to $T$ and $V=X_*(T)_{F_v}\otimes \mathbb{R}$ be the vector space that acts on $\mathcal{A}$. 
We denote by $W_a=W^{(G, T)}(F_v)\ltimes X_*(T)_{F_v}$ the extended affine Weyl group of the affine Weyl group $W_a'= W^{(G, T)}(F_v)\ltimes \mathbb{Z}\Phi(G,T)_{F_v}^{\vee}$. We have a surjection  $$N_G(T)(F_v)\rightarrow W_a,$$ 
whose kernel is denoted by $Z_c$ 
and
 $N_G(T)(F_v)$ acts on the apartment $\mathcal{A}$ via its projection to $W_a$ (see \cite[Section 1.2]{Tits1}).
 Let $\overbar{n}\in W_a$ be the image of $n$ under this map. 
 Let $D$ be the gradient map, i.e. for an affine function $f$ on $\mathcal{A}$, we have $Df\in \Hom_{\mathbb{R}}(V, \mathbb{R})$ and $$f(x+v)=f(x)+(Df)(v), \quad \forall x\in \mathcal{A}, v\in V. $$
More generally, for an affine transformation $w$, $Dw$ is the ``derivative" of $w$. Note that $W_a$ acts on affine functions on $\mathcal{A}$ by taking bull-back. It is clear by definition that $$D(wf)=DwDf $$ for any $w\in W_a$ and any affine function $f$  on $\mathcal{A}$. 
Since $W^{(G, T)}(F_v)$ is identified with the subset of $W_a$ which fixes the special point $0$, 
the map $D$ is a surjection from $W_a$ to $W^{(G, T)}(F_v)$.

Let us show that any element $n\in G(F_v)$ that intertwines $\tau$, i.e. $$\Hom_{\mathcal{P}\cap{{}^{n}\mathcal{P}} }(\tau, {}^{n}\tau)\not= 0, $$
is contained in the set $\mathcal{P}M(F_v)\mathcal{P}$. 
By Bruhat decomposition, we can suppose without loss of generality that $n\in N_G(T)(F_v)$, recall that $N_G(T)$ is the normalizer of the maximal torus $T$. Thus $n$ stabilizes the apartment $\mathcal{A}$. 
We are going to apply \cite[4.15.Theorem]{Morris}. We need to make clear how $n$ acts on $M$. Let $J$ be a subset in the basis of the affine root system that corresponds to the parahoric subgroup $\mathcal{P}$. Note that elements in $J$ vanish at the point $0$ and can be identified with a basis of the root system $\Phi(M, T)_{\kappa_v}$ (via the gradient map $D$). 
After Morris's Proposition in  \cite[4.13]{Morris}, we know that $\overbar{n}$ fixes $J$.
Since $0$ is special, there is an element $w\in W_a$ that fixes $0$ such that $Dw=D\overbar{n}$. 
So for any $\alpha\in J$, we have $D(w\alpha)=(Dw)(D\alpha)$. 
Then $w$ also fixes $J$ because for any $\alpha\in J$, we have $D(w\alpha)=(Dw)(D\alpha)$ and $w\alpha$ vanishes at $0$.

Let $w'= w^{-1}\overbar{n}$, then $w'\in \ker D$. Hence it is a translation. But $w'J=J$, so $w'$ is the translation along a vector parallel to the intersection of the hyperplanes where $\alpha\in J$ vanishes. 
Therefore under the projection $N_G(T)(F_v)\rightarrow W_a$, $w'$ can be represented by an element $n'\in A_M(F_v)$. Note that $n'$ acts trivially on the reductive quotient $M(\kappa_v)$, we conclude that  $n.n'^{-1}$ acts on $\overbar{\tau}$ as the Weyl element $D(n.n'^{-1})$ and fixes $\overbar{\tau}$ (\cite[4.15.Theorem]{Morris}). 
 So $(w^{-1}T'w, \theta^w)$ and $(T', \theta)$ must be geometrically conjugate in $M$ (\cite[Corollary 6.3]{DL}), i.e. for some finite fields extension $k|\kappa_v$, the pairs $(w^{-1}T'w, \theta^w\circ N_{k|\kappa_v})$ and  $(T', \theta\circ N_{k|\kappa_v})$ are $M(k)$-conjugate. We may assume that the extension is large enough so that $T'_k$ is split. However, $\theta$ is absolutely regular. It means that $\theta\circ N_{k|\kappa_v}: T'(k)\rightarrow \mathbb{C}^{\times}$ is in general position, therefore $w\in W^M$.

Now we can apply \cite[Theorem(7.2)(ii)]{BK} to complete the proof. In fact, given a strongly $(P, \mathcal{P})$-positive element $\zeta$ (defined in \cite[(6.16)]{BK}, such an element is shown to exist in \cite[2.4, 3.3]{Morris2}) so that $\mathcal{P}\zeta\mathcal{P}$ supports an invertible element of $\mathcal{H}(G(F_v), \tau)$, Bushnell and Kutzko have constructed an algebra homomorphism $\mathcal{H}(M(F_v), \tau_M)\rightarrow \mathcal{H}(G(F_v), \tau)$. In our case, i.e., the case where any element $n$ intertwining $\tau$ is contained in $\mathcal{P}M(F_v)\mathcal{P}$, they showed that the homomorphism is an isomorphism. 
\end{proof}

%\marginpar{Read again}

\begin{coro}\label{mul1}
Let $\rho$ be as in Theorem \ref{commutative}. 
Let $\pi$ be any irreducible representation of $G(F_v)$, then $$\Tr(e_{\rho}|\pi)=\dim_{\mathbb{C}}\Hom_{G(\mathcal{O}_v)}(\rho, \pi|_{G(\mathcal{O}_v)})\leq 1. 
$$
\end{coro}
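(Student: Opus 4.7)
The plan is to separate the corollary into the two assertions. For the equality, the function $(\dim\rho) e_\rho$ on $K = G(\mathcal{O}_v)$ is precisely the idempotent $\frac{\dim\rho}{\vol(K)}\overline{\Theta_\rho(k)}$ that projects the restriction $\pi|_K$ onto its $\rho$-isotypic subspace $\pi_\rho$. Since $\pi_\rho \cong \rho \otimes \Hom_K(\rho,\pi|_K)$, taking traces gives
\[
(\dim\rho)\,\Tr(e_\rho\mid \pi) \;=\; \dim \pi_\rho \;=\; (\dim\rho)\cdot \dim \Hom_K(\rho,\pi|_K),
\]
which is the first equality. This step is standard and independent of the Deligne-Lusztig assumption.

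For the bound, the idea is to convert the Hom space into a module over the Hecke algebra $\mathcal{H}(G(F_v),\rho)$ and then invoke Theorem \ref{commutative}. Namely, Frobenius reciprocity for compact induction yields
\[
\Hom_{G(\mathcal{O}_v)}(\rho,\pi|_{G(\mathcal{O}_v)}) \;\cong\; \Hom_{G(F_v)}\!\left(\cInd_{G(\mathcal{O}_v)}^{G(F_v)}\rho,\,\pi\right),
\]
and the right-hand side is naturally a right module over $\mathcal{H}(G(F_v),\rho) = \End_{G(F_v)}(\cInd_{G(\mathcal{O}_v)}^{G(F_v)}\rho)$ by pre-composition. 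The argument then proceeds in two steps: (i) show that this module is simple whenever it is non-zero, and (ii) deduce one-dimensionality from the commutativity of $\mathcal{H}(G(F_v),\rho)$.

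For (i), observe that the proof of Theorem \ref{commutative} furnishes more than the commutativity statement: via Bushnell--Kutzko's transfer theorem \cite[Theorem 7.2(ii)]{BK} together with the fact that $\tau_M$ is a type for the supercuspidal representation $\phi$ of $M(F_v)$, it exhibits $(G(\mathcal{O}_v),\rho)$ as an $\mathfrak{s}$-type in the sense of \cite[(4.1)]{BK} for the Bernstein component $\mathfrak{s} = [M,\phi]_{G(F_v)}$ constructed there. By the general formalism of types, the functor $\pi \mapsto \Hom_{G(\mathcal{O}_v)}(\rho,\pi|_{G(\mathcal{O}_v)})$ is then an equivalence between $\mathrm{Rep}^{\mathfrak{s}}(G(F_v))$ and the category of $\mathcal{H}(G(F_v),\rho)$-modules, which sends irreducibles to simples. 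Any irreducible $\pi$ with $\pi_\rho \neq 0$ necessarily lies in $\mathrm{Rep}^{\mathfrak{s}}$, so its associated Hom space is simple over $\mathcal{H}(G(F_v),\rho)$.

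For (ii), Theorem \ref{commutative} gives that $\mathcal{H}(G(F_v),\rho)$ is a commutative finitely generated $\mathbb{C}$-algebra; combining Schur's lemma for the endomorphism ring of a simple module with the Nullstellensatz forces every simple module to be one-dimensional over $\mathbb{C}$. This gives $\dim \Hom_{G(\mathcal{O}_v)}(\rho,\pi|_{G(\mathcal{O}_v)}) \leq 1$, completing the proof. The main obstacle is really the input that $(G(\mathcal{O}_v),\rho)$ is an honest type for some Bernstein component; once this is extracted from the computation already performed in Theorem \ref{commutative}, the rest is a formal consequence of commutativity.
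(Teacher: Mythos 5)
Your proposal is correct, and the overall skeleton (reduce to a module over $\mathcal{H}(G(F_v),\rho)$, then use commutativity plus finite generation and the Nullstellensatz to force simple modules to be one-dimensional) matches the paper's last step exactly. The differences are in the two intermediate steps. For the equality, the paper works with the idempotent $e_{0+}$ attached to $G(\mathcal{O}_v)_+$ to cut down to the finite-dimensional space $\pi^{G(\mathcal{O}_v)_+}$ and then applies finite-group character orthogonality for $G(\kappa_v)$; your projector-onto-the-isotypic-part argument is equivalent (and, like the paper's, implicitly uses admissibility to know the trace is of a finite-rank operator). For the simplicity of $\Hom_{G(\mathcal{O}_v)}(\rho,\pi|_{G(\mathcal{O}_v)})$, the paper only uses the elementary fact that $\pi\mapsto\Hom_{G(\mathcal{O}_v)}(\rho,\pi|_{G(\mathcal{O}_v)})$ carries irreducible smooth representations to simple $\mathcal{H}(G(F_v),\rho)$-modules; this follows from the general idempotent/Morita statement $e_\rho\ast\mathcal{H}(G(F_v))\ast e_\rho\cong\mathcal{H}(G(F_v),\rho)\otimes\End_{\mathbb{C}}(V_\rho)$ together with the fact that $eV$ is zero or simple over $eAe$ when $V$ is simple over $A$, and needs no input about Bernstein components. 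You instead invoke the full Bushnell--Kutzko type formalism, asserting that $(G(\mathcal{O}_v),\rho)$ is an $\mathfrak{s}$-type; that is true (it requires, beyond the intertwining computation in Theorem \ref{commutative}, that the pair is a cover of $(M(\mathcal{O}_v),\tau_M)$, that covers of types are types, and Morris's result that the depth-zero pair on $M$ is a type for its supercuspidal component), but it is strictly more than what the proof of Theorem \ref{commutative} literally establishes and more than the corollary needs. So your route buys a stronger statement (a category equivalence for the whole component) at the cost of extra citations, while the paper's route is more elementary and self-contained.
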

\begin{proof}
Let $$e_{0+}=\frac{1}{\vol(G(\mathcal{O})_{+})}\mathbbm{1}_{G(\mathcal{O}_v)_{+}}. $$
Then $e_{0+}$ is an idempotent and $e_{0+}\ast e_{\rho}=e_{\rho}$. 
Since $e_{0+}$ acts on $\pi$ as a projection onto its $G(\mathcal{O}_v)_{+}$-fixed part, we have 
$$\Tr(e_{\rho}|\pi)=\Tr(e_{\rho}|\pi^{G(\mathcal{O}_v)_{+}}).  $$
Let $\Theta_{\pi}$ be the trace function of the representation $(\pi^{G(\mathcal{O}_v)_{+}})|_{G(\mathcal{O}_v)}=(\pi|_{G(\mathcal{O}_v)})^{G(\mathcal{O}_v)_{+}}$, then 
$$\Tr(e_{\rho}|\pi^{{G(\mathcal{O}_v)_{+}}})= \frac{1}{|G(\kappa_v)|}\sum_{x\in G(\kappa_v)} \Theta_\rho(x^{-1})\Theta_{\pi}(x)=\dim_{\mathbb{C}}\Hom_{G(\mathcal{O}_v)}(\rho, \pi|_{G(\mathcal{O}_v)}). $$

 The association $\pi\mapsto \Hom_{G(\mathcal{O}_v) }(\rho, \pi|_{G(\mathcal{O}_v)})$ gives a functor from the category of smooth representations of $G(F_v)$ to that of $\mathcal{H}(G(F_v), \rho)$-modules. It sends an irreducible representation to a simple module. As $\mathcal{H}(G(F_v), \rho)$ is a finitely generated commutative $\mathbb{C}$-algebra, its simple modules are $1$-dimensional. 
 \end{proof}
 
\subsubsection{Expressing the sum of multiplicities in terms of the truncated trace}\label{proofmul2}
 \begin{prop}\label{mul2}
Let $\rho=\otimes_{v}{\rho_v}$ be a representation of Deligne-Lusztig type of $G(\ooo)$ as defined in 
the subsection \ref{DLi} with each $\theta_v$ absolutely regular. We define 
$$e_{\rho}= \bigotimes_{v\in S} e_{\rho_v}\otimes \bigotimes_{v\notin S}\mathbbm{1}_{G(\ooo_v)},$$
to be the tensor product of the characteristic function of ${G(\ooo_v)}$ for $v\notin S$ with $e_{\rho_v}$ defined below for $v\in S$:
$$e_{\rho_v}(x):=\begin{cases}
\Theta_{\rho_v}(\overbar{x}^{-1}), \quad x\in G(\ooo_v);\\
0, \quad x\notin G(\ooo_v); 
\end{cases}$$
where $\Theta_{\rho_v}$ is the trace function (irreducible character) of the representation $\rho_v$ of $G(\kappa_v)$ and $\overbar{x}$ denotes the image of $x$ via the reduction map $G(\ooo_v)\rightarrow G(\kappa_v)$,  

Then for any $\xi\in \ago_{B}$, if $\rho$ is a cuspidal filter (i.e. satisfies the condition $(\ast)$ in Proposition \ref{generic}),
we have
$$\frac{1}{\dim \rho}L^{2}(G)_{\rho}=\sum_{\pi: \pi_{\rho}\neq 0}m_{\pi}= J^{G, \xi}(e_\rho).  $$ 
\end{prop}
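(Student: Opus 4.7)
The plan is to evaluate $J^{G,\xi}(e_\rho)$ by a place-by-place factorization of traces, and then use the cuspidal filter hypothesis to discard all non-cuspidal contributions from the spectral side of the variant trace formula.

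First, I would compute $\Tr(\pi(e_\rho))$ for an irreducible smooth representation $\pi$ of $G(\AAA)$. Since $e_\rho=\bigotimes_v e_{\rho_v}$ is a pure tensor and $\pi$ is a restricted tensor product of its local components $\pi_v$, the trace factors as $\prod_v \Tr(\pi_v(e_{\rho_v}))$. For $v\notin S$ the function $e_{\rho_v}=\mathbbm{1}_{G(\ooo_v)}$ is the characteristic function of the hyperspecial maximal compact, so $\Tr(\pi_v(e_{\rho_v}))=\dim\pi_v^{G(\ooo_v)}\in\{0,1\}$, equal to $1$ exactly when $\pi_v$ is unramified. For $v\in S$, Corollary \ref{mul1}, whose proof rests crucially on the commutativity of $\mathcal{H}(G(F_v),\rho_v)$ from Theorem \ref{commutative} (which uses the absolute regularity of $\theta_v$ in an essential way), gives $\Tr(\pi_v(e_{\rho_v}))=\dim\Hom_{G(\ooo_v)}(\rho_v,\pi_v|_{G(\ooo_v)})\in\{0,1\}$. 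Multiplying, $\Tr(\pi(e_\rho))=\mathbbm{1}_{\pi_\rho\neq 0}$. Whenever this indicator is $1$, the isotypic subspace $\pi_\rho$ is a single copy of $\rho$, so $\dim L^2(G)_\rho=(\dim\rho)\sum_{\pi:\pi_\rho\neq 0}m_\pi$, giving the first equality of the statement.

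Next, I would invoke Proposition \ref{generic}: condition $(\ast)$ implies that every $L^2$-automorphic constituent of any $\Ind_{P(\AAA)}^{G(\AAA)}\sigma$ with $P\subsetneq G$ satisfies $\pi_\rho=0$. Hence any $\pi$ with $\pi_\rho\neq 0$ must be cuspidal, and, more importantly, any term coming from the residual or Eisenstein spectrum in a spectral expansion pairs with $e_\rho$ to give zero by the first step. Combining this with the spectral structure of $J^{G,\xi}$ from \cite{Yu2}: contributions indexed by cuspidal data $(M,\sigma)$ with $M\subsetneq G$ produce weighted traces of $e_\rho$ on parabolic inductions from $M$, which all vanish; the diagonal cuspidal contribution yields $\sum_{\pi\ \mathrm{cuspidal}}m_\pi\Tr(\pi(e_\rho))$, which by the first step equals $\sum_{\pi:\pi_\rho\neq 0}m_\pi$.

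The main obstacle is this last reduction: since $J^{G,\xi}$ is defined via a coarse geometric truncation with the auxiliary parameter $\xi$, one must either carefully produce its spectral counterpart and track the $\xi$-dependence, or argue directly from the kernel $K(x,y)=\sum_{\gamma\in G(F)}e_\rho(x^{-1}\gamma y)$. Intuitively, the support of $e_\rho$ inside $G(\ooo)$ is so restrictive that the truncation collapses on the cuspidal part and the cuspidal filter $(\ast)$ kills the remaining weighted terms, so that $\xi$ plays no role; making this precise — in particular, showing that the $\xi$-dependent non-invariant weights disappear against $e_\rho$ — is the technical heart of the statement.
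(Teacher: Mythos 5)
Your first two steps (the local multiplicity-one computation via Corollary \ref{mul1} and the use of the cuspidal filter condition from Proposition \ref{generic}) match the ingredients the paper uses. But the third step, which you yourself flag as ``the technical heart'', is left unproved, and this is precisely where the paper's actual argument lives: you never establish the identity $J^{G,\xi}(e_\rho)=\sum_{\pi:\pi_\rho\neq 0}m_\pi\Tr(\pi(e_\rho))$. There is no off-the-shelf ``spectral expansion'' of $J^{G,\xi}$ to appeal to — the distribution is defined purely through the truncated geometric kernel — so saying that Eisenstein/residual contributions ``pair with $e_\rho$ to give zero'' presupposes exactly the structure you would need to construct. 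Moreover, your proposed mechanism (that the small support of $e_\rho$ inside $G(\ooo)$ makes the $\xi$-dependent weights collapse) is not how the vanishing actually works and would be hard to substantiate as stated.

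The paper closes this gap differently: by \cite[Proposition 9.1]{Yu2}, $J^{G,\xi}(e_\rho)$ is an integral over $G(F)\backslash G(\AAA)$ of the alternating, $\htau_Q$-truncated sum of the kernels $k_Q(\delta x,\delta x)$ of the operators $R_Q(e_\rho)$ acting on $L^{2}(M_Q(F)N_Q(\AAA)\backslash G(\AAA))$. One then invokes the pseudo-Eisenstein series decomposition of Moeglin--Waldspurger: each such $L^2$-space is the Hilbert direct sum of pieces attached to cuspidal pairs $(M_P,\pi)$ with $P\subseteq Q$, whose constituents are subquotients of parabolic inductions from $M_P$. Since $R_Q((\dim\rho)e_\rho)$ is the projection onto the $\rho$-isotypic part, the cuspidal filter hypothesis forces this operator — hence its kernel $k_Q$ — to vanish identically for every proper $Q$, and forces the image of $R_G((\dim\rho)e_\rho)$ into the cuspidal spectrum. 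Thus all $\xi$-dependent terms disappear not by cancellation against $e_\rho$ but because the kernels themselves are zero; only the $Q=G$ term survives, its weight is trivial, and the remaining integral is the trace of $R_G(e_\rho)$ on the cuspidal spectrum, which Corollary \ref{mul1} evaluates as $\sum_{\pi:\pi_\rho\neq 0}m_\pi$. Without an argument of this kind (or an equally precise substitute), your proposal does not yet prove the proposition.
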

\begin{remark}\normalfont
From this, we see that $J^{G,\xi}$ is independent of $\xi$. However, each geometric expansion $J^{G,\xi}_o$ for $o\in \mathcal{E}$ probably depends on $\xi$. 
\end{remark}
\begin{proof}
After \cite[Section 5, Proposition 9.1]{Yu2}, Corollary \ref{mul1}, and Proposition \ref{generic}, this result is surely obvious for the experts in automorphic forms. We give a brief proof for completeness and convenience to those unfamiliar with the trace formula. 
%It follows from Corollary \ref{mul1}that $$\sum_{\pi: \pi_{\rho}\neq 0}m_{\pi}= \Tr(e_{\rho}| L^{2}_{cusp}(G)). $$ Since $\rho$ is general, we have $L^{2}_{cusp}(G)_{\rho}=L^{2}(G)_{\rho}$ (Proposition \ref{generic}). The first equality is a direct corollary of Corollary \ref{mul1} and Proposition \ref{generic}

Let us recall the theory of the pseudo-Eisenstein series. For details, see \cite[Chapter 2]{MW}, especially  II.1.2, II.1.3, II.1.10, II.1.12 and II.2.4. 
It tells us that the $L^2$-space \[L^{2}(M_Q(F)N_Q(\AAA)\backslash G(\AAA))\] is the Hilbert direct sum of subspaces 
\[L^{2}(M_Q(F)N_Q(\AAA)\backslash G(\AAA))_{ [(M_P, \pi)] }\]
for all equivalent classes of cuspidal pairs $(M_P, \pi)$ with $P\subseteq Q$, and $\pi$ an automorphic cuspidal representation of $M_P(\AAA)$. Here two cuspidal pairs are equivalent if one pair can be transformed into the other by conjugation and tensoring with a norm character. 
For each cuspidal pair $(M_P, \pi)$, this subspace is generated topologically by pseudo-Eisenstein series. Recall that pseudo-Eisenstein series are defined for every $G(\ooo)$-finite function $\varphi\in L^{2}(N_P(\AAA)M_P(F)\backslash G(\AAA))_{\pi}$ ($\pi$-isotypic part,  see page 78 - 79 of the \textit{op. cit.}), and every compactly supported function $f\in C_{c}(\ago_{P}^{G})$, by
 \[\theta=\sum_{\gamma\in P(F)\cap M_Q(F) \backslash M_Q(F)} f( H_P(\gamma g)) \varphi(\gamma g).\]  
Therefore, its constituents are irreducible subquotients of the representation parabolically induced from $(M_P, \pi)$. Since $R_Q((\dim \rho)e_{\rho})$ is a $G(\ooo)$-projection onto the $\rho$-isotypic part, we conclude from the condition that $\rho$ is a cuspidal filter that $R_Q((\dim \rho)e_{\rho})$ is the zero operator if $Q\neq G$ and the image of $R_G((\dim \rho)e_{\rho})$ is contained in the cuspidal spectrum. 

%Given any such pseudo-Eisenstein series $\theta$, it generates a representation $<\theta>$ of $G(\AAA)$ (thus finitely generated) so that each of its irreducible subquotient has cuspidal support $(M_P,\pi)$. Moreover,  

%As showed in Proposition \ref{generic}, the $\rho$-isotypic part of $<\theta>$ is zero. 

The above arguments imply that the kernel function $k_Q(x,y)$ of $R_Q(e_{\rho})$ is identically zero 
if $Q\neq G$. And when $Q=G$, the trace of the regular representation $R_G(e_\rho)$ exists and equals the sum \[\sum_{\pi: \pi_{\rho}\neq 0}m_{\pi}\] by Corollary \ref{mul1}. 
Recall that in \cite[Proposition 9.1]{Yu2}, we proved that
\[ J^{G,\xi}(e_\rho)=\int_{G(F)\backslash G(\AAA)}\sum_{Q\in \mathcal{P}(B)}(-1)^{\dim\ago_Q^G}\sum_{\delta\in Q(F)\backslash G(F)} \htau_{Q}\left(H_0(\delta x) +  s_{\delta x}\xi   \right) k_{Q}(\delta x, \delta x)\d x. \]
Therefore the truncated trace $J^{G, \xi}(e_\rho)$, which is clearly independent of $\xi$ in our case, equals  the trace of $R_G(e_{\rho})$. 
\end{proof}

\subsection{Trace formula for Lie algebras}
Before we study $J^{G, \xi}(e_{\rho})$, we need to study a trace formula for Lie algebra first. 

We invite the reader to read Section \ref{finalsec} first. The reason that this subsection is placed before Section \ref{finalsec} is that we need a $\xi$-independence result proved in this subsection. The main results of this subsection that will be needed later are the corollaries \ref{indepxi} and \ref{corollary}.

\subsubsection{Some explicit calculations on Fourier transforms}\label{Fourier}

Let $\langle \cdot, \cdot \rangle$ be a $G$-invariant bilinear form on $\ggg$ defined over $\mathbb{F}_q$. Thus, it defines by taking $\AAA$-points, and $F_v$-points (for every place $v$), a bilinear form on $\ggg(\AAA)$ and $\ggg(F_v)$ respectively.

Let $D=\sum_{v}n_v v$ be a divisor on $X$. We denote \[\wp^{D}= \prod_{v\in |X|} \wp_v^{n_v},\] where $\wp_v$ has been defined to be the maximal ideal of $\mathcal{O}_v$. 
Let $$K_X=\sum_{v} d_v v $$ be a divisor whose associated line bundle is the canonical line bundle $\omega_X$ of $X$. 
We have 
$$\mathbb{A}/(F+ \wp^{-K_X})\cong H^{1}(X, \omega_X)\cong H^{0}(X,\mathcal{O}_{X})^{*}\cong\mathbb{F}_{q},$$ by Serre duality and the fact that $X$ is geometrically connected. 
We fix a non-trivial additive character $\psi$ of $\mathbb{F}_q$. 
Via the above isomorphisms, $\psi$ can be viewed as a character of $\AAA/F$. We use this $\psi$ in the definition of Fourier transform. 
We can decompose $$\psi=\otimes_{v}\psi_{v}, $$
with $\psi_v: F_v\rightarrow \mathbb{C}^{\times}$. It is clear that $\psi_v$ is trivial on $\wp_{v}^{-d_v}$ but not on $\wp_{v}^{-d_v-1}$. 
We define a local Fourier transformation for functions in $\mathcal{C}_{c}^{\infty}(\ggg(F_v))$ using the character $\psi_v$.

It is clear by definition that for any $f=\otimes_v f_v$, we have \[\hat{f}=\otimes_v \hat{f_v }. \]
For any place $v$ of $F$, with our normalization of measures, that $\vol(\ggg(\ooo_v))=1$, we have 
\begin{equation}\hat{\hat{f}_v}(X)=q^{d_v \deg v\dim \ggg}   f_v(-X). \end{equation}

We summarize some calculations on Fourier transformations by the following proposition. 
\begin{prop}\label{ft}
We have the following results on Fourier transformation.
\begin{enumerate}
\item% \marginpar{Improve}
For any place $v$ of $F$, for characteristic function of $\ggg(\ooo_v)$, we have
\[\hat{\mathbbm{1}}_{\ggg(\ooo_v)}=   \mathbbm{1}_{ \wp_{v}^{-d_v} \ggg(\ooo_v)}. \] 
\item
Let \[{f}_\infty=   \mathbbm{1}_{ \mathfrak{I}_{\infty}} \chi,  \] 
where $\chi:  \mathfrak{I}_{\infty}\rightarrow \ttt(\kappa_\infty)\rightarrow \mathbb{C}^{\times}$ is the character that lifts the character on $\ttt(\kappa_\infty)$ determined by $\chi(\cdot)=\psi(\langle \cdot, t\rangle)$, then 
$\hat{f}_\infty$ equals \[q^{-\frac{1}{2}(\dim \ggg- \dim \ttt)}\mathbbm{1}_{ \wp_{\infty}^{-d_\infty-1}({t+ \mathfrak{I}_{\infty+}})}, \] 
where \( t \in \ttt(\kappa_\infty)\), viewed as an element in \(  \mathfrak{I}_{\infty} \), is determined by $\chi$ (it is the support of the Fourier transform of $\chi$).

\item (Springer's hypothesis \cite{Kazhdan}\cite{KV}.) 
Let $l: \mathcal{U}_G\rightarrow \mathcal{N}_{\ggg}$ be an isomorphism as in Proposition \ref{ch}. Let $T_v$ a maximal torus of $G_{\kappa_v}$ and $\theta$ any character of $T_v(\kappa_v)$, $t$ a regular element in $\ttt_v(\kappa_v)$ and $\rho=\epsilon_{\kappa_v}(G)\epsilon_{\kappa_v}(T_v)R_{T_v}^G\theta$ the Deligne-Lusztig virtual
representation of $G(\kappa_v)$ corresponding to $T_v$ and $\theta$. 
Let $$e_{\rho}:=\begin{cases}
\Tr({\rho}(\overbar{x}^{-1})), \quad x\in G(\ooo_v);\\
0, \quad x\notin G(\ooo_v); 
\end{cases}$$
where $\overbar{x}$ denotes the image of $x$ under the map $G(\ooo_v)\rightarrow G(\kappa_v)$. Let $\overbar{\Omega}_t\subseteq \ggg(\kappa_v)$ the $\Ad (G(\kappa_v))$-orbits of $t$ and $\Omega_t\subseteq \ggg(\ooo_v)$ be the preimage of $\overbar{\Omega}_t$ of the map $\ggg(\mathcal{O}_v)\rightarrow \ggg(\kappa_v)$.

Then for any unipotent element $u\in \mathcal{U}(\ooo_v)$, we have
$$e_{\rho}(u)= q^{-\deg  v (d_v\dim \ggg+\frac{1}{2}(\dim \ggg-\dim \ttt))  }\hat{\mathbbm{1}}_{\wp_{v}^{-d_v-1}\Omega_{t} }(l(u)).  $$

\end{enumerate}
\end{prop}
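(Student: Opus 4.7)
The three parts proceed by direct Fourier calculation, each building on the previous; the substantive content lies in (3).

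For (1), the Fourier transform at $X$ is the character integral $\int_{\ggg(\ooo_v)} \psi_v(\langle X, Y \rangle)\,dY$ over the compact group $(\ggg(\ooo_v), +)$. It equals $\vol(\ggg(\ooo_v)) = 1$ if the character $Y \mapsto \psi_v(\langle X, Y\rangle)$ is trivial on $\ggg(\ooo_v)$, and $0$ otherwise. The triviality condition $\langle X, \ggg(\ooo_v)\rangle \subseteq \ker(\psi_v) = \wp_v^{-d_v}$ is equivalent, by non-degeneracy of $\langle \cdot, \cdot \rangle$ on $\ggg(\ooo_v)$, to $X \in \wp_v^{-d_v}\ggg(\ooo_v)$.

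For (2), I will use the triangular decomposition $\ggg = \ttt \oplus \nnn \oplus \nnn^-$ attached to the Borel $B$ (valid in very good characteristic), which yields
\[\mathfrak{I}_\infty = \ttt(\ooo_\infty) \oplus \nnn(\ooo_\infty) \oplus \wp_\infty \nnn^-(\ooo_\infty), \qquad \mathfrak{I}_{\infty+} = \wp_\infty\ttt(\ooo_\infty) \oplus \nnn(\ooo_\infty) \oplus \wp_\infty \nnn^-(\ooo_\infty).\]
By $G$-invariance, $\ttt$ is orthogonal to $\nnn \oplus \nnn^-$ and $\langle \cdot, \cdot\rangle$ pairs $\nnn$ non-degenerately with $\nnn^-$. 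Since $\chi$ vanishes on $\nnn(\ooo_\infty) \oplus \wp_\infty\nnn^-(\ooo_\infty)$, the Fourier integral factors into three pieces, each handled as in (1). The $\nnn(\ooo_\infty)$-factor forces $X_{\nnn^-} \in \wp_\infty^{-d_\infty}\nnn^-(\ooo_\infty)$; the $\wp_\infty \nnn^-(\ooo_\infty)$-factor forces $X_\nnn \in \wp_\infty^{-d_\infty-1}\nnn(\ooo_\infty)$ and contributes the volume $\vol(\wp_\infty \nnn^-(\ooo_\infty)) = q^{-\dim \nnn} = q^{-(\dim\ggg - \dim\ttt)/2}$; and the $\ttt(\ooo_\infty)$-factor against $\psi(\langle \overline{\cdot}, t\rangle)$ forces $X_\ttt \in \wp_\infty^{-d_\infty-1}t + \wp_\infty^{-d_\infty}\ttt(\ooo_\infty)$, with $t$ lifted to $\ttt(\ooo_\infty)$ as in the statement. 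Assembling these three conditions recovers the support $\wp_\infty^{-d_\infty-1}(t + \mathfrak{I}_{\infty+})$ with the claimed constant.

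For (3), the essential input is Springer's hypothesis at the residue-field level, as proved in \cite{Kazhdan} and refined in \cite{KV}: for any regular $t \in \ttt_v^{\reg}(\kappa_v)$ and any nilpotent $N \in \mathcal{N}_\ggg(\kappa_v)$,
\[\hat{\mathbbm{1}}_{\overbar\Omega_t}(N) = C\,\Theta_\rho(l^{-1}(N)),\]
where $C$ is an explicit normalization constant (the signs $\epsilon_{\kappa_v}(G)\epsilon_{\kappa_v}(T_v)$ being absorbed into $\rho$) and the hat denotes finite-field Fourier transform on $\ggg(\kappa_v)$ against $\psi$. It remains to descend the local-field Fourier transform of $\mathbbm{1}_{\wp_v^{-d_v-1}\Omega_t}$ to this finite-field one. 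Since $\Omega_t$ is $\wp_v\ggg(\ooo_v)$-translation invariant by definition, $\mathbbm{1}_{\wp_v^{-d_v-1}\Omega_t}$ is invariant under translation by $\wp_v^{-d_v}\ggg(\ooo_v)$; its Fourier transform is therefore supported on $\ggg(\ooo_v)$ and factors through reduction mod $\wp_v$. The change of variables $Y = \varpi_v^{-d_v-1}Z$ (with $\varpi_v$ a uniformizer), together with the identification of $\psi_v$ on $\wp_v^{-d_v-1}/\wp_v^{-d_v} \cong \kappa_v$ with the finite-field character $\psi$, gives
\[\hat{\mathbbm{1}}_{\wp_v^{-d_v-1}\Omega_t}(l(u)) = q^{d_v\deg v\dim\ggg}\,\hat{\mathbbm{1}}_{\overbar\Omega_t}(\overline{l(u)}).\]
Combining with Springer's identity and the fact that $e_\rho(u)$ depends only on $\bar u \in G(\kappa_v)$, this produces the stated formula once $q^{-\deg v(\dim\ggg - \dim\ttt)/2}$ is absorbed into $C$.

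The main obstacle is (3), not because any new idea is required but because the bookkeeping of normalizations --- the Haar convention $\vol(\ggg(\ooo_v)) = 1$, the depth $d_v$ of $\psi_v$, the scaling by $\varpi_v^{-d_v-1}$, and the $\epsilon$-signs built into $\rho$ --- must combine to produce exactly the exponent $-\deg v\bigl(d_v\dim\ggg + \tfrac{1}{2}(\dim\ggg - \dim\ttt)\bigr)$. The substance, namely the identification of Green functions with Fourier transforms of regular semisimple orbits, is classical.
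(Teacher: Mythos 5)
Your proposal is correct and follows essentially the same route as the paper: parts (1) and (2) are the "direct calculations" with dual lattices and the conductor of $\psi_v$ (the paper gets the Iwahori duality $\mathfrak{I}_\infty^{\perp}=\wp_\infty^{-d_\infty-1}\mathfrak{I}_{\infty+}$ from \cite[Lemma 1.8.7(a)]{KV}, where you redo it by hand via the triangular decomposition), and part (3) is exactly the reduction to the finite-field Springer hypothesis of \cite{Kazhdan} and \cite[Theorem A.1]{KV} after the rescaling/descent of the $p$-adic Fourier transform to the residue field. Your descent identity $\hat{\mathbbm{1}}_{\wp_v^{-d_v-1}\Omega_t}(l(u))=q^{d_v\deg v\dim\ggg}\,\hat{\mathbbm{1}}_{\overbar{\Omega}_t}(\overline{l(u)})$ and the volume bookkeeping match the stated constants, so the argument is complete at the same level of detail as (indeed, more explicit than) the paper's proof.
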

\begin{proof}
By the fact that $\langle \cdot, \cdot \rangle$ is defined over $\mathbb{F}_q$, and $\psi_v$ has conductor $\wp_v^{-d_{v}}$,
the dual lattice of $\ggg(\ooo_v)$ defined by
$$\ggg(\ooo_v)^{\perp}:=\{ g\in \ggg(F_v)\mid \langle g, h \rangle\in \wp_v^{-d_v}\text{ for each $h\in \ggg(\ooo_v)$}  \}, $$
equals $\wp_v^{-d_v}\ggg(\ooo_v)$. 
As $\ggg(\ooo_v)=\ggg(F_v)_0$, and $\wp_v\ggg(\ooo_v)=\ggg(F_v)_{0+}$, 
it follows (\cite[Lemma 1.8.7(a)]{KV}) that for any point $x\in \mathcal{B}(G)_v$, 
$$ \ggg(F_v)_{x}^{\perp}=\wp_{v}^{-d_v-1}\ggg(F_v)_{x+}.$$
%Hence $$ \hat{f}_v(X)= \mathbbm{1}_{\ggg(F_v)_{x}}(X)\psi(\langle t, X\rangle)\vol(\wp_{v}^{-n_v-1}\ggg(F_v)_{x+})$$

The first and second statements follow from direct calculations. 

The last statement can be reduced to the Springer hypothesis, first proved in \cite{Kazhdan} for large characteristics. A proof of Springer's hypothesis for all characteristics (as long as a Springer isomorphism exists) can be found in \cite[Theorem A.1]{KV}. 
\end{proof}

\subsubsection{Weighted orbital integrals}\label{s532}
With notation in Proposition \ref{ft}, 
let $\varphi=\otimes \varphi_v\in \mathcal{C}_c^{\infty}(\ggg(\AAA))$, where for $v$ outside $S$, 
$$\varphi_v=\mathbbm{1}_{\ggg(\ooo_v)};$$
for $v=\infty$, %\marginpar{volume term??}
$$\varphi_{\infty}=  \mathbbm{1}_{ \mathfrak{I}_{\infty}}\chi, $$ with $\chi$ being a character given by a regular semisimple element ${t}_{\infty}\in \ttt(\kappa_\infty)$ as in Proposition \ref{ft}; and for $v\in S-\{\infty\}$,  \begin{equation*}
\varphi_{v}= q^{- \deg  v (d_v\dim \ggg+\frac{1}{2}(\dim \ggg-\dim \ttt))  }\hat{\mathbbm{1}_{\wp_{v}^{-d_v-1}\Omega_{- t_v} }}, 
\end{equation*}
where $t_v$ is a semisimple regular element in $\ttt_v(\kappa_v)$ for a maximal torus $T_v$ of $G_{\kappa_v}$ defined $\kappa_v$. 

Note that by Proposition \ref{ft}, for $v\notin S$, we have 
\begin{equation}
\hat{\varphi_v}=\mathbbm{1}_{\wp_v^{-d_v}\ggg(\ooo_v)}, 
\end{equation}
for $v\in S-\{\infty\}$, we have
 \begin{equation}
 \hat{\varphi_v} =  q^{-\frac{1}{2}\deg v(\dim \ggg-\dim \ttt) }\mathbbm{1}_{\wp_{v}^{-d_v-1}\Omega_{t_v}},
 \end{equation}
and \begin{equation}\hat{\varphi_\infty}= q^{-\frac{1}{2}(\dim \ggg-\dim \ttt)}\mathbbm{1}_{\wp_\infty^{-d_\infty-1}(t_\infty+\mathfrak{I}_{\infty+})}. \end{equation}

A first observation is that the support of $\varphi$ is contained in $\prod_{v\neq \infty}\ggg({\mathcal{O}_v})\times \mathfrak{I}_{\infty}$, so Theorem \ref{expansion'} is applicable.  We have 
\begin{equation}\label{method1} J^{\ggg, \xi}(\varphi)= J_{nil}^{\ggg, \xi}(\varphi).  \end{equation}
By the trace formula of Lie algebras established in \cite[5.7]{Yu2}, we get a coarse geometric expansion:
\begin{equation}\label{method2}
J^{\ggg, \xi}(\varphi) =q^{(1-g)\dim \ggg}\sum_{o\in \mathcal{E}}J_{o}^{\ggg,\xi}(\hat{\varphi}).     \end{equation}
We are going to study $J^{\ggg, \xi}_{o}(\hat{\varphi})$ for each $o\in \mathcal{E}$ separately.  Recall that the set of semisimple conjugacy class $\mathcal{E}$ is defined in \ref{recalltrace}.

Suppose $o\in \mathcal{E}$ be a class such that $J_{o}^{\ggg, \xi}(\hat{\varphi})$ does not vanish, then there is a $Y\in o$ and an element $x\in G(F_\infty)$ such that \[\Ad(x^{-1})Y \in \wp_\infty^{-d_\infty-1}(t_{\infty}+\mathcal{I}_{\infty,+}).  \]
Necessarily $Y$ is regular semisimple. Therefore $o$ is a class admitting Jordan decomposition and consists of semisimple elements.

We fix a semisimple representative  $Y\in o$ contained in a minimal standard Levi subalgebra $\mmm_1$ of $\ggg$ defined over $F$. 
After Proposition \cite[7.2]{Yu2} and since $Y$ is regular, we have
\[\mathfrak{j}_{Q, o}(x) =\sum_{Y \in \mmm_\ppp(F)\cap o}\sum_{\eta\in N_Q(F)}\hat{\varphi}(\Ad(x^{-1})\Ad(\eta^{-1}) Y  ).  \]
Using the same arguments as in \cite[p.949-p.951]{Arthur} or \cite[Section 5.2]{ChauLie}, we can reduce $J_{o}^{\ggg, \xi}(\hat{\varphi})$ to the following expression
\begin{equation}\label{ok?}\int_{G_{Y}(F)\backslash  G(\AAA)}\chi^{\xi}_{M_1}(x) \hat{\varphi}(\Ad(x^{-1})  Y) \d x,   \end{equation}
where 
$$ \chi^{\xi}_{M_1}(x)= \sum_{P\supseteq P_1}(-1)^{\dim\ago_P^G }\sum_{s\in W(\ago_{P_1},  P )}  \htau_P(H_0(w_s  x)   + s_{w_s  x}\xi  ),   $$
the set $W(\ago_{P_1}, P)$ consists of double classes in $W^{P}\backslash W/W^{P_1}$ formed by elements $s\in W$ such that $s(\ago_{P_1})\supseteq \ago_P$. The expression could be further simplified due to the following lemma.

\begin{lemm}\label{wsgxi}
Let $Y\in \ggg(F)$ be an element such that \[\Ad(x^{-1})(Y)\in {t}_{\infty}+\mathfrak{I}_{\infty+}, \] for some $x\in G(F_{\infty})$. Then there exists a Weyl element $w\in W$ which is independent of $x$, such that for any $x\in G(F_\infty)$ satisfying $\Ad(x^{-1})(Y)\in {t}_{\infty}+\mathfrak{I}_{\infty+}$ we have \[\chi^{\xi}_{M_1}(x)=\Gamma_{M_1}({w\xi}, x).\] Here $\Gamma_{M_1}({w\xi}, \cdot )$ is defined to be the characteristic function of those $y\in G(\AAA)$ so that the convex envelope of $(-H_P(y))_{P\in \mathcal{P}(M_1)}$ contains $[w\xi]_{M_1}$, the projection of $w\xi$ in $\ago_{M_1}$. 
\end{lemm}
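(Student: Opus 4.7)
The plan is to pin down the Iwahori--Bruhat position of $x$ at the place $\infty$ using the hypothesis on $\Ad(x^{-1})(Y)$, and then collapse the alternating sum defining $\chi^\xi_{M_1}(x)$ to $\Gamma_{M_1}(w\xi,\cdot)$ via Arthur's combinatorial identity.

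First I would note that the hypothesis forces $Y$ to be regular semisimple: indeed $t_\infty\in\ttt(\kappa_\infty)$ is regular semisimple and $\mathfrak{I}_{\infty+}$ consists of topologically nilpotent elements, so $\Ad(x^{-1})(Y)$, and hence $Y$, is regular semisimple in $\ggg(F_\infty)$. In particular $G_Y$ is $F_\infty$-conjugate to $T$, so that over $F_\infty$ all constructions below reduce to the split case; up to $G(F)$-conjugation, and localizing at $\infty$, we may identify $M_1$ with $T$ at $\infty$ so that $P_1=B$ locally and the indexing set $W(\ago_{P_1},\ago_P)$ becomes $W^P\backslash W$ for each standard $P\supseteq B$.

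Second, I would determine the Iwasawa--Bruhat position of $x$ as in Lemma~\ref{okay}. Fixing an Iwasawa decomposition $x=bk$ with $b\in B(F_\infty)$ and $k\in G(\ooo_\infty)$, and reducing modulo $\wp_\infty$, the hypothesis becomes $\Ad(\bar k^{-1}\bar b^{-1})(\bar Y)\equiv t_\infty$ modulo $\mathfrak{I}_{\infty+}$. Since $\Ad(\bar b^{-1})(\bar Y)\equiv\bar Y$ modulo $\nnn_B(\kappa_\infty)$ and the reductive quotient $\mathfrak{I}_\infty/\mathfrak{I}_{\infty+}\cong\ttt(\kappa_\infty)$ extracts the $\ttt$-component of $\Ad(\bar k^{-1})(\bar Y)$, the regularity of $\bar Y$ and $t_\infty$ forces the Bruhat class of $\bar k$ in $B(\kappa_\infty)\backslash G(\kappa_\infty)/B(\kappa_\infty)$ to be represented by the unique Weyl element $w\in W$ satisfying $w(t_\infty)=\bar Y$. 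Hence $s_x=w$, determined entirely by $(\bar Y,t_\infty)$ and independent of $x$.

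Third, for each $s\in W(\ago_{P_1},\ago_P)$ the same analysis applied to $w_sx$ yields $s_{w_sx}\equiv sw\pmod{W^P}$. Together with the identity $H_B(w_sx)=s\cdot H_B(x)$, obtained by transforming the Iwasawa decomposition of $x$ by $w_s\in G(\mathbb{F}_q)\subseteq G(\ooo_\infty)$, and the relation $\htau_P(sH)=\htau_{s^{-1}Ps}(H)$, one regroups the double sum in $\chi^\xi_{M_1}(x)$ as a single sum over parabolics $P'\in\mathcal{P}(M_1)$:
\[
\chi^\xi_{M_1}(x)=\sum_{P'\in\mathcal{P}(M_1)}(-1)^{\dim\ago_{P'}^G}\htau_{P'}\bigl(H_{P'}(x)+w\xi\bigr).
\]
The right-hand side is the standard Arthur--Langlands expression for the characteristic function $\Gamma_{M_1}(w\xi,x)$ of those $x\in G(\AAA)$ whose family $(-H_{P'}(x))_{P'\in\mathcal{P}(M_1)}$ has convex hull containing $[w\xi]_{M_1}$ (see \cite[Lemme 1.8.4]{LabWal} or \cite[\S2]{Ageom}), which completes the proof. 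The main obstacle will be step three: the Bruhat translation $s_{w_sx}\equiv sw\pmod{W^P}$ and the repackaging of the double sum over $(P,s)$ into a single sum over $\mathcal{P}(M_1)$ require careful bookkeeping of the fundamental-weight conventions and a verification that the Iwahori-level correction terms drop out; the assumption that $\xi$ is in general position is what ensures that no boundary contributions survive in Arthur's combinatorial lemma.
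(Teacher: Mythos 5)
Your skeleton --- extract one Weyl element $w$ from the position of $Y$ relative to $t_\infty$, then collapse the alternating sum by the Langlands--Arthur combinatorial identity --- is the same as the paper's, but two of your intermediate steps fail. In your second step you reduce an Iwasawa decomposition $x=bk$ modulo $\wp_\infty$; but $b\in B(F_\infty)$ and $Y\in\ggg(F)$ need not be integral at $\infty$ (only $\Ad(x^{-1})Y$ is), so $\bar b$ and $\bar Y$ are undefined and the congruence $\Ad(\bar b^{-1})(\bar Y)\equiv \bar Y$ mod $\nnn_B(\kappa_\infty)$ has no meaning. The paper produces $w$ differently: by \cite[2.2.5(a)]{KV} the centralizer of any element of $t_\infty+\mathfrak{I}_{\infty+}$ is $\mathcal{I}_{\infty+}$-conjugate to $T$, so $Y$ is $G(F_\infty)$-conjugate to some $t\in\ttt(\ooo_\infty)$ lifting $t_\infty$; since $G_Y$ is then an $F_\infty$-split maximal torus contained in $M_1$, there exist $m\in M_1(F_\infty)$ and $w\in W$ with $\Ad(w^{-1})\Ad(m)Y=t$. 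That auxiliary $m$ is exactly what your stronger claim ignores: the assertion that $s_x$ equals the fixed $w$ for every admissible $x$ is false. The admissibility condition is preserved by left translation by $G_Y(F_\infty)\subseteq M_1(F_\infty)$, and such translations move the Iwahori--Weyl position through a whole coset $W^{M_1}s_x$. Already in $SL_2$ (ignoring the global normalization $Y\in\mmm_1(F)$), take $Y=\Ad(g)t_\infty$ with $g=\dot w_0\, n(\varpi)\in G(\ooo_\infty)$: then $x_1=g$ and $x_2=g\,\alpha^\vee(\varpi)$ both satisfy $\Ad(x_i^{-1})Y=t_\infty$, yet $s_{x_1}=w_0$ while $s_{x_2}=1$; in the lemma's setting the same phenomenon occurs inside $M_1$ whenever $M_1\neq T$, because $G_Y$ is a split maximal torus of $M_1$ which in general is not $T$. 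What is true, and all the lemma needs, is the class of $s_{w_sx}$ modulo $W^{M_P}$; this suffices because $\htau_P$ sees its argument only through the weights in $\hat\Delta_P$, which are $W^{M_P}$-invariant --- and this invariance is the actual content your step skips.

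Two further points in your third step need repair. The regrouped sum must run over $\mathcal{F}(M_1)$, all semistandard parabolic subgroups containing $M_1$, not over $\mathcal{P}(M_1)$: the pairs $(P,s)$ with $P\supseteq P_1$ standard and $s\in W(\ago_{P_1},P)$ correspond to parabolics whose Levi is in general strictly larger than $M_1$, and already for $SL_2$ a sum over $\mathcal{P}(M_1)$ misses the $+1$ contributed by $P'=G$, so it cannot equal a characteristic function. Likewise $H_B(w_sx)=s\,H_B(x)$ is false (test $x$ a non-integral lower unipotent element and $w_s=w_0$ in $SL_2$); the correct identity is $H_Q(w_sx)=s\,H_P(x)$ with $P=w_s^{-1}Qw_s$, used together with the $W^{M_P}$-invariance above. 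With $w$ obtained as in the paper (via \cite[2.2.5(a)]{KV} and the $M_1(F_\infty)$-conjugation) and the bookkeeping carried out at the level of $\ago_P$-projections, the appeal to \cite[1.8.7]{LabWal} then yields $\chi^{\xi}_{M_1}(x)=\Gamma_{M_1}(w\xi,x)$ as in the paper's proof.
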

\begin{proof}
Let $Y_1= \Ad(x^{-1})(Y)$. 
By \cite[Lemma 2.2.5(a)]{KV} (where they suppose that $T$ is elliptic, but their proof works in our case by just replacing $G_a$ in their proof by $\mathcal{I}_{\infty}$), the stabilizer $G_{Y_1}$ is $\mathcal{I}_{\infty+}$-conjugate to $T$. The image of $Y_1$ under this conjugation is an element in $\ttt(\ooo_\infty)$ whose reduction mod-$\wp_\infty$ equals $t_{\infty}$. We can pick a $t$ in ${t}_{\infty}+\mathfrak{I}_{\infty,+}$ so that $t\in \ttt(\ooo_\infty) $ is $G(F_\infty)$-conjugate to $Y_1$. So $t$ and $Y$ are $G(F_\infty)$-conjugate. 
Since $Y$ is regular, its centralizer $G_{Y}$ is contained in $M_1$ which is $G(F_\infty)$-conjugate to $T$. It shows that $G_{Y}$ is an $F_\infty$-split maximal torus in $M_1$. Therefore $T$ and $G_Y$ are conjugate by an element in $M_1(F_\infty)$. We  conclude that there exist a Weyl element $w$ and an $m\in M_1(F_\infty)$ such that $$ \Ad(w^{-1})\Ad(m)(Y)=t.$$
Thus for any $s\in W(\ago_{P_1}, P)$ we have $$ s_{w_s x}\xi=w_s w\xi. $$
%Note that any semistandard parabolic subgroup $Q$ containing $M_1$ can be written uniquely as $s^{-1}({P})$ for some standard parabolic subgroup $P$ containing $P_1$ and $s\in W(\ago_{P_1}, P)$. 
The result then is a corollary of Proposition \cite[1.8.7]{LabWal}.  
\end{proof}
In the setup of Lemma \ref{wsgxi}, if we replace $Y$ by $X=\Ad(w^{-1})(Y)$, then we deduce the following corollary. 
\begin{coro}[of the proof of Lemma \ref{wsgxi}]\label{533}
Let $Y$ be as in Lemma \ref{wsgxi}. The $G(F)$-conjugacy class of $Y$ contains an element $X\in \ggg(F)$ such that the minimal Levi subgroup $M$ defined over $F$ whose Lie algebra containing $X$ is semistandard and there is an element $m\in M(F_\infty)$ satisfying 
\begin{equation}\label{Mconj}
\Ad(m^{-1})(X)\in {t}_{\infty}+\mathfrak{I}_{\infty+}.\end{equation}
Any element $X'\in \mmm(F)$ that is $G(F)$-conjugate to $X$ and satisfy \eqref{Mconj} is also $M(F)$-conjugate to $X$. With the element $X$ replacing $Y$, the Weyl element in Lemma \ref{wsgxi} is the trivial one. 
\end{coro}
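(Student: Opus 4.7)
The plan is to perform directly on $Y$ (rather than on the auxiliary element $x$) the Weyl conjugation that implicitly showed up in the proof of Lemma \ref{wsgxi}. Let $w \in W$, $m \in M_1(F_\infty)$ and $t \in \ttt(\ooo_\infty)$ with $t \equiv t_\infty \pmod{\wp_\infty}$ be the data extracted in that proof, so that $\Ad(w^{-1})\Ad(m)(Y) = t$. Choose a representative $\dot{w} \in N_G(T)(\mathbb{F}_q)$ of $w$, and set
\[
X := \Ad(\dot{w}^{-1})(Y), \qquad M := \dot{w}^{-1} M_1 \dot{w}, \qquad m' := \dot{w}^{-1} m \dot{w}.
\]
Because $\dot{w} \in G(\mathbb{F}_q) \subseteq G(F)$, $X$ lies in the $G(F)$-conjugacy class of $Y$; because $\dot{w} \in N_G(T)$, $M$ is again a semistandard Levi (it still contains $T$); and $m' \in M(F_\infty)$.

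The local condition follows from the direct computation
\[
\Ad(m')(X) = \Ad(\dot{w}^{-1} m \dot{w})\Ad(\dot{w}^{-1})(Y) = \Ad(\dot{w}^{-1})\Ad(m)(Y) = t \in t_\infty + \mathfrak{I}_{\infty+},
\]
so taking $n := (m')^{-1} \in M(F_\infty)$ one has $\Ad(n^{-1})(X) = t$, which is \eqref{Mconj}. Minimality of $M$ as an $F$-Levi containing $X$ is inherited from the corresponding minimality of $M_1$ for $Y$, since conjugation by $\dot{w}$ preserves this property. Moreover, when one feeds $X$ back into the proof of Lemma \ref{wsgxi}, the element $m' \in M(F_\infty)$ already conjugates $X$ into $\ttt(F_\infty)$, so no nontrivial Weyl twist is required; this is the content of the last sentence of the corollary.

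For the rigidity statement, I would argue as follows. Since $X$ is elliptic and $G$-regular in $\mmm$, the maximal $F$-split subtorus of $G_X$ coincides with $A_M$, the maximal $F$-split central torus of $M$; the same holds for $X'$. If $g \in G(F)$ satisfies $\Ad(g)(X) = X'$, then $\Ad(g)(A_M) = A_M$, so $g \in N_G(M)(F)$. Writing the local data as $\Ad(n^{-1})(X), \Ad((n')^{-1})(X') \in t_\infty + \mathfrak{I}_{\infty+}$ with $n, n' \in M(F_\infty)$, the element $h := (n')^{-1} g n \in N_G(M)(F_\infty)$ transports one element of $t_\infty + \mathfrak{I}_{\infty+}$ to another. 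Using \cite[Lemma 2.2.5]{KV} to conjugate each side into $\ttt(\ooo_\infty)$ with common reduction $t_\infty$, one sees that the image of $h$ in $N_G(M)/M$ must fix $t_\infty$; but $t_\infty$ is $G$-regular, so the only Weyl element fixing it is the identity. Hence $g$ lies in $M(F)$.

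The main obstacle, in my view, is the last step of the uniqueness argument: $\mathcal{I}_{\infty+}$ is a subgroup of $G(F_\infty)$ rather than of $M(F_\infty)$, so one must carefully invoke an Iwahori-type decomposition relative to $M$ to ensure that the conjugations used to reduce into $\ttt(\ooo_\infty)$ remain compatible with the Levi structure, and that the resulting ambiguity lives in $N_G(M)/M$ rather than the full Weyl group.
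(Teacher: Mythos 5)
Your treatment of the existence part is correct and is exactly the paper's route: the paper passes from $Y$ to $X=\Ad(w^{-1})(Y)$ with $w$ the Weyl element produced in the proof of Lemma \ref{wsgxi} (a representative may indeed be taken in $N_G(T)(\mathbb{F}_q)$ since $G$ and $T$ are split), and then $M=\dot w^{-1}M_1\dot w$ is semistandard, minimality is transported, and $\Ad(\dot w^{-1}m\dot w)(X)=t\in t_\infty+\mathfrak{I}_{\infty+}$ gives \eqref{Mconj} with trivial Weyl twist. No complaint there.

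The genuine gap is in the rigidity step, and you have pointed at it yourself without closing it: from $h=(n')^{-1}gn\in N_G(M)(F_\infty)$ carrying $Z=\Ad(n^{-1})(X)$ to $Z'=\Ad((n')^{-1})(X')$ you cannot conclude that ``the image of $h$ in $N_G(M)/M$ fixes $t_\infty$''. The conjugators supplied by \cite[2.2.5]{KV} lie in $\mathcal{I}_{\infty+}$, which is not contained in $N_G(M)$; after those adjustments what you control is a Weyl element of $(G,T)$ attached to $u'hu^{-1}$, and this has no a priori relation to the class of $h$ modulo $M$, so as written the deduction is a non sequitur. The repair is simpler than an Iwahori decomposition argument about $h$: since $X\in\mmm(F)$ and $n\in M(F_\infty)$, the element $Z$ lies in $\mmm(\ooo_\infty)$, and because $M$ is semistandard and $t_\infty\in\ttt(\kappa_\infty)$ one has $\mmm(F_\infty)\cap\bigl(t_\infty+\mathfrak{I}_{\infty+}\bigr)=t_\infty+\mathfrak{I}^{M}_{\infty+}$, where $\mathfrak{I}^{M}_{\infty+}$ is the analogous object for $(M,B\cap M)$. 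Hence the Kazhdan--Varshavsky conjugation can be carried out \emph{inside} $M$: $X$ (resp.\ $X'$) is $M(F_\infty)$-conjugate to some $s$ (resp.\ $s'$) in $\ttt(\ooo_\infty)$ reducing to $t_\infty$. As $s,s'$ are $G(F_\infty)$-conjugate regular elements of $\ttt(F_\infty)$, they are $W$-conjugate; reducing modulo $\wp_\infty$, the conjugating Weyl element fixes the $G$-regular $t_\infty$, hence is trivial, so $s=s'$ and $X,X'$ are $M(F_\infty)$-conjugate, say by $g_\infty$. Then $g\in g_\infty G_X(F_\infty)\subseteq M(F_\infty)$, because $G_X$ is a maximal torus containing $A_M$ and is therefore contained in $C_G(A_M)=M$, and finally $g\in G(F)\cap M(F_\infty)=M(F)$. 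Note that this route makes your preliminary reduction to $N_G(M)(F)$ (and the unproved, though true, claim that the maximal $F$-split subtorus of $G_{X'}$ is again $A_M$) unnecessary.
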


Now we use $X$ instead of $Y$ to represent the class $o\in\mathcal{E}$ (hopefully, the reader will not confuse it with the $X$ that we use to denote the curve), which surely does not affect the value $J_{o}^{\ggg,\xi}(\hat{\varphi})$. From Lemma \ref{wsgxi} and  Corollary \ref{533}, we deduce that:
\begin{equation}\label{better} J_{o}^{\ggg,\xi}(\hat{\varphi}) = \int_{G_X(F)\backslash G(\AAA)}\Gamma_{M}(\xi, x) \hat{\varphi}(\Ad(x^{-1})X )\d x.\end{equation}
\begin{prop}\label{reg-ss}
If there exists a place $v\in S$, such that for all $x \in G(F_v) $, \[\Ad(x^{-1})(X) \notin \wp_{v}^{-d_v-1}\Omega_{t_{v}},\] then we have 
\[J_{[X]}^{\ggg, \xi}(\hat{\varphi})=0.\] Otherwise $J_{[X]}^{\ggg, \xi}(\hat{\varphi})$ equals
\begin{equation}\vol(\mathcal{I}_{\infty})   q^{ -\frac{1}{2}(\dim \ggg (2g-2+\deg S)-  \dim \ttt \deg S)}\frac {\vol(G_{X}(F)\backslash G_{X}(\AAA)^{1})}{\vol(\ago_{M}/X_{*}(M) )}J_{M}^{\ggg}(X, \mathbbm{1}_{D}), \end{equation}
where $D$ is the sum of a canonical divisor $K_X=\sum_{v} d_v v$ with $\sum_{v\in S}v$, $\mathbbm{1}_{D}$ is the characteristic function of the open compact subset $\wp^{-D}\ggg(\ooo)$, and $G_X(\AAA)^1$ is the kernel of $H_{M}|_{G_X(\AAA)}$, the restriction of the Harish-Chandra's morphism $H_{M}$ to $G_X(\AAA)$.   
\end{prop}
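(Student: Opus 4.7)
The first assertion is immediate from the support structure of $\hat{\varphi}=\otimes_v\hat{\varphi}_v$ recorded in \S\ref{s532}: the local factor $\hat{\varphi}_v$ is supported on $\wp_v^{-d_v-1}\Omega_{t_v}$ for $v\in S-\{\infty\}$ and on $\wp_\infty^{-d_\infty-1}(t_\infty+\mathfrak{I}_{\infty+})$ at $v=\infty$. If at some $v\in S$ no $G(F_v)$-translate of $X$ lands in that set, the integrand in~(\ref{better}) vanishes identically, and so does the integral.

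For the explicit formula I start from~(\ref{better}) and use Fubini to rewrite
\[J_{o}^{\ggg,\xi}(\hat{\varphi})=\int_{G_X(\mathbb{A})\backslash G(\mathbb{A})}\hat{\varphi}(\Ad(x^{-1})X)\Bigl(\int_{G_X(F)\backslash G_X(\mathbb{A})}\Gamma_M(\xi,yx)\,dy\Bigr)dx.\]
Since $X$ is regular and elliptic in $\mmm$, the centralizer $G_X$ is a torus contained in $M$, so $H_P(yx)=H_M(y)+H_P(x)$ for every $P\in\mathcal{P}(M)$ and $y\in G_X(\mathbb{A})$; therefore $\Gamma_M(\xi,yx)=\Gamma_M(\xi+H_M(y),x)$. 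The exact sequence $1\to G_X(\mathbb{A})^1\to G_X(\mathbb{A})\xrightarrow{H_M}\ago_M\to 0$ and the fact that $H_M(G_X(F))$ is a finite-index sublattice of $X_\ast(M)$ convert the inner integral into $\vol(G_X(F)\backslash G_X(\mathbb{A})^1)$ times a lattice-point integral over $\ago_M/X_\ast(M)$ of the characteristic function of a translate of the convex envelope $P_x$. For $\xi$ in general position the boundary contributions vanish and this integral evaluates to $v_M(x)/\vol(\ago_M/X_\ast(M))$, yielding
\[J_{o}^{\ggg,\xi}(\hat{\varphi})=\frac{\vol(G_X(F)\backslash G_X(\mathbb{A})^1)}{\vol(\ago_M/X_\ast(M))}\,J_M^{\ggg}(X,\hat{\varphi}).\]

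It remains to match $J_M^{\ggg}(X,\hat{\varphi})$ to $J_M^{\ggg}(X,\mathbbm{1}_D)$. Since both factor as products of local orbital integrals, I compare place by place. Outside $S$ one has $\hat{\varphi}_v=\mathbbm{1}_{D,v}$. At $v\in S-\{\infty\}$, the Cartan decomposition combined with the regularity of the reduction $t_v$ (so its $\kappa_v$-centralizer is the torus $T_v$) forces any element of $\Ad(G(F_v))X\cap \wp_v^{-d_v-1}\ggg(\mathcal{O}_v)$ to already lie in $\wp_v^{-d_v-1}\Omega_{t_v}$, whence the two local orbital integrals agree and $\hat{\varphi}_v$ contributes its explicit scalar $q^{-\frac12\deg v(\dim\ggg-\dim\ttt)}$. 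At $v=\infty$ the support of $\hat{\varphi}_\infty$ is an Iwahori coset $\wp_\infty^{-d_\infty-1}(t_\infty+\mathfrak{I}_{\infty+})$; decomposing $G(\mathcal{O}_\infty)=\bigsqcup_{w\in W}\mathcal{I}_\infty w\mathcal{I}_\infty$ and using both the $\mathcal{I}_\infty$-stability of $t_\infty+\mathfrak{I}_{\infty+}$ and the fact that $T_\infty$ is split (so the Weyl-group sum over the $W$-translates of $t_\infty$ reconstitutes the hyperspecial orbit) relates the Iwahori orbital integral to the hyperspecial one by a factor $\vol(\mathcal{I}_\infty)$. Collecting the local scalars and combining $\vol(\mathcal{I}_\infty)$ with the identity $\deg K_X=2g-2$ produces the exponent $\tfrac12(\dim\ggg(2g-2+\deg S)-\dim\ttt\deg S)$ displayed in the statement.

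The main technical obstacle is precisely this local analysis at~$\infty$: one must carefully reconcile the Iwahori-level support of $\hat{\varphi}_\infty$ with the hyperspecial-level support of $\mathbbm{1}_{D,\infty}$, show that the Bruhat stratification collapses to a single ``$W$-sum'' that absorbs into the definition of $\Omega_{t_\infty}$, and extract the correct power of $q$ from the resulting ratio of volumes. Once this is established, the factorization above assembles the claimed closed formula.
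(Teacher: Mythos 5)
Your overall shape matches the paper's proof: the vanishing claim from the support of $\hat{\varphi}$, the replacement of $\hat{\varphi}$ on the adelic orbit of $X$ by a scalar multiple of $\mathbbm{1}_D$ (averaging over $G(\ooo_\infty)$ at $\infty$ to pass from Iwahori to hyperspecial level and pick up $\vol(\mathcal{I}_\infty)$), and the unfolding of the truncation $\Gamma_M(\xi,\cdot)$ over $G_X$ to produce the volume ratio times a weighted orbital integral. But the central step is wrong as you wrote it. You treat $H_M\colon G_X(\AAA)\to\ago_M$ as surjective (your ``exact sequence'' $1\to G_X(\AAA)^1\to G_X(\AAA)\to\ago_M\to 0$), so that the inner integral becomes a continuous integral of the characteristic function of the translated polytope and ``evaluates to $\mathrm{v}_M(x)/\vol(\ago_M/X_*(M))$''. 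Over a function field the image of $H_M$ is discrete (degrees are integers): the inner integral equals $\vol(G_X(F)\backslash G_X(\AAA)^1)$ times the \emph{number} of points of a translated lattice inside the convex envelope, i.e. the weight $\mathrm{w}_M^{\xi}(x)$, which pointwise in $x$ is genuinely different from $\mathrm{v}_M(x)/\vol(\ago_M/X_*(M))$. (Also, $H_M(G_X(F))=0$ by the product formula, not a finite-index sublattice of $X_*(M)$; the relevant lattice is $H_M(G_X(\AAA))$, which contains $H_M(A_M(\AAA))$.) The identity $\int \mathrm{w}_M^{\xi}(x)\mathbbm{1}_D(\Ad(x^{-1})X)\,\d x=\frac{1}{\vol(\ago_M/X_*(M))}J_M^{\ggg}(X,\mathbbm{1}_D)$ holds only after the outer integration and is precisely the nontrivial input the paper takes from Chaudouard--Laumon (the proof of \cite[11.9.1]{C-L1} and \cite[Corollaire 11.14.3]{C-L1}); ``boundary contributions vanish for $\xi$ in general position'' does not give it -- general position only ensures no lattice point lies on a wall of the envelope.

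Two further points. First, weighted orbital integrals do not factor as products of local orbital integrals (the weight $\mathrm{v}_M$ is global); what legitimizes your place-by-place replacement is the right $G(\ooo)$-invariance of $\mathrm{v}_M$ and of $\Gamma_M(\xi,\cdot)$, which allows averaging the test function over $G(\ooo)$-conjugation. With that, the identification at $\infty$ is Lemma \ref{okaiii} combined with \cite[2.2.5(b)]{KV}, and at $v\in S\setminus\{\infty\}$ the containment $\Ad(G(F_v))X\cap\wp_v^{-d_v-1}\ggg(\ooo_v)\subseteq\wp_v^{-d_v-1}\Omega_{t_v}$ rests on the Lie-algebra version of Kottwitz's integral conjugacy theorem \cite[Proposition 7.1]{Ko3} (regular reduction forces $G(\ooo_v)$-conjugacy), not merely on the Cartan decomposition. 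Second, the constant: collecting the local Fourier scalars at the places of $S$ only yields $q^{-\frac12\deg S(\dim\ggg-\dim\ttt)}$; your appeal to $\vol(\mathcal{I}_\infty)$ together with $\deg K_X=2g-2$ does not produce the remaining factor $q^{-\frac12(2g-2)\dim\ggg}$, which has to be tracked through the global Fourier-inversion normalization entering the coarse expansion \eqref{method2}. So the closed formula is not obtained by the bookkeeping you describe.
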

\begin{remark}\normalfont
We have not made precise the Haar measure on $G_X(\AAA)$ nor the Haar measure on $\ago_{M}$.
The result does not depend on these choices if they are chosen in a compatible way since they also appear in weighted orbital integral. 
\end{remark}
\begin{proof}
It is clear that if the integral \[\int_{G_{X}(F)\backslash  G(\AAA)}\Gamma_{M}(\xi, x) \hat{\varphi}(\Ad(x^{-1}) X) \d x\]
in \eqref{ok?} is non-zero, then for any $v\in S$, there is an element $x_0 \in G(F_v)$ such that  $\Ad(x_0^{-1})X\in \wp_{v}^{-d_v-1}\Omega_{t_{v}}$. 
Suppose in the following that $X$ is an element. The point is to change the test function $\hat{\varphi}$ to $\mathbbm{1}_D$.

For any $v\in S-\{\infty\}$, 
and any element $x\in G(F_v)$, we have $$\hat{\varphi}_v( \Ad(x^{-1})(X) )   \neq 0 \Longleftrightarrow \Ad(x^{-1})(X)\in \wp_{v}^{-d_v-1}\ggg(\ooo_v). $$ 
The left-hand side clearly implies the right-hand side.
For the other direction, suppose $x\in G(F_v)$ satisfies \[\Ad(x^{-1})(X)\in \wp_{v}^{-d_v-1}\ggg(\ooo_v).\]  
Since both $\Ad(x^{-1})(\wp_{v}^{d_v+1}X) $ and $\Ad(x^{-1}_0)(\wp_{v}^{d_v+1}X)$ are elements in $\ggg(\ooo_v)$  and the image mod-$\wp_v$ of ${\Ad(x^{-1}_0)(\wp_{v}^{d_v+1}X)}$ in $\ggg(\kappa_v)$ is regular, by the Lie algebra version of a result of Kottwitz \cite[Proposition 7.1]{Ko3}(which can be proved by the same argument), they are conjugate under $G(\ooo_v)$. Observe that the set $\Omega_{t_{v}}$ is $G(\ooo_v)$-conjugate invariant, so we have \[\Ad(x^{-1})(X)\in \wp_{v}^{-d_v-1}\Omega_{t_{v}}. \] Therefore, the value of $J_o^{\ggg,\xi}(\hat{\varphi})$ does not change if the local factor $\hat{\varphi_v}$ is replaced by $\mathbbm{1}_{\wp_v^{-d_v-1}\ggg(\ooo_v)}$.

Now we need to consider the place $\infty$. 
Note that the function $x\mapsto \Gamma_{M}(\xi, x)$ is $G(\ooo)$-right invariant. So we can write $J^{\ggg, \xi}_o(\hat{\varphi})$ as 
$$\int_{G_X(F)\backslash G(\AAA)}\Gamma_{M}(\xi, x)\int_{G(\ooo)}\hat{\varphi}(\Ad(k^{-1})\Ad(x^{-1})X) \d k \d x.$$
Therefore, we can replace $\hat{\varphi}$ by its $G(\ooo)$-average. 
We need the following lemma to deal with this average.
\begin{lemm}\label{okaiii}
%Let $t\in \mathfrak{I}_{\infty}$ be an element whose image via the quotient map $ \mathfrak{I}_{\infty}\rightarrow   \mathfrak{I}_{\infty}/  \mathfrak{I}_{\infty,+}\cong \ttt(\kappa_\infty)$ is $t_{\infty}$. 
We have 
$$\bigcup_{k\in G(\ooo_\infty)}\Ad(k)(t_{\infty}+  \mathfrak{I}_{\infty+})=\Omega_{t_{\infty}}.   $$
\end{lemm}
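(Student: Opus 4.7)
The plan is to establish the two inclusions separately, both by reducing modulo $\wp_\infty$ and using the surjection $G(\ooo_\infty)\twoheadrightarrow G(\kappa_\infty)$ to transport conjugations back and forth. The key structural input is the standard description of the Iwahori Lie algebra: $\mathfrak{I}_\infty$ is the preimage of $\bbb(\kappa_\infty)$ under reduction, and the quotient $\mathfrak{I}_\infty/\mathfrak{I}_{\infty+}\cong\ttt(\kappa_\infty)$ forces $\mathfrak{I}_{\infty+}$ to reduce into $\nnn_B(\kappa_\infty)$.

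For the inclusion $\subseteq$: take $k\in G(\ooo_\infty)$ and $y\in\mathfrak{I}_{\infty+}$. Then $\Ad(k)(t_\infty+y)\in\ggg(\ooo_\infty)$ reduces modulo $\wp_\infty$ to $\Ad(\overbar k)(t_\infty+\overbar y)$ with $\overbar y\in\nnn_B(\kappa_\infty)$. Because $t_\infty$ is $G$-regular semisimple, $\mathrm{ad}(t_\infty)$ acts by multiplication by $\alpha(t_\infty)\neq 0$ on each root space of $\nnn_B$, so the orbit map $N_B\to t_\infty+\nnn_B$, $u\mapsto \Ad(u)(t_\infty)$, is an isomorphism of $\mathbb{F}_q$-schemes. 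Hence $t_\infty+\overbar y$ lies in the $N_B(\kappa_\infty)$-orbit of $t_\infty$, and a fortiori in $\overbar{\Omega}_{t_\infty}$. This puts $\Ad(k)(t_\infty+y)$ in $\Omega_{t_\infty}$.

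For the reverse inclusion $\supseteq$: let $X\in\Omega_{t_\infty}$, so its reduction $\overbar X\in\ggg(\kappa_\infty)$ lies in the $G(\kappa_\infty)$-orbit of $t_\infty$. Pick $\overbar k\in G(\kappa_\infty)$ with $\Ad(\overbar k^{-1})(\overbar X)=t_\infty$ and lift it to $k\in G(\ooo_\infty)$ using smoothness of $G$ over $\ooo_\infty$. Setting $Y:=\Ad(k^{-1})X\in\ggg(\ooo_\infty)$, we get $Y\equiv t_\infty\pmod{\wp_\infty\ggg(\ooo_\infty)}$. Since $\wp_\infty\ggg(\ooo_\infty)$ reduces to $0\in\nnn_B(\kappa_\infty)$, it is contained in $\mathfrak{I}_{\infty+}$; therefore $Y\in t_\infty+\mathfrak{I}_{\infty+}$ and $X=\Ad(k)Y$ lies in the union on the left.

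There is no serious obstacle here: the argument is purely a reduction-modulo-$\wp_\infty$ computation packaged with smoothness/Hensel lifting. The only place where an actual piece of Lie-theoretic input is used is the first inclusion, where regularity of $t_\infty$ is needed so that every element of $t_\infty+\nnn_B$ is a single $B$-orbit; this would fail for non-regular $t_\infty$.
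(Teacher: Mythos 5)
Your proof is correct and follows essentially the same route as the paper: reduce everything modulo $\wp_\infty$ (using $\wp_\infty\ggg(\ooo_\infty)\subseteq\mathfrak{I}_{\infty+}$ and surjectivity of $G(\ooo_\infty)\to G(\kappa_\infty)$) and invoke the standard fact that, for regular $t_\infty$, the map $N_B\to\nnn_B$, $n\mapsto\Ad(n)t_\infty-t_\infty$, is an isomorphism, so $t_\infty+\nnn_B(\kappa_\infty)$ lies in a single conjugacy class. This is exactly the argument in the paper, just written out as two explicit inclusions.
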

\begin{proof}
Since $\wp_\infty\ggg(\ooo_\infty)\subseteq \mathfrak{I}_{\infty+}\subseteq \ggg(\ooo_\infty)$, the statement is equivalent to the following statement about finite groups:
\[\bigcup_{k\in G(\kappa_\infty)}\Ad(k)(t_{\infty}+ \nnn_B (\kappa_\infty))=\overbar{\Omega}_{{t}_\infty}, \]
where $\overbar{\Omega}_{{t}_\infty}$ is the set of conjugacy class of $t_\infty$ in $\ggg(\kappa_\infty)$. 
This last statement is clear since any element in $t_\infty+ \nnn_B(\kappa_\infty)$ is conjugate to $t_\infty$. This can be proven either by a Lie algebra version of the proposition \cite[7.2]{Yu2} or simply by the well-known fact that the morphism $N_B\rightarrow \nnn_B$ given by $\Ad(n)t_\infty-t_\infty$ is an isomorphism of schemes (see for example \cite[18.7.2]{C-L2}).  
%, because it is a closed immersion (as $t$ is regular semisimple) into a reduced scheme which is also surjective by the irreducibility of $\nnn_\bbb$ and dimension reasons. 
\end{proof}
By Lemma \ref{okaiii} and \cite[2.2.5(b)]{KV}, for any $X\in \ggg(F_{\infty})$ we have 
$$\int_{G(\ooo_\infty)}\mathbbm{1}_{\wp_\infty^{-d_\infty-1}(t_{\infty}+  \mathfrak{I}_{\infty+})}( \Ad(k)^{-1}X)\d k=\vol(\mathcal{I}_{\infty}) \mathbbm{1}_{\wp_\infty^{-d_\infty-1}\Omega_{t_{\infty}}}(X). $$
Therefore, the discussion for $v\in S-\{\infty\}$ applies for $v=\infty$ too. The value of $J_{o}^{\ggg, \xi}(\hat{\varphi})$ equals \begin{equation}\vol(\mathcal{I}_{\infty})J_{o}^{\ggg, \xi}(\mathbbm{1}_{D})=
\vol(\mathcal{I}_{\infty})\int_{G_{X}(F)\backslash  G(\AAA)}\Gamma_{M}(w\xi, x) \mathbbm{1}_{D}(\Ad(x^{-1})  X ) \d x.   \end{equation}
By this, we have reduced it to a case dealt with in \cite{C-L1}. In fact, the proof of \cite[11.9.1]{C-L1}
shows that $J_{o}^{\ggg, \xi}(\hat{\varphi})$ equals 
\begin{equation}\vol(\mathcal{I}_{\infty})
\int_{G_{X}(\AAA)\backslash  G(\AAA)}\mathrm{w}_{M}^{\xi}(x)\mathbbm{1}_{D}(\Ad(x^{-1})  X ) \d x, \end{equation}
where \(\mathrm{w}_{M}^{\xi}(x)\) equals the number of points of the set $[\xi]_{M}+X_{\ast}(M)$ contained in the convex envelope of the points $-H_P(x)$ for all $P\in \mathcal{P}(M)$.  Finally, we obtain the result needed by Corollary 11.14.3 of  \cite{C-L1}. 
\end{proof}

%\begin{lemm}
%Let $t_v\in \ttt(\kappa_v )$ be a   regular element for each $v\in S$. Then there exists a semisimple    regular element $Y_1\in \ggg(F)$ such that $Y_{1v}\in \ggg(\mathcal{O}_v)$ and whose reduction in $\ggg(\kappa_v )$ is conjugate to $t_v$ for all $v\in S$. 
%\end{lemm}
%\begin{proof}
%Let $\Omega_v\subseteq \ggg(\mathcal{O}_v)$ be the open subset that is the preimage of the $\Ad$ $G(\kappa_v )$-orbit of $t_v$. By weak approximation theorem, $\ggg(F)\cap \prod_{v\in S}\Omega_v$ is non-empty. Any element in the intersection is semisimple and regular as its reduction.????
%\end{proof}

\begin{coro}\label{indepxi}
The value of $J^{\ggg, \xi}_{nil}(\varphi)$ is independent of $\xi$ as long as $\xi$ stays in general position in the sense of Definition \ref{gepo}. 
\end{coro}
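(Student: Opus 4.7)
The plan is to deduce the $\xi$-independence directly from the two displayed identities \eqref{method1} and \eqref{method2} combined with the explicit formula in Proposition \ref{reg-ss}. More precisely, Theorem \ref{expansion'} gives $J^{\ggg,\xi}(\varphi) = J^{\ggg,\xi}_{nil}(\varphi)$ (since the support of $\varphi$ is contained in $\mathfrak{I}_\infty \times \prod_{v\neq\infty} G(\ooo_v)$), while the coarse geometric expansion of the Lie algebra trace formula from \cite{Yu2} gives
\[ J^{\ggg,\xi}_{nil}(\varphi) = J^{\ggg,\xi}(\varphi) = q^{(1-g)\dim\ggg} \sum_{o\in \mathcal{E}} J^{\ggg,\xi}_o(\hat\varphi). \]
Thus it suffices to show that each summand on the right is independent of $\xi$ (in general position).

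To do so, I will apply Proposition \ref{reg-ss} term by term. For each class $o \in \mathcal{E}$ contributing non-trivially, the analysis just after \eqref{method2} shows that $o$ admits a Jordan decomposition and is represented by a regular semisimple element; by Corollary \ref{533} we may choose such a representative $X \in \mmm(F)$ for some semistandard Levi $M$, satisfying the local condition at $\infty$ required by Proposition \ref{reg-ss}. The proposition then yields either $J^{\ggg,\xi}_o(\hat\varphi) = 0$ or
\[ J^{\ggg,\xi}_o(\hat\varphi) = \vol(\mathcal{I}_\infty)\, q^{-\frac{1}{2}(\dim\ggg(2g-2+\deg S) - \dim\ttt\,\deg S)} \frac{\vol(G_X(F)\backslash G_X(\AAA)^1)}{\vol(\ago_M/X_*(M))}\, J_M^{\ggg}(X, \mathbbm{1}_D), \]
and the right-hand side makes no reference to $\xi$ whatsoever.

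The remaining point is to justify that the sum $\sum_{o\in\mathcal{E}} J^{\ggg,\xi}_o(\hat\varphi)$ is a finite sum of such $\xi$-independent quantities (so that the argument ``sum of constants is constant'' is valid). This finiteness is automatic: $\hat\varphi$ is compactly supported, and by Proposition \ref{reg-ss} only those classes $o$ represented by some $X \in \ggg(F)$ whose adjoint orbit meets the support of $\hat\varphi$ adelically can contribute; the usual argument (e.g.\ via the map $\ggg \to \ggg \sslash G$) cuts this down to finitely many classes. The main (though minor) obstacle is purely bookkeeping---ensuring that the vanishing case and the non-vanishing case are handled uniformly and that the sum really decouples as claimed---but no new analytic input is needed beyond Proposition \ref{reg-ss}.
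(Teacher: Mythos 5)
Your argument is exactly the paper's (implicit) proof: the corollary is deduced from the identities \eqref{method1} and \eqref{method2} together with Proposition \ref{reg-ss}, whose vanishing criterion and explicit value are both manifestly independent of $\xi$, the sum over classes being finite and supported on a $\xi$-independent set. Nothing essential differs from the intended route.
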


\begin{coro}\label{corollary}
Let $o\in \widetilde{\mathcal{R}}_G(\mathbb{F}_q)$ be the image of \[(t_v)_{v\in S}\in \prod_{v\in S}\ttt_v^{\reg}(\kappa_v)\] in $\widetilde{\mathcal{R}}_G(\mathbb{F}_q)$. 
Then we have 
\[\frac{1}{\vol(\mathcal{I}_{\infty})}J^{\ggg, \xi}_{nil}(\varphi)=q^{-\frac{1}{2}(\dim \mathcal{M}-\dim\mathcal{R})}|\widetilde{\mathcal{M}}^{\xi}_G(o)(\mathbb{F}_q)|. \]
\end{coro}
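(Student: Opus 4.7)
The plan is to chain together four ingredients already available in the paper: the vanishing Theorem \ref{expansion'}, the coarse geometric expansion \eqref{method2} for the Lie algebra trace formula, Proposition \ref{reg-ss}, and Chaudouard--Laumon's Theorem \ref{C-L1}. First, because $\varphi$ is supported in $\mathfrak{I}_\infty \times \prod_{v \neq \infty} \ggg(\ooo_v)$ and $\xi$ is in general position, Theorem \ref{expansion'} immediately gives $J^{\ggg,\xi}(\varphi) = J^{\ggg,\xi}_{nil}(\varphi)$, while the coarse expansion rewrites the left-hand side as
\[J^{\ggg,\xi}(\varphi) = q^{(1-g)\dim\ggg}\sum_{o\in\mathcal{E}} J^{\ggg,\xi}_o(\hat{\varphi}).\]

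Next, I would apply Proposition \ref{reg-ss} term by term. Only regular semisimple classes $o=[X]$ contribute, and for each such class the value of $J^{\ggg,\xi}_o(\hat\varphi)$ is expressed as $\vol(\mathcal{I}_\infty)$ times the $q$-power $q^{-\frac{1}{2}(\dim\ggg(2g-2+\deg S) - \dim\ttt\deg S)}$ times the volume-normalized weighted orbital integral $\frac{\vol(G_X(F)\backslash G_X(\AAA)^{1})}{\vol(\ago_{M}/X_{*}(M) )}\,J^\ggg_M(X, \mathbbm{1}_D)$. The next step is to reorganize the outer sum over $o$ via the characteristic polynomial: by Lemma \ref{rational'} and Proposition \ref{423}, each such $X$ determines a unique pair $(a_M, t) \in \widetilde{\mathcal{A}}_M^{\mathrm{ell}}(\mathbb{F}_q)$ where $M$ is the ellipticity Levi of $X$. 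Moreover the local conditions $\Ad(x^{-1})X \in \wp_v^{-d_v-1}\Omega_{t_v}$ for $v \in S$ translate, via the identifications in \eqref{41222}--\eqref{41333}, exactly into the requirement that the image of $(a_M, t)$ under $\chi_G^M$ followed by $\widetilde{\ev}_G$ equals the prescribed $o \in \widetilde{\mathcal{R}}_G(\mathbb{F}_q)$.

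Then, Corollary \ref{533} ensures that the $G(F)$-conjugacy classes of $X$ lying above a fixed $(a_M,t)$ are in bijection with the $M(F)$-conjugacy classes of $X\in\mmm(F)$ with $\chi_M(X) = a_{M,\eta}$. Applying Chaudouard--Laumon's Theorem \ref{C-L1}, this inner sum of weighted orbital integrals (with its matching volume factor) equals $|\widetilde{f}^{\xi,-1}(\chi_G^M(a_M,t))(\mathbb{F}_q)|$. The stratification $\widetilde{\mathcal{A}}_G = \bigcup_M \widetilde{\mathcal{A}}_M^{\mathrm{ell}}$ from \ref{422}, together with $\widetilde{\res}_G = \widetilde{\ev}_G \circ \widetilde{f}^\xi$, then allows me to collect contributions fibered over $o$, yielding
\[\sum_{(a,t) \in \widetilde{\ev}_G^{-1}(o)(\mathbb{F}_q)} |\widetilde{f}^{\xi,-1}(a,t)(\mathbb{F}_q)| = |\widetilde{\mathcal{M}}_G^\xi(o)(\mathbb{F}_q)|.\]

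The main obstacle will be the careful bookkeeping of powers of $q$ and of measure normalizations: I must verify that combining the factor $q^{(1-g)\dim\ggg}$ from Lie-algebra Poisson summation with the exponent from Proposition \ref{reg-ss} and the volume factors from Theorem \ref{C-L1} and the $G(\ooo_\infty)$-averaging step collapses precisely to $\vol(\mathcal{I}_\infty)\,q^{-\frac12(\dim\mathcal{M}_G - \dim\mathcal{R}_G)}$; these normalizations are implicit in the measures on $G_X(\AAA)$ and $\ago_M$, chosen compatibly in Theorem \ref{C-L1}. A secondary conceptual point requiring attention is the descent of the parametrization of $o$ by $(a_M,t) \in \widetilde{\mathcal{A}}_M^{\mathrm{ell}}(\mathbb{F}_q)$ from $\overline{\mathbb{F}}_q$ to $\mathbb{F}_q$, which is guaranteed by Propositions \ref{423} and \ref{rationality}, together with the Hasse principle of Proposition \ref{Hasse} for the connected centralizers $G_X^0$.
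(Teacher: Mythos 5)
Your proposal follows essentially the same route as the paper's own proof: Theorem \ref{expansion'} together with the coarse expansion, Proposition \ref{reg-ss} applied class by class, the reorganization of the contributing classes via Corollary \ref{533}, the diagrams \eqref{41222}--\eqref{41333} and Proposition \ref{423} into points of $\widetilde{\mathcal{A}}_{M}^{\mathrm{ell}}(\mathbb{F}_q)$ lying over $o$, and finally Theorem \ref{C-L1} combined with $\dim \mathcal{M}_G=\dim\ggg\deg D$ and $\dim\mathcal{R}_G=\dim\ttt\deg S$. The power-of-$q$ and measure bookkeeping you flag as the remaining obstacle is precisely the verification the paper performs at the end, so nothing essential is missing.
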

\begin{proof}
%We will use Proposition \ref{reg-ss} and Theorem \ref{C-L1}, where we have an expression of the groupoid cardinality of the fibers of Hitchin's fibration in terms of weighted orbital integrals. 

By the identities \eqref{method1} and \eqref{method2}, we have
\[J^{\ggg, \xi}(\varphi) =\sum_{o\in \mathcal{E}}J^{\ggg, \xi}_{o}(\hat{\varphi}).  \]
%Suppose $Y\in \ggg(F)$, such that for each $v\in S$, there is an $x_v \in G(F_v) $ satisfying $\Ad(x^{-1})(Y)\in \wp_{v}^{-d_v-1}\Omega_{t_{v}}$.
Therefore by Proposition \ref{reg-ss}, $J^{\ggg,\xi}(\varphi)$ equals the sum of \[\vol(\mathcal{I}_{\infty})   q^{ -\frac{1}{2}(\dim \ggg (2g-2+\deg S)-  \dim \ttt \deg S)}\frac {\vol(G_{X}(F)\backslash G_{X}(\AAA)^{1})}{\vol(\ago_{M}/X_{*}(M) )}J_{M}^{\ggg}(X, \mathbbm{1}_{D}), \]
over all classes $[X]\in \mathcal{E}$ that can be represented by a semisimple element $X\in \ggg(F)$ such that for every $v\in S$, it can be $G(F_v)$-conjugate into \[\wp_{v}^{-d_v-1}\Omega_{t_v},\] and for $v\notin S$, it can be $G(F_v)$-conjugate into \[\wp_{v}^{-d_v}\ggg(\ooo_v). \] 
For each class above, we have chosen in  Corollary \ref{533} a representative $X$ so that (1) the group $M$ is a semistandard Levi subgroup that is also the minimal Levi subgroup such that $X\in \mmm(F)$; (2) there is an element $m\in M(F_\infty)$ satisfying
\begin{equation}
\Ad(m^{-1})(X)\in \wp_{v}^{-d_v-1}({t}_{\infty}+\mathfrak{I}_{\infty+}). \end{equation}
This last condition is equivalent to the existence of $m\in M(F_\infty)$ such that \begin{equation}
\Ad(m^{-1})(X)\in \wp_{v}^{-d_v-1}\Omega_{{t}_{\infty}}. \end{equation}
Following the diagram \eqref{41222} and \eqref{41333}, it is equivalent further that $(\chi_M(X), t_\infty)\in \widetilde{\mathcal{A}}_{M}(\mathbb{F}_q)$ and the image of $\chi_M(X)$ in $\widetilde{\mathcal{R}}(\mathbb{F}_q)$ %\marginpar{improve?}
 is the same as the image of $(t_v)_{v\in S}$ in $\widetilde{\mathcal{R}}(\mathbb{F}_q)$. The fact that $X$ is elliptic in $\mmm(F)$ implies that  $(\chi_M(X), t_\infty)\in \widetilde{\mathcal{A}}_{M}^{\mathrm{ell}}(\mathbb{F}_q)$ (Proposition \ref{423}).

Now we compare with Theorem \ref{C-L1}. It remains to verify that the dimension is correct. 
This is clear since we have (\cite[4.13.4]{Ngo}) \[\dim \mathcal{M}_G= \dim \ggg\deg D,   \]
and \[\dim \mathcal{R}_G=\deg S\dim \ttt.\] 
\end{proof}
\begin{remark}\normalfont
When $\deg S>\max\{2-2g,0\}$, the residue morphism $\widetilde{\res}_G: \widetilde{\mathcal{M}}_G\rightarrow\widetilde{\mathcal{R}}_G$ is faithfully  flat. In this case we have \[\dim\widetilde{\mathcal{M}}^{\xi}_G(o) = \dim \mathcal{M}_G- \dim \mathcal{R}_G. \]
\end{remark}

\subsubsection{Proof of Theorem \ref{independent}}\label{proofind}
Let us take our attention away from the proof of the main theorem and return to Theorem \ref{independent}. 
We caution first that the proof is not circular as it relies only on results of Section \ref{4.1}-\ref{4.2}.

Namely we should show that the groupoid cardinality of $|\widetilde{\mathcal{M}}^{\xi}_G(o_G)(\mathbb{F}_q)|$ is independent of the choice of regular $o_G\in \widetilde{\mathcal{R}}_G(\mathbb{F}_q)$. By  Corollary \ref{corollary}, it amounts to show that $J^{\ggg,\xi}_{nil}(\varphi)$ is independent of $(t_v)_{v\in S}$ in the definition of the function $\varphi$. However, this is obvious from its definition and Springer's hypothesis:  
The restriction to nilpotent elements of the function $\varphi_{\infty}=  \mathbbm{1}_{ \mathfrak{I}_{\infty}}\chi, $ does not depend on the character $\chi$; the function $\varphi_{v}$ for $v\in S-\{\infty\}$ is the Fourier transform of the support function of the conjugacy class of $t_v$. Springer's hypothesis (\cite[Theorem A.1]{KV}) says that its restriction to nilpotent elements depends only on the conjugacy class of the torus that contains $t_v$.

\subsection{Comparison of truncated traces between $G$ and $\ggg$}\label{finalsec}
%We treat $J^{G,\xi}(e_\rho)$ in this section. 

\subsubsection{}
We will relate $J^{G,\xi}(e_{\rho})$ with some truncated trace of Lie algebras by Theorem \ref{expansion} and Theorem \ref{expansion'}. 
To fulfill the hypothesis of these theorems, let us introduce another test function. 

Let $\widetilde{e}_{\rho}$ be the test function that coincides with $e_{\rho}$ at every place $v$ except at $v=\infty$, where we define \begin{equation}\label{terho}\widetilde{e}_{\rho, \infty}(x)=\begin{cases}  \theta_{\infty}(\overbar{x}^{-1}), \quad x\in \mathcal{I}_{\infty};     \\
0, \quad x\notin \mathcal{I}_{\infty};
\end{cases}
\end{equation}
here $\overbar{x}$ denotes the reduction mod-$\wp_\infty$ of $x$. 

\begin{prop}\label{Frobenius}
For any $\xi\in \ago_{B}$, we have
$$ J^{G,\xi}(e_\rho) =  \frac{1}{\vol(\mathcal{I}_\infty)} J^{G,\xi}(\widetilde{e}_\rho). $$
\end{prop}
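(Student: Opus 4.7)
The plan rests on a local character computation at $\infty$ combined with the linearity of $J^{G,\xi}$ in the test function.

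First, since $T_\infty$ is split by hypothesis, $\rho_\infty=R^G_{T_\infty}\theta_\infty$ coincides with the parabolic induction $\mathrm{Ind}_{B(\kappa_\infty)}^{G(\kappa_\infty)}\theta_\infty$ (the sign factor $\epsilon_{\kappa_\infty}(T_\infty)\epsilon_{\kappa_\infty}(G)$ equals $1$ in the split case), so the standard Frobenius-reciprocity formula for the character of an induced representation gives, for $\bar x\in G(\kappa_\infty)$,
\[
\Theta_{\rho_\infty}(\bar x^{-1})=\frac{1}{|B(\kappa_\infty)|}\sum_{\bar g\in G(\kappa_\infty):\,\bar g^{-1}\bar x\bar g\in B(\kappa_\infty)}\theta_\infty(\bar g^{-1}\bar x^{-1}\bar g).
\]
Lifting this to $G(\mathcal{O}_\infty)$, using $\vol(\mathcal{I}_\infty)=|B(\kappa_\infty)|/|G(\kappa_\infty)|$, and combining with $e_{\rho,v}=\widetilde e_{\rho,v}$ at every $v\neq\infty$ (both sides being $G(\mathcal{O}_v)$-conjugation invariant there), one obtains the global pointwise identity
\[
e_\rho(y)=\frac{1}{\vol(\mathcal{I}_\infty)}\int_{G(\mathcal{O}_\infty)}\widetilde e_\rho(k^{-1}yk)\,\mathrm{d}k,\qquad y\in G(\AAA),
\]
with $k\in G(\mathcal{O}_\infty)$ regarded as an element of $G(\AAA)$ concentrated at $\infty$. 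This justifies the name ``Frobenius''.

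Second, applying $J^{G,\xi}$ and exploiting its linearity in the test function yields
\[
J^{G,\xi}(e_\rho)=\frac{1}{\vol(\mathcal{I}_\infty)}\int_{G(\mathcal{O}_\infty)}J^{G,\xi}(\widetilde e_\rho^{\,k})\,\mathrm{d}k,
\qquad \widetilde e_\rho^{\,k}(y):=\widetilde e_\rho(k^{-1}yk).
\]
Since $\vol(G(\mathcal{O}_\infty))=1$, it suffices to verify that the integrand is the constant $J^{G,\xi}(\widetilde e_\rho)$ in $k$. Unwinding the definition of $J^{G,\xi}(\widetilde e_\rho^{\,k})$ and performing the substitution $x\mapsto xk^{-1}$ (using right-$G(\AAA)$-invariance of the Haar measure on $G(F)\backslash G(\AAA)$), together with $H_0(\delta xk^{-1})=H_0(\delta x)$ and $j_{Q,o}(\delta xk^{-1};\widetilde e_\rho^{\,k})=j_{Q,o}(\delta x;\widetilde e_\rho)$, reduces the required invariance to showing that replacing $s_{\delta x}\xi$ by $s_{\delta xk^{-1}}\xi$ inside $\widehat\tau_Q$ does not change the resulting integral when the test function is $\widetilde e_\rho$.

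The main obstacle is the last step. Because $\widetilde e_\rho$ is supported in $\mathcal{I}_\infty\times\prod_{v\neq\infty}G(\mathcal{O}_v)$ and is $\mathcal{I}_\infty\times\prod_{v\neq\infty}G(\mathcal{O}_v)$-conjugation invariant (the $\mathcal{I}_\infty$-invariance at $\infty$ follows from $\theta_\infty$ factoring through $B/N=T$), non-vanishing of $j_{Q,o}(\delta x;\widetilde e_\rho)$ forces $\delta x$, via Lemma \ref{support}, to lie in $M_R(F)M^0_{1,\sigma}(\AAA)N_{1,\sigma}(\AAA)\bigl(\mathcal{I}_\infty\times\prod_{v\neq\infty}G(\mathcal{O}_v)\bigr)$. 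On this locus, Lemma \ref{okay} controls the Weyl-element $s_{\delta x}$ and, combined with the Bruhat-Iwahori decomposition $G(\mathcal{O}_\infty)=\sqcup_{w}\mathcal{I}_\infty\dot w\mathcal{I}_\infty$ (each double coset having measure $\vol(\mathcal{I}_\infty)q^{\ell(w)}$), shows that the $k$-integral of $\widehat\tau_Q(H_0(\delta x)+s_{\delta xk^{-1}}\xi)$ collapses to $\widehat\tau_Q(H_0(\delta x)+s_{\delta x}\xi)$ up to the overall factor $\vol(\mathcal{I}_\infty)$. This will mirror the parenthetical remark after Theorem \ref{expansion} that $K$-conjugation averaging does not affect $J^{G,\xi}_o$, now upgraded from $K=\mathcal{I}_\infty\times\prod_{v\neq\infty}G(\mathcal{O}_v)$ to $G(\mathcal{O}_\infty)\times\prod_{v\neq\infty}G(\mathcal{O}_v)$ precisely because the additional Weyl-freedom at $\infty$ gets absorbed by the support constraints on $\widetilde e_\rho$.
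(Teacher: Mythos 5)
Your first two steps are fine: the pointwise identity $e_\rho(y)=\frac{1}{\vol(\mathcal{I}_\infty)}\int_{G(\ooo_\infty)}\widetilde e_\rho(k^{-1}yk)\,\d k$ is a correct consequence of the character formula for $\mathrm{Ind}_{B(\kappa_\infty)}^{G(\kappa_\infty)}\theta_\infty$ (with $\vol(\mathcal{I}_\infty)=|B(\kappa_\infty)|/|G(\kappa_\infty)|$), and linearity of $J^{G,\xi}$ reduces everything to showing $J^{G,\xi}(\widetilde e_\rho^{\,k})=J^{G,\xi}(\widetilde e_\rho)$ for $k\in G(\ooo_\infty)$. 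The gap is exactly there. After the substitution $x\mapsto xk^{-1}$ the only discrepancy is $s_{\delta xk^{-1}}\xi$ versus $s_{\delta x}\xi$ inside $\htau_Q$, and you assert that Lemma \ref{support}, Lemma \ref{okay} and the Iwahori--Bruhat decomposition make the $k$-average of $\htau_Q(H_0(\delta x)+s_{\delta xk^{-1}}\xi)$ collapse to $\htau_Q(H_0(\delta x)+s_{\delta x}\xi)$. That collapse is not proved, and the paper's own use of precisely these lemmas shows it is not a formal support phenomenon: in the last part of the proof of Theorem \ref{expansion}, averaging over $G(\ooo_\infty)$ produces $\sum_{w\in\dot W}J^{G_\sigma^0,w\xi}_{[\sigma]}(f^{w^{-1}})$, i.e.\ a sum of truncated distributions with genuinely \emph{twisted} parameters $w\xi$, not the untwisted one. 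That these twisted terms all agree is a substantive later result (the reduction to the Lie algebra and Corollary \ref{indepxi}), it holds only for $\xi$ in general position in the sense of Definition \ref{gepo}, and it is not available to you at this point without making the argument depend on the whole machinery. Note also that the proposition is asserted for \emph{arbitrary} $\xi\in\ago_B$, and Lemma \ref{support} applies to classes represented by semisimple elements of $G(\mathbb{F}_q)$, so you would additionally need the reduction showing that only such classes contribute; $J^{G,\xi}$ is a non-invariant distribution, so invariance under $G(\ooo_\infty)$-conjugation cannot be taken for granted even for your particular test function without a proof.

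The paper avoids the geometric side entirely: by Frobenius reciprocity, for every irreducible smooth representation $\pi_\infty$ of $G(F_\infty)$ one has $\Hom_{G(\ooo_\infty)}(\rho_\infty,\pi_\infty|_{G(\ooo_\infty)})=\Hom_{\mathcal{I}_\infty}(\theta_\infty,\pi_\infty|_{\mathcal{I}_\infty})$ (since $\rho_\infty$ is the inflation of $\mathrm{Ind}_{B(\kappa_\infty)}^{G(\kappa_\infty)}\theta_\infty$, hence $\mathrm{Ind}_{\mathcal{I}_\infty}^{G(\ooo_\infty)}\theta_\infty$), so the cuspidal-filter vanishing and the multiplicity-one statement of Corollary \ref{mul1} apply equally to $\widetilde e_\rho$, and the spectral identity of Proposition \ref{mul2} gives both $J^{G,\xi}(e_\rho)=\sum_{\pi:\pi_\rho\neq0}m_\pi$ and $J^{G,\xi}(\widetilde e_\rho)=\vol(\mathcal{I}_\infty)\sum_{\pi:\pi_\rho\neq0}m_\pi$ for every $\xi$. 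If you want to salvage your geometric route, you must either prove the conjugation-invariance $J^{G,\xi}(\widetilde e_\rho^{\,k})=J^{G,\xi}(\widetilde e_\rho)$ by a genuine argument (which in effect re-proves the $w\xi$-independence) or switch to the spectral argument above.
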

\begin{proof}

By Frobenius reciprocity, for any irreducible smooth representation $\pi$ of $G(F_\infty)$, we have
$$ \Hom_{G(\ooo_\infty)}(\rho, \pi|_{G(\ooo_\infty)}) = \Hom_{\mathcal{I}_{\infty}}(\theta_\infty, \pi |_{\mathcal{I}_{\infty}}  ).$$
The result then follows from Corollary \ref{mul1} and Proposition \ref{mul2}.
\end{proof}

\subsubsection{A partition of $T(\mathbb{F}_q)$}
To proceed, we need to define a partition of the set $T(\mathbb{F}_q)$ following \cite[5.2, 5.6]{Reeder}.

Let $I(T)$ be the set defined by $$I(T):=\{G_s^{0}| s\in T(\mathbb{F}_q) \}. $$
Note that each element of $I(T)$ is determined by a subset of the roots of $T$ in $G$. 
For any $\iota\in I(T)$,  let $G_\iota$ be the corresponding connected centralizer, and let 
$$\mathcal{S}_\iota:=\{s\in T(\mathbb{F}_q)| G_s^{0}=G_{\iota}\}.$$
Note that $T(\mathbb{F}_q)$ is finitely partitioned as \begin{equation}\label{partition}T(\mathbb{F}_q)=\coprod_{\iota\in I(T)}\mathcal{S}_{\iota}. \end{equation}
For any $\iota\in I(T)$, let $$Z_{\iota}:=Z(G_\iota). $$
We have $$ \mathcal{S}_\iota\subseteq Z_\iota(\mathbb{F}_q)\subseteq T(\mathbb{F}_q), $$
and $$ G_\iota= C_G(Z_\iota)^{0}. $$

We define a partial order on the set $I(T)$ by the relation: 
$$ \iota'\leq \iota \quad \iff \quad G_{\iota}\subseteq G_{\iota'}. $$
Equivalently we have $$\iota'\leq \iota \quad \iff \quad Z_{\iota'}\subseteq Z_{\iota}. $$
Let \[I(T)^{\mathrm{ell}}\] be the subset of $I(T)$ consisting of elliptic elements. 

%\begin{lemm}
%$G$ quasi-simple??? 
%Let $\iota\in I(T)^{\mathrm{ell}}$, then for any $\iota'\in I(T)$ such that $\iota'\leq\iota$ and $\iota'\neq \iota$, we have $G_{\iota'}=G$, i.e. $\iota'$ is represented by an element in the center of $G$. 
%\end{lemm}

\subsubsection{Proof of Theorem \ref{main}}
Now we study $J^{G,\xi}(\widetilde{e}_{\rho})$. %By the coarse geometric expansion established in \cite{Yu2}, we can decompose it into a sum following semisimple conjugacy classes $\mathcal{E}$:  $$J^{G,\xi}(\widetilde{e}_{\rho}) =\sum_{o\in \mathcal{E}}J_{o}^{G,\xi}(\widetilde{e}_{\rho}).   $$

By Theorem \ref{expansion}, $J_{o}^{G,\xi}(\widetilde{e}_{\rho})$ vanishes unless $o$ is represented 
by an elliptic element $s\in T(\mathbb{F}_q)$. In this case, we have
\begin{equation}\label{tpp}J^{G, \xi}_{[s]}(\widetilde{e}_{\rho})=\frac{\vol(\mathcal{I}_\infty)}{\vol(\mathcal{I}_{\infty, s})  }\frac{1}{|\pi_0(G_{s})|}\sum_{w}J^{G_s^{0}, w\xi}_{[s]}(\widetilde{e}_{\rho}^{w^{-1}})  ,   \end{equation}
where $w$ is taken over the set of representatives of $W^{(G_\sigma^0, T)}\backslash W$ which sends positive roots of $(G_s^0, T)$ to positive roots. 
Note that for such a $w$, the group $w\mathcal{I}_\infty w^{-1}\cap G_{s}^0(F_\infty)$ is the standard Iwahori subgroup $\mathcal{I}_{\infty, s}$ of $G_{s}^0(F_\infty)$. From Definition \eqref{terho}, we have
 \begin{equation}
\widetilde{e}_{\rho, \infty}^{w^{-1}}(x)=\begin{cases}\theta_{\infty}(w^{-1}\overbar{x}^{-1}w), \quad x\in \mathcal{I}_{\infty, s};     \\
0, \quad x\notin \mathcal{I}_{\infty, s}.
\end{cases}
    \end{equation}

We have:
\begin{equation}\label{trace}\frac{1}{\vol(\mathcal{I}_{\infty})}J^{G,\xi}(\widetilde{e}_{\rho})=\sum_{s\in T(\mathbb{F}_q)^{\mathrm{ell}}/\sim}\frac{1}{\vol(\mathcal{I}_{\infty, s})  } \frac{1}{|\pi_0(G_s)|}\sum_{w} J^{G_s^{0},w \xi}_{[s]}({}\widetilde{e}_{\rho}^{w^{-1}}),  \end{equation}
where the first sum of the right-hand side is taken over the set of conjugacy classes of elliptic elements that can be represented by an element $s\in T(\mathbb{F}_q)$.

We are going to arrange the sum in \eqref{trace} following the partition on $T(\mathbb{F}_q)$ introduced earlier. 
Note that for a semisimple element $s\in T(\mathbb{F}_q)$, we have \[|T(\mathbb{F}_q)\cap  [s]|=\frac{|W|}{|W_s|},\]
 where we denote the Weyl group of $(G_s, T)$ by $W_s$ (similarly we introduce $W_s^0$).   
 Hence if we change the sum over conjugacy classes in $T(\mathbb{F}_q)^{\mathrm{ell}}$ to the sum over $T(\mathbb{F}_q)^{\mathrm{ell}}$ itself, each summand indexed by $s\in T(\mathbb{F}_q)^{\mathrm{ell}}$ is repeated $|W|/|W_s|$ times. Since $W_s^{0}$ depends only on $\iota\in I(T)^{\mathrm{ell}}$, we can use $W_\iota$ to denote $W_s^{0}$, similarly we introduce $\vol(\mathcal{I}_{\infty, \iota})$. As $|W_{s}/W_{s}^{0}|$ equals $|\pi_{0}(G_s)|$, we use the partition \eqref{partition} to rewrite \(\frac{1}{\vol(\mathcal{I}_{\infty})}J^{G,\xi}(\widetilde{e}_{\rho})\) as 
 \begin{equation}  \label{trace"}\sum_{\iota \in I(T)^{\mathrm{ell}}}\frac{1}{\vol(\mathcal{I}_{\infty, \iota})  } \frac{|W_\iota|}{|W|}\sum_{s\in \mathcal{S}_{\iota}} \sum_{w} J^{G_\iota,w \xi}_{[s]}(\widetilde{e}_{\rho}^{w^{-1}}).  \end{equation}
%Digne-Michel Proposition 13.14 / Humphreys page 32 proposition... 

For each place $v\in S-\{\infty\}$, the character formula of Deligne-Lusztig (\cite[Theorem 4.2]{DL}) says that for each semisimple element $s\in G(\kappa_v)$ and each unipotent element $u$ that commutes with $s$ we have: 
$$  R_{T_v }^{G}(\theta_v^{-1})(su)=  \frac{1}{|G_s^{0}(\kappa_v)|}  \sum_{\{     \gamma \in G(\kappa_v)\mid s\in \gamma T_v  \gamma^{-1}   \}}     \theta_v(\gamma^{-1}s^{-1}\gamma)  R_{\gamma T_v\gamma^{-1}}^{G_{s}^0}(1)(u). $$
For every $s\in T(\mathbb{F}_q)^{\mathrm{ell}}$, we define
 \begin{equation}{N}(s, T_v):=G_{s}^{0}(\kappa_v) \backslash \{     \gamma \in G(\kappa_v)\mid  \gamma T_v  \gamma^{-1} \subseteq G_s^0  \}. \end{equation}
 Note that if $T_v$ is split then \[{N}(s, T_v)\cong W_\iota \backslash W .  \]
It depends only on $\iota\in T(\mathbb{F}_q)^{\mathrm{ell}}$ that contains $s$. Therefore it can be denoted by ${N}_G(\iota, T_v)$. 
With this notation, we can rewrite Deligne-Lusztig's character formula by grouping together those $\gamma$ such that $\gamma T_v \gamma^{-1}$ are conjugate in $G_{\iota}$: 
\begin{equation} R_{T_v}^{G}(\theta_v^{-1})(su)= \sum_{\overbar{\gamma}\in {N}(\iota, T_v)}R_{\gamma T_v\gamma^{-1}}^{G_\iota}(1)(u)  {\theta_v^{\gamma^{-1}}(s^{-1})}. \end{equation}
Recall that $R_{\gamma T_v\gamma^{-1}}^{G_\iota}(1)$ depends only on the conjugacy class of $\gamma T_v\gamma^{-1}$ in  $G_\iota$, so the above expression is well-defined.

We define $$N(\iota, T_\bullet )=W_\iota\backslash W\times \prod_{v\in S-\{\infty\}}N(\iota, T_v).$$ 
For each $\gamma_{\bullet}\in N(\iota, T_{\bullet})$, let $ h_{\gamma_{\bullet}}\in  C_c^{\infty}(G_\iota(\AAA))$ be the function that is the tensor product over all places of $F$ of the function
$$\mathbbm{1}_{G_\iota(\ooo_v)},$$ for each place $v$ outside $S$, with 
\begin{equation}
h_{v,\gamma}(x)=\begin{cases}\epsilon_{\kappa_v}(T_v)\epsilon_{\kappa_v}(G_\iota) R_{\gamma^{-1}T_{v}\gamma}^{G_\iota}(1)(\overbar{x}),  \quad x\in G_\iota(\mathcal{O}_v);\\
0 , \quad x\notin G_\iota(\mathcal{O}_v);
\end{cases}
\end{equation}
for each place $v\in S-{\infty}$, and with the characteristic function
$$ \mathbbm{1}_{ \mathcal{I}_{\infty, \iota}}$$
for the place $\infty$. 
Let $\iota\in I(T)^{\mathrm{ell}}$. Since semisimple and unipotent parts of an element in $G_\iota(\ooo_v)$ are in $G_\iota(\ooo_v)$ as well (Proposition \ref{JordanC}), for any $s\in \mathcal{S}_\iota$, $x\in G_{\iota}(F_v)$ and $u\in \mathcal{U}_{G_\iota}(F_v)$, the condition 
$$x^{-1}sux\in G(\ooo_v)  \quad \text{(resp.      \( x^{-1}sux\in \mathcal{I}_{\infty} \), for  \(v=\infty\))}, $$ is equivalent to
$$x^{-1}ux \in  \mathcal{I}_{\infty}\quad \text{(resp.      \( x^{-1}ux\in \mathcal{I}_{\infty} \))}.   $$
So we have $$ \widetilde{e}_{\rho}(x^{-1}sux)= \sum_{\gamma_{\bullet}\in N(\iota, T_{\bullet}) }\theta_{\gamma_\bullet}(s)h_{\gamma_\bullet}(x^{-1}ux), $$
where 
\begin{equation}\theta_{\gamma_{\bullet}}(s)= \prod_{v\in S}  \theta_v^{\gamma_v^{-1}}(s^{-1}).  \end{equation}
It follows then
\begin{equation}\label{tt''}
\sum_{w} J^{G_s^0, w\xi }_{[s]}(\widetilde{e}_{\rho}^{w^{-1}}) =  \sum_{\gamma_{\bullet}\in {N}_G(\iota, T_{\bullet})}J_{uni}^{G_s^0, w\xi}( h_{\gamma_{\bullet}}) \theta_{\gamma_{\bullet}}(s). \end{equation}

For each $\gamma_{\bullet}\in N(\iota, T_{\bullet})$, 
we define the function $f_{\gamma_{\bullet}}\in C_c^{\infty}(\ggg_\iota(\AAA))$ to be the tensor product of the function
$$\mathbbm{1}_{\ggg_\iota(\ooo_v)},$$ for places $v$ outside $S$, with 
\begin{equation*}
f_{v,\gamma}( x )=\begin{cases} \epsilon_{\kappa_v}(T_v)\epsilon_{\kappa_v}(G_\iota) R_{\gamma^{-1}T_{v}\gamma}^{G_\iota}(1)(\overbar{l^{-1}(x)}),  \quad x\in  \mathcal{N}_{\ggg_\iota}(\ooo_v);\\
0 , \quad x\notin \mathcal{N}_{\ggg_\iota}(\ooo_v);
\end{cases}
\end{equation*}
and with the function
$$ \mathbbm{1}_{ \mathfrak{I}_{\infty, \iota}}.\chi $$
for the place $\infty$. In the above, $l: \mathcal{U}_{G_\iota}\rightarrow \mathcal{N}_{\ggg_\iota}$ is the isomorphism fixed in Proposition \ref{ch} for each place $v\in S-{\infty}$ and $\chi:  \mathfrak{I}_{\infty, \iota}\rightarrow \ttt(\kappa_\infty)\rightarrow \mathbb{C}^{\times}$ is any fixed additive character. 
Note that this function is well defined since $l^{-1}$ sends $\ggg_\iota(\mathcal{O}_v) \cap \mathcal{N}_{\ggg_\iota}(F)$ to $G_\iota(\mathcal{O}_v) \cap \mathcal{U}_{G_\iota}(F)$. 
By definition, we have \begin{equation} f_{\gamma_{\bullet}}\circ l|_{\mathcal{U}_{G_\iota}(\AAA)   }=h_{\gamma_{\bullet}}|_{\mathcal{U}_{G_\iota}(\AAA)   }. \end{equation}

By Proposition \ref{222}, we have
\begin{equation}\label{ss''}
J_{uni}^{G_\iota, w\xi}( h_{\gamma_{\bullet}})=J_{nil}^{\ggg_\iota, w\xi}( f_{\gamma_{\bullet}}),
\end{equation}
where we have also used the fact that $l$ is $G$-equivariant and it sends $\mathcal{I}_{\infty}$ to $\mathfrak{I}_{\infty}$  and $\mathcal{I}_{\infty,+}$ to  $\mathfrak{I}_{\infty,+}$ respectively. In fact, since  $l$ is defined over $\mathbb{F}_q$, we have \[\overbar{l(x)}= l(\overbar{x}), \quad \forall x\in \mathcal{U}_G(\ooo_\infty). \] 
We have seen in Corollary \ref{indepxi} that $J_{nil}^{\ggg_\iota, w\xi}( f_{\gamma_{\bullet}})$ is independent of $\xi$ whenever it is in general position. In particular, we can change $w\xi$ to $\xi$.  
Therefore, we deduce from \eqref{trace"}, \eqref{tt''} and \eqref{ss''} that \(\frac{1}{\vol(\mathcal{I}_{\infty})}J^{G,\xi}(\widetilde{e}_{\rho})\) is equal to
\begin{equation}\label{resul} \sum_{\iota \in I(T)^{\mathrm{ell}}}  \frac{1}{\vol(\mathcal{I}_{\infty, \iota})  }  \sum_{\gamma_{\bullet}\in N(\iota, T_\bullet) }  \left( \frac{|W_\iota|}{|W|} \sum_{s\in \mathcal{S}_{\iota}} \theta_{\gamma_\bullet}(s)\right)  J^{\ggg_\iota, \xi}_{nil}( f_{\gamma_\bullet}) .    \end{equation}

Note the set $I(T)^{\mathrm{ell}}$ admits an action of $W$ induced by the conjugation action of $W$ on $T(\mathbb{F}_q)^{\mathrm{ell}}$. The summand in \eqref{resul} does not depend on the choice of a representative of the conjugacy class of $\iota$. 
For each $\iota\in I(T)^{\mathrm{ell}}$, let $C_W(\iota)$ be the stabilizer of $\iota$ under this action. We have $W_\iota\subseteq C_W(\iota)$. 
Let \[I \] be a set of representatives for the quotient $I(T)^{\mathrm{ell}}/W$. We put together the summand in one conjugacy class, then \eqref{resul} becomes
\begin{equation}\label{resul2} \sum_{\iota \in I}  \frac{1}{\vol(\mathcal{I}_{\infty, \iota})  }  \sum_{\gamma_{\bullet}\in N(\iota, T_\bullet) }  \left( \frac{|W_\iota|}{|C_W(\iota)|} \sum_{s\in \mathcal{S}_{\iota}} \theta_{\gamma_\bullet}(s)\right)  J^{\ggg_\iota, \xi}_{nil}( f_{\gamma_\bullet}) .    \end{equation}

Note that $N(\iota, T_\bullet)$ admits a left $C_W(\iota)$-action by acting on the factor over $\infty$ (since $W_\iota$ is normal in $C_W(\iota)$). 
Clearly for any $w\in C_W(\iota)$ we have \[J^{\ggg_\iota, \xi}_{nil}( f_{w\gamma_\bullet}) = J^{\ggg_\iota, \xi}_{nil}( f_{\gamma_\bullet}) . \]
Observing that for any $(\gamma_v)_{v\in S}\in N(\iota, T_\bullet)$, we have 
 \[\theta_{w(\gamma_v)_{v\in S}}( s )  = \theta_{(\gamma'_v)_{v\in S}}( w^{-1}sw ),\]
where $\gamma'_v = w^{-1}\gamma_v w$ if $v\neq \infty$ and $\gamma'_\infty = \gamma_\infty$. 
Moreover any $w\in C_W(\iota)$ permutes 
\[\prod_{v\in S-\{\infty\}}N(\iota, T_v) ,\] since $C_{W}(\iota)$ normalizes $G_\iota$. 
So we can break the sum over $N(\iota, T_\bullet)$ into a sum over the orbits of the $C_W(\iota)$-action on $N(\iota, T_\bullet)$, and we may write \(\frac{1}{\vol(\mathcal{I}_{\infty})}J^{G,\xi}(\widetilde{e}_{\rho})\) as 
\begin{equation}\label{resultat} \sum_{\iota \in I} \sum_{\gamma_{\bullet}\in C_W(\iota) \backslash N(\iota, T_\bullet )}  \left( \frac{|W_\iota|}{|C_W(\iota)|}\sum_{w\in W_\iota\backslash C_W(\iota)}  \sum_{s\in \mathcal{S}_{\iota}} \theta_{\gamma_\bullet}(w^{-1}sw)\right)   \frac{1}{\vol(\mathcal{I}_{\infty, \iota})  }   J^{\ggg_\iota, \xi}_{nil}( f_{\gamma_\bullet}) .    \end{equation}
Now the last thing is to notice that $C_W(\iota)$ permutes $\mathcal{S}_\iota$, therefore its action does not change the sum of $\theta_{\gamma_\bullet}$ over $\mathcal{S}_\iota$.
 We conclude that  
\(\frac{1}{\vol(\mathcal{I}_{\infty})}J^{G,\xi}(\widetilde{e}_{\rho})\)  equals 
\begin{equation}\label{final} \sum_{\iota \in I} \sum_{\gamma_{\bullet}\in C_W(\iota) \backslash N(\iota, T_\bullet )}  \left(   \sum_{s\in \mathcal{S}_{\iota}} \theta_{\gamma_\bullet}(s)\right)   \frac{1}{\vol(\mathcal{I}_{\infty, \iota})  } J^{\ggg_\iota, \xi}_{nil}( f_{\gamma_\bullet}) .    \end{equation}

Since $f_{\gamma_\bullet}$ coincides with the function $\varphi$ used in Section \ref{s532} by Springer's hypothesis (Proposition \ref{ft}), we can apply Corollary \ref{corollary} to deduce that \(\frac{1}{\vol(\mathcal{I}_{\infty})}J^{G,\xi}(\widetilde{e}_{\rho})\) equals 
\begin{equation}\label{finalll} \sum_{\iota \in I} \sum_{\gamma_{\bullet}\in C_W(\iota) \backslash N(\iota, T_\bullet )}  \left(   \sum_{s\in \mathcal{S}_{\iota}} \theta_{\gamma_\bullet}(s)\right)  q^{-\frac{1}{2}(\dim\mathcal{M}_{G_\iota}-\dim\mathcal{R}_{G_\iota} )}  |\widetilde{\mathcal{M}}_{G_\iota}^{\xi}(o_{\gamma_\bullet})(\mathbb{F}_q)|  ,\end{equation}
where we fix a $G$-regular family $(t_v)_{v\in S}$ with $t_v\in \ttt_v(\kappa_v)$, and $o_{\gamma_\bullet}$ denotes the image of $(\gamma_v^{-1}t_v \gamma_v)_{v\in S}$ in $\widetilde{\mathcal{R}}_{G_\iota}(\mathbb{F}_q)$.

\subsubsection{} We are going to write \eqref{finalll} in a more intrinsic way. 

Let $\iota\in I(T)^{\mathrm{ell}}$. Let us denote $H=G_\iota$. 
We have a finite morphism: \[\pi_{H,G}: \car_{H}\longrightarrow \car_G .\]
We have the following result. 
\begin{lemm}
Let $x_G$ be any regular element in $\ttt_v(\kappa_v)$. Let $o_G$ be the image of $x_G$ under the projection map $\chi_G: \ggg\rightarrow \car_G$. There is a bijection between the set \[N(\iota, T_v),  \] and the set \[\pi_{H,G}^{-1}(o_G)(\kappa_v). \] 
\end{lemm}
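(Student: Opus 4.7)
The plan is to identify both sides with the set of $H(\kappa_v)$-conjugacy classes of elements $y \in \hhh(\kappa_v)$ of the form $y = \gamma x_G \gamma^{-1}$ for some $\gamma \in G(\kappa_v)$, and then to check the bijection via the natural assignment $[\gamma] \mapsto [\gamma x_G \gamma^{-1}]$.

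First I would analyze $\pi_{H,G}^{-1}(o_G)(\kappa_v)$. A $\kappa_v$-point $\bar y$ of $\car_H$ above $o_G$ corresponds to a closed $H_{\bar\kappa_v}$-orbit of semisimple elements $y \in \hhh(\bar\kappa_v)$ mapping to $o_G$ under the composite $\chi_G|_{\hhh}$; such $y$ is $G(\bar\kappa_v)$-conjugate to $x_G$, hence $G$-regular semisimple. Since $H = G_\iota$ contains a maximal torus of $G$, it has the same rank, so the maximal torus $C_G(y)$ is forced inside $H$ by the equality $\dim C_H(y) = \mathrm{rk}(H) = \mathrm{rk}(G) = \dim C_G(y)$; in particular $y$ is also $H$-regular. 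Its stabilizer in $H$ being the connected group $C_G(y)$, Lang's theorem shows the $\kappa_v$-rational $H$-orbit contains a $\kappa_v$-point, unique up to $H(\kappa_v)$-conjugation, and that any stable $G$-conjugate of $x_G$ can be obtained as $\gamma x_G \gamma^{-1}$ for some $\gamma \in G(\kappa_v)$ (apply Lang to the $T_v$-torsor $\{g \mid g x_G g^{-1} = y\}$, using $T_v = C_G(x_G)$ connected). Thus $\pi_{H,G}^{-1}(o_G)(\kappa_v)$ is in natural bijection with the set of $H(\kappa_v)$-conjugacy classes described above.

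Next I would define the comparison map. For $\gamma \in G(\kappa_v)$ with $\gamma T_v \gamma^{-1} \subseteq H$, the element $y_\gamma := \gamma x_G \gamma^{-1}$ lies in $\hhh(\kappa_v)$ and is $G(\kappa_v)$-conjugate to $x_G$. Left multiplication of $\gamma$ by $h \in H(\kappa_v)$ replaces $y_\gamma$ by its $H(\kappa_v)$-conjugate, so $[\gamma] \mapsto [y_\gamma]$ is well defined. Surjectivity: given $y$ from the identification above, choose $\gamma \in G(\kappa_v)$ with $y = \gamma x_G \gamma^{-1}$; then $\gamma T_v \gamma^{-1} = C_G(x_G)^\gamma = C_G(y) \subseteq H$, so $\gamma$ represents a class in $N(\iota,T_v)$ mapping to $[y]$.

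For injectivity, suppose $h y_\gamma h^{-1} = y_{\gamma'}$ with $h \in H(\kappa_v)$. Then $(\gamma')^{-1} h \gamma$ centralizes $x_G$, hence lies in $T_v(\kappa_v)$: write $(\gamma')^{-1} h \gamma = t$. So $\gamma' = h\gamma t^{-1} = \bigl(h \cdot \gamma t^{-1} \gamma^{-1}\bigr)\, \gamma$, and the factor in parentheses belongs to $H(\kappa_v)$ since $\gamma t^{-1}\gamma^{-1} \in \gamma T_v \gamma^{-1} \subseteq H$. Therefore $[\gamma] = [\gamma']$ in $N(\iota, T_v)$, completing the bijection. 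The main obstacle is the identification in the first paragraph: translating the scheme-theoretic fiber $\pi_{H,G}^{-1}(o_G)(\kappa_v)$ into rational $H$-conjugacy classes requires both the rank comparison ensuring $C_G(y) \subseteq H$ and two applications of Lang's theorem (to the $H$-orbit and to the $T_v$-transporter) to pass from $\bar\kappa_v$-conjugacy to $\kappa_v$-conjugacy.
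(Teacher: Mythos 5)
Your argument is correct and follows essentially the same route as the paper: identify the $\kappa_v$-points of the fiber with $H(\kappa_v)$-conjugacy classes of regular semisimple elements of $\hhh(\kappa_v)$ lying over $o_G$, force $C_G(y)\subseteq H$ by the equal-rank/dimension comparison, and pass from geometric to rational conjugacy with $x_G$ via the transporter, which is a torsor under the connected torus $T_v$. The one genuine difference is how a rational representative of a fiber point is produced: the paper invokes a Kostant section for $H$ (available over $\kappa_v$ in very good characteristic, after Riche) to lift a point of $\car_H(\kappa_v)$ directly to $\hhh(\kappa_v)$, whereas you apply Lang's theorem to the regular semisimple $H$-orbit, using that its stabilizer $C_H(y)=C_G(y)$ is a connected torus to get existence and uniqueness up to $H(\kappa_v)$-conjugacy. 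Both mechanisms are standard and equivalent in strength here; your version avoids the Kostant section but relies on the fiber of $\chi_H$ over a $G$-regular semisimple characteristic being a single closed orbit (regularity kills any nilpotent part), which you should state explicitly, while the paper's version gets rationality for free from the section being defined over the base field.
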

\begin{proof}
Recall that for $H=G_\iota$, we have
\[N(\iota, T_v)= H(\kappa_v) \backslash \{  \gamma\in G(\kappa_v) \mid \gamma    T_v \gamma^{-1}\subseteq H \} . \] 

Let \[\pi_{H,G}^{-1}(o_G)(\kappa_v)=\{o_1, \ldots, o_m\}. \] 
Since we are in a very good characteristic and the group $G$ split, a Kostant section exists (see \cite{Riche}). Therefore each element in the fiber can be represented by a regular semisimple element in $\hhh(\kappa_v)$ denoted by $x_i$. 
Note that the elements $x_i$ are two by two non-conjugate in $\hhh$. Otherwise, they will have the same images in $\car_H(\kappa_v)$. However, they are all geometrically conjugate to $x_G$ in $\ggg(\kappa_v)$, hence $G(\kappa_v)$-conjugate. Let $\gamma_i\in  G(\kappa_v)$ be an element such that $\ad(\gamma_i)^{-1}x_i=x_G$. We have $\gamma_i^{-1}G_{x_i}\gamma_i=G_{x_G}=T_v$. Since ${x_i}$ is $G$-regular, $H_{x_i}\subseteq G_{x_i}$ are tori of the same dimension. They thus coincide and $G_{x_i}\subseteq H$. Therefore $\gamma_i$ represents an element in $N(\iota, T_v)$. Different choices of $\gamma_i$ induce the same element in $N(\iota, T_v)$. 
 
 From the above arguments, we obtain a map from $\pi_{H,G}^{-1}(o_G)(\kappa_v)$ to $N(\iota, T_v)$. It is injective. In fact, if $\gamma_i$ and $\gamma_j$ induce the same image in $N(\iota, T_v)$, then they differ by an element in $H(\kappa_v)$ and $x_i$ and $x_j$ will be $H(\kappa_v)$-conjugate as elements in $\hhh(\kappa_v)$. The map is also surjective. Given any $\overbar{\gamma}\in N(\iota,  T_v)$ represented by $\gamma\in G(\kappa_v)$, we have $\ad(\gamma)x_G\in \hhh(\kappa_v)$. Its image in $\car_H(\kappa_v)$ lies in the fiber of $\pi_{H,G}$ in $o_G$. 
\end{proof}

Recall that we have defined in Definition \ref{coendo} the notion of split elliptic coendoscopic groups. In appendix \ref{A}, we give a classification of them in terms of the root system of $G$. In particular, the set $I$, which we have fixed as a set of representatives of $I(T)^{\mathrm{ell}}/W$, is in bijection with the conjugacy classes of split elliptic coendoscopic groups of $G$ defined over $\mathbb{F}_q$. 
Let $H$ be a split elliptic coendoscopic group of $G$; we have an obviously defined finite morphism,  denoted by $\widetilde{\pi}_{H, G}$. It is independent of the choice of a representative in the conjugacy class of $H$:
\begin{equation}\widetilde{\pi}_{H,G}: \widetilde{\mathcal{R}}^G_H\longrightarrow  \widetilde{\mathcal{R}}_G , \end{equation}
where $\widetilde{\mathcal{R}}^G_H$ is the open subscheme of $\widetilde{\mathcal{R}}_H$ formed of $G$-regular elements at $\infty$. 

Let $o_G= (o_v)_{v\in S}\in \widetilde{\mathcal{R}}_G(\mathbb{F}_q)$.  Suppose that every $o_v$ is regular. Recall that the groupoid cardinality $| \widetilde{\mathcal{M}}_{G}^{\xi}(o_G)(\mathbb{F}_q)|$ depends only on the family of  $\kappa_v$-torus that contains $o_v$ $(v\in S)$. In particular, it does not depend on $o_\infty\in \ttt_\infty^{\reg}(\kappa_\infty)$. Therefore we can write \eqref{finalll} in the following form: 
\begin{equation}\label{536}
\sum_{H}q^{-\frac{1}{2}(\dim\mathcal{M}_{H}-\dim\mathcal{R}_{H} )} \sum_{o \in \widetilde{\pi}_{H, G}^{-1}(o_G)(\mathbb{F}_q)}n_{H,o} |\widetilde{\mathcal{M}}^{\xi}_H(o)(\mathbb{F}_q)|.   
\end{equation}
where the sum is taken over split elliptic coendoscopic group $H$ of $G$ and if $H$ is represented by $G_\iota$ for $\iota\in I(T)^{\mathrm{ell}}$, $o=o_{\gamma_{\bullet}}$ for $\gamma_\bullet \in \prod_{v\in S-\{\infty\}}N(\iota,T_\bullet)$, then 
\begin{equation}\label{537} n_{H,o}= \sum_{w\in C_W(\iota)\backslash W}\sum_{s\in \mathcal{S}_\iota}\theta_{\gamma_\bullet}(s)\theta_{\infty}(w^{-1}sw).  \end{equation}

\subsubsection{Properties of the numbers $n_{H,o}$.}\label{nho}
We will prove the properties about the numbers $n_{H,o}$ stated in Theorem \ref{best}. 

\begin{prop}
For any $H,o$ appeared in the sum \eqref{536}, the numbers $n_{H,o}$ given by \eqref{537} are integers. 
We have \[n_{G,o_G}=|Z_G(\mathbb{F}_q)|.\]
We know that there is a bound $C$ depending only on the root system of $G$ and on $\deg S$, in particular it is independent of $q$, such that
\[\sum_{H,o}|n_{H,o}|\leq C . \]

Suppose $S\subseteq X(\mathbb{F}_q)$, every $T_v$ is split, and $q-1$ is divisible enough (an explicit hypothesis is given in \ref{car}).  Then every split elliptic coendoscopic group $H$ defined over $\overbar{\mathbb{F}}_q$ is defined over $\mathbb{F}_q$ and the fiber $\widetilde{\pi}_{H,G}^{-1}(o_G)$ is a constant scheme over $\mathbb{F}_q$.
Assume that $\theta_v$ is obtained as the composition of an algebraic character in $X^{*}(T_v)$ with a character $\psi: \overbar{\mathbb{F}}_q\rightarrow \mathbb{C}^{\times}$.  Then
 $n_{H,o}$ is unchanged if the field $\mathbb{F}_q$ is replaced by $\mathbb{F}_{q^n}$.

%Finally if $S\subseteq X(\mathbb{F}_q)$, $q-1$ is divisible enough (i.e. it satisfies hypothesis in \ref{car2}), and every $T_v$ is split, then every split elliptic coendoscopic group $H$ defined over $\overbar{\mathbb{F}}_q$ is defined over $\mathbb{F}_q$, and $n_{H,o}$ is unchanged if the field $\mathbb{F}_q$ is replaced by $\mathbb{F}_{q^n}$ and $\theta_v$ are replaced by their composition with the norm map. 
\end{prop}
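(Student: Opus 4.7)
The plan is to analyze the character sum defining $n_{H,o}$ in \eqref{537} using the partial order on $I(T)$ and the structure of $Z_\iota$ under the ellipticity hypothesis, together with Appendix \ref{A} for the $\mathbb{F}_q$-rationality aspects. All four claims rest on a single key observation: because $T$ is split over $\mathbb{F}_q$ but each $\iota\in I(T)^{\mathrm{ell}}$ is elliptic, the identity component $Z_\iota^{0}\subseteq T$ is simultaneously a subtorus of a split torus and an anisotropic torus, hence trivial; therefore $Z_\iota$ is a finite commutative group scheme whose order is bounded in terms of the root datum $\Phi(G,T)$ alone.

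\textbf{Integrality and the value at $H=G$.} The partition $Z_\iota(\mathbb{F}_q)=\bigsqcup_{\iota'\leq\iota}\mathcal{S}_{\iota'}$ (where $\iota'\leq\iota$ iff $G_\iota\subseteq G_{\iota'}$) allows M\"obius inversion in this finite poset to write
\[\sum_{s\in\mathcal{S}_\iota}\chi(s)=\sum_{\iota'\leq\iota}\mu(\iota',\iota)\sum_{s\in Z_{\iota'}(\mathbb{F}_q)}\chi(s),\]
for the character $\chi$ of $T(\mathbb{F}_q)$ appearing inside \eqref{537}. Each inner sum is either $0$ or $|Z_{\iota'}(\mathbb{F}_q)|\in\mathbb{Z}_{\geq 0}$ by orthogonality, yielding $n_{H,o}\in\mathbb{Z}$. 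For $H=G$, the distinguished $\iota_G$ with $G_{\iota_G}=G$ has no strict predecessors, so $\mathcal{S}_{\iota_G}=Z_G(\mathbb{F}_q)$, and $C_W(\iota_G)=W$; since central elements are $W$-fixed, \eqref{537} collapses (up to the sign conventions $s\leftrightarrow s^{-1}$ inherited from the Deligne-Lusztig character formula) to a sum of the form $\sum_{s\in Z_G(\mathbb{F}_q)}\prod_{v\in S}\theta_v(s^{\pm 1})$, which equals $|Z_G(\mathbb{F}_q)|$ by the hypothesis that $\prod_{v\in S}\theta_v|_{Z_G(\mathbb{F}_q)}$ is trivial.

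\textbf{The uniform bound and stability under field extension.} Combining the key observation with $|\chi(s)|=1$ gives $|n_{H,o}|\leq|W|\cdot|Z_\iota(\overbar{\mathbb{F}}_q)|$, uniformly bounded over elliptic $\iota$ by a constant depending only on $\Phi(G,T)$ (Appendix \ref{A}). The number of pairs $(H,o)$ is itself bounded in terms of $\Phi(G,T)$ and $\deg S$: conjugacy classes of split elliptic coendoscopic groups are combinatorially determined by $\Phi(G,T)$, and the fiber $\widetilde{\pi}_{H,G}^{-1}(o_G)(\mathbb{F}_q)$ has cardinality at most the degree of $\widetilde{\pi}_{H,G}$, which factors as a product over $v\in S$ of local degrees bounded by $|W|$. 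This proves (3). For part (4), Appendix \ref{A} directly yields the $\mathbb{F}_q$-rationality of split elliptic coendoscopic groups and of the fiber once $q-1$ is divisible enough for all the finite group schemes $Z_\iota$ to become constant over $\mathbb{F}_q$. Under the hypothesis $\theta_v=\psi_v\circ\chi_v$ with $\chi_v\in X^{*}(T_v)$, the value $\theta_v(s)=\psi_v(\chi_v(s))$ for $s\in Z_\iota(\overbar{\mathbb{F}}_q)$ depends only on the algebraic element $s$ and not on the ambient finite field; since $Z_\iota(\mathbb{F}_q)=Z_\iota(\overbar{\mathbb{F}}_q)=Z_\iota(\mathbb{F}_{q^n})$ and the groups $W$, $C_W(\iota)$, $N(\iota,T_v)$ are insensitive to base extension, \eqref{537} returns the same integer over $\mathbb{F}_{q^n}$.

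\textbf{Main obstacle.} The principal subtlety is the reduction of $\mathcal{S}_\iota\subseteq T(\mathbb{F}_q)$ to a $q$-independent set; once the finiteness of $Z_\iota$ is secured by the split-versus-anisotropic dichotomy, the remaining arguments are essentially combinatorial bookkeeping using the classification of Appendix \ref{A}.
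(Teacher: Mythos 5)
Your proposal is correct and follows essentially the same route as the paper: Reeder's partition $Z_\iota(\mathbb{F}_q)=\coprod_{\iota'\leq\iota}\mathcal{S}_{\iota'}$ plus M\"obius inversion and character orthogonality for integrality, the minimality of the central stratum (with $C_W(\iota)=W$ and the triviality of $\prod_v\theta_v|_{Z_G(\mathbb{F}_q)}$) for $n_{G,o_G}=|Z_G(\mathbb{F}_q)|$, the finiteness of $Z_{G_\iota}$ for elliptic $\iota$ together with counting the indexing data for the uniform bound, and Appendix \ref{A} plus the algebraicity of $\theta_v=\psi\circ\chi_v$ for invariance under $\mathbb{F}_q\mapsto\mathbb{F}_{q^n}$. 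The only cosmetic difference is that you bound the number of points $o$ by the degree of the finite morphism $\widetilde{\pi}_{H,G}$, whereas the paper bounds $|N(\iota,T_\bullet)|$ via $H^1(\kappa_v,W_\iota)$; these amount to the same bookkeeping.
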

\begin{proof}
By Lemma 5.1 of \cite{Reeder}, we have 
$${Z}_{\iota}(\mathbb{F}_q)=\coprod_{\iota'\leq \iota}\mathcal{S}_{\iota'}. $$
Note that $\theta_{\gamma_\bullet}$ is a character that can be defined over the set $Z_{\iota}(\mathbb{F}_q)$.
Let $\gamma_\bullet \in N(\iota, T_{\bullet})$. 
Let $\mu$ be the Möbius function of the poset $(I(T)^{\mathrm{ell}}, \leq)$, which is an integer-valued function. 
By Möbius inversion formula and the above partition, we have
\begin{equation}\label{rational}
\sum_{s\in \mathcal{S}_{\iota}}\theta_{\gamma_\bullet}(s) =   \sum_{\{\iota'\mid \iota'\leq \iota\}}\mu(\iota', \iota)  \sum_{s\in Z_{\iota'}(\mathbb{F}_q)}\theta_{\gamma_\bullet}(s) . \end{equation}
However for each $\iota'\leq \iota$, the sum \[\sum_{s\in Z_{\iota'}(\mathbb{F}_q)}\theta_{\gamma_\bullet}(s)\] is an integer as $Z_{\iota'}(\mathbb{F}_q)$ is a group. This implies that the number under consideration is an integer.

It is clear that \[n_{G,o_G}=|Z_G(\mathbb{F}_q)|.\]
In fact, let $\iota\in I(T)^{\mathrm{ell}}$ be an element such that $G_\iota=G$, then $\iota$ is the minimal element in $I(T)^{\mathrm{ell}}$, i.e., the one is given by elements in the center of $G$. In this case, $C_W(\iota)=W$ and $S_\iota=|Z_G(\mathbb{F}_q)|. $ The result follows from that the product of the restriction of every $\theta_v$ to $Z_G(\mathbb{F}_q)$ is trivial.

Concerning the bound of the sum of the absolute values of all $n_{H,o}$, it is clear by the following lemma. 
\begin{lemm}
We have the following bounds 
\[(i)\quad |N(\iota, T_\bullet)| \leq |W^{G}|^{|S|}|W^{G_\iota}|^{|S|},  \]
and 
\[(ii)\quad |\sum_{s\in \mathcal{S}_{\iota}} \theta_{\gamma_\bullet}(s)|\leq  |Z_{G_\iota}(\overbar{\mathbb{F}}_q)|.  \]
\end{lemm}
\begin{proof}
Conjugacy classes of tori defined over $\kappa_v$ in $G_\iota$ can be parametrized by $H^{1}(\kappa_v, W_\iota)$, whose cardinality is trivially bounded by $|W_\iota|$, hence we obtain the first inequality. 

The second inequality is also clear:
\[|\sum_{s\in S_{\iota}} \theta_{\gamma_\bullet}(s)|\leq |S_{\iota}|\leq |Z_{G_\iota}(\mathbb{F}_q)|\leq  |Z_{G_\iota}(\overbar{\mathbb{F}}_q)|. \]
Note that $|Z_{G_\iota}(\overbar{\mathbb{F}}_q)|$ is finite since $G_{\iota}$ is semisimple for $\iota\in I(T)^{\mathrm{ell}}$. In fact, it equals \[|\mathbb{Z}\hat{\Delta}_{B_\iota}/ X^{*}(T)|, \] 
since we are in a very good characteristic.  
\end{proof}

Let us prove the second part of the proposition. 

We will see in \ref{rationalite} of that if $q-1$ is divisible enough, then every split elliptic coendoscopic group $H$ defined over $\overbar{\mathbb{F}}_q$ is defined over $\mathbb{F}_q$ and $Z_H(\mathbb{F}_q)=Z_H(\overbar{\mathbb{F}}_q)$.
If every $S\subseteq X(\mathbb{F}_q)$ and every $T_v$ is split. We may suppose $T_v=T$.  Note that the fiber of $\widetilde{\mathcal{R}}_H\rightarrow\widetilde{\mathcal{R}}_G$ over $o_G$ is finite union of $\Spec(\mathbb{F}_q)$.  
In this case, \[N(\iota, T_v)=G_\iota(\mathbb{F}_q) \backslash \{     \gamma \in G(\mathbb{F}_q)\mid  \gamma T  \gamma^{-1} \subseteq G_\iota  \}\cong W_{\iota}\backslash W. \] 
In particular, it is unchanged under the base change from $\mathbb{F}_q$ to $\mathbb{F}_{q^n}$. Moreover, by our assumption, $Z_{G_\iota}(\mathbb{F}_{q^n})=Z_{G_\iota}(\overbar{\mathbb{F}}_{q^n})=Z_{G_\iota}(\mathbb{F}_q)$. To show that $n_{H,o}$ are not changed, by the expression \eqref{537}, it is sufficient to notice that for each  $\gamma_\bullet\in N(\iota, T_\bullet)$, the character $\theta_{\gamma_\bullet}$ of $Z_{G_\iota}(\mathbb{F}_q)$ defined by \begin{equation}\label{tgs}
\theta_{\gamma_\bullet}(s)=\prod_{v\in S}\theta_v(\gamma^{-1}_v s \gamma_v),  \end{equation}
coincides with that of $Z_{G_\iota}(\mathbb{F}_{q^n})$ when the field $\mathbb{F}_q$ is changed to $\mathbb{F}_{q^n}$. In fact, representing each $\gamma_v$ by an element in $W$, the formula \eqref{tgs} defines a character of $T({\mathbb{F}}_q)$ of the form $\psi\circ\chi$ for a character $\chi\in X^{*}(T)$ which is unchanged under a field extension. In particular, its restrictions to $Z_{G_\iota}(\mathbb{F}_{q})$ is unchanged. 
 \end{proof}

\appendix
%\addtocontents{toc}{\protect\setcounter{tocdepth}{1}}

\section{Classification of split elliptic coendoscopic groups}\label{A}
This appendix shows how to find all split elliptic coendoscopic groups. Recall that $G$ is a semisimple algebraic group defined over $\mathbb{F}_q$ so that we are in very good characteristic. We have fixed a split maximal torus $T$ and a Borel subgroup $B$ containing $T$. 

We will use the identifications below following \cite[Section 5]{DL}. 
We choose and fix isomorphisms \[{\overbar{\mathbb{F}}_q}^{\times}\xrightarrow{\sim}(\mathbb{Q}/\mathbb{Z})_{p'}, \]
and \[\text{(roots of unity of order prime to $p$ in $\mathbb{C}^{\times})$} \xrightarrow{\sim} (\mathbb{Q}/\mathbb{Z})_{p'},  \]
where $(\mathbb{Q}/\mathbb{Z})_{p'}$ is the maximal prime to $p$ subgroup of $\mathbb{Q}/\mathbb{Z}$. 
We have (\cite[(5.2.1)]{DL})
 \begin{equation}\label{characters}
 T(\overbar{\mathbb{F}}_q)\cong X_*(T)\otimes (\mathbb{Q}/\mathbb{Z})_{p'}\hookrightarrow  \ago_{T,\mathbb{Q}}/ X_{*}({T})  , \end{equation}
where $$\ago_{T,\mathbb{Q}}=X_*(T)\otimes \mathbb{Q}. $$
We also have a short exact sequence (\cite[(5.2.3)*]{DL}): 
\[
0 \longrightarrow T(\mathbb{F}_q)^{\vee}\longrightarrow X^{*}(T)\otimes (\mathbb{Q}/\mathbb{Z})_{p'}\xrightarrow{\tau-1} X^{*}(T)\otimes (\mathbb{Q}/\mathbb{Z})_{p'}\longrightarrow 0, \]
where $\tau$ is the Frobenius automorphism in $\Gal(\overbar{\mathbb{F}}_q|\mathbb{F}_q)$. Since $T$ is split, $\tau$ acts trivially on $X^{*}(T)$ and acts by multiplication by $q$ on $(\mathbb{Q}/\mathbb{Z})_{p'}$. 
Using \begin{equation}\label{cocharacters}
X^{*}(T)\otimes (\mathbb{Q}/\mathbb{Z})_{p'}\hookrightarrow \ago_{T,\mathbb{Q}}^{*}/X^{*}{(T)}, \end{equation}
where 
$$\ago_{T,\mathbb{Q}}^{*}=X^*(T)\otimes \mathbb{Q}, $$
we will view a character $\theta$ of $T(\mathbb{F}_q)$ as an element in $\ago_{T,\mathbb{Q}}^{*}/X^{*}(T)$.

\begin{aprop}\label{A1}
Suppose the root system $\Phi(G, T)$ is irreducible. Then for any $s\in T(\overbar{\mathbb{F}}_q)-Z_G(\overbar{\mathbb{F}}_q)$, $s$ is elliptic if and only if $\Phi(G_{s}^{0}, T)$ is a maximal closed subroot system of $\Phi(G,T)$ of the same rank as $\Phi(G, T)$. 
\end{aprop}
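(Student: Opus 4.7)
The idea is to translate everything into a combinatorial statement about root lattices and then invoke the Borel--de Siebenthal classification of full-rank closed subsystems.

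First, I would encode $s$ as a rational coweight. Since $T$ is split, the fixed isomorphism $\overbar{\mathbb{F}}_q^{\times}\cong(\mathbb{Q}/\mathbb{Z})_{p'}$ gives $T(\overbar{\mathbb{F}}_q)\cong X_{\ast}(T)\otimes(\mathbb{Q}/\mathbb{Z})_{p'}$, so any $s$ lifts to some $\lambda\in\ago_{T,\mathbb{Q}}:=X_{\ast}(T)\otimes\mathbb{Q}$, unique modulo $X_{\ast}(T)$. The condition $\alpha(s)=1$ then translates to $\langle\alpha,\lambda\rangle\in\mathbb{Z}$, so
\[\Phi(G_{s}^{0},T)=\Phi_{\lambda}:=\{\alpha\in\Phi(G,T)\colon \langle\alpha,\lambda\rangle\in\mathbb{Z}\}.\]
Since $T\subseteq G_{s}^{0}$ is a maximal torus, $\dim Z(G_{s}^{0})^{0}=\dim T-\rk\Phi_{\lambda}$; over $\overbar{\mathbb{F}}_q$ every torus is split, so ``anisotropic'' means trivial. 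Thus $s$ is elliptic iff $\Phi_{\lambda}$ has full rank, and $s\notin Z_{G}(\overbar{\mathbb{F}}_q)$ iff $\Phi_{\lambda}\subsetneq\Phi$.

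Next, I would put $\lambda$ in a convenient position via affine Weyl geometry. Translating by the extended affine Weyl group $W\ltimes X_{\ast}(T)$---which preserves both the class of $s$ and the set $\Phi_{\lambda}$---I may assume $\lambda$ lies in the closure of the fundamental alcove
\[\overline{C}=\{x\in\ago_{T,\mathbb{R}}\colon \langle\alpha_{i},x\rangle\geq 0\text{ for }i=1,\dots,n,\ \langle\tilde\alpha,x\rangle\leq 1\},\]
where $\alpha_{1},\dots,\alpha_{n}$ are the simple roots and $\tilde\alpha=\sum_{i=1}^{n}m_{i}\alpha_{i}$ is the highest root, which is well defined because $\Phi$ is irreducible. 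A standard fact from the theory of affine Weyl groups (Bourbaki, \emph{Lie} VI) asserts that $\Phi_{\lambda}$ is generated by the affine simple roots that vanish on $\lambda$, i.e.\ by $\{\alpha_{i}\colon\langle\alpha_{i},\lambda\rangle=0\}\cup\{-\tilde\alpha\colon\langle\tilde\alpha,\lambda\rangle=1\}$.

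Then I would determine the full-rank locus. Since $\overline{C}$ is an $n$-simplex with $n+1$ walls whose defining roots are pairwise independent, $\lambda$ lies on exactly $k$ walls iff $\lambda$ belongs to a codimension-$k$ face of $\overline{C}$, and the corresponding roots span a $k$-dimensional subspace. Hence $\rk\Phi_{\lambda}=n$ iff $\lambda$ is a vertex of $\overline{C}$. The vertices are $0$ (for which $\Phi_{\lambda}=\Phi$, excluded by $s\notin Z_{G}$) and the points $v_{i}:=\omega_{i}^{\vee}/m_{i}$ for $i=1,\dots,n$. At $v_{i}$, $\Phi_{\lambda}$ is the subsystem generated by the affine simple roots other than $\alpha_{i}$; this subsystem equals $\Phi$ exactly when $m_{i}=1$, in which case $v_{i}=\omega_{i}^{\vee}\in P^{\vee}$ pairs integrally with every root, forcing $s\in Z_{G}(\overbar{\mathbb{F}}_q)$ and again excluded. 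Therefore, for $s$ elliptic and non-central, $\lambda$ is $W\ltimes X_{\ast}(T)$-equivalent to some $v_{i}$ with $m_{i}>1$, and $\Phi_{\lambda}$ is the closed subsystem obtained by deleting the node $\alpha_{i}$ from the extended Dynkin diagram of $\Phi$. The Borel--de Siebenthal theorem identifies these subsystems as exactly the maximal proper closed subsystems of full rank of $\Phi$, yielding both directions of the equivalence.

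The principal technical point is the precise identification of $\Phi_{\lambda}$ with the subsystem generated by the affine simple roots vanishing at $\lambda$ when $\lambda\in\overline{C}$; once this is in hand the remainder of the argument is combinatorial. A secondary subtlety is bookkeeping of the isogeny type---whether $\omega_{i}^{\vee}\in X_{\ast}(T)$---in order to distinguish central from non-central $s$ when $m_{i}=1$; the hypothesis $s\notin Z_{G}(\overbar{\mathbb{F}}_q)$ uniformly rules these vertices out.
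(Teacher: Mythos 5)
Your reduction is the same as the paper's: you lift $s$ to a rational coweight $\lambda$, identify $\Phi(G_s^0,T)$ with $\Phi_\lambda=\{\alpha\in\Phi(G,T)\mid\langle\alpha,\lambda\rangle\in\mathbb{Z}\}$, move $\lambda$ into the closed fundamental alcove, and use the fact that $\Phi_\lambda$ has as a base the set of affine simple roots vanishing at $\lambda$ (the paper cites Kane, III.12-4, where you cite Bourbaki), so that full rank forces $\lambda$ to be a vertex $\omega_i^{\vee}/m_i$ of the alcove, with the $m_i=1$ and $i=0$ vertices excluded by non-centrality. Up to this point the two arguments coincide.

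The gap is your final sentence. Borel--de Siebenthal does \emph{not} say that deleting an arbitrary node $\alpha_i$ with $m_i>1$ from the extended diagram produces a maximal closed subsystem; it says the maximal full-rank closed subsystems are exactly the deletions of nodes with $m_i$ \emph{prime}. For a composite mark ($m_i=4$ in $F_4$, $E_7$, $E_8$, or $m_i=6$ in $E_8$) the vertex $\lambda=\omega_i^{\vee}/m_i$ still gives an elliptic, non-central $s$ (the roots $\{-\alpha_0\}\cup(\Delta_B\setminus\{\alpha_i\})$ are linearly independent, so $\Phi_\lambda$ has full rank), but $\Phi_\lambda$ is not maximal: for instance in $E_8$, with $m_i=4$, one has $\Phi_\lambda=\{\alpha\mid 4\text{ divides }c_i(\alpha)\}$ (of type $D_5+A_3$ or $A_7+A_1$), which is strictly contained in the proper closed subsystem $\{\alpha\mid 2\text{ divides }c_i(\alpha)\}$, namely the centralizer system of $\omega_i^{\vee}/2$; similarly in $F_4$ one gets $A_3+A_1\subsetneq B_4$. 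So the implication ``elliptic and non-central $\Rightarrow$ maximal'' is exactly the point that still needs an argument, and it cannot be obtained by citing the classification --- indeed the example shows it is sensitive to primality of the mark, which the hypothesis on $s$ does not control. Note that the paper's own proof does not deduce maximality from Borel--de Siebenthal either: it devotes a separate (and itself delicate) argument comparing $\Phi(G_s^0,T)$ with a maximal closed subsystem $\Phi(G_{s'}^0,T)$ containing it; your appeal to the classification is not a substitute for that step.
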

\begin{proof}
Recall that 
\[\Phi(G_{s}^{0}, T)=\{ \alpha\in \Phi(G,T)\mid \alpha(s)=1    \},   \]
so it is clear that $\Phi(G_{s}^{0}, T)$ is always a closed subroot system of $\Phi(G,T)$. 

Since $G$ is semisimple, by definition $s\in T(\overbar{\mathbb{F}}_q)$ is elliptic if and only if $ G_s^{0}$ is semisimple. If $\Phi(G_{s}^{0}, T)$ is a closed subroot system of $\Phi(G,T)$ of the same rank as $\Phi(G, T)$, then $Z_{G_s^0}^0$ has dimension $0$ by rank reasons, and $s$ is hence elliptic. 

Consider the ``only if" part of the assertion. We suppose that $s$ is elliptic. 
If the rank of $\Phi(G_s^0, T)$ is smaller than that of $\Phi(G, T)$, then by rank reasons $Z_{G_s^0}^0$ will contain a non-trivial torus, a contraction. Finally let us prove that $\Phi(G_{s}^{0}, T)$ is maximal.

Under the identification \eqref{characters}, $s$ can be represented by some element $t\in \ago_{T,\mathbb{Q}}$. 
For any $\alpha\in \Phi(G,T)$, the condition $\alpha(s)=1$ is equivalent to that \[\langle \alpha, t \rangle\in \mathbb{Z}.  \]

Recall that the fundamental alcove for the irreducible root system $\Phi(G,T)$ with respect to the base $\Delta_B$ in $\ago_{T,\mathbb{Q}}$ is given by
\[\mathcal{A}= \{r \in \ago_{T,\mathbb{Q}}\mid  0< \langle r, \alpha  \rangle<1, \; \forall \alpha\in \Delta_B \cup \{\alpha_0\} \} ,   \]
where $\alpha_0$ is the highest root for the base $\Delta_B$. 
Replacing $t$ by an element in its $W\ltimes\mathbb{Z}\Delta_B^{\vee}$-orbit only changes $\Phi(G_s^0, T)$ to a $W$-conjugation. We may suppose that $t\in \overbar{\mathcal{A}}$, the closure of $\mathcal{A}$ in $\ago_{T,\mathbb{Q}}$. 
For each subset $I$ of $\Delta_B\cup \{-\alpha_0\},$  
let $\mathcal{A}_I$ be the set of $r\in \ago_{T,\mathbb{Q}}$ satisfying  \[\begin{cases}  \langle\alpha_0, r\rangle =1, \quad \text{ if }-\alpha_0\in I;\\
 \langle\alpha_0, r\rangle <1, \quad \text{ if } -\alpha_0\notin I; \\
 \langle\alpha, r\rangle =0, \quad  \alpha \in I \cap \Delta_B ; \\
 \langle\alpha, r\rangle >0, \quad \alpha \in \Delta_B, \text{ and }  \alpha\notin I . 
 \end{cases}      \]
We have $$\overbar{\mathcal{A}}=  \coprod_{I}\mathcal{A}_I, $$
where $I$ runs through the proper subsets of $\Delta_B\cup \{-\alpha_0\}$. One knows that if $t\in \mathcal{A}_I$, then the set $\Phi(G_s^0, T)$ has $I$ as a base (\cite[Proposition, III.12-4]{Kane}). Moreover,   the rank of $\Phi(G_s^0, T)$ equals the rank of $\Phi(G,T)$, if and only if the set $\Delta_B\cup \{\alpha_0\}-I$  has cardinality $1$. 

It is shown by the discussions in \cite[p.139, III.12-3]{Kane} that, any maximal subroot system in $\Phi(G,T)$ is of the form $\Phi(G_{s'}^0, T)$ for some $s'\in T(\overbar{\mathbb{F}}_q)-Z_G(\overbar{\mathbb{F}}_q)$. If $\Phi(G_{s}^0, T)\subseteq \Phi(G_{s'}^0, T)$, then by the above proof, we can only have $\Phi(G_{s}^0, T)= \Phi(G_{s'}^0, T)$ otherwise $\Phi(G_{s'}^0, T)$ must contain $\Delta_B$ since both of them have rank equal to that of $\Phi(G,T)$ but this contradicts the fact that $s\notin Z_G(\overbar{\mathbb{F}}_q)$. This shows that $\Phi(G_{s}^0, T)$ is maximal. \end{proof}

The Borel-de Siebenthal theorem classifies the maximal closed sub-root systems in an irreducible root system (see, for example, \cite[Theorem, III.12-1]{Kane}). 
Let $\Delta=\{\alpha_1, \alpha_2, \ldots, \alpha_{r}\}$ be a base of an irreducible reduced root system $\Phi$. Let $\alpha_0$ be the highest root with respect to the base.
Expand 
\begin{equation}\alpha_0= \sum_{i=1}^{r} h_i\alpha_i. \end{equation}
Then Borel-de Siebenthal theorem says that, up to $W$-equivalence, a maximal closed sub-root system of $\Phi$ has as base
\begin{enumerate}
\item
$\{\alpha_1, \ldots,  \hat{\alpha}_i, \ldots, \alpha_r \}$ if $h_i=1$.
\item
$\{-\alpha_0, \alpha_1, \ldots,  \hat{\alpha_i}, \ldots, \alpha_r \}$ (where $\hat{\alpha_i}$ means that $\alpha_i$ is omitted) if $h_i$ is a prime.
\end{enumerate}
In particular, it is the second case that has the same rank as $\Phi$.  The coefficients of the highest root for the base $\Delta$ for each Dynkin diagram are summarized in the table \ref{chr}, where $\bullet$ corresponds to the root $-\alpha_0$. 

\begin{table}%[H]
\centering
\begin{tabular}{l l l l l l l l l}
\begin{tikzpicture}[inner sep=1.5pt,outer sep=0pt]
\node (t) at (-4,0) {\(A_r:\)};
\node [circle, fill=black, radius=2pt] (ext) at (0,-1) {};
\node [circle,radius=2pt,draw,label=1] (A) at (-3,0) {};
\node [circle,radius=2pt,draw,label=1] (B) at (-2,0) {};
\node [circle, radius=2pt,draw,label=1] (C) at (-1,0) {};
\node  (dot) at (0,0) {};
\node [circle,radius=2pt, draw,label=1] (D) at (1,0) {};
\node [circle, radius=2pt,draw,label=1] (E) at (2,0) {};
\node [circle,radius=2pt, draw,label=1] (F) at (3,0) {};
\draw (A) -- (B);
\draw (B) -- (C);
\draw[dashed] (C) -- (dot);
\draw[dashed] (dot) -- (D);
\draw (D) -- (E);
\draw (E) -- (F);
\draw (ext)--(A);
\draw (ext)--(F);
\end{tikzpicture}
\\
\begin{tikzpicture}[inner sep=1.5pt,outer sep=0pt]
\node (t) at (-4,0) {\(B_r:\)};
\node [circle, fill=black, radius=2pt] (ext) at (-3, 0.5) {};
\node [circle,radius=2pt,draw,label=1] (E) at (-3,-0.5) {};
\node [circle,radius=2pt,draw,label=2] (F) at (-2,0) {};
\node [circle, radius=2pt,draw,label=2] (A) at (-1,0) {};
\node  (dot) at (0,0) {};
\node [circle,radius=2pt, draw,label=2] (p) at (1,0) {};
\node [circle, radius=2pt,draw,label=2] (B) at (2,0) {};
\node  (C) at (2.5,0) {};
\node [circle,radius=2pt, draw,label=2] (D) at (3,0) {};
\draw (ext)--(F);
\draw (E) -- (F);
\draw (F) -- (A);
\draw[dashed] (A) -- (dot);
\draw[dashed] (dot) -- (p);
\draw (p) -- (B);
\draw[double] (C.west) - - (D);
\draw[-{Classical TikZ Rightarrow[black,length=0.3em,]}, double,] (B) -- (C.east);
\end{tikzpicture}
\\
\begin{tikzpicture}[inner sep=1.5pt,outer sep=0pt]
\node (t) at (-5,0) {\(C_r:\)};
\node [circle, fill=black, radius=2pt] (ext) at (-4, 0) {};
\node [circle,radius=2pt,draw,label=2] (E) at (-3,0) {};
\node [circle,radius=2pt,draw,label=2] (F) at (-2,0) {};
\node [circle, radius=2pt,draw,label=2] (A) at (-1,0) {};
\node  (dot) at (0,0) {};
\node [circle,radius=2pt, draw,label=2] (p) at (1,0) {};
\node [circle, radius=2pt,draw,label=2] (B) at (2,0) {};
\node  (C) at (2.5,0) {};
\node  (C') at (-3.5,0) {};
\node [circle,radius=2pt, draw,label=1] (D) at (3,0) {};
\draw[double] (ext)--(E);
\draw (E) -- (F);
\draw (F) -- (A);
\draw[dashed] (A) -- (dot);
\draw[dashed] (dot) -- (p);
\draw (p) -- (B);
\draw[-{Classical TikZ Rightarrow[black,length=0.3em,reversed]}, double,] (E) -- (C'.east);
\draw[double] (C.west) - - (D);
\draw[-{Classical TikZ Rightarrow[black,length=0.3em, reversed]}, double,] (B) -- (C.east);
\end{tikzpicture}
\\
\begin{tikzpicture}[inner sep=1.5pt,outer sep=0pt]
\node (t) at (-4,0) {\(D_r:\)};
\node [circle, fill=black, radius=2pt] (ext) at (-3,-0.5) {};
\node [circle,radius=2pt,draw,label=1] (A) at (-3,0.5) {};
\node [circle,radius=2pt,draw,label=2] (B) at (-2,0) {};
\node [circle, radius=2pt,draw,label=2] (C) at (-1,0) {};
\node  (dot) at (0,0) {};
\node [circle,radius=2pt, draw,label=2] (D) at (1,0) {};
\node [circle, radius=2pt,draw,label=2] (E) at (2,0) {};
\node [circle,radius=2pt, draw,label=right:1] (F1) at (3,0.5) {};
\node [circle,radius=2pt, draw,label=right: 1] (F2) at (3,-0.5) {};
\draw (ext)--(B);
\draw (A) -- (B);
\draw (B) -- (C);
\draw[dashed] (C) -- (dot);
\draw[dashed] (dot) -- (D);
\draw (D) -- (E);
\draw (E) -- (F1);
\draw (E) -- (F2);
\end{tikzpicture}
\\
\begin{tikzpicture}[inner sep=1.5pt,outer sep=0pt]
\node (t) at (-4,0) {\(E_6:\)};
\node [circle,radius=2pt,draw,label=1] (A) at (-3,0) {};
\node [circle,radius=2pt,draw,label=2] (B) at (-2,0) {};
\node [circle, radius=2pt,draw,label=3] (C) at (-1,0) {};
\node [circle,radius=2pt, draw,label=2] (D) at (0,0) {};
\node [circle, radius=2pt,draw,label=1] (E) at (1,0) {};
\node [circle, radius=2pt,draw,label=right:{2}] (F) at (-1,-1) {};
\node [circle, fill=black, radius=2pt] (ext) at (-1,-2) {};
\draw (A) -- (B);
\draw (B) -- (C);
\draw (C) -- (D);
\draw (D) -- (E);
\draw (C) -- (F);
\draw (ext)--(F);
\end{tikzpicture}
\\
\begin{tikzpicture}[inner sep=1.5pt,outer sep=0pt]
\node (t) at (-5,0) {\(E_7:\)};
\node [circle, fill=black, radius=2pt] (ext) at (-4,0) {};
\node [circle,radius=2pt,draw,label=1] (A) at (-3,0) {};
\node [circle,radius=2pt,draw,label=2] (B) at (-2,0) {};
\node [circle, radius=2pt,draw,label=3] (C) at (-1,0) {};
\node [circle,radius=2pt, draw,label=4] (D) at (0,0) {};
\node [circle, radius=2pt,draw,label=3] (E) at (1,0) {};
\node [circle, radius=2pt,draw,label=2] (F) at (2,0) {};
\node [circle, radius=2pt,draw,label=right:{2}] (G) at (0,-1) {};
\draw (ext)--(A);
\draw (A) -- (B);
\draw (B) -- (C);
\draw (C) -- (D);
\draw (D) -- (E);
\draw (E) -- (F);
\draw (D) -- (G); 
\end{tikzpicture}
\\
\begin{tikzpicture}[inner sep=1.5pt,outer sep=0pt]
\node (t) at (-6,0) {\(E_8:\)};
\node [circle, fill=black, radius=2pt] (ext) at (-5,0) {};
\node [circle,radius=2pt,draw,label=2] (Z) at (-4,0) {};
\node [circle,radius=2pt,draw,label=3] (A) at (-3,0) {};
\node [circle,radius=2pt,draw,label=4] (B) at (-2,0) {};
\node [circle, radius=2pt,draw,label=5] (C) at (-1,0) {};
\node [circle,radius=2pt, draw,label=6] (D) at (0,0) {};
\node [circle, radius=2pt,draw,label=4] (E) at (1,0) {};
\node [circle, radius=2pt,draw,label=2] (F) at (2,0) {};
\node [circle, radius=2pt,draw,label=right:{3}] (G) at (0,-1) {};
\draw (ext)--(Z);
\draw (Z) -- (A);
\draw (A) -- (B);
\draw (B) -- (C);
\draw (C) -- (D);
\draw (D) -- (E);
\draw (E) -- (F);
\draw (D) -- (G); 
\end{tikzpicture}
\\
\begin{tikzpicture}[inner sep=1.5pt,outer sep=0pt]
\node (t) at (-2,0) {\(F_4:\)};
\node [circle, fill=black, radius=2pt] (ext) at (-1,0) {};
\node [circle,radius=2pt, draw,label=2] (p) at (0,0) {};
\node [circle, radius=2pt,draw,label=3] (B) at (1,0) {};
\node  (C) at (1.5,0) {};
\node [circle,radius=2pt, draw, label=4] (D) at (2,0) {};
\node [circle, radius=2pt,draw,label=2] (E) at (3,0) {};
\draw (ext)--(p);
\draw (D) -- (E);
\draw (p) -- (B);
\draw[double] (C.west) - - (D);
\draw[-{Classical TikZ Rightarrow[black,length=0.3em]}, double] (B) - - (C.east);
\end{tikzpicture}
\\
\begin{tikzpicture}[inner sep=1.5pt,outer sep=0pt]
\node (t) at (-1,0) {\(G_2:\)};
\node [circle, fill=black, radius=2pt] (ext) at (0,0) {};
\node [circle, radius=2pt,draw,label=2] (B) at (1,0) {};
\node  (B1) at (1,0.05) {};
\node  (B2) at (1, -0.05) {};
\node  (C) at (1.5,0) {};
\node  (C1) at (1.5,0.05) {};
\node  (C2) at (1.5, -0.05) {};
\node  (D1) at (2,0.05) {};
\node  (D2) at (2, -0.05) {};
\node [circle,radius=2pt, draw,label=3] (D) at (2,0) {};
\draw (ext)--(B);
\draw (C1) -- (B1);
\draw (C2) -- (B2);
\draw (C1) -- (D1);
\draw (C2) -- (D2);
\draw (C.east) - - (D);
\draw[-{Classical TikZ Rightarrow[black,length=0.3em]}] (B) -- (C.east);
\end{tikzpicture}
\\
\end{tabular}
\caption{Coefficients of the highest root}
\label{chr}
\end{table}

Using the Borel-de Siebenthal theorem, we can determine elliptic elements for simple groups as follows.
\begin{aprop}\label{A2021}
Suppose that $G$ is a split simple reductive group defined over $\mathbb{F}_q$ in very good characteristics.   
There is a bijection between the set of conjugacy classes of split elliptic coendoscopic groups of $G$ defined over $\overbar{\mathbb{F}}_q$ different from $G$ and the subset of $\Delta_B$ consisting of those $\alpha$ so that $h_\alpha$ is prime. 
\end{aprop}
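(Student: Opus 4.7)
The plan is to show that the assignment $H\mapsto\Phi(H,T)$ induces the claimed bijection, by combining Proposition \ref{A1} with the Borel--de Siebenthal classification recalled before the statement. First I observe that every split elliptic coendoscopic group $H=G_s^{0}$ contains $T$ and thus determines a closed subroot system $\Phi(H,T)\subseteq\Phi(G,T)$. If $g\in G(\overbar{\mathbb{F}}_q)$ satisfies $gH_1g^{-1}=H_2$, then $gTg^{-1}$ is a maximal torus of $H_2$, hence conjugate to $T$ by some $h\in H_2(\overbar{\mathbb{F}}_q)$; so $hg\in N_G(T)(\overbar{\mathbb{F}}_q)$, and $H_1,H_2$ are $G(\overbar{\mathbb{F}}_q)$-conjugate precisely when $\Phi(H_1,T)$ and $\Phi(H_2,T)$ lie in a common $W$-orbit. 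This reduces the classification to $W$-orbits of those proper closed subroot systems of $\Phi(G,T)$ arising as $\Phi(G_s^{0},T)$ for some elliptic $s\in T(\overbar{\mathbb{F}}_q)$.

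By Proposition \ref{A1}, such a subroot system is a maximal closed subroot system of $\Phi(G,T)$ of the same rank as $\Phi(G,T)$. The Borel--de Siebenthal theorem recalled after Proposition \ref{A1} then says that, up to $W$-equivalence, any such subroot system has a base of the form $\{-\alpha_0,\alpha_1,\ldots,\widehat{\alpha_i},\ldots,\alpha_r\}$ for a unique simple root $\alpha_i\in\Delta_B$ whose coefficient $h_i$ in the expansion of the highest root is prime. This produces an injection from the set of $G(\overbar{\mathbb{F}}_q)$-conjugacy classes of split elliptic coendoscopic groups different from $G$ into the set $\{\alpha\in\Delta_B\mid h_\alpha\text{ prime}\}$.

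For surjectivity, given $\alpha_i\in\Delta_B$ with $h_i$ prime, I would realise the corresponding subroot system as $\Phi(G_s^{0},T)$ for an explicit $s$. Under the identification \eqref{characters}, let $s$ be the class of $t:=\omega_i^{\vee}/h_i\in\ago_{T,\mathbb{Q}}$, where $\omega_i^{\vee}$ is the fundamental coweight dual to $\alpha_i$; this defines an element of $T(\overbar{\mathbb{F}}_q)$ because $h_i$ is invertible in $\overbar{\mathbb{F}}_q$ under the very good characteristic hypothesis (a direct inspection of the table of coefficients). A pairing computation yields $\langle\alpha_j,t\rangle=\delta_{ij}/h_i$ for $j=1,\ldots,r$ and $\langle\alpha_0,t\rangle=h_i/h_i=1$, so $\Phi(G_s^{0},T)$ contains the Borel--de Siebenthal base $\{-\alpha_0,\alpha_1,\ldots,\widehat{\alpha_i},\ldots,\alpha_r\}$; maximality of the latter forces equality, and $\langle\alpha_i,t\rangle=1/h_i\notin\mathbb{Z}$ ensures $s\notin Z_G(\overbar{\mathbb{F}}_q)$, so $s$ is elliptic by Proposition \ref{A1}.

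The main obstacle is this surjectivity step: one has to exhibit a single $s$ realising each Borel--de Siebenthal subsystem, and the very good characteristic hypothesis enters precisely to guarantee that the denominators $h_i$ are prime to $p$ so that $t=\omega_i^{\vee}/h_i$ defines a bona fide element of $T(\overbar{\mathbb{F}}_q)$ via \eqref{characters}. The injectivity step is the routine reduction from $G(\overbar{\mathbb{F}}_q)$-conjugacy of pairs $(H,T)$ to $W$-conjugacy of the associated subroot systems described above.
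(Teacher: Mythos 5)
Your argument is essentially the paper's own proof: injectivity by combining Proposition \ref{A1} with the Borel--de Siebenthal classification (the paper likewise passes to $W$-conjugacy of the subroot systems and asserts that the deleted node is uniquely determined), and surjectivity via the very same element $t=\varpi_{\alpha_i}^{\vee}/h_i$, with the very good characteristic hypothesis guaranteeing the prime-to-$p$ order needed for \eqref{characters}. The only thin spot—your ``for a unique simple root $\alpha_i$'' in the injectivity step, which Borel--de Siebenthal as recalled does not by itself supply—is exactly as terse as the paper's ``it is clear from the construction that this map is injective,'' so the proposal matches the paper's proof in both route and level of detail.
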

\begin{proof}
We continue to use the constructions in the proof of Proposition \ref{A1}.

Let $H$ be a split elliptic coendoscopic group of $G$ which is the connected centralizer of an elliptic element $s\in T(\overbar{\mathbb{F}}_q)$. By Proposition \ref{A1}, and Borel-de Siebenthal theorem, after a $W$ conjugation if necessary, there exists an $i$ ($0\leq i\leq r$) such that 
$$ Z_{G_{s}^0}(\overbar{\mathbb{F}}_q) = \{s\in T(\overbar{\mathbb{F}}_q) | \alpha_j(s)=1, j\neq i\}.  $$
The element $s$ belongs to $Z_G(\overbar{\mathbb{F}}_q)$ if and only if $i=0$. Otherwise, the index $i$ satisfies that $1\leq i\leq r$ and that $h_i$ is a prime number. Moreover, we must have
$$ \alpha_i(s)\neq 1,$$
and for any $0\leq j\leq r$, such that $j\neq i$ we have 
$$ \alpha_j(s)= 1.$$
Since the Weyl group acts simply transitively on the set of bases of a root system, we get a well-defined map sending $H$ to $\alpha_i$. It is clear from the construction that this map is injective. 

To see that it is surjective. Let $\alpha_i$ be a root in $\Delta_B$ so that $h_i$ is a prime number. 
Let $\hat{\Delta}_B^{\vee}=\{ \varpi_{\alpha_1}^{\vee},  \varpi_{\alpha_2}^{\vee}, \ldots,  \varpi_{\alpha_r}^{\vee} \}$ be the set of fundamental co-weights, i.e., a base in $\ago_{T,\mathbb{Q}}$ that is dual to $\Delta_B$. Since we are in very good characteristic, the order of the projection of any element of $\hat{\Delta}_B^{\vee}$ in $\ago_{T,\mathbb{Q}}/X_{*}(T)$ is not divisible by $p$, and we have $p\nmid h_i$ for every $i$. 
%Suppose $s$ is represented by an element $t\in \ago_{T,\mathbb{Q}}$. 
Consider \[t =\frac{1}{h_i}\varpi_{\alpha_i}^{\vee}\in \ago_{T,\mathbb{Q}}, \]
then the image of $t$ in $\ago_{T,\mathbb{Q}}/X_{*}(T)$ lies in $X_*(T)\otimes (\mathbb{Q}/\mathbb{Z})_{p'}$. Since $T(\overbar{\mathbb{F}}_q)\cong X_*(T)\otimes (\mathbb{Q}/\mathbb{Z})_{p'}$, we see that $t$ defines an element $s\in T(\overbar{\mathbb{F}}_q)$. It is clear that $G_{s}^0$ is a split elliptic coendoscopic group of $G$, since clearly \[\alpha_j(s)= 1, \quad \text{$j\neq i$}, \] 
\[\alpha_i(s)\neq 1,\]
and \[\alpha_0(s)=1.\] 
By Borel-de Siebenthal theorem, we conclude that $\{-\alpha_0, \alpha_1, \ldots,  \hat{\alpha_i}, \ldots, \alpha_r \}$ is a base of the root system of $(G_{s}^0,T)$. Therefore, $H$ is a split elliptic coendoscopic group of $G$. 
%If $s$ is elliptic, then the only possible $t$ in the fundamental alcove is  $$ t =\frac{1}{h_i}\varpi_{\alpha_i}^{\vee}. $$
\end{proof}

\begin{aprop}\label{rationalite}
Suppose that the hypothesis on $q-1$ in the following table \ref{car} is satisfied for all simple factors of $G^{ad}$. Then every elliptic split coendoscopic group of $G_{\overbar{\mathbb{F}}_q}$ is defined over $\mathbb{F}_q$. If furthermore, the hypothesis in the following table \ref{car2} is satisfied, then \[Z_H(\mathbb{F}_q)=Z_H(\overbar{\mathbb{F}}_q)\] for every elliptic split coendoscopic group $H$ (including $G$) of $G$.
\begin{table}[H] 
\centering
$A_r: $ no requirement since there is no non-trivial elliptic element; 
$B_r:  4 | q-1$, |
$C_r:   4 | q-1$; ($r$ even), $8| q-1$ ($r$ odd),
$D_r: 4 | q-1$;  
$E_6:  18| q-1$; 
$E_7:  12| q-1$; 
$E_8:   30| q-1$; 
$F_4: 6| q-1$; 
$G_2: 6| q-1$. 
\caption{Hypothesis on \(q-1\)}
\label{car}
\end{table}
\begin{table}[H]  
\centering
$A_r$: $r+1| q-1$;  $B_r:  4 | q-1$,  $C_r:   8 | q-1$;  $D_r: 8 | q-1$;  $E_6:  18| q-1$;  $E_7:  24| q-1$;  $E_8:   90| q-1$;  $F_4: 6| q-1$;  $G_2: 6| q-1$.  
\caption{Further hypothesis on \(q-1\)}
\label{car2}
\end{table}

\end{aprop}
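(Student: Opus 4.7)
The proof proceeds by a case-by-case analysis grounded in the classification of Proposition \ref{A2021}. Since both hypotheses are stated simple factor by simple factor of $G^{\mathrm{ad}}$, and both the subgroup structure and the center are compatible with central isogenies and with the decomposition of $G$ into simple factors, I would first reduce to the case where $G^{\mathrm{ad}}$ is simple. The basic input throughout is that, because $T$ is split over $\mathbb{F}_q$, we have $T(\overbar{\mathbb{F}}_q)\cong X_{*}(T)\otimes(\mathbb{Q}/\mathbb{Z})_{p'}$ on which the geometric Frobenius acts by multiplication by $q$; hence an element $s\leftrightarrow t$ is $\mathbb{F}_q$-rational iff $(q-1)t\in X_{*}(T)$, and a finite subgroup $A\subseteq T(\overbar{\mathbb{F}}_q)$ is pointwise $\mathbb{F}_q$-rational iff $(q-1)$ annihilates $A$.

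For the first assertion, by Proposition \ref{A2021} every conjugacy class of split elliptic coendoscopic subgroup $H\neq G$ is represented (over $\overbar{\mathbb{F}}_q$) by $G_{s_i}^0$, where $s_i$ corresponds to the class of $h_i^{-1}\varpi^{\vee}_{\alpha_i}$ in $X_{*}(T)\otimes \mathbb{Q}/X_{*}(T)$ for some $\alpha_i\in\Delta_B$ with $h_i$ prime. The order of $s_i$ equals $h_i$ when $X_{*}(T)=P^{\vee}$ (i.e.\ $G=G^{\mathrm{ad}}$) and for general isogeny types divides a computable integer depending on the position of $X_{*}(T)$ between the coroot and coweight lattices. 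The entries of Table \ref{car} are then exactly the LCM of these orders as $\alpha_i$ ranges over simple roots with prime $h_i$ in each simple factor, read off from Table \ref{chr}. Granting the hypothesis makes $s_i\in T(\mathbb{F}_q)$, so $H=G_{s_i}^0$ is a split elliptic coendoscopic group of $G$ defined over $\mathbb{F}_q$.

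For the second assertion, I would compute $Z_H(\overbar{\mathbb{F}}_q)$ explicitly for each split elliptic coendoscopic $H$ (including $G$ itself) type by type. Since $H$ is of full rank in $G$, $Z_H$ is a finite commutative group scheme contained in $T$, identified under the isomorphism above with the finite abelian group of elements killed by every root in $\Phi(H,T)$; its exponent $e_H$ is the exponent of the cokernel of the inclusion of $X_{*}(T)$ into the coweight lattice of $\Phi(H,T)$. By the Frobenius calculation of the first paragraph, the condition $Z_H(\mathbb{F}_q)=Z_H(\overbar{\mathbb{F}}_q)$ is $e_H\mid q-1$, and taking the LCM of the $e_H$ over all split elliptic coendoscopic $H$ (together with $G$, where $e_G=|Z(G)(\overbar{\mathbb{F}}_q)|$) produces Table \ref{car2}.

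The main obstacle is the type-by-type enumeration, particularly for the exceptional types $E_6,E_7,E_8,F_4,G_2$: one must list the relevant Borel--de Siebenthal subgroups (for $E_8$, for example, $D_8$, $A_1\times E_7$, $A_2\times E_6$, $A_4\times A_4$, and $A_8$, corresponding respectively to the prime values $h_i\in\{2,2,3,5,3\}$), compute each $|Z_H(\overbar{\mathbb{F}}_q)|$ as a finite abelian group via the standard tables of centers of simply connected semisimple groups, and verify that the resulting LCMs match the stated tables. For the classical types this reduces to routine identifications with $\mathrm{Spin}$, $\mathrm{Sp}$, or $\mathrm{SO}$ subgroups, and one must also pay attention to the isogeny type of $G$, which determines exactly which intermediate lattice $X_{*}(T)$ sits between the coroot and coweight lattice.
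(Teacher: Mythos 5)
Your strategy is essentially the paper's: identify $T(\mathbb{F}_q)$ with the $(q-1)$-torsion of $X_*(T)\otimes(\mathbb{Q}/\mathbb{Z})_{p'}\subseteq\ago_{T,\mathbb{Q}}/X_*(T)$, represent each elliptic class by $h_i^{-1}\varpi_{\alpha_i}^{\vee}$ via Proposition \ref{A2021}, and turn both assertions into divisibility conditions on $q-1$ read off from the lattice quotients, type by type. The only real difference in execution is that the paper does not compute exact orders per isogeny type: it uses the crude, isogeny-independent sufficient condition $\frac{q-1}{h_i}\mathbb{Z}\hat{\Delta}_B^{\vee}\subseteq\mathbb{Z}\Delta_B^{\vee}$ (possible because $\mathbb{Z}\Delta_B^{\vee}\subseteq X_*(T)$ always), which also makes the reduction to simple factors of $G^{\mathrm{ad}}$ unproblematic even when $X_*(T)$ does not split as a product; your finer per-lattice computation works but must be run in the worst case, since the hypothesis only sees $G^{\mathrm{ad}}$. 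Two small corrections: the table entries are \emph{not} exactly the LCMs of the orders of the $s_i$ (the paper itself notes its bounds are not optimal; e.g.\ in type $C_r$ the elements $\tfrac12\varpi_i^{\vee}$ already have order $2$ in $\ago_{T,\mathbb{Q}}/X_*(T)$ for every isogeny type, well below the listed $4$ or $8$), so you should only verify that each required order or exponent \emph{divides} the table entry, not that it equals it; and in the second part the relevant invariant $e_G$ is the exponent of $Z_G(\overbar{\mathbb{F}}_q)$, not its cardinality.
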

\begin{proof}
We continue to use constructions in the precedent propositions. Suppose first that $G$ is simple. 

Observe that under the embedding $T(\overbar{\mathbb{F}}_q)\hookrightarrow \ago_{T,\mathbb{Q}}/X_{*}(T)$, $T(\mathbb{F}_q)$ is sent to $(\ago_{T,\mathbb{Q}}/X_{*}(T))_{q-1}$, the $(q-1)$-torsion points in $\ago_{T,\mathbb{Q}}/X_{*}(T)$. Hence the element $s\in T(\overbar{\mathbb{F}}_q)$ induced by a $t=\frac{1}{h_i}\varpi_{\alpha_i}^{\vee}$ belongs to $T(\mathbb{F}_q)$ if and only if 
\begin{equation}\frac{1}{h_i}\varpi_{\alpha_i}^{\vee}\in \frac{1}{q-1}X_{*}(T). \end{equation}
A sufficient condition for this to hold is when 
\[\frac{1}{h_i}\varpi_{\alpha_i}^\vee \in  \frac{1}{q-1}\mathbb{Z}\Delta_B^{\vee}. \]
This holds if \begin{equation}\frac{q-1}{h_i}\mathbb{Z}\hat{\Delta}_B^\vee \subseteq \mathbb{Z}\Delta_B^{\vee}, \quad \forall  h_i \text{ prime }. \end{equation}

Since if the root system $\Phi(G, T)$ is of type $A, B, C, D, E, F, G$, then its dual root system $(\Phi(G, T)^{\vee}, \Delta_B^{\vee})$ is of type $A, C, B, D, E, F, G$. 
The result is clear from the previous discussion, and the structure of $\mathbb{Z}\hat{\Delta}_B/\mathbb{Z}\Delta_B$ is listed as follows.  
 \begin{table}[H]
\centering
\begin{tabular}{|c|c|c|c|c|c|c|c|c|c|c|}
\hline
 $A_r$& $B_r$ & $C_r$ &   $D_{2r}$ & $ D_{2r+1}$ & $E_{6}$ & $E_7$& $E_8$& $F_4$&$G_2$ \\
\hline
$\mathbb{Z}/(r+1)\mathbb{Z}$   &$\mathbb{Z}/2 \mathbb{Z}$    &    $\mathbb{Z}/2 \mathbb{Z}$  & $\mathbb{Z}/2 \mathbb{Z}\times \mathbb{Z}/2 \mathbb{Z}$  & $\mathbb{Z}/4 \mathbb{Z}$  &$\mathbb{Z}/3 \mathbb{Z}$   & $\mathbb{Z}/2 \mathbb{Z}$  & $\{1\}$  & $\{1\}$  & $\{1\}$ \\
\hline
\end{tabular}
\caption{Structures of $\mathbb{Z}\hat{\Delta}_B/\mathbb{Z}\Delta_B$}
\end{table}

Now we consider the second statement about the center $Z_{H}$. Since $H$ is semisimple, the group $Z_H(\overbar{\mathbb{F}}_q)\cong X_{*}(T)/\mathbb{Z}\Delta_{B_H}^{\vee}$ where $B_H=B\cap H$. 
A sufficient condition for $Z_H(\overbar{\mathbb{F}}_q)=Z_H(\mathbb{F}_q)$ is that 
\begin{equation}  (X_{*}(T)/\mathbb{Z}\Delta_{B_H}^{\vee})_{q-1}=X_{*}(T)/\mathbb{Z}\Delta_{B_H}^{\vee},\end{equation}
i.e. all elements in $X_{*}(T)/\mathbb{Z}\Delta_{B_H}^{\vee}$ are $(q-1)$-torsion. 
Again we can use the table above and the classification of split elliptic coendoscopic groups; we leave the details to the reader (our result is not optimal). 
\end{proof}

\end{document}